\tikzset{
curvarr/.style={
  to path={ -- ([xshift=2ex]\tikztostart.east)
    |- (#1) [near end]\tikztonodes
    -| ([xshift=-2ex]\tikztotarget.west)
    -- (\tikztotarget)}
  }
}
\tikzset{
  curvedlink/.style={
    to path={
      let \p1=(\tikztostart.east), \p2=(\tikztotarget.west),
      \n1= {abs(\y2-\y1)/4} in
      (\p1) arc(90:-90:\n1) -- ([yshift=2*\n1]\p2) arc (90:270:\n1)
    },
  }
}
\newtheorem{theorem}{Theorem}[section]
\newtheorem{lemma}[theorem]{Lemma}
\newtheorem{conjecture}[theorem]{Conjecture}
\newtheorem{proposition}[theorem]{Proposition}
\newtheorem{corollary}[theorem]{Corollary}
\theoremstyle{definition}
\newtheorem{define}[theorem]{Definition}
\newtheorem{example}[theorem]{Example}
\newtheorem{convention}[theorem]{Convention}
\theoremstyle{remark}
\newtheorem{remark}[theorem]{Remark}
\numberwithin{equation}{section}
\DeclareMathOperator{\Hom}{Hom}
\DeclareMathOperator{\Ext}{Ext}
\DeclareMathOperator{\Der}{Der}
\DeclareMathOperator{\ann}{ann}
\DeclareMathOperator{\Div}{Div}
\DeclareMathOperator{\Var}{V}
\DeclareMathOperator{\depth}{depth}
\DeclareMathOperator{\Spec}{Spec}
\DeclareMathOperator{\reg}{reg}
\DeclareMathOperator{\rank}{rank}
\DeclareMathOperator{\qi}{q.i.}
\DeclareMathOperator{\deRham}{DR}
\DeclareMathOperator{\anal}{an}
\DeclareMathOperator{\derivedR}{\mathbf{R}}
\DeclareMathOperator{\localSystem}{L}
\DeclareMathOperator{\Exp}{Exp}
\DeclareMathOperator{\Cech}{\check{C}ech}
\DeclareMathOperator{\alg}{alg}
\author{Daniel Bath}
\title[Arrangements: (un)Twisted Logarithmic Comparison Theorems]{Hyperplane Arrangements Satisfy (un)Twisted Logarithmic Comparison Theorems, Applications to $\mathscr{D}_{X}$-modules}
\address{Department of Mathematics, KU Leuven, Leuven, Belgium.}
\email{dan.bath@kuleuven.be}
\thanks{The author is supported by FWO grant \#G097819N}
\subjclass[2010]{Primary 32S20, 32S22, 14F40 Secondary: 14F10, 32S40, 14B15, 13D45.}
\keywords{Logarithmic Comparison Theorem, arrangement, local system, local cohomology, Castelnuovo--Mumford regularity, Bernstein--Sato, logarithmic de Rham}
\begin{document}
\sloppy
\maketitle
\begin{abstract}
For a reduced hyperplane arrangement we prove the analytic Twisted Logarithmic Comparison Theorem, subject to mild combinatorial arithmetic conditions on the weights defining the twist. This gives a quasi-isomorphism between the twisted logarithmic de Rham complex and the twisted meromorphic de Rham complex. The latter computes the cohomology of the arrangement's complement with coefficients from the corresponding rank one local system. We also prove the algebraic variant (when the arrangement is central), and the analytic and algebraic (untwisted) Logarithmic Comparison Theorems. The last item positively resolves an old conjecture of Terao. We also prove that: every nontrivial rank one local system on the complement can be computed via these Twisted Logarithmic Comparison Theorems; these computations are explicit finite dimensional linear algebra. Finally, we give some $\mathscr{D}_{X}$-module applications: for example, we give a sharp restriction on the codimension one components of the multivariate Bernstein--Sato ideal attached to an arbitrary factorization of an arrangement. The bound corresponds to (and, in the univariate case, gives an independent proof of) M. Saito's result that the roots of the Bernstein--Sato polynomial of a non-smooth, central, reduced arrangement live in $(-2 + 1/d, 0).$
\end{abstract}

\tableofcontents

\section{Introduction}

Throughout $f=f_{1} \cdots f_{d} \in R = \mathbb{C}[x_{1}, \dots, x_{n}]$ is a defining equation, factored into linears, of a $\deg(f) = d$ reduced hyperplane arrangement $\mathscr{A}$ in $X = \mathbb{C}^{n}$ and with complement $U = X \setminus \mathscr{A}$. By default, we regard $X$ as a Stein manifold with analytic structure sheaf $\mathscr{O}_{X}$. Along with the de Rham complex $(\Omega_{X}^{\bullet}, d)$ there is the \emph{meromorphic de Rham complex}
\[
(\Omega_{X}^{\bullet}(\star \mathscr{A}), d) = 0 \to \Omega_{X}^{0}(\star \mathscr{A}) \xrightarrow[]{d} \cdots \xrightarrow[]{d} \Omega_{X}^{n}(\star \mathscr{A}) \to 0 
\]
where $d$ is the exterior derivative and $(\star \mathscr{A})$ means we allow poles of arbitrary order along $\mathscr{A}$. K. Saito \cite{SaitoLogarithmicForms} introduced the \emph{logarithmic de Rham complex} $(\Omega_{X}^{\bullet}(\log \mathscr{A}), d)$ as the subcomplex of the meromorphic de Rham complex characterized by 
\[
\Omega_{X}^{j}(\log \mathscr{A}) = \{ \eta \in \Omega_{X}^{j}(\mathscr{A}) \mid d(\eta) \in \Omega_{X}^{j+1}(\mathscr{A}) \}.
\]
That is, both $\eta$ and $d(\eta)$ have poles of order at most one along $\mathscr{A}$. Note that these are complexes of sheaves of $\mathscr{O}_{X}$-modules, neither of which depends on the choice of defining equation $f$ of $\mathscr{A}$. There are algebraic analogues: the algebraic logarithmic de Rham complex $\Omega_{R}^{\bullet}(\log \mathscr{A})$; the algebraic rational de Rham complex $\Omega_{R}^{\bullet}(\star \mathscr{A})$. These complexes of $R$-modules have similar definitions to the analytic ones.

Our original motivation was to answer a well studied conjecture (Conjecture 3.1 of \cite{TeraoLCTforArrangementsConjecture}) of Terao from 1977:

\begin{conjecture} \label{conjecture -  algebraic lct for arrangements} \text{(Terao's LCT Conjecture)} The Algebraic Logarithmic Comparison Theorem holds for reduced hyperplane arrangements. That is, the natural inclusion of the algebraic logarithmic de Rham complex into the algebraic rational de Rham complex is a quasi-isomorphism:
\[
(\Omega_{R}^{\bullet}(\log \mathscr{A}), d) \xhookrightarrow{\qi} (\Omega_{R}^{\bullet}(\star \mathscr{A}), d) \quad (\simeq H^{\bullet}(U, \mathbb{C}_{U})).
\]
\end{conjecture}
\noindent Here $\mathbb{C}_{U}$ is the the rank one $\mathbb{C}$-valued constant sheaf on $U$. 

That $H^{\bullet}(\Omega_{R}^{\bullet}(\star \mathscr{A}), d) \simeq H^{\bullet}(U, \mathbb{C}_{U})$ is a consequence of Grothendieck's Comparison Theorem \cite{GrothendieckOnTheDeRham}. So Conjecture \ref{conjecture -  algebraic lct for arrangements} promises that logarithmic data of $\mathscr{A}$ determines the cohomology of the complement via the conjectured quasi-isomorphism.

We prove Conjecture \ref{conjecture - algebraic lct for arrangements} as a corollary to more general result: the Twisted Logarithmic Comparison Theorem. This promises that twisting the differential of the logarithmic de Rham complex by a certain logarithmic form computes the cohomology of the complement $U$ in the corresponding rank one local system. 

To explain this, let $\lambda_{1}, \dots, \lambda_{d} \in \mathbb{C}$ be weights, one for each hyperplane $\Var(f_{k})$ of $\mathscr{A}$, and let the weights define a logarithmic one form
\[
\omega = \sum_{k} \lambda_{k} \frac{df_{k}}{f_{k}} = \sum_{k} \lambda_{k} d \log f_{k}.
\]
Using $\omega$ we twist the exterior derivative for both the logarithmic and meromorphic de Rham complex (along with the algebraic analogues): we replace the differential $d(-)$ with the twisted differential $\nabla_{\omega}(-) = d(-)+ \omega \wedge (-).$ 

On the local system side, it is well known that all rank one local systems on the complement $U = X \setminus \mathscr{A}$ are parameterized by torus points $\boldsymbol{\beta} \in (\mathbb{C}^{\star})^{d}$. Essentially, each $\beta_{k}$ encodes the monodromy around the hyperplane $\Var(f_{k})$. Denote $\localSystem_{\boldsymbol{\beta}}$ the local system corresponding to $\boldsymbol{\beta}$. By Deligne and Grothendieck's algebraic de Rham Theorems (see the summary of the arrangement case in \cite{OrlikHypergeometricIntegralsAndArrangements}), the algebraic twisted de Rham complex computes the cohomology on $U$ with one of these local systems:
\[
H^{\bullet}(\Omega_{R}^{\bullet}(\star \mathscr{A}), \nabla_{\omega}) \simeq H^{\bullet}(U, \localSystem_{\Exp(\boldsymbol{\lambda})}).
\]
\noindent $\localSystem_{\Exp(\boldsymbol{\lambda})}$ corresponds to the torus point obtained by replacing each component $\lambda_{k}$ of the weights $\boldsymbol{\lambda} = (\lambda_{1}, \dots, \lambda_{d})$ with $e^{2 \pi i \lambda_{k}}$.

Our two main results are: the analytic Twisted Logarithmic Comparison Theorem; the algebraic Twisted Logarithmic Comparison Theorem. In both cases, we have very mild combinatorial arithmetic restrictions on the weights defining the twist. Recall $\mathscr{L}(A)$ is the intersection lattice of $\mathscr{A}$; its constituents are called edges. The majority of work is in proving our analytic Theorem \ref{thm - intro, analytic twisted lct}:

\begin{theorem} \label{thm - intro, analytic twisted lct}
(Analytic Twisted Logarithmic Comparison Theorem) Let $f = f_{1} \cdots f_{d} \in R$ cut out a reduced hyperplane arrangement $\mathscr{A}$. Suppose that $\lambda_{1}, \dots, \lambda_{d} \in \mathbb{C}$ are weights such that, for each edge $E \in \mathscr{L}(\mathscr{A})$,
\begin{equation} \label{eqn - intro, analytic twisted lct weight condition}
\sum_{\{1 \leq k \leq d \mid E \subseteq \Var(f_{k})\}} \lambda_{k} \notin \mathbb{Z}_{\geq \min\{2, \rank(E)\}}.
\end{equation}
Let $\omega = \sum_{k} \lambda_{k} \frac{df_{k}}{f_{k}}$ be
the logarithmic one form determined by the $\{\lambda_{k}\}$. Then the analytic Twisted Logarithmic Comparison Theorem with respect to $\{\lambda_{k}\}$ holds:
\begin{equation*} 
(\Omega_{X}^{\bullet}(\log \mathscr{A}), \nabla_{\omega}) \xhookrightarrow{\qi} (\Omega_{X}^{\bullet}(\star \mathscr{A}), \nabla_{\omega}) \quad (= \derivedR j_{\star} \localSystem_{\Exp(\boldsymbol{\lambda})}).
\end{equation*}
\end{theorem}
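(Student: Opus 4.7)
The plan is to localize the problem and then induct on the complexity of the local arrangement. Since both complexes and the inclusion are morphisms of sheaves of $\mathscr{O}_{X}$-modules, the quasi-isomorphism can be tested on stalks. At a point $p \in X$ only the hyperplanes through $p$ contribute to the stalks on both sides; after choosing coordinates that split off a transverse affine factor, the stalk question at $p$ reduces to the case of an essential central arrangement of rank $r \leq n$ at the origin of $\mathbb{C}^{r}$, with weights inherited from $\mathscr{A}$ still satisfying the combinatorial condition \eqref{eqn - intro, analytic twisted lct weight condition}.

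For the essential central case I would induct on the rank $r$. The base case $r = 1$ is a smooth hypersurface, for which the hypothesis $\lambda_{k} \notin \mathbb{Z}_{\geq 1}$ is precisely the residue condition in Deligne's classical twisted logarithmic comparison theorem for normal crossings divisors. For $r \geq 2$, form the cokernel complex $Q^{\bullet} = (\Omega_{X}^{\bullet}(\star \mathscr{A})/\Omega_{X}^{\bullet}(\log \mathscr{A}), \nabla_{\omega})$; it is enough to show $Q^{\bullet}$ is acyclic. At any point $q \neq 0$, the essential part of the local arrangement at $q$ has rank strictly less than $r$, and the weight condition \eqref{eqn - intro, analytic twisted lct weight condition} descends to the corresponding edge subarrangement, so the stalk of $Q^{\bullet}$ at $q$ is acyclic by the inductive hypothesis. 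The remaining task is to kill the hypercohomology of $Q^{\bullet}$ supported at the origin.

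The main obstacle is this calculation at the origin. Here I would exploit the Euler derivation $\theta_{E} = \sum_{i} x_{i} \partial_{x_{i}}$: because each $f_{k}$ is linear, $\iota_{\theta_{E}} \omega = \sum_{k} \lambda_{k}$ is a scalar, and Cartan's formula yields the operator identity $\nabla_{\omega} \iota_{\theta_{E}} + \iota_{\theta_{E}} \nabla_{\omega} = \mathcal{L}_{\theta_{E}} + \sum_{k} \lambda_{k}$ on forms. Since $\mathcal{L}_{\theta_{E}}$ is diagonal under the natural polynomial grading on the logarithmic and meromorphic complexes, contracting with $\theta_{E}$ provides a homotopy showing the weight-$m$ graded piece is $\nabla_{\omega}$-acyclic whenever $m + \sum_{k} \lambda_{k} \neq 0$. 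The arithmetic hypothesis $\sum_{k} \lambda_{k} \notin \mathbb{Z}_{\geq \min\{2, r\}}$ is engineered to ensure this nonvanishing precisely in the range of weights $m$ that can contribute to $H^{\bullet}(Q^{\bullet})_{0}$. Identifying this range — equivalently, bounding from above the degrees in which $Q^{\bullet}$ has nontrivial graded pieces — is the most delicate step, and I expect it to require sharp Castelnuovo--Mumford regularity estimates on $\Der(-\log \mathscr{A})$ or on a closely related $R$-module attached to the arrangement, consistent with the tools advertised by the paper's keywords.
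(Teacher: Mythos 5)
Your architecture matches the paper's: localize to stalks, reduce to an essential central arrangement, induct on rank, and at the origin combine the Euler contraction homotopy with a Castelnuovo--Mumford regularity bound. The reductions away from the origin (dropping hyperplanes missing the point by conjugating with $u^{-\lambda}$, splitting off the transverse factor) are exactly Lemmas \ref{lemma - inductive step, removing fk terms from omega} and \ref{lemma - divisor product, inductive step quasi-iso}, and the identity $\nabla_{\omega}\iota_{E} + \iota_{E}\nabla_{\omega} = L_{E} + \sum_{k}\lambda_{k}$ is Lemma \ref{lemma - twisted lie derivative computation on homogeneous}. So far, so good.

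The gap is in the step you yourself flag as delicate, and it is larger than "identify the range of weights $m$." The acyclicity of your cokernel complex $Q^{\bullet}$ at the origin is \emph{not} controlled by the graded pieces of the stalks alone: because $\Omega_{X}^{p}(\log\mathscr{A})$ is not acyclic on the punctured neighborhood $V\setminus 0$, the hypercohomology of $Q^{\bullet}$ there is computed by a spectral sequence whose first page involves $H^{q}(V\setminus 0, \Omega_{X}^{p}(\log\mathscr{A}))$ for $q\geq 1$, i.e.\ the Cech cohomology $H_{\Cech}^{q}(\Omega_{X}^{p}(\log\mathscr{A}))$ on the cover $\{D(x_{i})\}$, equivalently local cohomology in degrees $t\geq 2$. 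Three things are then needed that your sketch does not supply. First, the Euler homotopy must be pushed to these Cech cohomology groups, where "homogeneous" means homogeneous in the sense of Laurent expansions; the operator $\eta\mapsto\sum_{p\neq-\iota_{E}(\omega)}\eta_{p}/(p+\iota_{E}(\omega))$ must be shown to preserve convergence, logarithmicity, and Cech cycles/boundaries (Propositions \ref{prop - assignment is well-defined}--\ref{prop - assignment preserves Cech cycles and boundaries}, Lemma \ref{lemma - cohomological surjections of analytic homogeneous twisted Cech complex to total twisted Cech complex}). Second, a GAGA argument is required to identify the surviving homogeneous piece with the algebraic graded local cohomology $H_{\mathfrak{m}}^{t}(\Omega_{R}^{j}(\log\mathscr{A}))_{-\sum\lambda_{k}}$ (Corollary \ref{cor - specific weights at origin, acyclic complex of analytic Cech cohomology}). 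Third, the regularity input is not the known bound on $\Der_{R}(-\log\mathscr{A})$: one needs $\reg(\Omega_{R}^{j}(\log\mathscr{A}))\leq 0$ for \emph{every} $1\leq j\leq\rank(\mathscr{A})-1$, which is a new theorem (Theorem \ref{thm - computing CM regularity of log forms}) proved by a separate Derksen--Sidman-type induction on degree and rank; only the top case $j=\rank(\mathscr{A})-1$ reduces to logarithmic derivations. With that bound, reflexivity kills $t=0,1$ and regularity forces $H_{\mathfrak{m}}^{t}(\Omega_{R}^{j}(\log\mathscr{A}))_{\ell}=0$ for $t\geq 2$ unless $\ell\leq -2$, which together with integrality is exactly where the hypothesis $\sum\lambda_{k}\notin\mathbb{Z}_{\geq\min\{2,\rank\}}$ is consumed. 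Until these three pieces are in place, the origin computation is not a proof.
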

\noindent The statement $``(= \derivedR j_{\star} \localSystem_{\Exp(\boldsymbol{\lambda})})''$ means there is equality in the derived category (of perverse sheaves), where $j: U \xhookrightarrow{} X$ is the inclusion.

From this we deduce the algebraic Theorem \ref{thm - global algebraic twisted lct with homogeneous subcomplex q.i}, though here impose centrality:

\begin{theorem} \label{thm - intro, global algebraic twisted lct with homogeneous subcomplex q.i} (Algebraic Twisted Logarithmic Comparison Theorem)
Let $f = f_{1} \cdots f_{d}$ cut out a central, reduced hyperplane arrangement $\mathscr{A}$ and let $\lambda_{1}, \dots, \lambda_{d} \in \mathbb{C}$ be weights such that, for each edge $E$,
\[
\sum_{\{1 \leq k \leq d \mid E \subseteq \Var(f_{k})\}} \lambda_{k} \notin \mathbb{Z}_{\geq \min\{2, \rank(E)\}}.
\]
Furthermore, let $\omega = \sum_{k} \lambda_{k} df_{k} / f_{k}$. Then the algebraic Twisted Logarithmic Comparison Theorem holds along with an additional quasi-isomorphism:
\[
(\Omega_{R}^{\bullet}(\log \mathscr{A})_{-\iota_{E}(\omega)}, \nabla_{\omega}) \xhookrightarrow{\qi} (\Omega_{R}^{\bullet}(\log \mathscr{A}), \nabla_{\omega}) \xhookrightarrow{\qi} (\Omega_{R}^{\bullet}(\star \mathscr{A}), \nabla_{\omega}) \quad (\simeq  H^{\bullet}(U, \localSystem_{\Exp(\boldsymbol{\lambda})})).
\]
Here $(\simeq  H^{\bullet}(U, \localSystem_{\Exp(\boldsymbol{\lambda})}))$ means there are isomorphisms on the level of cohomology and $(\Omega_{R}^{\bullet}(\log \mathscr{A})_{-\iota_{E}(\omega)}, \nabla_{\omega})$ is the homogeneous subcomplex of degree $- \iota_{E}(\omega) = -( \lambda_{1} + \cdots + \lambda_{d})$, which is a complex of finite dimensional $\mathbb{C}$-vector spaces.
\end{theorem}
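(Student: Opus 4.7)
The plan is to leverage centrality, which furnishes a $\mathbb{C}^{\star}$-action on $X$, and to bootstrap from the analytic Theorem~\ref{thm - intro, analytic twisted lct}. Two observations guide everything. First, $\omega$ is homogeneous of degree zero, so the twisted differential $\nabla_\omega$ preserves the grading on every complex in sight. Second, the Euler derivation $E = \sum_i x_i \partial_{x_i}$ is a global logarithmic derivation of the central $\mathscr{A}$, and, because every $f_k$ is linear, $\iota_E(\omega) = \sum_k \lambda_k$. Thus $\iota_E$ and the Lie derivative $\mathcal{L}_E$ act on each of $(\Omega_R^\bullet(\log \mathscr{A}), \nabla_\omega)$, $(\Omega_R^\bullet(\star \mathscr{A}), \nabla_\omega)$, and their analytic counterparts.

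First I will record the twisted Cartan identity
\[
\nabla_\omega \iota_E + \iota_E \nabla_\omega = \mathcal{L}_E + \iota_E(\omega),
\]
which follows from the classical magic formula $\mathcal{L}_E = d \iota_E + \iota_E d$ and the antiderivation property of $\iota_E$ with respect to a $1$-form. On the graded piece of degree $m$ the right-hand side acts as the scalar $m + \iota_E(\omega)$. Hence for every integer $m \neq -\iota_E(\omega)$ the weight-$m$ subcomplex is null-homotopic via $(m + \iota_E(\omega))^{-1} \iota_E$ and is acyclic. Only the subcomplex of degree $-\iota_E(\omega)$ can contribute to cohomology, which yields the innermost q.i. of the statement; finite-dimensionality of $\Omega_R^\bullet(\log \mathscr{A})_{-\iota_E(\omega)}$ is automatic since $\Omega_R^j(\log \mathscr{A})$ is a finitely generated graded $R$-module with bounded-below grading. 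When $\iota_E(\omega) \notin \mathbb{Z}$ every graded piece vanishes and both algebraic complexes become acyclic, compatibly with the vanishing of $H^\bullet(U, \localSystem_{\Exp(\boldsymbol{\lambda})})$ forced by nontrivial monodromy of the local system along the $\mathbb{C}^{\star}$-orbits in $U$.

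Next I will transfer Theorem~\ref{thm - intro, analytic twisted lct} into the middle q.i. For any $\mathbb{C}^{\star}$-equivariant coherent sheaf on $X = \mathbb{C}^n$, the weight-$m$ eigenspace of its analytic global sections coincides with the weight-$m$ part of the corresponding finitely generated graded $R$-module: via Taylor expansion at the origin, the weight-$m$ part of $\Gamma(X, \mathscr{O}_X)$ is exactly $R_m$. Cartan's Theorem~B on the Stein space $X$ then computes the hypercohomology of $(\Omega_X^\bullet(\log \mathscr{A}), \nabla_\omega)$ as the cohomology of its global sections complex; decomposing weight by weight and applying Theorem~\ref{thm - intro, analytic twisted lct} together with the previous step yields
\[
H^\bullet\bigl(\Omega_R^\bullet(\log \mathscr{A})_{-\iota_E(\omega)}, \nabla_\omega\bigr) \cong H^\bullet(U, \localSystem_{\Exp(\boldsymbol{\lambda})}).
\]
A parallel treatment of the rational complex $\Omega_X^\bullet(\star \mathscr{A})$, exhausted by the coherent subsheaves $\Omega_X^j(k \mathscr{A})$, together with the algebraic Grothendieck--Deligne comparison recalled in the introduction, yields the same identification for $\Omega_R^\bullet(\star \mathscr{A})_{-\iota_E(\omega)}$. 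Since the inclusion of algebraic complexes is $\mathbb{C}^{\star}$-equivariant, on each weight-$m$ piece it is either a map between acyclic complexes or, at $m = -\iota_E(\omega)$, a map realizing the common cohomology $H^\bullet(U, \localSystem_{\Exp(\boldsymbol{\lambda})})$ compatibly; hence $\Omega_R^\bullet(\log \mathscr{A}) \hookrightarrow \Omega_R^\bullet(\star \mathscr{A})$ is a q.i., as desired.

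The main obstacle will be the bookkeeping between analytic and algebraic global sections: Theorem~\ref{thm - intro, analytic twisted lct} is a priori an analytic derived-category statement, while the algebraic log de Rham complex is strictly smaller than what one obtains from analytic global sections. The $\mathbb{C}^{\star}$-equivariant structure is precisely what bridges this gap, since taking a single weight eigenspace is exact on coherent $\mathbb{C}^{\star}$-equivariant sheaves on $X$ and collapses the analytic global sections onto the algebraic graded piece. Once that identification is installed, the Cartan homotopy isolates the unique weight relevant to the local system and the algebraic theorem drops out of the analytic one.
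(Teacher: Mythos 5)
Your proposal is correct and follows essentially the same route as the paper: the twisted Cartan homotopy $\nabla_\omega\iota_E+\iota_E\nabla_\omega=(\deg+\iota_E(\omega))\cdot$ isolates the weight $-\iota_E(\omega)$ subcomplex on both the algebraic and analytic sides, that subcomplex is literally the same finite-dimensional object in both settings (a homogeneous convergent series is a polynomial), and the analytic Theorem~\ref{thm - intro, analytic twisted lct} is then imported; the paper runs this at the stalk at $0$ rather than on global Stein sections, which is only a cosmetic difference. The one phrase to tighten is ``decomposing weight by weight'': the analytic sections complex is \emph{not} the direct sum of its weight subcomplexes, so reducing it to the single weight $-\iota_E(\omega)$ requires checking that the homotopy $\eta\mapsto\sum_{m\neq-\iota_E(\omega)}(m+\iota_E(\omega))^{-1}\iota_E(\eta_m)$ still converges and stays logarithmic --- this is exactly what the paper's Lemma~\ref{lemma - twisted analytic log qi to homogeneous subcomplex} (via Proposition~\ref{prop - assignment is well-defined}) supplies, and it is routine but not omittable.
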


The combinatorial arithmetic conditions \eqref{eqn - intro, analytic twisted lct weight condition} on the intersection lattice are relatively benign. (And are closely related to data about Bernstein--Sato ideals, cf. Theorem \ref{thm - intro, bounding codim one components of BS zero loci}, Section 4.) We justify this assessment in Proposition \ref{prop - all local systems can be computed}:

\begin{proposition} \label{prop - intro, all local systems can be computed}
For an arbitrary rank one local system $\localSystem_{\boldsymbol{\beta}}$ on $U$, there exists a $\boldsymbol{\lambda} \in \Exp^{-1}(\boldsymbol{\beta})$ such that all the aforementioned Twisted Logarithmic Comparison Theorems hold with respect to the weights $\boldsymbol{\lambda} = (\lambda_{1}, \dots, \lambda_{d}).$
\end{proposition}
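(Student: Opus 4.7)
The plan is to produce an explicit section of $\Exp \colon \mathbb{C}^{d} \to (\mathbb{C}^{\star})^{d}$ whose image automatically satisfies the combinatorial arithmetic hypothesis \eqref{eqn - intro, analytic twisted lct weight condition}. Once I have such a lift $\boldsymbol{\lambda} \in \Exp^{-1}(\boldsymbol{\beta})$, Theorem \ref{thm - intro, analytic twisted lct} immediately furnishes the analytic Twisted Logarithmic Comparison Theorem, and Theorem \ref{thm - intro, global algebraic twisted lct with homogeneous subcomplex q.i} gives the algebraic version in the central case. So the task collapses to lifting $\boldsymbol{\beta}$ wisely.

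Concretely, I would take $\lambda_{k}$ to be the unique complex number with $e^{2 \pi i \lambda_{k}} = \beta_{k}$ and $\Re(\lambda_{k}) \in (-1, 0]$. Such a $\lambda_{k}$ exists and is unique because the strip $S = \{z \in \mathbb{C} : \Re(z) \in (-1, 0]\}$ is a fundamental domain for the translation action of $\mathbb{Z}$ on $\mathbb{C}$. To verify \eqref{eqn - intro, analytic twisted lct weight condition} for $\boldsymbol{\lambda} = (\lambda_{1}, \dots, \lambda_{d})$, fix an edge $E \in \mathscr{L}(\mathscr{A})$: each summand $\lambda_{k}$ indexed by $\{k : E \subseteq \Var(f_{k})\}$ lies in $S$ and hence satisfies $\Re(\lambda_{k}) \leq 0$, so $\Re\bigl(\sum_{k} \lambda_{k}\bigr) \leq 0 < 1$. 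Consequently the sum cannot equal any positive integer, so it lies outside $\mathbb{Z}_{\geq 1} \supseteq \mathbb{Z}_{\geq \min\{2, \rank(E)\}}$, as required.

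I do not anticipate any serious obstacle: the deep content has already been absorbed by Theorems \ref{thm - intro, analytic twisted lct} and \ref{thm - intro, global algebraic twisted lct with homogeneous subcomplex q.i}, and the present proposition reduces to a judicious choice of branch of the logarithm. The key observation is that a single fundamental domain must simultaneously handle the rank-$1$ constraints on individual weights and the higher-rank constraints on multi-index sums; the strip $S$ accomplishes both uniformly because confining real parts to $(-1, 0]$ forces every relevant sum to have non-positive real part, well below the threshold $2$ at which the forbidden integer range begins. The only mild subtlety worth flagging is that one should read $\mathscr{L}(\mathscr{A})$ as the intersection lattice of proper edges (the empty intersection being excluded), which is the standard convention.
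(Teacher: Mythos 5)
Your proposal is correct and is essentially the paper's own argument: the paper takes an arbitrary lift and replaces $\boldsymbol{\lambda}$ by $\boldsymbol{\lambda} - \mathbf{z}$ with $z \gg 0$, while you normalize real parts into $(-1,0]$ — in both cases one translates the lift by integers so that every edge-sum has real part at most $0$ and hence avoids $\mathbb{Z}_{\geq \min\{2,\rank(E)\}} \subseteq \mathbb{Z}_{\geq 1}$. Your version is marginally more explicit (no ``sufficiently large $z$''), but the idea and the verification are the same.
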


These Twisted Logarithmic Comparison Theorems can be considered as an extension of the results in \cite{CohomologyofLocalSystemsOnTheComplementOfHyperplanes}. Therein projective hyperplane arrangements are considered, but using a Leray spectral sequence there is a precise cohomological relationship between local systems on the complement of a central affine arrangement and the corresponding local system on the complement of the associated projective arrangement, cf. Theorem 5.2 of \cite{DimcaHyperplaneArrangements}. In \cite{CohomologyofLocalSystemsOnTheComplementOfHyperplanes}, the authors prove that the twisted Orlik--Solomon algebra, i.e. the twisted Brieskorn algebra, with differential wedging $\omega = \sum \lambda_{k} df_{k}/f_{k}$, computes the cohomology of the associated rank one local system on the projective complement provided that: (1) the sum $\lambda_{1} + \cdots + \lambda_{k} = 0$; (2) a similar condition to \eqref{eqn - intro, analytic twisted lct weight condition} holds on the residues of $\omega$ at dense edges. The first requirement means these twisted Orlik--Solomon algebra can only detect local systems belong to non-torsion translated components of the characteristic variety, i.e. resonance varieties, cf. Example 4.1 of \cite{SuciuTranslatedTori} or Section 3 of \cite{CohenTriplesofArrangements}. This is a feature of twisted Orlik--Solomon algebras: they are blind to the local systems populating the torsion translated components by \cite{CohenSuciuTangent}. 

In contrast, any local system on the complement of an arrangement can be detected via these Twisted Logarithmic Comparison Theorems, after an appropriate twist, essentially because there is no stricture like $\lambda_{1} + \cdots + \lambda_{d} = 0$. So Theorem \ref{thm - intro, analytic twisted lct} and Theorem \ref{thm - intro, global algebraic twisted lct with homogeneous subcomplex q.i} extend the results of \cite{CohomologyofLocalSystemsOnTheComplementOfHyperplanes} to all rank one local systems, at the cost of replacing the Orlik--Solomon algebra with the logarithmic de Rham complex. To demonstrate this, in Example \ref{ex - deleted B3 arrangement} we use our results to compute the Betti numbers of $H^{\bullet}(U, \localSystem_{\boldsymbol{\beta}})$ for a special local system on the complement of the deleted $B_{3}$ arrangement. This system $\localSystem_{\boldsymbol{\beta}}$ is invisible to Orlik--Solomon methods.

From Theorem \ref{thm - intro, analytic twisted lct} we quickly conclude the (untwisted) analytic Logarithmic Comparison Theorem holds in Corollary \ref{cor - analytic untwisted lct}:

\begin{corollary} \label{cor - intro, analytic untwisted lct} (Analytic Logarithmic Comparison Theorem)
For a reduced hyperplane arrangement $\mathscr{A}$, the analytic Logarithmic Comparison Theorem holds:
\[
(\Omega_{X}^{\bullet}(\log \mathscr{A}), d) \xhookrightarrow{\qi} (\Omega_{X}^{\bullet}(\star \mathscr{A}), d) \quad (= \derivedR j_{\star} \mathbb{C}_{U}).
\]
\end{corollary}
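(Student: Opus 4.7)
The plan is to deduce the corollary as an immediate specialization of Theorem \ref{thm - intro, analytic twisted lct} to the trivial weights $\boldsymbol{\lambda} = (0, 0, \ldots, 0) \in \mathbb{C}^{d}$. With this choice, the associated one-form $\omega = \sum_{k} 0 \cdot d\log f_{k}$ vanishes identically, so the twisted differential $\nabla_{\omega} = d + \omega \wedge (-)$ collapses to the exterior derivative $d$. Consequently, the two complexes in Theorem \ref{thm - intro, analytic twisted lct} become exactly $(\Omega_{X}^{\bullet}(\log \mathscr{A}), d)$ and $(\Omega_{X}^{\bullet}(\star \mathscr{A}), d)$; moreover $\Exp(\boldsymbol{0}) = (1, \ldots, 1)$, so $\localSystem_{\Exp(\boldsymbol{0})} = \mathbb{C}_{U}$ and the derived-category identification $\derivedR j_{\star} \mathbb{C}_{U}$ appears on the right-hand side automatically.

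All that remains is to verify the combinatorial arithmetic hypothesis \eqref{eqn - intro, analytic twisted lct weight condition}. For every edge $E \in \mathscr{L}(\mathscr{A})$ the relevant sum of weights equals $0$, while the forbidden set is $\mathbb{Z}_{\geq \min\{2, \rank(E)\}} \subseteq \mathbb{Z}_{\geq 1}$. Since $0 \notin \mathbb{Z}_{\geq 1}$, the hypothesis is satisfied at every edge, whatever its rank. The one subtlety worth flagging is the edges of rank one (where $\min\{2, \rank(E)\} = 1$ rather than $2$), which is the most tempting case to skip over; but even here $0$ is strictly less than $1$, so the condition holds.

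There is no real obstacle: the content of the corollary is wholly contained in Theorem \ref{thm - intro, analytic twisted lct}, and invoking that theorem with $\boldsymbol{\lambda} = \boldsymbol{0}$ directly produces the quasi-isomorphism $(\Omega_{X}^{\bullet}(\log \mathscr{A}), d) \xhookrightarrow{\qi} (\Omega_{X}^{\bullet}(\star \mathscr{A}), d) = \derivedR j_{\star} \mathbb{C}_{U}$ claimed in Corollary \ref{cor - intro, analytic untwisted lct}.
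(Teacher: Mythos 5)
Your proposal is correct and is exactly the paper's argument: Corollary \ref{cor - analytic untwisted lct} is proved by invoking Theorem \ref{thm - analytic twisted lct} with all weights $\lambda_{k}$ set to zero, and your check that $0 \notin \mathbb{Z}_{\geq \min\{2,\rank(E)\}}$ for every edge (including rank-one edges) is the only verification needed.
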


We also prove the algebraic (untwisted) Logarithmic Comparison Theorem in Corollary \ref{cor - algebraic untwisted lct}, positively resolving Terao's Conjecture \ref{conjecture -  algebraic lct for arrangements}. Here there is no centrality assumption and $A^{\bullet}(\mathscr{A})$ is the Orlik--Solomon algebra.

\begin{corollary} \label{cor - intro, algebraic untwisted lct}
(Algebraic Logarithmic Comparison Theorem) Let $\mathscr{A}$ be a reduced hyperplane arrangement. Then the algebraic Logarithmic Comparison Theorem holds giving a sequence of quasi-isomorphisms
\begin{equation*} 
A^{\bullet}(\mathscr{A}) \xhookrightarrow{\qi} (\Omega_{R}^{\bullet}(\log \mathscr{A}), d) \xhookrightarrow{\qi} (\Omega_{R}^{\bullet}( \star \mathscr{A}), d) \quad (\simeq H^{\bullet}(U, \mathbb{C}_{U})).
\end{equation*}
Moreover, $A^{j}(\mathscr{A}) = H^{j}(\Omega_{R}^{\bullet}(\log \mathscr{A}), d)$.
\end{corollary}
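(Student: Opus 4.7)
The plan is to deduce the algebraic theorem from the already-proved analytic Logarithmic Comparison Theorem (Corollary \ref{cor - intro, analytic untwisted lct}) by faithfully flat descent, and then identify $A^{\bullet}(\mathscr{A})$ with the cohomology of the algebraic logarithmic de Rham complex via the classical Brieskorn presentation. Notably, no centrality hypothesis intervenes at any stage, so Terao and Yuzvinsky's Conjecture \ref{conjecture -  algebraic lct for arrangements} follows without restriction.

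The main step is to establish the right-hand quasi-isomorphism $\Omega_R^{\bullet}(\log \mathscr{A}) \xhookrightarrow{\qi} \Omega_R^{\bullet}(\star \mathscr{A})$. For every closed point $p \in X = \mathbb{C}^n$ the canonical map $R_{\mathfrak{m}_p} \to \mathscr{O}_{X,p}^{\text{an}}$ is faithfully flat. Since $\Omega_R^j(\log \mathscr{A})$ is a finitely generated $R$-module defined by a local pole condition, base change to $\mathscr{O}_{X,p}^{\text{an}}$ recovers the analytic stalk $\Omega_X^j(\log \mathscr{A})_p$; the analogous (and more trivial) identity holds for $\Omega_R^j(\star \mathscr{A}) = \Omega_R^j[1/f]$. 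By Corollary \ref{cor - intro, analytic untwisted lct} the inclusion is a quasi-isomorphism on every analytic stalk, so by faithfully flat descent it is a quasi-isomorphism after localization at each maximal ideal. Since cohomology of a complex of $R$-modules can be tested at maximal ideals, the algebraic inclusion is itself a quasi-isomorphism. Combined with Grothendieck's algebraic de Rham theorem for the smooth affine variety $U$, which gives $H^\bullet(\Omega_R^{\bullet}(\star \mathscr{A}), d) \simeq H^{\bullet}(U, \mathbb{C}_U)$, the right-hand side of the statement is settled.

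For the Orlik--Solomon portion, view $A^{\bullet}(\mathscr{A}) \subseteq \Omega_R^{\bullet}(\log \mathscr{A})$ as the graded $\mathbb{C}$-subalgebra generated by the closed logarithmic one-forms $df_k / f_k$. Its induced differential is zero, so there is a canonical map of complexes $(A^{\bullet}(\mathscr{A}), 0) \to (\Omega_R^{\bullet}(\log \mathscr{A}), d)$. Post-composing with the isomorphism to $H^{\bullet}(U, \mathbb{C}_U)$ just established yields a map that sends each $df_k/f_k$ to its de Rham class on $U$; this composite is the classical Brieskorn--Orlik--Solomon isomorphism and is therefore bijective. Hence $A^{\bullet}(\mathscr{A}) \to H^{\bullet}(\Omega_R^{\bullet}(\log \mathscr{A}), d)$ is an isomorphism, producing simultaneously the left-hand quasi-isomorphism and the equality $A^j(\mathscr{A}) = H^j(\Omega_R^{\bullet}(\log \mathscr{A}), d)$.

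The main subtlety is that one cannot simply take global sections of the analytic quasi-isomorphism to land in the algebraic setting: on the Stein space $\mathbb{C}^n$ the analytic global sections of $\Omega_X^j(\log \mathscr{A})$ vastly exceed $\Omega_R^j(\log \mathscr{A})$ (containing entire germs like $e^{x_1}\, df_k/f_k$), and $\Omega_X^j(\star \mathscr{A})$ is merely quasi-coherent. Performing the descent stalkwise at each closed point of $\mathbb{A}^n$ circumvents this issue, because the algebraic and analytic stalks of both the coherent logarithmic forms and the rational forms are matched exactly by base change along the faithfully flat extension $R_{\mathfrak{m}_p} \to \mathscr{O}_{X,p}^{\text{an}}$.
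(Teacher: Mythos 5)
Your reduction of the Orlik--Solomon half to Brieskorn's theorem (two-out-of-three on the composite $A^{\bullet}(\mathscr{A}) \to \Omega_{R}^{\bullet}(\star \mathscr{A})$) is fine and is exactly what the paper does, and you are right that one cannot simply take global analytic sections on the Stein space $\mathbb{C}^{n}$. But the step you use instead --- deducing the algebraic quasi-isomorphism from the analytic one by ``faithfully flat descent'' along $R_{\mathfrak{m}_{p}} \to \mathscr{O}_{X,p}^{\anal}$ --- has a genuine gap. The differentials of these complexes are only $\mathbb{C}$-linear (first-order differential operators), not $R$-linear, so there is no base-change isomorphism $H^{j}(E^{\bullet} \otimes_{R_{\mathfrak{m}_{p}}} \mathscr{O}_{X,p}^{\anal}) \simeq H^{j}(E^{\bullet}) \otimes_{R_{\mathfrak{m}_{p}}} \mathscr{O}_{X,p}^{\anal}$ (the right-hand side is not even defined: $H^{j}(E^{\bullet})$ is merely a $\mathbb{C}$-vector space). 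Consequently exactness of the analytified stalk complex does not descend to exactness of the algebraic localization. A minimal counterexample to the principle you invoke: the two-term complex $\mathbb{C}[x]_{(x-1)} \xrightarrow{d/dx} \mathbb{C}[x]_{(x-1)}$ has nonzero $H^{1}$, because $1/x$ lies in this local ring but has no rational antiderivative (the derivative of a rational function has vanishing residue at every pole), whereas its analytification $\mathscr{O}_{\mathbb{C},1}^{\anal} \xrightarrow{d/dx} \mathscr{O}_{\mathbb{C},1}^{\anal}$ is exact in degree one since $\log x$ is a holomorphic germ at $x=1$. So applying your argument to the mapping cone of the inclusion proves nothing without a special reason why that particular cone is immune to this phenomenon, and none is given.

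The paper's actual route is necessarily different. In the central case it sandwiches the finite-dimensional homogeneous subcomplex $(\Omega_{R}^{\bullet}(\log \mathscr{A})_{-\iota_{E}(\omega)}, \nabla_{\omega})$ between the algebraic complexes and the analytic stalk complexes at $0$ (diagram \eqref{eqn - comm diagram analytic to algebraic, homogeneous subcomplexes}), using the Euler-derivation contracting homotopy on both sides; the grading is what transports exactness across the algebraic/analytic divide, not flatness. In the non-central case the paper reruns the entire spectral-sequence induction of Theorem \ref{thm - analytic twisted lct} in the Zariski topology, replacing ``Stein'' by ``affine,'' identifying $^{\prime\prime}E_{1}^{p,q}$ with algebraic local cohomology $H_{\mathfrak{m}}^{q+1}(\Omega_{R}^{p}(\log \mathscr{A}))$, and invoking Corollary \ref{cor - specific weights at origin, acyclic complex of local cohomology modules} directly; the passage from a quasi-isomorphism of complexes of Zariski sheaves to one of global-section complexes then goes through hypercohomology and the vanishing of higher quasi-coherent cohomology on affines, which is the correct substitute for your ``tested at maximal ideals'' step. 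You would need to replace your descent paragraph with one of these mechanisms.
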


Previously Conjecture \ref{conjecture -  algebraic lct for arrangements} has been confirmed in special cases of arrangements and other divisors: necessary and sufficient conditions for quasi-homogeneous divisors with isolated singularities \cite{MondLogarithmicDifferentialFormsAnd}; strongly quasi-homogeneous free divisors \cite{CohomologyComplementFree} (in this setting, see also \cite{MorenoMacarroLogarithmic} for a less explicit twisted version than what we find and \cite{MAcarroDuality} for the current state of the art); tame arrangements and arrangements of rank at most 4 \cite{YuzvinskyWiensLTCTameArrangements}. Both free and tame impose homological restrictions on the logarithmic forms: free means $\Omega_{R}^{1}(\log \mathscr{A})$ is a free $R$-module; tame means the projective dimension of $\Omega_{R}^{j}(\log \mathscr{A})$ is at most $j$.

Our approach to the analytic Twisted Logarithmic Comparison Theorem (Theorem \ref{thm - intro, analytic twisted lct}) inhabits the architecture of Castro Jim{\'e}nez, Mond, and Narv{\'a}ez Macarro's inductive proof of the analytic (untwisted) Logarithmic Comparison Theorem for strongly quasi-homogeneous free divisors \cite{CohomologyComplementFree}, with the induction happening on the rank of edges $E \in \mathscr{L}(\mathscr{A})$ (or equivalently on the dimension of logarithmic strata as in loc.\ cit.\, see \cite{SaitoLogarithmicForms}). Within this inductive scheme, the obstacle is understanding a spectral sequence with first page
\begin{equation} \label{eqn - intro, first page of the spectral sequence}
^{\prime \prime} E_{1}^{p,q} = H^{q}(V \setminus 0, \Omega_{X}^{p}(\log \mathscr{A})),
\end{equation}
where $V \ni 0$ is a small Stein open, $\mathscr{A}$ is central, and this page's vertical differential is induced by $\nabla_{\omega}$. The goal is to show only certain entries survive on the second page. Homological restrictions on the logarithmic $j$-forms limit the complexity of this first page: in \cite{CohomologyComplementFree}, under freeness, only two columns may not vanish; in \cite{YuzvinskyWiensLTCTameArrangements}, under tameness, the problematic entries are confined to two diagonal lines. With these simplifications, both \cite{CohomologyComplementFree} and \cite{YuzvinskyWiensLTCTameArrangements} can then use graded data to show the required entries on the second page vanish (though \cite{YuzvinskyWiensLTCTameArrangements} crucially invokes Brieskorn's Theorem at this step).

Without any homological assumptions the first page \eqref{eqn - intro, first page of the spectral sequence} is arbitrarily daedal. However, the first page's data is governed by the complexes of modules of analytic \v{C}ech cohomology
\begin{equation} \label{eqn - intro, analytic second page to global twisted Cech}
(H_{\Cech}^{t}(\Omega_{X}^{\bullet}(\log \mathscr{A})), \nabla_{\omega})
\end{equation}
attached to the open cover $\{D(x_{i})\}$ of $X \setminus 0$. In subsection 2.3 we show the cohomology of \eqref{eqn - intro, analytic second page to global twisted Cech} is determined by a ``homogeneous'' subcomplex in the sense of Laurent expansions. Thus, to show the necessary entries of our spectral sequence's second page vanish, it suffices to show that this ``homogeneous'' subcomplex is the zero complex. Corollary \ref{cor - specific weights at origin, acyclic complex of analytic Cech cohomology} uses a GAGA argument to relate this vanishing to the vanishing of certain graded components of the algebraic local cohomology: in notation, to $H_{\mathfrak{m}}^{t}(\Omega_{R}^{\bullet}(\log \mathscr{A}))_{\ell}$. We show the appropriate components are zero in Theorem \ref{thm - computing CM regularity of log forms}, where we bound the Castelnuovo--Mumford regularity of $\Omega_{R}^{j}(\log \mathscr{A})$: 

\begin{theorem} \label{thm - intro, computing CM regularity of log forms}
For $\mathscr{A}$ a central, essential, reduced hyperplane arrangement, the Castelnuovo--Mumford regularity of $\Omega_{R}^{j}(\log \mathscr{A})$ is bounded by:
\[
\reg(\Omega_{R}^{j}(\log \mathscr{A})) \leq 0 \quad \text{for} \quad 1 \leq j \leq \rank(\mathscr{A}) - 1.
\]
When $j = 0$ or $j=\rank(\mathscr{A})$, the module of logarithmic zero or $\rank(\mathscr{A})$-forms are free $R$-modules and so only $H_{\mathfrak{m}}^{\rank{\mathscr{A}}}(-) \neq 0$. In these cases, $\reg(\Omega_{R}^{0}(\log \mathscr{A})) = 0$ and $\reg(\Omega_{R}^{\rank(\mathscr{A})}(\log \mathscr{A})) = \rank(\mathscr{A}) - \deg(\mathscr{A})$. 
\end{theorem}

We obtain this using linear approximation methods due to Derksen and Sidman \cite{DerksenSidman-CMRegularyByApproximation}. Therein they bound the regularity of the logarithmic derivations of an arrangement (and M. Saito improved this in \cite{SaitoDegenerationOfPoleOrderSpectralSequence4Variables}), but there is nontrivial difficulty in applying their techniques to the case of logarithmic differential forms.

\vspace{5mm}

We conclude with some applications to classical questions for $\mathscr{D}_{X}$-modules. This helps demystify the combinatorial arithmetic conditions \eqref{eqn - intro, analytic twisted lct weight condition}. Here $\mathscr{D}_{X}$ is the sheaf of $\mathbb{C}$-linear analytic differential operators. One expects the analytic Logarithmic Comparison Theorem to inform certain $\mathscr{D}_{X}$-constructions, especially Bernstein--Sato polynomials. See the surveys \cite{TorelliLogarithmicComparisionTheoremAnd}, \cite{NarvaezMacarroLinearityConditions}. For some free divisors there is an intrinsic $\mathscr{D}_{X}$-theoretic formulation of the Logarithmic Comparison Theorem as developed in \cite{MorenoMacarroLogarithmic}; see also \cite{MAcarroDuality} for current developments. 

The existing literature primarily focuses on the untwisted, univariate case, and thus questions like: does the cyclic $\mathscr{D}_{X}$-module generated by $f^{-1}$ equal (as a submodule) the $\mathscr{D}_{X}$-module $\mathscr{O}_{X}(\star \mathscr{A})$ (the module with poles of arbitrary order along $\mathscr{A})$? This corresponds to $-1$ being the smallest $\mathbb{Z}$-root of the Bernstein--Sato polynomial of $f$, which was proved for arrangements by Leykin in \cite{WaltherGeneric}.

We ask a more subtle question: for a fixed $\boldsymbol{\lambda} \in \mathbb{C}^{d}$, does the cyclic $\mathscr{D}_{X}$-module generated by $f^{\boldsymbol{\lambda}} = f_{1}^{\lambda_{1}} \cdots f_{d}^{\lambda_{d}}$ equal (as a submodule) the $\mathscr{D}_{X}$-module $\mathscr{O}_{X}(\star f^{\lambda}) = \cup_{\textbf{p} \in \mathbb{N}^{d}} \mathscr{D}_{X} f^{\boldsymbol{\lambda} - \textbf{p}}$? Using ideas of Torelli \cite{TorelliLogarithmicComparisionTheoremAnd}, in Theorem \ref{thm - twisted LCT implies generating level of meromorphic specialization} we give an answer, contingent on our familiar combinatorial arithmetic properties on $\boldsymbol{\lambda}$:

\begin{theorem} \label{thm - intro, twisted LCT implies generating level of meromorphic specialization}

Let $f = f_{1} \cdots f_{d}$ cut out a central, reduced hyperplane arrangement and let $\boldsymbol{\lambda} = (\lambda_{1}, \dots, \lambda_{d}) \in \mathbb{C}^{d}$ be weights such that for each edge $E$
\[
\sum_{\{1 \leq k \leq d \mid E \subseteq \Var(f_{k})\}} \lambda_{k} \notin \mathbb{Z}_{\geq \min\{2, \rank(E)\}}.
\]
Then 
\[
\mathscr{D}_{X} f^{-\textbf{1} + \boldsymbol{\lambda}} = \mathscr{O}_{X}(\star f^{\boldsymbol{\lambda}}).
\]
\end{theorem}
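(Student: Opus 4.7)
The plan is to reduce to the shifted weights $\boldsymbol{\mu} := -\mathbf{1} + \boldsymbol{\lambda}$, apply the analytic Twisted Logarithmic Comparison Theorem, and then use Riemann--Hilbert to convert a quasi-isomorphism of de Rham complexes into a $\mathscr{D}_{X}$-module equality. First I would check that $\boldsymbol{\mu}$ satisfies the hypothesis of Theorem \ref{thm - intro, analytic twisted lct}: for each edge $E$, setting $m_{E} := |\{k : E \subseteq \Var(f_{k})\}|$, one has $\sum_{k : E \subseteq \Var(f_{k})} \mu_{k} = \sum_{k : E \subseteq \Var(f_{k})} \lambda_{k} - m_{E}$, and because $m_{E} \geq \rank(E) \geq \min\{2, \rank(E)\}$, if the shifted sum lay in $\mathbb{Z}_{\geq \min\{2, \rank(E)\}}$ so would the original $\boldsymbol{\lambda}$-sum, contradicting our hypothesis. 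Theorem \ref{thm - intro, analytic twisted lct} applied to $\boldsymbol{\mu}$ then yields $(\Omega_{X}^{\bullet}(\log \mathscr{A}), \nabla_{\omega_{\boldsymbol{\mu}}}) \xhookrightarrow{\qi} (\Omega_{X}^{\bullet}(\star \mathscr{A}), \nabla_{\omega_{\boldsymbol{\mu}}})$, where $\omega_{\boldsymbol{\mu}} = \omega - df/f$.

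Next I would translate via the $\mathscr{O}_{X}$-linear correspondence $\eta \mapsto \eta \cdot f^{\boldsymbol{\mu}}$: a Leibniz calculation gives $d(\eta \cdot f^{\boldsymbol{\mu}}) = (\nabla_{\omega_{\boldsymbol{\mu}}}\eta) \cdot f^{\boldsymbol{\mu}}$, which realizes the twisted meromorphic complex as $\deRham(\mathscr{O}_{X}(\star f^{\boldsymbol{\mu}})) = \deRham(\mathscr{O}_{X}(\star f^{\boldsymbol{\lambda}}))$ (the two unions coincide since $\mathscr{D}_{X} f^{\boldsymbol{\lambda}-\mathbf{p}} \subseteq \mathscr{D}_{X} f^{\boldsymbol{\lambda}-\mathbf{p}-\mathbf{1}}$) and realizes the twisted logarithmic complex as the subcomplex $(\Omega_{X}^{\bullet}(\log \mathscr{A}) \cdot f^{\boldsymbol{\mu}}, d)$. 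Following Torelli \cite{TorelliLogarithmicComparisionTheoremAnd}, the central step is to show this subcomplex factors through $\deRham(\mathscr{D}_{X} f^{-\mathbf{1}+\boldsymbol{\lambda}})$. In degree zero this is automatic from $\mathscr{O}_{X} f^{\boldsymbol{\mu}} \subseteq \mathscr{D}_{X} f^{\boldsymbol{\mu}}$; in degree $p$, a logarithmic form is locally a sum of terms $g \cdot \frac{df_{k_{1}}}{f_{k_{1}}} \wedge \cdots \wedge \frac{df_{k_{p}}}{f_{k_{p}}}$, so the claim reduces to showing every shift $f^{\boldsymbol{\mu}-\mathbf{p}}$ with $\mathbf{p} \in \mathbb{N}^{d}$ lies in $\mathscr{D}_{X} f^{\boldsymbol{\mu}}$ --- a multivariate Bernstein--Sato descent for which the combinatorial arithmetic hypothesis on $\boldsymbol{\lambda}$ is precisely what prevents the relevant Bernstein--Sato elements from vanishing at shifts of $\boldsymbol{\mu}$, via the link to the Section 4 bounds.

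Finally, the composition $\Omega_{X}^{\bullet}(\log \mathscr{A}) \cdot f^{\boldsymbol{\mu}} \hookrightarrow \deRham(\mathscr{D}_{X} f^{-\mathbf{1}+\boldsymbol{\lambda}}) \hookrightarrow \deRham(\mathscr{O}_{X}(\star f^{\boldsymbol{\lambda}}))$ is a quasi-isomorphism by the first two steps, which forces the second inclusion to be one as well. The cokernel $Q$ of the $\mathscr{D}_{X}$-inclusion $\mathscr{D}_{X} f^{-\mathbf{1}+\boldsymbol{\lambda}} \hookrightarrow \mathscr{O}_{X}(\star f^{\boldsymbol{\lambda}})$ is a regular holonomic $\mathscr{D}_{X}$-module (being a quotient of the regular holonomic $\mathscr{O}_{X}(\star f^{\boldsymbol{\lambda}})$), supported on $\mathscr{A}$ (since the two modules agree on $U$), with $\deRham(Q) = 0$. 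Faithfulness of the de Rham functor on regular holonomic $\mathscr{D}_{X}$-modules (Riemann--Hilbert) then forces $Q = 0$, completing the proof. The hardest part will be the central step: without freeness or Koszul-freeness, the combinatorial arithmetic hypothesis on $\boldsymbol{\lambda}$ must do the essential work of producing $\mathscr{D}_{X}$-operators that realize logarithmic forms as applications of $\mathscr{D}_{X}$ to $f^{\boldsymbol{\mu}}$.
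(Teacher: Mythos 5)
Your overall skeleton (a Torelli-style factorization of the twisted logarithmic de Rham complex through $\deRham(\mathscr{D}_{X}f^{-\mathbf{1}+\boldsymbol{\lambda}})$, the Twisted LCT making the composition a quasi-isomorphism, and Riemann--Hilbert killing the cokernel) is the right one and matches the paper. But your ``central step'' contains a genuine gap, and it is created by your choice of trivialization. The paper trivializes $\mathscr{O}_{X}(\star f^{\boldsymbol{\lambda}})$ by $f^{\boldsymbol{\lambda}}$ itself (via the auxiliary connection $\nabla^{\boldsymbol{\lambda}}$ on $\mathscr{O}_{X}(\star f)$ of Proposition \ref{prop - technical details of funky connection}) and applies the Twisted LCT with the twist $\omega_{\boldsymbol{\lambda}}$. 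Under that identification a logarithmic $p$-form $\eta = \eta'/f$ corresponds to $\eta'\otimes f^{\boldsymbol{\lambda}-\mathbf{1}}$, which lies in $\Omega_{X}^{p}\otimes\mathscr{D}_{X}f^{-\mathbf{1}+\boldsymbol{\lambda}}$ \emph{for free}, just because logarithmic forms have pole order at most one. By instead trivializing by $f^{\boldsymbol{\mu}}=f^{\boldsymbol{\lambda}-\mathbf{1}}$ and twisting by $\omega_{\boldsymbol{\mu}}$, you push the logarithmic complex one pole order too deep: its image consists of sections $\eta' f^{\boldsymbol{\mu}-\mathbf{1}}$, and placing these inside $\deRham(\mathscr{D}_{X}f^{\boldsymbol{\mu}})$ requires $f^{\boldsymbol{\mu}-\mathbf{p}}\in\mathscr{D}_{X}f^{\boldsymbol{\mu}}$ --- which is \emph{exactly} the statement $\mathscr{D}_{X}f^{-\mathbf{1}+\boldsymbol{\lambda}}=\mathscr{O}_{X}(\star f^{\boldsymbol{\lambda}})$ you are trying to prove. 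Worse, your proposed route to it via ``the Section 4 bounds'' is circular: Theorem \ref{thm - bounding codim one components of BS zero loci} is proved \emph{using} the present theorem (it is invoked in Step 2 of that proof to build the opens $G_{E',p}$). As a side remark, the reduction ``a logarithmic form is locally a sum of terms $g\,\frac{df_{k_{1}}}{f_{k_{1}}}\wedge\cdots\wedge\frac{df_{k_{p}}}{f_{k_{p}}}$'' is also false in general: $\Omega_{X}^{p}(\log\mathscr{A})$ is not the $\mathscr{O}_{X}$-module generated by Orlik--Solomon forms for non-free arrangements, and avoiding any such assumption is the whole point of this paper.

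The second gap is in the last step. From the fact that the composition
\[
(\Omega_{X,0}^{\bullet}(\log\mathscr{A}),\nabla_{\omega})\xhookrightarrow{} \deRham(\mathscr{D}_{X,0}f^{-\mathbf{1}+\boldsymbol{\lambda}})\xhookrightarrow{} \deRham(\mathscr{O}_{X,0}(\star f^{\boldsymbol{\lambda}}))
\]
is a quasi-isomorphism you may only conclude that the second map is \emph{surjective} on cohomology, not that it is a quasi-isomorphism, so $\deRham(Q)=0$ does not yet follow. The paper closes this by inducing on $\dim X$ (using Lemmas \ref{lemma - inductive step, removing fk terms from omega} and \ref{lemma - divisor product, inductive step quasi-iso} to handle points away from the origin), so that $Q$ is supported at $\{0\}$; Kashiwara's equivalence then concentrates $\deRham(Q)$ in degree $n$, and surjectivity on $H^{n}$ alone forces $H^{n}(\deRham(Q))=0$, hence $\deRham(Q)=0$ and $Q=0$ by Riemann--Hilbert. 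If you repair the first inclusion by switching to the $f^{\boldsymbol{\lambda}}$-trivialization (which also lets you apply Theorem \ref{thm - intro, analytic twisted lct} directly with the weights $\boldsymbol{\lambda}$, making your $\boldsymbol{\mu}$-verification unnecessary) and add this induction, your argument becomes the paper's proof.
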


As in the case of $f^{-1}$ one hopes there is an equivalent interpretation in terms of Bernstein--Sato ideals. However, unlike the univariate case ($f^{-1}$, Bernstein--Sato polynomials), the multivariate case ($f^{\boldsymbol{\lambda}}$, Bernstein--Sato ideals) and the associated $\mathscr{D}_{X}[s_{1}, \dots, s_{d}]$-modules are more nuanced and the precise connection remains unclear. These issues are central to the approach of \cite{MaisonobeFiltrationRelative} and \cite{ZeroLociI}, where methods and theory were developed for the multivariate setting.

Using the generically Cohen--Macaulay strategy developed in \cite{ZeroLociI} as well as other intricate techniques, in Theorem \ref{thm - bounding codim one components of BS zero loci} we are able to give a very good bound (i.e. sharp, see the formula for a generic arrangement in Theorem 3.23 of \cite{Bath3}) for the codimension one components of the zero locus of the Bernstein--Sato ideal. Here we consider arbitrary factorizations $F = (f_{1}, \dots, f_{r})$ of $f$.

\begin{theorem} \label{thm - intro, bounding codim one components of BS zero loci}
For $F = (f_{1}, \dots, f_{r})$ an arbitrary factorization of a reduced, central arrangement $f$, the codimension one components of the Bernstein--Sato ideal attached to $F$ has the following restriction:
\[
Z_{r-1}(B_{F,0}) \subseteq \bigcup_{\substack{E \in \mathscr{L}(\mathscr{A}) \\ E \text{ \normalfont dense}}} \bigcup_{v=0}^{Q_{E}} \left\{ \sum\limits_{\{1 \leq k \leq r \mid E \subseteq \Var(f_{k})\}} d_{E,k} s_{k} + \rank(E) + v = 0 \right\}
\]
where $d_{E,k}$ is as in Definition \ref{def - notation for factorization of arrangements along edges} and 
\[
Q_{E} = 
\begin{cases}
2 d_{E} - \rank(E) - \min\{2, \rank(E)\} \text{ for } F \text{ a factorization into linears}; \\
2 d_{E} - \rank(E) - \min\{2, \rank(E)\} \text{ for $F$ any factorization } \& \text{ } E = \{0\}; \\
2 d_{E} - \rank(E) - 1 \text{ for } F \text{ not a factorization into linears }\& \text{ } E \neq \{0\}.
\end{cases}
\]
In particular, when $F = (f)$ is the trivial factorization and $f$ is not smooth, the roots of the Bernstein--Sato polynomial are contained in $(-2 + 1/d, 0).$
\end{theorem}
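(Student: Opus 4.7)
The plan is to combine the generically Cohen--Macaulay strategy of \cite{MaisonobeFiltrationRelative} (as refined in the multivariate setting of \cite{ZeroLociI}) with Theorem \ref{thm - intro, twisted LCT implies generating level of meromorphic specialization}. First, I would establish that every codim one component of $Z(B_{F,0})$ must have the shape $H_{E,v} := \{\sum_{k} d_{E,k} s_{k} + \rank(E) + v = 0\}$ for some dense edge $E$ and some $v \in \mathbb{Z}_{\geq 0}$. To do this, localize the relative $\mathscr{D}_{X}[\textbf{s}]$-module $\mathscr{D}_{X}[\textbf{s}] f^{\textbf{s}} / \mathscr{D}_{X}[\textbf{s}] f^{\textbf{s}+\textbf{1}}$ at the generic point of each $E \in \mathscr{L}(\mathscr{A})$; there the arrangement analytically splits into the essential arrangement transverse to $E$ times a trivial smooth factor, and the local Bernstein--Sato data is controlled by the linear forms through $E$ grouped according to $F$. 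A Kashiwara--Malgrange $V$-filtration analysis combined with the generically Cohen--Macaulay structure forces the slopes of any codim one component to be $(d_{E,k})_{k}$ and the constant term to lie in $\rank(E) + \mathbb{Z}_{\geq 0}$; the condition that the component pass through $\{0\}$ singles out dense edges.

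The new ingredient is the upper bound $v \leq Q_{E}$, which I obtain from the Twisted LCT. Suppose $H_{E,v} \subseteq Z(B_{F,0})$, so that for Zariski-generic $\boldsymbol{\alpha} \in H_{E,v}$ we have $f^{\boldsymbol{\alpha}} \notin \mathscr{D}_{X} f^{\boldsymbol{\alpha} + \textbf{1}}$. Apply Theorem \ref{thm - intro, twisted LCT implies generating level of meromorphic specialization} with $\boldsymbol{\lambda} := \boldsymbol{\alpha} + 2\textbf{1}$: were the arithmetic condition to hold on $\boldsymbol{\lambda}$, one would get $\mathscr{D}_{X} f^{\boldsymbol{\alpha} + \textbf{1}} = \mathscr{D}_{X} f^{\boldsymbol{\lambda} - \textbf{1}} = \mathscr{O}_{X}(\star f^{\boldsymbol{\lambda}}) = \mathscr{O}_{X}(\star f) \ni f^{\boldsymbol{\alpha}}$, a contradiction. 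Hence the arithmetic condition must fail generically on $H_{E,v}$: there is some edge $E'$ for which $\sum_{k} d_{E',k}(\alpha_{k} + 2) \in \mathbb{Z}_{\geq \min\{2, \rank E'\}}$ on a Zariski-open subset of $H_{E,v}$. Since $H_{E,v}$ has dimension $r-1$ and the left-hand side is affine in $\boldsymbol{\alpha}$, being constant on such an open forces $(d_{E',k})_{k}$ to be proportional to $(d_{E,k})_{k}$, and a combinatorial argument using the density of $E$ pins this down to $E' = E$. Substituting the defining equation of $H_{E,v}$ gives $2d_{E} - \rank(E) - v = \sum_{k} d_{E,k}(\alpha_{k} + 2) \in \mathbb{Z}_{\geq \min\{2, \rank E\}}$, i.e.\ $v \leq Q_{E}$ in the linear case.

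The refinements to $Q_{E}$ for non-linear factorizations and for $E = \{0\}$ follow by the same scheme with the appropriate variations: when $F$ is not a linear factorization, the grouping of linear forms weakens the proportionality constraint and the relevant lower bound on the failing integer drops from $\min\{2, \rank E\}$ to $1$ for interior edges; when $E = \{0\}$ no strictly deeper edge is available to interact, so the sharper linear-case bound is retained. The stated consequence for the univariate Bernstein--Sato polynomial of a non-smooth central arrangement is then the $F = (f)$ specialization, where the roots are the $-(\rank E + v)/d_{E}$ for dense $E$ and the admissible range of $v$, which all lie in $(-2 + 1/d, 0)$. The main obstacle is the structural step: carefully setting up the $V$-filtration and generically Cohen--Macaulay analysis along each stratum for an arbitrary factorization requires delicate bookkeeping; by comparison, the TLCT-based upper bound is conceptually transparent and depends crucially on the \emph{sharpness} of the arithmetic condition in Theorem \ref{thm - intro, twisted LCT implies generating level of meromorphic specialization}.
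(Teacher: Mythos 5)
Your proposal assembles the right ingredients (the generically Cohen--Macaulay machinery of \cite{ZeroLociI} plus Theorem \ref{thm - intro, twisted LCT implies generating level of meromorphic specialization}), and your arithmetic for the linear case is consistent with the stated $Q_{E}$: shifting a generic point of $H_{E,v}$ by $2\cdot\textbf{1}$ and asking when the weight condition fails at $E$ does reproduce $v \leq 2d_{E}-\rank(E)-\min\{2,\rank(E)\}$. But the pivotal step of your upper-bound argument is a genuine gap: you assert that $H_{E,v}\subseteq Z(B_{F,0})$ forces $f^{\boldsymbol{\alpha}}\notin\mathscr{D}_{X}f^{\boldsymbol{\alpha}+\textbf{1}}$ for Zariski-generic $\boldsymbol{\alpha}\in H_{E,v}$. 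That implication does not follow from membership in the zero locus. What is true is the easy converse (a nonvanishing Bernstein--Sato element specializes to a functional equation proving generation); the direction you need requires (i) that $M=\mathscr{D}_{X,0}[S]F^{S}/\mathscr{D}_{X,0}[S]F^{S+1}$ be $(n+1)$-Cohen--Macaulay at the point, so that $\textbf{a}\in Z(B_{F,0})$ is equivalent to $M\otimes\mathbb{C}_{\textbf{a}}\neq 0$ (Proposition 3.4.3 of \cite{ZeroLociI}), and (ii) a comparison of $M\otimes\mathbb{C}_{\textbf{a}}$ with the honest quotient $\mathscr{D}_{X,0}f^{\textbf{a}}/\mathscr{D}_{X,0}f^{\textbf{a}+\textbf{1}}$, which fails unless no deeper integer translate of $\textbf{a}$ lies in $Z(B_{F,0})$ (this is where Proposition 3.6 of \cite{OakuAlgorithm} and Sabbah's finiteness of bad translates enter). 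The paper's proof is organized precisely around these obstructions: it intersects countably many dense opens (the CM loci of all translates, and the loci where the weight conditions of the twisted LCT hold), invokes the Baire category theorem to get a nonempty dense $\Gamma$, and then runs a maximal-bad-translate contradiction over all shifts $j\geq 2$, not just $j=2$. Your single shift by $2\cdot\textbf{1}$ cannot be made to work without this scaffolding.

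Two secondary issues. First, the ``proportionality forces $E'=E$'' step is not sound as stated: for coarse factorizations (and in the extreme univariate case, where $H_{E,v}$ is a single point) many edges $E'$ give linear forms proportional to $\sum_{k}d_{E,k}s_{k}$, and it is exactly the interference of these other edges that degrades the bound to $Q_{E}=2d_{E}-\rank(E)-1$ for $E\neq\{0\}$ when $F$ is not a factorization into linears; the paper handles this by explicitly exhibiting points (e.g.\ $\textbf{1/d}$ for $E=\{0\}$) in the pullbacks $(\gamma^{\sharp})^{-1}(G_{E',p})$. Second, the structural statement that every codimension-one component has the form $\{\sum_{k}d_{E,k}s_{k}+\rank(E)+v=0\}$ with $E$ dense and $v\geq 0$ is not something you can dispatch with a sketched $V$-filtration argument; the paper cites it as Theorem 4.18 of \cite{Bath2}, and you should do the same rather than attempt to reprove it in passing.
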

\noindent To clarify notation: $Z_{r-1}(B_{F,0})$ denotes the codimension one components of the Bernstein--Sato ideal attached to $F$ at the origin, $d_{E}$ equals the number of hyperplanes containing the edge $E$, and $d_{E,k}$ equals the number of hyperplanes containing the edge $E$ that themselves are contained in $\Var(f_{k})$, cf. Definition \ref{def - notation for factorization of arrangements along edges}.

In the simpler case of Bernstein--Sato polynomials, Theorem \ref{thm - intro, bounding codim one components of BS zero loci} gives an entirely new proof of M. Saito's result \cite{SaitoArrangements} that the roots of the Bernstein--Sato polynomial of a non-smooth central arrangement lie in $(-2+1/d, 0)$.

\vspace{5mm}
Here is the paper's structure. Section 2 introduces paper-wide notation, develops tools for studying complexes of algebraic local cohomology of logarithmic forms (subsection 2.2, subsection 2.5), studying the analytic analogue of complexes of \v{C}ech cohomology of logarithmic forms (subsection 2.3), converting algebraic results to analytic ones (subsection 2.6), and bounding the Castelnuovo--Mumford regularity (subsection 2.4). Section 3 uses these results to: prove our four (un)Twisted Logarithmic Comparison Theorems (two analytic, two algebraic); prove that the theorems apply to all rank one local systems (Proposition \ref{prop - all local systems can be computed}); compute Example \ref{ex - deleted B3 arrangement}. Section 4 gives $\mathscr{D}_{X}$-module applications, including a bound on the codimension one components of the Bernstein--Sato ideal of an arbitrary factorization of our arrangement.

We thank Guillem Blanco, Nero Budur, Francisco Castro Jim{\'e}nez, Luis Narv{\'a}ez Macarro, and Robin van der Veer for combinations of helpful conversations, comments, and inspiring our interest in the subject. We are very grateful to Avi Steiner for detailed comments, including identifying a gap of justification in a previous version. And we sincerely appreciate Uli Walther's general support as well as his generosity in fielding an untold number of technical questions.

\section{Logarithmic Forms and Local Cohomology}

In the first subsection we set up necessary notation and background knowledge used throughout the paper. The rest of the section is devoted to studying the local cohomology of logarithmic forms in the algebraic setting as well as a sort of analytic analogue: the \v{C}ech cohomology of the logarithmic forms with respect to the cover $\{D(x_{i})\}$ of $X \setminus 0$. While we require knowledge of both contexts, the analytic analogue is vital. On the other hand, the techniques in the algebraic setting inspire those for the analytic one.

In subsection 2.2 we build up tools for studying complexes of such algebraic local cohomology modules with twisted differentials, focusing on establishing useful contracting homotopies; in subsection 2.3 we morally repeat the strategy in the analytic \v{C}ech case, again obtaining a useful contracting homotopy; in subsection 2.4 we bound the Castelnuovo--Mumford regularity of logarithmic forms in the algebraic case; in subsection 2.5 we use this bound to obtain an acyclicity criterion in the algebraic case; in subsection 2.6 we use a GAGA argument to get a similar acyclicity criterion in the analytic setting. The last result powers our proof of the analytic (un)Twisted Logarithmic Comparison Theorem. 

\subsection{Terminology and Background} \text{ }

We will use $f=f_{1} \cdots f_{d} \in R = \mathbb{C}[x_{1}, \dots, x_{n}]$ to denote a defining equation of a reduced hyperplane arrangement $\mathscr{A}$ of degree $d$ (i.e. $\deg(f) = d)$. We use $X$ to denote $\mathbb{C}^{n}$ and will, unless otherwise stated, consider $X$ as a Stein manifold with analytic structure sheaf $\mathscr{O}_{X}$. The complement $U = X \setminus \mathscr{A}$ of our arrangement includes into $X$ via $j: U \xhookrightarrow{} X$. As is standard, we denote the constant $\mathbb{C}$-valued rank one system on $U$ by $\mathbb{C}_{U}$.

Let $\mathscr{L}(\mathscr{A})$ be the intersection lattice of the hyperplane arrangement $\mathscr{A}$; its members are called \emph{edges} (sometimes flats in the literature). We say: the \emph{rank} of $\mathscr{A}$, denoted $\rank(\mathscr{A})$, is the rank of the lattice $\mathscr{L}(\mathscr{A})$; $\mathscr{A}$ is \emph{reduced} when $f$ is reduced; $\mathscr{A}$ is \emph{central} when $\{0\} \subseteq \cap_{E \in \mathscr{L}(\mathscr{A})} E$, i.e. $f$ is homogeneous; $\mathscr{A}$ is \emph{essential} when $\{0\} = \cap_{E \in \mathscr{L}(\mathscr{A})} E$; $\mathscr{A}$ is \emph{decomposable} when, after a possible coordinate change, there are polynomials $g$ and $h$ in disjoint, nonempty variable sets such that $f = gh$; an edge $E \in \mathscr{L}(\mathscr{A})$ is \emph{dense} when the subarrangement of all hyperplanes containing $E$ is indecomposable; the rank of $E \in \mathscr{L}(\mathscr{A})$ is the rank of the subarrangement of hyperplanes containing $E$. Alternatively, the rank of $A$ or $E$ is the codimension of the corresponding subspace in $X$.

We recall the types of de Rham complexes appearing in the Introduction.

\begin{define} \label{def - meromorphic and log de rham complex} Let $(\Omega_{X}^{\bullet}, d)$ be the canonical de Rham complex. The \emph{meromorphic de Rham complex} is 
\begin{equation*} 
(\Omega_{X}^{\bullet}(\star \mathscr{A}), d) = 0 \to \Omega_{X}^{0}(\star \mathscr{A}) \xrightarrow[]{d} \cdots \xrightarrow[]{d} \Omega_{X}^{n}(\star \mathscr{A}) \to 0, 
\end{equation*}
where $d$ is the exterior derivative and $\Omega_{X}^{j}(\star \mathscr{A})$ is the localization of $\Omega_{X}^{j}$ along $f$, i.e. $(\star \mathscr{A})$ means we have poles of arbitrary order along $\mathscr{A}$. The \emph{logarithmic de Rham complex} $(\Omega_{X}^{\bullet}(\log \mathscr{A}), d)$ is the subcomplex of the meromorphic de Rham complex characterized by 
\begin{equation} \label{eqn - def of log forms in terms of differentials}
\Omega_{X}^{j}(\log \mathscr{A}) = \{ \eta \in \Omega_{X}^{j}(\mathscr{A}) \mid d(\eta) \in \Omega_{X}^{j+1}(\mathscr{A}) \},
\end{equation}
that is, its the largest subcomplex where both $\eta$ and $d(\eta)$ have poles of order at most one along $\mathscr{A}$. Both are complexes of $\mathscr{O}_{X}$-modules with $\mathbb{C}$-linear differentials.

There are global algebraic analogues: the algebraic \emph{rational de Rham complex} $\Omega_{R}^{\bullet}(\star \mathscr{A})$; the algebraic \emph{logarithmic de Rham complex} $\Omega_{R}^{\bullet}(\log \mathscr{A})$. These are complexes of $R$-modules with $\mathbb{C}$-linear differentials and have entirely similar definitions: $\Omega_{R}^{j}(\star \mathscr{A}) = \Omega_{R}^{j}[f^{-1}]$; $\Omega_{R}^{j}(\log \mathscr{A})$ satisfies its version of \eqref{eqn - def of log forms in terms of differentials}.

There are natural inclusion of complexes:
\begin{align} \label{eqn - def inclusion log de Rham complex into meromorphic}
(\Omega_{X}^{\bullet}(\log \mathscr{A}), d) \xhookrightarrow{} (\Omega_{X}^{\bullet}(\star \mathscr{A}), d); \\
(\Omega_{R}^{\bullet}(\log \mathscr{A}), d) \xhookrightarrow{} (\Omega_{R}^{\bullet}(\star \mathscr{A}), d). \nonumber
\end{align}
We say $\mathscr{A}$ satisfies the \emph{analytic Logarithmic Comparison Theorem} when first inclusion \eqref{eqn - def inclusion log de Rham complex into meromorphic} is a quasi-isomorphism; it satisfies the \emph{algebraic Logarithmic Comparison Theorem} when the second is a quasi-isomorphism.
\end{define}

A dual object to $\Omega_{R}^{1}(\log \mathscr{A})$ are the \emph{logarithmic derivations}
\[
\Der_{R}(-\log \mathscr{A}) = \{\delta \in \Der_{R} \mid (\delta \bullet f) \in R \cdot f\}.
\]
These can be defined analytically as well. We sometimes replace ``$\log \mathscr{A}$'' with ``$\log f$'' when working with a defining equation $f$ of $\mathscr{A}$. Note that none of these constructions depend on the choice of $f$ (nor coordinate systems).

\begin{remark} \label{rmk - Kunneth formula for log diff forms}
Suppose $\mathscr{A}$ is a product (i.e. decomposable): $\mathscr{A} = \mathscr{B} \oplus \mathscr{C}$. This means the defining equation may be written as $f = g h$ where $g \in B = \mathbb{C}[x_{1}, \dots, x_{i}]$ cuts out $\mathscr{B}$ and $h \in C=\mathbb{C}[x_{i+1}, \dots, x_{n}]$ cuts out $\mathscr{C}$ (changing coordinates as necessary). Then the logarithmic differential forms satisfy a Kunneth formula, see, for example, Lemma 3.10 of \cite{CriticalPointsandResonanceVarieties}:
\[
\Omega_{R}^{k}(\log \mathscr{A}) \simeq \bigoplus_{i+j = k} \Omega_{B}^{i}(\log \mathscr{B}) \otimes_{\mathbb{C}} \Omega_{C}^{j}(\log \mathscr{C}).
\]
The same type of Kunneth formula holds on the sheaf level and for $\Der_{R}(-\log \mathscr{A})$, cf. Lemma 1.2 of \cite{SaitoDegenerationOfPoleOrderSpectralSequence4Variables}.
\end{remark}

Recall that the \emph{Orlik--Solomon} algebra $A^{\bullet}(\mathscr{A})$ can be identified by the $\mathbb{C}$-algebra generated by the differential forms $\{d f_{k} / f_{k}\}$, one for each hyperplane $f_{k}$. Under this characterization it is called the \emph{Brieskorn algebra}. Brieskorn's Theorem \cite{BrieskornTheorem} combined with Grothendieck's Comparison Theorem \cite{GrothendieckOnTheDeRham} gives a quasi-isomorphism
\[
A^{\bullet}(\mathscr{A}) \xhookrightarrow{\qi} (\Omega_{R}^{\bullet}(\star \mathscr{A}), d)  \quad (\simeq  H^{\bullet}(U, \mathbb{C}_{U})),
\]
where ``$(\simeq  H^{\bullet}(U, \mathbb{C}_{U}))$'' means the cohomology objects are isomorphic, cf. Section 5.4 \cite{OrlikTeraoBook}.

We turn to twisted de Rham complexes and nontrivial rank one local systems.

\begin{define} \label{def - twisted meromoprhic and log de rham}
For $\lambda_{1}, \dots, \lambda_{d} \in \mathbb{C}$, consider the logarithmic one form
\[
\omega = \sum_{k} \lambda_{k} \frac{d f_{k}}{f_{k}} = \sum_{k} \lambda_{k} d \log f_{k}.
\]
This induces a \emph{twisted meromorphic de Rham complex}
\[
(\Omega_{X}^{\bullet}(\star \mathscr{A}), \nabla_{\omega}) = 0 \to \Omega_{X}^{0}(\star \mathscr{A}) \xrightarrow[]{\nabla_{\omega}} \Omega_{X}^{1}(\star \mathscr{A}) \xrightarrow[]{\nabla_{\omega}} \cdots \xrightarrow[]{\nabla_{\omega}} \Omega_{X}^{n}(\star \mathscr{A}) \to 0.
\]
where, for a meromorphic $j$-form $\eta$, 
\[
\nabla_{\omega}(\eta) = d(\eta) + \omega_{\lambda} \wedge \eta.
\]
Since the logarithmic de Rham complex is closed under exterior products, $\nabla_{\omega}$ also induces the \emph{analytic twisted logarithmic de Rham complex}
\[
(\Omega_{X}^{\bullet}(\log \mathscr{A}), \nabla_{\omega}) = 0 \to \Omega_{X}^{0}(\log \mathscr{A}) \xrightarrow[]{\nabla_{\omega}} \Omega_{X}^{1}(\log \mathscr{A}) \xrightarrow[]{\nabla_{\omega}} \cdots \xrightarrow[]{\nabla_{\omega}} \Omega_{X}^{n}(\log \mathscr{A}) \to 0.
\]
Because $\omega \in \Omega_{R}^{1}(\log \mathscr{A})$, we have global algebraic analogues: the \emph{twisted rational de Rham complex} $(\Omega_{R}^{\bullet}(\star \mathscr{A}), \nabla_{\omega})$; the \emph{algebraic twisted logarithmic de Rham complex} $(\Omega_{R}^{\bullet}(\log \mathscr{A}), \nabla_{\omega}).$ We sometimes drop ``analytic'' or ``algebraic'' when the context is clear.

Analogizing the untwisted case, we say $\mathscr{A}$ satisfies the \emph{analytic Twisted Logarithmic Comparison Theorem} when the natural inclusion of complexes is a quasi-isomorphism, that is when:
\[
(\Omega_{X}^{\bullet}(\log \mathscr{A}), \nabla_{\omega}) \xhookrightarrow{\qi} (\Omega_{X}^{\bullet}(\star \mathscr{A}), \nabla_{\omega}).
\]
$\mathscr{A}$ satisfies the \emph{algebraic Twisted Logarithmic Comparison Theorem} when the corresponding inclusion of algebraic complexes is a quasi-isomorphism.
\end{define}

As the meromorphic de Rham complex relates to cohomology with constant coefficients, the twisted meromorphic de Rham complex relates to cohomology with a nontrivial rank one local system. 

\begin{define}
Recall $U = X \setminus \mathscr{A}$ and $j: U \xhookrightarrow{} X$ is the inclusion. It is well known that the $\mathbb{C}$-valued rank one local systems on $U$ are in one to one correspondence with the torus points
\[
\Hom(\pi_{1}(U, \mathbb{Z}), \mathbb{C}^{\star}) = (\mathbb{C}^{\star})^{d}.
\]
For a point $\boldsymbol{\beta}$ in the torus $(\mathbb{C}^{\star})^{r}$ we define $\localSystem_{\boldsymbol{\beta}}$ to be the corresponding rank one local system on $U$. We recall
\[
\derivedR j_{\star} \localSystem_{\boldsymbol{\beta}}
\]
is an object in the derived category of bounded $\mathbb{C}_{X}$-complexes. In fact, it is a perverse sheaf. 

Now let $\Exp : \mathbb{C}^{d} \to (\mathbb{C}^{\star})^{d}$ be the exponential map:
\[
\boldsymbol{\lambda} = (\lambda_{1}, \dots, \lambda_{d}) \mapsto \Exp(\boldsymbol{\lambda}) = (e^{2 \pi i \lambda_{1}}, \dots, e^{2 \pi i \lambda_{d}}).
\]
Then every point $\boldsymbol{\lambda} \in \mathbb{C}^{d}$, along with its integer translates, corresponds to the rank one local system on $U$ we've called $\localSystem_{\Exp(\boldsymbol{\lambda})}.$
\end{define}

Similar to the untwisted case, by a combination of Deligne's and Grothendieck's algebraic de Rham Theorems we have isomorphisms in cohomology:
\[
H^{\bullet}(\Omega_{R}^{\bullet}(\star \mathscr{A}), \nabla_{\omega}) \simeq H^{\bullet}(U, \localSystem_{\Exp(\boldsymbol{\lambda})}).
\]
See \cite{OrlikHypergeometricIntegralsAndArrangements} for a discussion of this in the arrangement case. In the analytic setting we have a well-known identification in the derived category:
\[
(\Omega_{X}(\star f), \nabla_{\omega}) = \derivedR j_{\star} \localSystem_{\Exp(\boldsymbol{\lambda})}.
\]
We give details on this in Proposition \ref{prop - twisted meromorphic de rham quasi-iso to local system}. So the analytic twisted meromorphic de Rham complex has all the data of the derived direct image of the local system in question. An analytic/algebraic Twisted Logarithmic Comparison Theorem guarantees this data is determined by only logarithmic content.

\begin{convention} \label{convention - grading}
Unless otherwise stated, $R = \mathbb{C}[x_{1}, \dots, x_{n}]$ and is canonically graded with $x_{i}$ given weight one. Denote the irrelevant ideal $R \cdot (x_{1}, \dots, x_{n})$ of $R$ by $\mathfrak{m}$. In general we allow $R$-modules to be $\mathbb{Z}$-graded. We grade $\Omega_{R}^{j}$ by giving $dx_{i}$ weight one, we grade $R[x_{I}^{-1}]$ (the localization of $R$ at each $x_{i}$ for $i \in I \subseteq [n]$) by giving $x_{i}^{-1}$ weight one; for a graded $R$-module $M$ we grade $M[x_{I}^{-1}]$ similarly. We also grade $\Omega_{R}^{j}(\star \mathscr{A})$ and $\Omega_{R}^{j}(\log \mathscr{A})$ by giving $dx_{i}$ weight one and $\frac{1}{f}$ weight $- \deg(f)$; we grade $\Der_{R}(-\log f)$ by giving $\partial_{i}$ weight $-1$.
\end{convention}

\subsection{Complexes of Local Cohomology Modules of Logarithmic Forms} \text{ }

In this section we work algebraically, developing some technical tools for studying both twisted logarithmic de Rham complexes and twisted complexes of local cohomology of logarithmic forms. By the latter, we mean a complex of local cohomology modules induced by the twisted logarithmic de Rham complex. Much of the time is spent showing this latter complex is well-defined and the correct notion of a twisted Lie derivative obeys a nice formula. The primary objective is Proposition \ref{prop - homogeneous subcomplex quasi iso}, which reduces computing cohomology of these complexes to computing the cohomology of a particular subcomplex of finite dimensional $\mathbb{C}$-vector spaces.

In subsection 2.3, we recycle these ideas in an analytic setting.

\begin{convention}
In this subsection $\mathscr{A}$ is always assumed to be central with a homogeneous defining equation $f$.
\end{convention}

First let us recall basic facts about local cohomology modules:

\begin{define} \label{def - local cohomology}
Let $M$ be a finite $R$-module, $Q \subseteq R$ an ideal. Consider the functor
\[
\Gamma_{Q}^{0}(M) = \{m \in M \mid Q^{\ell} m = 0 \text{ for some } \ell \in \mathbb{N} \}.
\]
Then $H_{Q}^{t}(M)$, the \emph{$t^{\text{th}}$ local cohomology module of $M$ with respect to $Q$}, is the $t^{\text{th}}$ derived functor of $\Gamma_{Q}(M)$.
\end{define}

\begin{remark} \text{ }
\begin{enumerate}[label=(\alph*)]
    \item We usually work with $R$ canonically graded, $M$ a $\mathbb{Z}$ or $\mathbb{N}$-graded module, $\mathfrak{m} = R \cdot (x_{1} , \dots, x_{n})$, and the local cohomology modules $H_{\mathfrak{m}}^{t}(M)$. In this graded setting there are well known (non)vanishing theorems:
        \begin{enumerate}[label=(\arabic*)]
            \item $H_{\mathfrak{m}}^{t}(M) = 0 \text{ for } t < \depth(M) \text{ or } t > \dim M$;
             \item $\text{If } t = \depth(M) \text{ or } t = \dim M \text{, then } H_{\mathfrak{m}}^{t}(M) \neq 0$.
        \end{enumerate}
    Consequently, $H_{\mathfrak{m}}^{t}(R)$ is nonzero only when $t = n$.
    \item There is a practical way to compute local cohomology using the algebraic \v{C}ech complex. Let $q_{1}, \dots, q_{p}$ generate the ideal $Q \subseteq R$. The algebraic \v{C}ech complex is
    \[
    C(q_{1}, \dots, q_{p}; M) = 0 \to M \to \cdots \to \bigoplus_{\substack{I \subseteq [p] \\ \mid I \mid = t}} M[q_{I}^{-1}] \to \cdots \to M[q_{[p]}^{-1}] \to 0
    \]
    where the differential $\epsilon: \oplus_{\substack{I \subseteq [p] \\ \mid I \mid = t}} M[q_{I}^{-1}] \to \oplus_{\substack{I \subseteq [p] \\ \mid I \mid = t+1}} M[q_{I}^{-1}]$ takes
    \[
    M[q_{I}^{-1}] \ni m_{I} \mapsto \sum_{\ell \notin I} (-1)^{\sigma_{I}(\ell)} m_{I \cup \{\ell\}} \in \sum_{\ell \not in I} M[q_{I \cup \{\ell\}}^{-1}].
    \]
    By $m_{I \cup \{\ell\}}$ we mean the image of $m_{I}$ in the localization $M[q_{I}^{-1}] \to M[q_{I \cup \{\ell\}}^{-1}]$; by $\sigma_{I}(\ell)$ we mean the number of elements in $I$ less than $\ell$. There is natural identification
    \[
    H_{Q}^{t}(M) = H^{t}(C(q_{1}, \dots, q_{p}; M)).
    \]
    \item If $Q$ is homogeneous, $M$ is $\mathbb{Z}$-graded, and we grade $M[x_{I}^{-1}]$ naturally by granting $x_{i}^{-1}$ weight one, then $H_{Q}^{t}(M)$ is naturally $\mathbb{Z}$-graded by the above.
\end{enumerate}
\end{remark}

We want to construct a very large commutative diagram of horizontal and vertical complexes, visualized in $\mathbb{Z}^{2}$, that we eventually name $N^{\bullet, \bullet}$. This diagram will let us understand the interplay between local cohomology modules of logarithmic differential forms and the twisted differential $\nabla_{\omega}$.

Picture $(\Omega_{R}^{\bullet}(\log \mathscr{A}), \nabla_{\omega})$ as a column with differentials pointed upwards. In fact, put $(\Omega_{R}^{\bullet}(\log \mathscr{A}), \nabla_{\omega})$ on the $y$-axis. For each $\Omega_{R}^{j}(\log \mathscr{A})$, consider the \v{C}ech complex $C(x_{1}, \dots, x_{n}; \Omega_{R}^{j}(\log \mathscr{A}))$ that computes the local cohomology modules $H_{\mathfrak{m}}^{\bullet}(\Omega_{R}^{j}(\log \mathscr{A}))$. Position each $C(x_{1}, \dots, x_{n}; \Omega_{R}^{j}(\log \mathscr{A}))$ as a horizontal complex, with differentials oriented rightwards, located on the line $y = j$. Shift the \v{C}ech complex so that its $\Omega_{R}^{j}(\log \mathscr{A})$ term overlaps with the corresponding object of $(\Omega_{R}^{\bullet}(\log \mathscr{A}), \nabla_{\omega})$. 

The map $\nabla_{\omega}$ naturally lifts to a map between the objects of the \v{C}ech complex. We want to verify this induces a chain map of \v{C}ech complexes, that is, we must confirm the squares
\begin{equation} \label{eqn - tikzcd diagram, comm square of local cohomology log de rham complex}
\begin{tikzcd}
\bigoplus\limits_{\mid I \mid = t} \Omega_{R}^{j+1}(\log \mathscr{A})[x_{I}^{-1}] \rar{\epsilon}
    & \bigoplus\limits_{\mid I^{\prime} \mid = t+1} \Omega_{R}^{j+1}(\log \mathscr{A})[x_{I^{\prime}}^{-1}] \\
\bigoplus\limits_{\mid I \mid = t} \Omega_{R}^{j}(\log \mathscr{A})[x_{I}^{-1}] \uar{\nabla_{\omega}} \rar{\epsilon}
    & \bigoplus\limits_{\mid I^{\prime} \mid = t+1} \Omega_{R}^{j}(\log \mathscr{A})[x_{I^{\prime}}^{-1}] \uar{\nabla_{\omega}}
\end{tikzcd}
\end{equation}
commute. Here, for $\eta_{I} \in \Omega_{R}^{j}(\log \mathscr{A})$ and $Q_{I} \in R[x_{I}^{-1}]$, the horiziontal map is defined by
\begin{equation} \label{eqn - Cech complex map on j log diff forms}
\epsilon(Q_{I} \eta_{I}) = \sum_{\ell \notin I} (-1)^{\sigma_{I}(\ell)} Q_{I \cup \{\ell\}} \eta_{I} \in \bigoplus\limits_{ \ell \notin I } \Omega_{R}^{j}(\log \mathscr{A})[x_{I \cup \{\ell\}}^{-1}]
\end{equation}
where $Q_{I \cup \{\ell\}}\eta_{I}$ denotes the image of $Q_{I}\eta_{I} \in  \Omega_{R}^{j}(\log \mathscr{A})[x_{I}^{-1}]$ in the subsequent localization $\Omega_{R}^{j}(\log \mathscr{A})[x_{I \cup \{\ell\}}^{-1}]$ and $\sigma_{I}(\ell)$ is the number of elements in $I$ less than $\ell$. To suppress potential confusion, the vertical $\nabla_{\omega}$ are defined by 
\begin{equation} \label{eqn - nabla omega definition on Cech complex}
\nabla_{\omega}(Q_{I} \eta_{I}) = d(Q_{I} \eta_{I}) + \omega_{\lambda} \wedge Q_{I} \eta_{I} = d(Q_{I}) \wedge \eta_{I} + Q_{I} \wedge d(\eta_{I}) + Q_{I} \omega_{\lambda} \wedge \eta_{I}. 
\end{equation}
Note that $d(Q_{I}) \wedge \eta_{I} \in \Omega_{R}^{j+1}(\log \mathscr{A})[x_{I}^{-1}]$ since $d(Q_{I}) \in \Omega_{R}^{1}[x_{I}^{-1}]$ and $\Omega_{R}^{1} \wedge \Omega_{R}^{j}(\log \mathscr{A}) \subseteq \Omega_{R}^{j+1}(\log \mathscr{A})$, which together entail that $d(Q_{I}) \wedge \eta_{I} \in \Omega_{R}^{1}[x_{I}^{-1}] \wedge \Omega_{R}^{j}(\log \mathscr{A}) \subseteq \Omega_{R}^{j+1}(\log \mathscr{A})[x_{I}^{-1}]$. 

That \eqref{eqn - tikzcd diagram, comm square of local cohomology log de rham complex} commutes follows from the definitions:
\begin{align*}
\nabla_{\omega}(\epsilon(Q_{I} \eta_{I}) &= \nabla_{\omega}(\sum_{\ell \notin I}  (-1)^{\sigma_{I}(\ell)} Q_{I \cup \{\ell\}} \eta_{I}) \\
    &= \sum_{\ell \notin I}(-1)^{\sigma_{I}(\ell)}( (d(Q_{I \cup \{\ell\}}) \wedge \eta_{I} + Q_{I\cup \{\ell\}} \wedge d(\eta_{I}) + Q_{I \cup \{\ell\}} \omega \wedge \eta_{I}); \\
\epsilon(\nabla_{\omega}(Q_{I} \eta_{I}) &= \epsilon( d(Q_{I}) \wedge \eta_{I} + Q_{I} \wedge d(\eta_{I}) + Q_{I} \omega \wedge \eta_{I}) \\
    &= \epsilon(d(Q_{I}) \wedge \eta_{I}) + \sum_{\ell \notin I} (-1)^{\sigma_{I}(\ell)} (Q_{I \cup \{\ell\}} \wedge d(\eta_{I}) + Q_{I \cup \{\ell\}} \omega \wedge \eta_{I}).
\end{align*}
We are done once we confirm $\sum_{\ell \notin I} (-1)^{\sigma_{I}(\ell)} d(Q_{I \cup \{\ell\}}) \wedge \eta_{I} = \epsilon(d(Q_{I}) \wedge \eta_{I})$. Let $Q_{I} = \frac{a}{x_{I}^{p}}$. Then 
\begin{align*}
\epsilon(d(Q_{I}) \wedge \eta_{I}) &= \epsilon( (\sum_{1 \leq i \leq n} \frac{(\partial_{i} \bullet a) dx_{i}}{x_{I}^{p}} - \frac{(\partial_{i} \bullet x_{I}^{p}) a dx_{i}}{x_{I}^{2p}}) \wedge \eta_{I}) \\
    & = \epsilon( \sum_{1 \leq i \leq n} (\frac{(\partial_{i} \bullet a)} {x_{I}^{p}} - \frac{(\partial_{i} \bullet x_{I}^{p}) a} {x_{I}^{2p}}) dx_{i}  \wedge \eta_{I} \\
    &= \sum_{\ell \notin I} (-1)^{\sigma_{I}(\ell)} d(Q_{I \cup \{\ell\}}) \wedge \eta_{I}
\end{align*}
since the image of $\partial_{i} \bullet Q_{I}$ in $R[x_{I \cup \{\ell\}}^{-1}]$ agrees with $\partial_{i}$ applied to $Q_{I\cup \{\ell\}} \in R[x_{I \cup \{\ell\}}^{-1}]$. 

We summarize our construction of $N^{\bullet, \bullet}$:

\begin{proposition} \label{prop - definition of N diagram, Cech twisted log de rham lattice}
There is a commutative diagram $N^{\bullet, \bullet}$, positioned in $\mathbb{Z}^{2}$, where each row and column is a chain complex, and each square looks like: 
\begin{equation} \label{eqn - tikzcd diagram, comm square of local cohomology log de rham complex with names}
\begin{tikzcd}
N^{j+1, t} \rar[equal]
    &  \bigoplus\limits_{\mid I \mid = t} \Omega_{R}^{j+1}(\log \mathscr{A})[x_{I}^{-1}] \rar{\epsilon}
    & \bigoplus\limits_{\mid I^{\prime} \mid = t+1} \Omega_{R}^{j+1}(\log \mathscr{A})[x_{I^{\prime}}^{-1}] \rar[equal]
        & N^{j+1, t+1} \\
N^{j, t} \rar[equal]
    & \bigoplus\limits_{\mid I \mid = t} \Omega_{R}^{j}(\log \mathscr{A})[x_{I}^{-1}] \uar{\nabla_{\omega}} \rar{\epsilon}
    & \bigoplus\limits_{\mid I^{\prime} \mid = t+1} \Omega_{R}^{j}(\log \mathscr{A})[x_{I^{\prime}}^{-1}] \uar{\nabla_{\omega}} \rar[equal]
        & N^{j, t+1}.
\end{tikzcd}
\end{equation}
The horizontal maps are the natural $R$-linear maps induced by the \v{C}ech complex $C(x_{1}, \dots, x_{n}; \Omega_{R}^{j}(\log \mathscr{A}))$ (see \eqref{eqn - Cech complex map on j log diff forms}), the vertical maps are the natural $\mathbb{C}$-linear maps induced by $(\Omega_{R}^{\bullet}, \nabla_{\omega})$ (see \eqref{eqn - nabla omega definition on Cech complex}),  and each square commutes (see the computations following \eqref{eqn - tikzcd diagram, comm square of local cohomology log de rham complex}). 
\end{proposition}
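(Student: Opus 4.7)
The plan is to build the diagram $N^{\bullet, \bullet}$ one structural piece at a time and then verify the single commutativity identity that ties the rows and columns together. First, for each fixed $j$, I set $N^{j,t} = \bigoplus_{|I|=t} \Omega_{R}^{j}(\log \mathscr{A})[x_{I}^{-1}]$ and take the horizontal differential to be the standard alternating-sum Cech differential $\epsilon$ associated to the cover $\{D(x_{i})\}$; since $\Omega_{R}^{j}(\log \mathscr{A})$ is a finitely generated $R$-module, this is just the classical Cech complex of that module and hence is a complex. Next, for each fixed $t$, I need to define the vertical map $\nabla_{\omega}$ on each summand $\Omega_{R}^{j}(\log \mathscr{A})[x_{I}^{-1}]$. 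The formula is forced by the Leibniz rule: $\nabla_{\omega}(Q_{I}\eta_{I}) = d(Q_{I}) \wedge \eta_{I} + Q_{I}\, d(\eta_{I}) + Q_{I}\,\omega \wedge \eta_{I}$. The first well-definedness check is that this lands in $\Omega_{R}^{j+1}(\log \mathscr{A})[x_{I}^{-1}]$: the second and third summands are immediate, and for the first I use that $d(Q_{I}) \in \Omega_{R}^{1}[x_{I}^{-1}]$, so $d(Q_{I}) \wedge \eta_{I}$ lies in $\Omega_{R}^{1}[x_{I}^{-1}] \wedge \Omega_{R}^{j}(\log \mathscr{A}) \subseteq \Omega_{R}^{j+1}(\log \mathscr{A})[x_{I}^{-1}]$, using that wedging a regular $1$-form with a logarithmic $j$-form yields a logarithmic $(j+1)$-form.

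That each column is a complex (i.e. $\nabla_{\omega}^{2}=0$) follows from the two standard facts $d\omega = 0$ (since $\omega$ is a $\mathbb{C}$-linear combination of closed forms $df_{k}/f_{k}$) and $\omega \wedge \omega = 0$; the same argument used at the level of $(\Omega_{R}^{\bullet}(\log \mathscr{A}), \nabla_{\omega})$ passes verbatim to each localization $\Omega_{R}^{\bullet}(\log \mathscr{A})[x_{I}^{-1}]$.

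The central point is the commutativity of the square \eqref{eqn - tikzcd diagram, comm square of local cohomology log de rham complex}. Since $\epsilon$ is $R$-linear, the parts $Q_{I}\, d(\eta_{I})$ and $Q_{I}\,\omega \wedge \eta_{I}$ of $\nabla_{\omega}$ commute with $\epsilon$ trivially. So the whole check collapses to verifying the identity
\[
\epsilon\bigl(d(Q_{I}) \wedge \eta_{I}\bigr) \;=\; \sum_{\ell \notin I} (-1)^{\sigma_{I}(\ell)} d(Q_{I \cup \{\ell\}}) \wedge \eta_{I},
\]
which in turn reduces to the assertion that the localization map $R[x_{I}^{-1}] \to R[x_{I \cup \{\ell\}}^{-1}]$ commutes with each $\partial_{i}$. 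This last compatibility is a routine quotient-rule computation with $Q_{I} = a/x_{I}^{p}$ (essentially the one performed just before the proposition in the text), and once it is in place the commutativity of every square in the diagram follows by summing over $\ell$.

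I do not anticipate a genuine obstacle here; the proposition is fundamentally a bookkeeping statement organizing the Cech and $\nabla_{\omega}$ structures into a double complex. The one place that genuinely requires attention — and where one has to resist skipping — is verifying that the term $d(Q_{I}) \wedge \eta_{I}$ remains inside the logarithmic (not just meromorphic) subspace after localization; everything else is a mechanical consequence of the Leibniz rule, $R$-linearity of $\epsilon$, and compatibility of derivations with localization.
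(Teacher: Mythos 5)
Your proposal is correct and follows essentially the same route as the paper: the same Leibniz-rule definition of $\nabla_{\omega}$ on each localization, the same observation that $\Omega_{R}^{1}[x_{I}^{-1}] \wedge \Omega_{R}^{j}(\log \mathscr{A}) \subseteq \Omega_{R}^{j+1}(\log \mathscr{A})[x_{I}^{-1}]$ handles well-definedness, and the same reduction of commutativity to the compatibility of $\partial_{i}$ with the localization maps, checked via the quotient rule on $Q_{I} = a/x_{I}^{p}$. Your explicit note that $\nabla_{\omega}^{2}=0$ on each column is a small addition the paper defers to the following proposition, but it changes nothing substantive.
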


We now construct a well behaved map $N^{j+1, t} \to N^{j,t}$ that is $R$-linear.

\begin{define} \label{def - defining contraction along Euler in many contexts}
Let $E = \sum_{i} x_{i} \partial_{i} \in \Der_{X}(-\log \mathscr{A})$ be the Euler operator and recall the classical \emph{contraction} $\iota_{E}: \Omega_{R}^{j+1} \to \Omega_{R}^{j}$ along $E$. Since $E$ is logarithmic, this induces a contraction on the logarithmic differential forms (cf. 1.6 \cite{SaitoLogarithmicForms})
\[
\iota_{E} : \Omega_{R}^{j+1}(\log \mathscr{A}) \to \Omega_{R}^{j}(\log \mathscr{A})
\]
as well as a contraction on the meromorphic differential forms
\[
\iota_{E} : \Omega_{R}^{j+1}(\star \mathscr{A}) \to \Omega_{R}^{j}(\star \mathscr{A}).
\]
It also induces a contraction map
\[
\iota_{E}: \bigoplus\limits_{\mid I \mid = t} \Omega_{R}^{j+1}(\log \mathscr{A})[x_{I}^{-1}] \to \bigoplus\limits_{\mid I \mid = t} \Omega_{R}^{j}(\log \mathscr{A})[x_{I}^{-1}]
\]
given by, for $\eta_{I} \in \Omega_{R}^{j+1}(\log \mathscr{A})$,
\[
\iota_{E}(Q_{I} \eta_{I}) = Q_{I} \iota_{E}(\eta_{I}).
\]
\end{define}

\begin{proposition} \label{prop - comm square of N diagram with contraction}
Let $E$ be the Euler operator and let $N^{\bullet, \bullet}$ as in Proposition \ref{prop - definition of N diagram, Cech twisted log de rham lattice}. Then the diagram
\[
\begin{tikzcd}
N^{j+1, t} \rar{\epsilon} \dar{\iota_{E}}
    & N^{j+1, t+1} \dar{\iota_{E}} \\
N^{j, t} \rar{\epsilon}
    & N^{j, t+1}
\end{tikzcd}
\]
commutes.
\end{proposition}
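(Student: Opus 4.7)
The plan is to verify commutativity by a direct computation on representative elements of the form $Q_{I} \eta_{I}$ with $\eta_{I} \in \Omega_{R}^{j+1}(\log \mathscr{A})$ and $Q_{I} \in R[x_{I}^{-1}]$. The crucial observation is that $\epsilon$ and $\iota_{E}$ act on disjoint ``factors'' of such an element: $\epsilon$ only modifies the scalar coefficient $Q_{I}$ (by localizing further and recording the sign $(-1)^{\sigma_{I}(\ell)}$), while $\iota_{E}$ only contracts the differential-form factor $\eta_{I}$ by Definition~\ref{def - defining contraction along Euler in many contexts}.

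First I would record that $\iota_{E}$ is $R$-linear on $\Omega_{R}^{j+1}(\log \mathscr{A})$ (the contraction along a fixed derivation is $R$-linear on differential forms), and hence extends uniquely to an $R[x_{I}^{-1}]$-linear map on each localization $\Omega_{R}^{j+1}(\log \mathscr{A})[x_{I}^{-1}]$ prescribed by the rule $\iota_{E}(Q_{I}\eta_{I}) = Q_{I}\iota_{E}(\eta_{I})$ of Definition~\ref{def - defining contraction along Euler in many contexts}. These extensions are compatible with the natural localization maps $R[x_{I}^{-1}] \hookrightarrow R[x_{I \cup \{\ell\}}^{-1}]$: since $\iota_{E}$ does not touch the coefficient factor, the image of $Q_{I} \iota_{E}(\eta_{I})$ under localization is exactly $Q_{I \cup \{\ell\}} \iota_{E}(\eta_{I})$.

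With this in hand, both compositions unwind in parallel:
\begin{align*}
\iota_{E}(\epsilon(Q_{I} \eta_{I})) &= \iota_{E}\Bigl(\sum_{\ell \notin I} (-1)^{\sigma_{I}(\ell)} Q_{I \cup \{\ell\}} \eta_{I}\Bigr) = \sum_{\ell \notin I} (-1)^{\sigma_{I}(\ell)} Q_{I \cup \{\ell\}} \iota_{E}(\eta_{I}), \\
\epsilon(\iota_{E}(Q_{I} \eta_{I})) &= \epsilon\bigl(Q_{I} \iota_{E}(\eta_{I})\bigr) = \sum_{\ell \notin I} (-1)^{\sigma_{I}(\ell)} Q_{I \cup \{\ell\}} \iota_{E}(\eta_{I}).
\end{align*}
These agree, which gives the claim. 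No serious obstacle arises; the only subtlety worth flagging is that $\iota_{E}$ indeed preserves logarithmic forms, because $E \in \Der_{R}(-\log \mathscr{A})$ (cf.\ 1.6 of \cite{SaitoLogarithmicForms}), so the computation stays entirely inside the lattice $N^{\bullet,\bullet}$.
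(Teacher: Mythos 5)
Your proof is correct and follows the same route as the paper's: both verify commutativity by expanding $\iota_{E}(\epsilon(Q_{I}\eta_{I}))$ and $\epsilon(\iota_{E}(Q_{I}\eta_{I}))$ on a representative element and observing that $\epsilon$ acts only on the coefficient $Q_{I}$ while $\iota_{E}$ acts only on the form $\eta_{I}$. The extra remarks on $R$-linearity and on $\iota_{E}$ preserving logarithmic forms are consistent with Definition~\ref{def - defining contraction along Euler in many contexts} and add nothing that conflicts with the paper's argument.
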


\begin{proof}
Let $Q_{I} \in R[x_{I}^{-1}]$ and $\eta_{I} \in \Omega_{R}^{j+1}(\log \mathscr{A})$ so that $Q_{I} \eta_{I} \in \Omega_{R}^{j+1}(\log \mathscr{A})[x_{I}^{-1}]$. Then
\begin{align*}
\epsilon(\iota_{E}(Q_{I} \eta_{I})) = \epsilon(Q_{I}( \iota_{E}(\eta_{I})))     &= \sum_{\ell \notin I} (-1)^{\sigma_{I}(\ell)} Q_{I \cup \{\ell\}} \iota_{E}(\eta_{I}) \\
    &= \iota_{E}(\sum_{\ell \notin I} (-1)^{\sigma_{I}(\ell)} Q_{I \cup \{\ell\}} \eta_{I}) = \iota_{E}(\epsilon (Q_{I}\eta_{I})). 
\end{align*}
\end{proof}

We now begin to reap some fruits of our mostly formal labors. From our first set of commutative diagrams we have constructed a complex of $\mathbb{C}$-modules; from our second set, a complex of $R$-modules:

\begin{proposition} \label{prop- contraction and twisted log de rham on local cohomology well defined}
The complex
\begin{align} \label{eqn - twisted log de rham complex on local cohomology}
(H_{\mathfrak{m}}^{t}(\Omega_{R}^{\bullet}(\log \mathscr{A})), \nabla_{\omega}) = 0 \to H_{\mathfrak{m}}^{t}(\Omega_{R}^{0}(\log \mathscr{A})) &\xrightarrow[]{\nabla_{\omega}} H_{\mathfrak{m}}^{t}(\Omega_{R}^{1}(\log \mathscr{A})) \xrightarrow[]{\nabla_{\omega}} \cdots \\
    &\xrightarrow[]{\nabla_{\omega}} H_{\mathfrak{m}}^{t}(\Omega_{R}^{n}(\log \mathscr{A}) \to 0 \nonumber
\end{align}
with $\mathbb{C}$-linear differential $\nabla_{\omega}$, as described in \eqref{eqn - nabla omega definition on Cech complex}, is well defined. Additionally, for $E$ the Euler operator, the complex
\begin{align} \label{eqn - contraction along Euler on local cohomology}
(H_{\mathfrak{m}}^{t}(\Omega_{R}^{\bullet}(\log \mathscr{A})), \iota_{E}) = 0 \to H_{\mathfrak{m}}^{t}(\Omega_{R}^{n}(\log \mathscr{A})) &\xrightarrow[]{\iota_{E}} H_{\mathfrak{m}}^{t}(\Omega_{R}^{n-1}(\log \mathscr{A})) \xrightarrow[]{\iota_{E}} \cdots \\
    &\xrightarrow[]{\iota_{E}} H_{\mathfrak{m}}^{t}(\Omega_{R}^{0}(\log \mathscr{A}) \to 0 \nonumber
\end{align}
with $R$-linear differential induced by $\iota_{E}$, as in Definition \ref{def - defining contraction along Euler in many contexts}, is well-defined.
\end{proposition}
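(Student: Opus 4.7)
The plan is to exhibit both complexes as the result of passing to horizontal cohomology in the double complex $N^{\bullet,\bullet}$ built in Propositions \ref{prop - definition of N diagram, Cech twisted log de rham lattice} and \ref{prop - comm square of N diagram with contraction}. Since $\nabla_\omega$ commutes with the horizontal Cech differential $\epsilon$ (by the commutativity of \eqref{eqn - tikzcd diagram, comm square of local cohomology log de rham complex}) and since $\iota_E$ likewise commutes with $\epsilon$ (by Proposition \ref{prop - comm square of N diagram with contraction}), each defines a chain map between adjacent rows of $N^{\bullet,\bullet}$. Taking $t$-th row cohomology, i.e.\ local cohomology of each $\Omega^j_R(\log\mathscr{A})$, produces well-defined induced maps
$$\nabla_\omega \colon H^t_\mathfrak{m}(\Omega^j_R(\log\mathscr{A})) \to H^t_\mathfrak{m}(\Omega^{j+1}_R(\log\mathscr{A})), \quad \iota_E \colon H^t_\mathfrak{m}(\Omega^{j+1}_R(\log\mathscr{A})) \to H^t_\mathfrak{m}(\Omega^{j}_R(\log\mathscr{A})).$$
Both come with the claimed linearity for free: the Leibniz rule for $d$ is the only obstruction to $R$-linearity of $\nabla_\omega$, which leaves it $\mathbb{C}$-linear; and the formula $\iota_E(Q_I\eta_I) = Q_I \iota_E(\eta_I)$ from Definition \ref{def - defining contraction along Euler in many contexts} is manifestly $R$-linear on each $N^{j,t}$ and so on cohomology.

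The only substantive step is verifying that each induced differential squares to zero. For $\nabla_\omega$, I would expand on any $N^{j,t}$:
$$\nabla_\omega^2(\eta) = d^2\eta + d(\omega\wedge\eta) + \omega\wedge d\eta + \omega\wedge\omega\wedge\eta = (d\omega)\wedge\eta,$$
using $d^2 = 0$, $\omega\wedge\omega = 0$, and the graded Leibniz rule $d(\omega\wedge\eta) = d\omega\wedge\eta - \omega\wedge d\eta$. Because $\omega = \sum_k \lambda_k\, d\log f_k$ is a $\mathbb{C}$-linear combination of closed logarithmic one-forms, $d\omega = 0$, so $\nabla_\omega^2 = 0$ already on $N^{j,t}$ and thus on its cohomology. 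For $\iota_E$ the identity $\iota_E^2 = 0$ is the classical fact that interior multiplication with a single vector field squares to zero on ordinary forms; transported through Definition \ref{def - defining contraction along Euler in many contexts}, it holds on the logarithmic subforms and passes termwise to the localizations $\Omega^j_R(\log\mathscr{A})[x_I^{-1}]$ that make up $N^{j,t}$.

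No real obstacle remains: both the chain-map property (horizontal commutativity with $\epsilon$) and the square-zero property on $N^{\bullet,\bullet}$ have either been established in the two preceding propositions or follow from the closedness of $\omega$ and the tautology $\iota_E^2 = 0$. The proposition is thus a formal consequence of the commutative-diagram bookkeeping already in place; the only place I need to be careful is the sign accounting in the Leibniz expansion above, but that is routine.
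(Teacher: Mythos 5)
Your argument is correct and follows the paper's own route: both proofs deduce well-definedness of the induced maps from the chain-map property established in Propositions \ref{prop - definition of N diagram, Cech twisted log de rham lattice} and \ref{prop - comm square of N diagram with contraction}, and then note $\nabla_{\omega}^{2}=0$ and $\iota_{E}^{2}=0$. You simply spell out the Leibniz computation $\nabla_{\omega}^{2}=(d\omega)\wedge(-)=0$ that the paper takes for granted.
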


\begin{proof}
That the squares \eqref{eqn - tikzcd diagram, comm square of local cohomology log de rham complex with names} from Proposition \ref{prop - definition of N diagram, Cech twisted log de rham lattice} commute amounts to saying $\nabla_{\omega}$ gives a chain map from $N^{j, \bullet} \to N^{j+1, \bullet}$ for all $j$. So the induced maps on cohomology 
\[
H_{\mathfrak{m}}^{t}(\Omega_{R}^{j}(\log \mathscr{A})) \xrightarrow[]{\nabla_{\omega}} H_{\mathfrak{m}}^{t}(\Omega_{R}^{j+1}(\log \mathscr{A}))
\]
are well-defined. Since $\nabla_{\omega}^{2} = 0$, certainly $(H_{\mathfrak{m}}^{t}(\Omega_{R}^{\bullet}(\log \mathscr{A})), \nabla_{\omega})$ is a complex.

The case of $(H_{\mathfrak{m}}^{t}(\Omega_{R}^{\bullet}(\log \mathscr{A})), \iota_{E})$ is similar, now using Proposition \ref{prop - comm square of N diagram with contraction}.
\end{proof}

Thus we can discuss the cohomology of the twisted differential $\nabla_{\omega}$ on local cohomology modules of logarithmic forms. Grade $R$ and $\Omega_{R}^{\bullet}(\log \mathscr{A})$ by the edict of the the Euler derivation $\sum x_{i} \partial_{i}$; that is, give $x$ and $dx_{i}$ weight one, $\frac{1}{f}$ weight $- \deg(f)$. Local cohomology $H_{\mathfrak{m}}^{t}(\Omega_{R}^{j}(\log \mathscr{A}))$ inherits a natural $\mathbb{Z}$-grading by making $x^{-1}$ have weight $-1$. Exterior differentiation preserves degree as does the exterior product with $\omega$ (since $df_{k}/f_{k}$ has weight zero). Therefore our complexes decompose into their homogeneous subcomplexes:
\[
(\Omega_{R}^{\bullet}(\log \mathscr{A}), \nabla_{\omega}) = \bigoplus_{q \in \mathbb{Z}} \enspace (\Omega_{R}^{\bullet}(\log \mathscr{A})_{q}, \nabla_{\omega})
\]
\[
(H_{\mathfrak{m}}^{t}(\Omega_{R}^{\bullet}(\log \mathscr{A})), \nabla_{\omega}) = \bigoplus_{q \in \mathbb{Z}} (H_{\mathfrak{m}}^{t}(\Omega_{R}^{\bullet}(\log \mathscr{A}))_{q}, \nabla_{\omega}).
\]
Clearly cohomology is well-behaved with respect to this direct sum decomposition.

Following Lemma 2.1 of \cite{CohomologyComplementFree} we can extend a nice formula involving Lie derivative and homogeneous elements to the twisted and local cohomological cases:

\begin{lemma} \label{lemma - twisted lie derivative computation on homogeneous}
Suppose that $E$ is the Euler operator and that $R$, $\Omega_{R}^{j}(\log \mathscr{A})$, and $H_{\mathfrak{m}}^{t}(\log \mathscr{A})$ are graded accordingly, i.e. with $x_{i}$ and $dx_{i}$ given weight one. The twisted Lie derivative with respect to $E$ on $\Omega_{R}^{\bullet}(\log \mathscr{A})$
\begin{equation} \label{eqn - def of twisted lie derivative, chi}
L_{\omega}(\eta) = \nabla_{\omega}(\iota_{E} (\eta)) + \iota_{E}((\nabla_{\omega})(\eta))
\end{equation}
satisfies, for $\eta$ a homogeneous logarithmic $j$-form,
\begin{equation} \label{eqn - twisted lie derivative on log forms, homogeneous}
L_{\omega}(\eta) = (\deg (\eta) + \iota_{E}(\omega))\eta = (\deg (\eta) + \sum \lambda_{k}) \eta.
\end{equation}
where $\omega = \sum \lambda_{k} d \log f_{k}.$ The induced twisted Lie derivative on a homogeneous class $[\zeta] \in H_{\mathfrak{m}}^{t}(\Omega_{R}^{j}(\log \mathscr{A}))$ satisfies
\begin{equation} \label{eqn - twisted lie derivative on local cohomology of log forms, homogeneous}
L_{\omega}([\zeta]) = (\deg ([\zeta]) + \iota_{E}(\omega))[\zeta] = (\deg ([\zeta]) + \sum \lambda_{k})[\zeta].
\end{equation}
\end{lemma}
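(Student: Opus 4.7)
The plan is to reduce this to the classical Cartan magic formula plus a direct computation of the extra term that the twist introduces. Starting from the definition
\[
L_{\omega}(\eta) \;=\; \nabla_{\omega}(\iota_{E}(\eta)) \;+\; \iota_{E}(\nabla_{\omega}(\eta)),
\]
I expand $\nabla_{\omega} = d + \omega \wedge (-)$ and group the terms as
\[
L_{\omega}(\eta) \;=\; \bigl[ d(\iota_{E}(\eta)) + \iota_{E}(d(\eta)) \bigr] \;+\; \bigl[ \omega \wedge \iota_{E}(\eta) + \iota_{E}(\omega \wedge \eta) \bigr].
\]
The first bracket is the ordinary Lie derivative $L_{E}(\eta)$ along the Euler operator, and for $\eta$ homogeneous (in the grading where $x_{i}$ and $dx_{i}$ both have weight one) the classical identity $L_{E}(\eta) = \deg(\eta)\cdot \eta$ applies. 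For the second bracket I use that $\iota_{E}$ is a graded derivation of degree $-1$; since $\omega$ is a $1$-form,
\[
\iota_{E}(\omega \wedge \eta) \;=\; \iota_{E}(\omega)\,\eta \;-\; \omega \wedge \iota_{E}(\eta),
\]
so the two $\omega \wedge \iota_{E}(\eta)$ contributions cancel and only $\iota_{E}(\omega)\,\eta$ survives. This proves \eqref{eqn - twisted lie derivative on log forms, homogeneous}.

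To upgrade $\iota_{E}(\omega)$ to the numerical value $\sum_{k} \lambda_{k}$, I note that each $f_{k}$ is a linear form, so $E \bullet f_{k} = f_{k}$ and
\[
\iota_{E}(\omega) \;=\; \sum_{k} \lambda_{k}\, \iota_{E}\!\Bigl(\tfrac{df_{k}}{f_{k}}\Bigr) \;=\; \sum_{k} \lambda_{k}\, \tfrac{E \bullet f_{k}}{f_{k}} \;=\; \sum_{k} \lambda_{k}.
\]
This is a scalar in $\mathbb{C}$, so it commutes with everything in sight.

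For the local cohomology statement \eqref{eqn - twisted lie derivative on local cohomology of log forms, homogeneous}, I argue that the entire identity is formal on the level of the Cech double complex $N^{\bullet,\bullet}$ of Proposition \ref{prop - definition of N diagram, Cech twisted log de rham lattice}. More precisely, Proposition \ref{prop - comm square of N diagram with contraction} shows that $\iota_{E}$ extends entry-wise to $N^{\bullet,\bullet}$ (commuting with the horizontal Cech differentials $\epsilon$), and Proposition \ref{prop - definition of N diagram, Cech twisted log de rham lattice} shows the same for $\nabla_{\omega}$. Hence the combination $L_{\omega} = \nabla_{\omega}\iota_{E} + \iota_{E}\nabla_{\omega}$ is a well-defined chain map on each row $N^{\bullet,t}$, and descends to $H_{\mathfrak{m}}^{t}(\Omega_{R}^{\bullet}(\log \mathscr{A}))$. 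Repeating the computation above term-by-term on a homogeneous representative $\zeta_{I} \in \Omega_{R}^{j}(\log \mathscr{A})[x_{I}^{-1}]$ (where degree is well-defined because the localization $x_{i}^{-1}$ has weight $-1$, the classical Cartan formula still holds entry-wise, and the graded Leibniz rule for $\iota_{E}$ is unchanged) yields $L_{\omega}(\zeta_{I}) = (\deg(\zeta_{I}) + \sum_{k}\lambda_{k})\,\zeta_{I}$ on the nose. Passing to the cohomology class recovers \eqref{eqn - twisted lie derivative on local cohomology of log forms, homogeneous}.

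There is no substantial obstacle here: once the commutation statements of Propositions \ref{prop - definition of N diagram, Cech twisted log de rham lattice} and \ref{prop - comm square of N diagram with contraction} are in hand, the content is a one-line application of Cartan's formula plus the interior-product Leibniz rule for $\omega \wedge \eta$. The only point that needs care is verifying that the grading used for the Cartan formula (weight one on both $x_{i}$ and $dx_{i}$) matches the grading used for the local cohomology modules throughout the paper, which is exactly Convention \ref{convention - grading}.
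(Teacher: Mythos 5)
Your proof is correct and follows essentially the same route as the paper: both reduce to Naruki's untwisted identity $L_{E}(\eta)=\deg(\eta)\eta$ and then observe that the twist contributes exactly $\iota_{E}(\omega)\,\eta=(\sum_{k}\lambda_{k})\eta$ via the degree $-1$ Leibniz rule for $\iota_{E}$, and both handle the local cohomology case by computing term-wise on homogeneous representatives using the already-established compatibility of $\iota_{E}$ and $\nabla_{\omega}$ with the Cech differential. The only difference is presentational: you spell out the Cartan-formula grouping that the paper delegates to the citation of Lemma 2.1 of \cite{CohomologyComplementFree}.
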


\begin{proof}
We first do the case of logarithmic forms. The untwisted statement is due to Naruki according to Lemma 2.1 of \cite{CohomologyComplementFree} and is a straightforward computation. Since contraction and exterior differentiation are additive, we need only to confirm that
\begin{align*}
\omega \wedge \iota_{E}(\eta) + \iota_{E}(\omega \wedge \eta) &= \omega \wedge \iota_{E}(\eta) + \iota_{E}(\omega) \wedge \eta - \omega \wedge \iota_{E}(\eta)  \\
    &= \iota_{E}(\omega) \eta 
    = (\sum  \lambda_{k} \iota_{E}(d \log f_{k}) )\eta = (\sum \lambda_{k}) \eta.
\end{align*}
This gives \eqref{eqn - twisted lie derivative on log forms, homogeneous}. As for \eqref{eqn - twisted lie derivative on local cohomology of log forms, homogeneous}, first note that we have confirmed $\iota_{E}$ and $\nabla_{\omega}$ are well-defined maps on local cohomology and are defined by taking representatives. So the second formula follows from the first because: Naruki's computation for the untwisted case extends to $\Omega_{R}^{j}(\log \mathscr{A})[x_{I}^{-1}]$; we can compute the twisted Lie derivative on a homogeneous representative of our homogeneous class of $H_{\mathfrak{m}}^{t}(\Omega_{R}^{j}(\log \mathscr{A}))$.
\end{proof}

\begin{convention}
A quick computation verifies $\iota_{E}(\omega) = \lambda_{1} + \cdots + \lambda_{d}$. We use $\iota_{E}(\omega)$ throughout the paper, instead of writing this sum.
\end{convention}

Here is the workman's result of this subsection: it reduces computing cohomology of these twisted complexes to a finite dimensional linear algebra problem. Indeed, only a particular homogeneous subcomplex needs to be considered. Because $R$ is $\mathbb{N}$-graded, this homogeneous subcomplex is a finite dimensional $\mathbb{C}$-module. Again, this lemma tracks with ideas from \cite{CohomologyComplementFree}, but we have extended the result to the both the twisted case and to the case of local cohomology.

\begin{proposition} \label{prop - homogeneous subcomplex quasi iso}
Suppose that $E$ is the Euler operator and that $R$, $\Omega_{R}^{j}(\log \mathscr{A})$, $H^{i}_{\mathfrak{m}}(\Omega_{R}^{j}(\log \mathscr{A}))$ are graded accordingly, i.e. with $x_{i}$ and $dx_{i}$ given weight one. Then $(\Omega_{R}^{\bullet}(\log \mathscr{A}), \nabla_{\omega})$ and $(H_{\mathfrak{m}}^{t}(\Omega_{R}^{\bullet}(\log \mathscr{A})), \nabla_{\omega})$ have homogeneous differentials and decompose as direct sum of their homogeneous subcomplexes. Moreover, the following homogeneous subcomplex inclusions are quasi-isomorphisms:
\[
(\Omega_{R}^{\bullet}(\log \mathscr{A})_{ - \iota_{E}(\omega)}, \nabla_{\omega}) \xhookrightarrow{\text{q.i}} (\Omega_{R}^{\bullet}(\log \mathscr{A}), \nabla_{\omega});
\]
\[
(H_{\mathfrak{m}}^{t}(\Omega_{R}^{\bullet}(\log \mathscr{A}))_{ - \iota_{E}(\omega)}, \nabla_{\omega}) \xhookrightarrow{\text{q.i}} (H_{\mathfrak{m}}^{t}(\Omega_{R}^{\bullet}(\log \mathscr{A})), \nabla_{\omega}).
\]
(Here $(-)_{-\iota_{E}(\omega)}$ denotes the $-\iota_{E}(\omega)$-homogeneous subspace of $(-)$.)
\end{proposition}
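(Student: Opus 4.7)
The plan is to promote the twisted Lie derivative formula of Lemma \ref{lemma - twisted lie derivative computation on homogeneous} into a null-homotopy on each off-diagonal homogeneous subcomplex. Concretely, since $\omega$ is degree $0$ and $E = \sum x_i \partial_i$ is degree $0$ (pairing $dx_i$ of weight $+1$ with $\partial_i$ of weight $-1$), both $\nabla_\omega$ and $\iota_E$ preserve the $\mathbb{Z}$-grading; likewise the Cech differential $\epsilon$ preserves the induced grading on $\Omega_R^j(\log\mathscr{A})[x_I^{-1}]$. Hence the complexes $(\Omega_R^\bullet(\log\mathscr{A}), \nabla_\omega)$ and $(H_{\mathfrak{m}}^t(\Omega_R^\bullet(\log\mathscr{A})), \nabla_\omega)$ split as direct sums of their homogeneous components indexed by $q \in \mathbb{Z}$, and cohomology is computed componentwise.

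Next I would fix a component $q \in \mathbb{Z}$ with $q \neq -\iota_E(\omega)$ and show the $q$-th homogeneous subcomplex is acyclic. Rewriting \eqref{eqn - def of twisted lie derivative, chi} as
\[
\nabla_\omega \circ \iota_E + \iota_E \circ \nabla_\omega = L_\omega,
\]
Lemma \ref{lemma - twisted lie derivative computation on homogeneous} says $L_\omega$ acts on the $q$-homogeneous piece of $\Omega_R^j(\log\mathscr{A})$ (respectively of $H_{\mathfrak{m}}^t(\Omega_R^j(\log\mathscr{A}))$) as scalar multiplication by $q + \iota_E(\omega)$. Since $q + \iota_E(\omega) \neq 0$, the operator
\[
h = \frac{1}{q + \iota_E(\omega)} \iota_E
\]
is a chain homotopy from the identity of the $q$-homogeneous subcomplex to the zero map. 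Therefore every such homogeneous subcomplex is acyclic, and the cohomology of the full complex equals the cohomology of the single homogeneous subcomplex of degree $-\iota_E(\omega)$. This is precisely the statement that the inclusions of the $-\iota_E(\omega)$-homogeneous subcomplexes are quasi-isomorphisms.

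The only bookkeeping needed is that everything descends correctly to local cohomology. That is already in place: Proposition \ref{prop- contraction and twisted log de rham on local cohomology well defined} shows $\iota_E$ and $\nabla_\omega$ are well-defined on $H_{\mathfrak{m}}^t(\Omega_R^\bullet(\log\mathscr{A}))$; the natural grading on $H_{\mathfrak{m}}^t$ (giving $x_i^{-1}$ weight $-1$) is respected by the Cech differential $\epsilon$ as well as by the vertical $\nabla_\omega$; and Lemma \ref{lemma - twisted lie derivative computation on homogeneous} was proved on the nose for a homogeneous class $[\zeta] \in H_{\mathfrak{m}}^t(\Omega_R^j(\log\mathscr{A}))$ by lifting to a homogeneous representative. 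So the homotopy $h$ defined above makes sense as an $R$-linear operator on the graded pieces of $H_{\mathfrak{m}}^t$ and implements the same null-homotopy argument.

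There is no real obstacle here; the whole point of the technical work in Propositions \ref{prop - definition of N diagram, Cech twisted log de rham lattice}, \ref{prop - comm square of N diagram with contraction}, and \ref{prop- contraction and twisted log de rham on local cohomology well defined}, together with Lemma \ref{lemma - twisted lie derivative computation on homogeneous}, was to make this Cartan-style magic formula legitimate both on logarithmic forms and after passing to $H_{\mathfrak{m}}^t$. Once that is in hand, the proposition is a one-line consequence: off the resonant degree $q = -\iota_E(\omega)$, the homotopy $\iota_E/(q + \iota_E(\omega))$ trivializes the complex.
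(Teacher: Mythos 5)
Your proof is correct and follows the paper's own argument: both decompose the complexes by degree using the homogeneity of $\nabla_{\omega}$ and $\iota_{E}$, then use the twisted Cartan formula from Lemma \ref{lemma - twisted lie derivative computation on homogeneous} to exhibit $\iota_{E}$ (up to the nonzero scalar $q+\iota_{E}(\omega)$) as a contracting homotopy on each non-resonant homogeneous subcomplex, both for logarithmic forms and after passing to $H_{\mathfrak{m}}^{t}$. No differences worth noting.
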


\begin{proof}
Clearly $d$ is degree preserving map. Since $\omega = \sum \lambda_{k} \frac{df_{k}}{f_{k}}$ is homogeneous of grade zero, $\nabla_{\omega}$ is also a degree preserving map. That all these complexes decompose into direct sums of their homogeneous parts follows. 

As for the quasi-isomorphisms, we first do the case of the logarithmic de Rham complex. Thanks to the direct sum decomposition (which certainly extends to cohomology) it is enough to prove that, for $q \neq - \iota_{E}(\omega)$, the degree $q$ subcomplex $(\Omega_{R}^{\bullet}(\log \mathscr{A})_{q}, \nabla_{\omega})$ is acyclic. Thanks to \eqref{eqn - twisted lie derivative on log forms, homogeneous} from Lemma \ref{lemma - twisted lie derivative computation on homogeneous}, and the fact $\iota_{E}$ is itself degree preserving, $\iota_{E}$ defines a chain homotopy between 
\[
(\Omega_{R}^{\bullet}(\log \mathscr{A})_{q}, \nabla_{\omega}) \xrightarrow{(q + \iota_{E}(\omega)) \cdot} (\Omega_{R}^{\bullet}(\log \mathscr{A})_{q}, \nabla_{\omega})
\]
and the zero map
\[
(\Omega_{R}^{\bullet}(\log \mathscr{A})_{q}, \nabla_{\omega}) \xrightarrow[]{0 \cdot} (\Omega_{R}^{\bullet}(\log \mathscr{A})_{q}, \nabla_{\omega}).
\]
Since $q + \iota_{E}(\omega) \neq 0$, the first map is a quasi-isomorphism and so $(\Omega_{R}^{\bullet}(\log \mathscr{A})_{q}, \nabla_{\omega})$ is acyclic. The result for the logarithmic de Rham complex follows.

As for the local cohomology complexes, the same argument works: $\iota_{E}$ defines a chain homotopy between similar maps, now using \eqref{eqn - twisted lie derivative on local cohomology of log forms, homogeneous} from Lemma \ref{lemma - twisted lie derivative computation on homogeneous}.
\end{proof}

\subsection{Complexes of Analytic \v{C}ech Cohomology of Logarithmic Forms} \enspace

The purpose of this subsection is to repeat the constructions of the previous subsection in the appropriate analytic setting. Specifically, we want an analytic version of Proposition \ref{prop - homogeneous subcomplex quasi iso}. This involves working with the coherent sheaves $\Omega_{X}^{j}(\log \mathscr{A})$ of $\mathscr{O}_{X}$-modules. Often we will replace $``\log \mathscr{A}''$ with $``\log f''$. In this subsection we will always assume $\mathscr{A}$ is central and so $f$ is homogeneous.

Instead of working with the algebraic \v{C}ech complex attached to the maximal ideal $\mathfrak{m}$ (the irrelevant ideal of $R$), we will work with the sheaf-theoretic analytic \v{C}ech complex attached to the open cover $\{D(x_{i})\} = \{ \{x_{i} \neq 0 \}\}_{1 \leq i \leq n}$ of $X \setminus 0$. For an index set $I \subseteq [n]$, let $x_{I}$ denote $\prod_{i \in I} x_{i}$ and let $D(x_{I}) = \{x_{I} \neq 0\}$ the open set in $X = \mathbb{C}^{n}$. Since $D(x_{I})$ is a Rienhardt (i.e. multicircular) domain, every analytic function on $D(x_{I})$ has a unique Laurent expansion at $0$ that converges absolutely on $D(x_{I})$ and uniformly on compact sets. This Laurent expansion is a formal power series in $\mathbb{C}[[x_{1}^{\pm 1}, \dots, x_{n}^{\pm 1}]]$, but using absolute convergence one can check that every monomial $c_{\boldsymbol{\alpha}}x^{\boldsymbol{\alpha}}$ with some $\alpha_{k} < 0$ for $k \notin I$ has $c_{\boldsymbol{\alpha}} = 0$. When considering an analytic function on $D(x_{I})$ we will identify it with this Laurent expansion, and so its associated element of the formal power series ring $\mathbb{C}[[x_{1}, \dots, x_{n}, x_{i_{1}}^{-1}, \dots, x_{i_{t}}^{-1}]]$, where $I = \{i_{1}, \dots, i_{t}\}$.

In particular, an element $\eta_{I} \in \Gamma(D(x_{I}), \Omega_{X}^{j}(\log f))$ is of the form 
\begin{equation} \label{eqn - our expression for log j-forms on distinguished opens}
\eta_{I} = \sum_{\substack{J \subseteq [n] \\ |J| =j}} \frac{ g_{I,J} dx_{J}}{f}
\end{equation}
where $g_{I,J} \in \mathbb{C}[[x_{1}, \dots, x_{n}, x_{i_{1}}^{-1}, \dots, x_{i_{t}}^{-1}]]$ is the convergent Laurent expansion (at $0$) on $D(x_{I})$ and $dx_{J} = \prod_{j \in J} dx_{j}$. (We order the $j \in J$ in increasing order as is the norm.) Since each $g_{I,J}$ is canonically identified with its Laurent expansion and because this series converges absolutely we may write 
\[
g_{I,J} = \sum_{\textbf{p} \in \mathbb{Z}^{n}} c_{\textbf{p}} x_{1}^{p_{1}} \cdots x_{i}^{p_{n}} = \sum_{p \in \mathbb{Z}^{n}} g_{I,j,p}
\]
where 
\[
g_{I,J,p} = \sum_{\substack{\textbf{p} \in \mathbb{Z}^{n} \\ |\textbf{p}| = p}} c_{\textbf{p}} x^{\textbf{p}} = \sum_{\substack{\textbf{p} \in \mathbb{Z}^{n} \\ |\textbf{p}| = p}} c_{\textbf{p}} x_{1}^{p_{1}} \cdots x_{n}^{p_{n}}
\]
and it is understood that $\textbf{p}$ is restricted to integral vectors whose potential negative components correspond to entries from $I$. This decomposition of $g_{I,J}$ into the infinite sum $\sum_{p \in \mathbb{Z}} g_{I,J,p}$ is a decomposition of an analytic function into homogeneous parts, under the standard weighting giving $x_{i}$ weight $1$ and $x_{i}^{-1}$ weight $-1$. We extend this weighting to $\Gamma(D(x_{I}), \Omega_{X}^{j}(\log f))$ by, as before, weighting $\deg(1/f) = -\deg(f)$ and $\deg(dx_{J}) = |J|$. Then every element $\eta_{I} \in \Gamma(D(x_{I}), \Omega_{X}^{j}(\log f))$ has a (infinite) decomposition into homogeneous parts
\begin{equation} \label{eqn - log j-form decomposition into homogeneous parts}
\eta_{I} = \sum_{p \in \mathbb{Z}} \eta_{I,p} \quad \text{ where } \quad \eta_{I,p} = \sum_{\substack{J \subseteq [n] \\ |J| =j}} \frac{ g_{I,J,p+\deg(f)-|J|} dx_{J}}{f} 
\end{equation}
is a homogeneous (infinte sum) of weight $p$.

\begin{remark} \enspace

\begin{enumerate}[label=(\alph*)] \label{rmk - general analytic laurent nonsense}
    \item If $g_{I} = \sum_{p \in \mathbb{Z}} g_{I,p}$ is the homogeneous decomposition of a Laurent expansion (at $0$) of a holomorphic function on $D(x_{I})$, then each $g_{I,p}$ is also the Laurent expansion (at $0$) of a holomorphic function on $D(x_{I})$. This follows from the fact $g_{I}$ is absolutely convergent and so not only can the series terms be rearranged, but subseries $g_{I,p}$ is also absolutely convergent on all of $D(x_{I})$. Also note that, by comparing homogeneous terms, the (partial) derivatives of $g_{I,p}$ are recovered from the (partial) derivatives of $g_{I}$.
    \item If $\eta_{I} = \sum_{p \in \mathbb{Z}} \eta_{I,p}$ is the homogeneous decomposition of a logarithmic $j$-form from $\Gamma(D(x_{I}), \Omega_{X}^{j}(\log \mathscr{A}))$, then so is each $\eta_{p}$. The previous item shows $\eta_{p} \in \frac{1}{f} \Gamma(D(x_{I}), \Omega_{X}^{j}(\log \mathscr{A}))$. So we need only verify $d(\eta_{p})$ also has a pole of order at most one along $f$. We know $d(\eta)$ has a pole of order at most one along $f$, which equates to 
    $d(f) \wedge \sum_{p \in \mathbb{Z}} \sum_{\substack{J \subseteq [n] \\ |J| =j}}  g_{I,J,p+\deg(f)-|J|} dx_{J}$ belonging to $f \cdot \Gamma(D(x_{I}), \Omega_{X}^{j+1})$. Because $\eta_{I}$ is absolutely convergent, we may compute this wedge product term-by-term; because $d(f)$ is homogeneous of degree $\deg(f)$, wedging with $d(f)$ sends a homogeneous series of degree $k$ to a homogeneous series of degree $k+\deg(f)$. By comparing the homogeneous decomposition of the result of wedging with $d(f)$ (and using the fact the function $0$ has a unique Laurent expansion (at $0$) on $D(x_{I})$), we conclude $d(f) \wedge \sum_{\substack{J \subseteq [n] \\ |J| =j}}  g_{I,J,p+\deg(f)-|J|} dx_{J}$ also belongs to $f \cdot \Gamma(D(x_{I}), \Omega_{X}^{j+1})$.
\end{enumerate}
\end{remark}

We explicitly study the sheaf-theoretic \v{C}ech complex of $\Omega_{X}^{j}(\log \mathscr{A})$ attached to the open cover $\{D(x_{i})\}$ of $X \setminus 0$. We name this 
\[
(C^{\bullet}(\{D(x_{i})\}, \Omega_{X}^{j}(\log \mathscr{A})), \epsilon)
\]
where $\epsilon$ is the standard \v{C}ech differential (and defined similarly to the algebraic \v{C}ech differential). We almost always omit the $\epsilon$ for aesthetic reasons.
When working with a particular element from the \v{C}ech complex we use the notation
\[
\eta = \bigoplus_{\substack{I \subseteq [n] \\ |I| = t}} \eta_{I} = \bigoplus_{\substack{I \subseteq [n] \\ |I| = t}}\sum_{p \in \mathbb{Z}} \eta_{I,p} \in \bigoplus_{\substack{I \subseteq [n] \\ |I| = t}} \Gamma(D(x_{I}), \Omega_{X}^{j}(\log f)) 
\]
where $\eta_{I}$ is as in \eqref{eqn - our expression for log j-forms on distinguished opens} and $\eta_{I,p}$ is as in \eqref{eqn - log j-form decomposition into homogeneous parts}.

Similar arguments as in the previous subsection show that the differentials $\nabla_{\omega}$ and $\iota_{E}$ commute with the \v{C}ech differentials, yielding commutative diagrams similar to those of Proposition \ref{prop - definition of N diagram, Cech twisted log de rham lattice} and Proposition \ref{prop - comm square of N diagram with contraction}. Therefore, as in Proposition \ref{prop- contraction and twisted log de rham on local cohomology well defined} these induce well-defined complexes on \v{C}ech cohomology. We give notation for this:

\begin{define} \label{def - analytic twisted complexes of Cech cohomology}
For the open cover $\{D(x_{i})\}$ of $X \setminus 0$, denote the $t^{\text{th}}$ \v{C}ech cohomology of $\Omega_{X}^{j}(\log \mathscr{A})$ with respect to this cover by
\[
H_{\Cech}^{t}(\Omega_{X}^{j}(\log \mathscr{A})) = H^{t}(C^{\bullet}(\{D(x_{i})\}, \Omega_{X}^{j}(\log \mathscr{A}))).
\]
The resultant complex on \v{C}ech cohomology induced by $\nabla_{\omega}$ is named
\[
(H_{\Cech}^{t}(\Omega_{X}^{\bullet}(\log \mathscr{A})), \nabla_{\omega}) = \enspace \to H_{\Cech}^{t}(\Omega_{X}^{j}(\log \mathscr{A})) \xrightarrow[]{\nabla_{\omega}} H_{\Cech}^{t}(\Omega_{X}^{j+1}(\log \mathscr{A})) \to 
\]
and the resultant complex on \v{C}ech cohomolgy induced by $\iota_{E}$ is named
\[
(H_{\Cech}^{t}(\Omega_{X}^{\bullet}(\log \mathscr{A})), \iota_{E}) =  \enspace \to H_{\Cech}^{t}(\Omega_{X}^{j+1}(\log \mathscr{A})) \xrightarrow[]{\iota_{E}} H_{\Cech}^{t}(\Omega_{X}^{j}(\log \mathscr{A})) \to 
\]
\end{define}

It is clear that the \v{C}ech differential sends homogeneous elements of weight $k$ in $C^{t}(\{D(x_{i})\}, \Omega_{X}^{j}(\log \mathscr{A})$ to homogeneous elements of weight $k$ in $C^{t+1}(\{D(x_{i})\}, \Omega_{X}^{j}(\log \mathscr{A})$. Here, by homogeneous, we mean an element $\eta = \bigoplus_{I} \eta_{I} \in \bigoplus_{I} \Gamma(D(x_{I}), \Omega_{X}^{j}(\log \mathscr{A}))$ whose homogeneous decomposition (cf. \eqref{eqn - log j-form decomposition into homogeneous parts}) satisfies $\bigoplus_{I} \eta_{I} = \bigoplus_{I} \sum_{p \in \mathbb{Z}} \eta_{I,p} = \bigoplus_{I} \eta_{I,k}$, that is, $\eta$ is homogeneous of weight $k$ when, for each $I$, $\eta_{I,m} = 0$ for all $m \neq k$. And since $\iota_{E}$ and $\nabla_{\omega}$ also preserve the weight of homogeneous elements, we are authorized to make the following defintion:

\begin{define} \label{def - homogeneous subcomplex of analytic twisted complexes of Cech cohomology}
We denote by 
\[
(C^{\bullet}(\{D(x_{i})\}, \Omega_{X}^{j}(\log \mathscr{A}))_{m}, \epsilon)
\]
the subcomplex of the \v{C}ech complex $(C^{\bullet}(\{D(x_{i})\}, \Omega_{X}^{j}(\log \mathscr{A})), \epsilon)$ comprised solely of homogeneous terms of weight $m$, where by homogeneous we mean in the Laurent series sense as described above. We denote this subcomplex's cohomology by \[
H_{\Cech}^{t}(\Omega_{X}^{\bullet}(\log \mathscr{A})_{m}) = H^{t}((C^{\bullet}(\{D(x_{i})\}, \Omega_{X}^{j}(\log \mathscr{A}))_{m}, \epsilon))
\]
We also have a map of complexes
\begin{equation} \label{eqn - analytic Cech complex cohomology twisted inclusion}
(H_{\Cech}^{t}(\Omega_{X}^{\bullet}(\log \mathscr{A})_{m}), \nabla_{\omega}) \to (H_{\Cech}^{t}(\Omega_{X}^{\bullet}(\log \mathscr{A})), \nabla_{\omega})
\end{equation}
which is induced by including $C^{\bullet}(\{D(x_{i})\}, \Omega_{X}^{j}(\log \mathscr{A})_{m})$ into the whole \v{C}ech complex, and is well-defined because $\nabla_{\omega}$ commutes with the \v{C}ech differential $\epsilon$ and both preserve weights. 
\end{define}

With this set-up we can now state our intended generalization of Proposition \ref{prop - homogeneous subcomplex quasi iso}: we intend to show that with $m = - \iota_{E}(\omega)$, the map \eqref{eqn - analytic Cech complex cohomology twisted inclusion} of complexes induces a surjection on the level of cohomology. While we still use twisted Lie derivative formulas as in Lemma \ref{lemma - twisted lie derivative computation on homogeneous}, unlike the algebraic case their application is not immediate. We track a similar procedure as in \cite{CohomologyComplementFree}, but whereas they work directly with analytic logarithmic de Rham complexes we work a level deeper with their \v{C}ech cohomology, making our version more technically involved. The key tool is the following map:

\begin{define}
Define a map 
\[
\phi: \bigoplus_{\substack{I \subseteq [n] \\ |I| = t}} \Gamma(D(x_{I}), \Omega_{X}^{j}(\log f)) \to \bigoplus_{\substack{I \subseteq [n] \\ |I| = t}} \Gamma(D(x_{I}), \Omega_{X}^{j}(\log f))
\]
by sending each $I$-component $\eta_{I}$ of $\eta$ to
\[
\phi(\eta_{I}) = \phi(\sum_{p} \eta_{I,p}) = \sum_{ p \neq - \iota_{E}(w) } \frac{\eta_{I, p}}{p + \iota_{E}(\omega)}.
\]
\end{define}

\begin{proposition} \label{prop - assignment is well-defined}
The assignment $\phi: \Gamma(D(x_{I}),\Omega_{X}^{j}(\log f)) \to \Gamma(D(x_{I}),\Omega_{X}^{j}(\log f))$ is well-defined. 
\end{proposition}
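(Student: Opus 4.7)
The plan is to verify that the formal Laurent series defining $\phi(\eta_I)$ converges on $D(x_I)$ to a genuine logarithmic $j$-form, and then the well-definedness is immediate from the uniqueness of Laurent expansions on the Reinhardt domain $D(x_I)$.

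First I would set
\[
c = \inf\{\, |p + \iota_E(\omega)| : p \in \mathbb{Z},\ p \neq -\iota_E(\omega)\,\}
\]
and observe $c > 0$: if $\iota_E(\omega) \in \mathbb{Z}$ then $c = 1$, and if $\iota_E(\omega) \notin \mathbb{Z}$ then $c = \mathrm{dist}(\iota_E(\omega), \mathbb{Z}) > 0$. In either case, every coefficient $1/(p + \iota_E(\omega))$ appearing in the definition of $\phi$ has modulus at most $1/c$.

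Next I would use this uniform bound to transfer the absolute convergence of $\eta_I$ to the absolute convergence of $\phi(\eta_I)$. Writing $f \eta_I = \sum_{|J|=j} g_{I,J} dx_J$ with $g_{I,J} = \sum_{\textbf{p}} c_{\textbf{p}} x^{\textbf{p}}$ absolutely convergent on $D(x_I)$ and uniformly convergent on compact subsets, and regrouping by total weight produces the decomposition $\eta_I = \sum_p \eta_{I,p}$ of Remark 2.x(a). The formal series for $f \phi(\eta_I)$ is obtained by rescaling each homogeneous block by a scalar of modulus at most $1/c$. Therefore for any compact $K \subseteq D(x_I)$,
\[
\sum_{p \neq -\iota_E(\omega)} \sup_{x \in K} \Bigl|\frac{\eta_{I,p}(x)}{p + \iota_E(\omega)}\Bigr| \le \frac{1}{c} \sum_{p} \sup_{x \in K} |\eta_{I,p}(x)| < \infty,
\]
so the rearranged Laurent series converges absolutely on $D(x_I)$ and uniformly on compact subsets, defining a holomorphic function after multiplication by $f$. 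Hence $\phi(\eta_I) \in \tfrac{1}{f} \Gamma(D(x_I), \Omega_X^j)$.

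Finally I would promote this to membership in $\Gamma(D(x_I), \Omega_X^j(\log f))$ by checking the differential-closure condition: since uniform convergence on compact sets permits differentiation term by term, $d\phi(\eta_I) = \sum_{p \neq -\iota_E(\omega)} d(\eta_{I,p})/(p + \iota_E(\omega))$, and by Remark 2.x(b) each $\eta_{I,p}$ is itself logarithmic, so each $d(\eta_{I,p})$ has a pole of order at most one along $f$. The same absolute-convergence bound as above (applied now to $f \cdot d(\eta_{I,p})$) shows $f \cdot d\phi(\eta_I)$ is a genuine holomorphic $(j+1)$-form on $D(x_I)$. The main (mild) obstacle is the step of justifying the convergence after rescaling; once the lower bound $c > 0$ is in hand this is routine, and the passage from absolute convergence to preservation of the logarithmic pole condition via term-by-term differentiation follows the same pattern.
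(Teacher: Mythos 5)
Your proof is correct and follows essentially the same route as the paper: decompose $\eta_{I}$ into its homogeneous Laurent pieces, observe that the rescaling factors $1/(p+\iota_{E}(\omega))$ are uniformly bounded (you extract an explicit positive lower bound $c$ on $|p+\iota_{E}(\omega)|$, where the paper argues via $|1/(p+\iota_{E}(\omega))| \to 0$ and comparison for $|p|$ large), and conclude absolute/normal convergence of the rescaled series. The only cosmetic difference is the logarithmic-pole check: you differentiate term by term and invoke part (b) of the preceding remark for each homogeneous piece, whereas the paper wedges with the weight-zero form $df/f$ and commutes $\phi$ past the wedge; both are valid.
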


\begin{proof}
It is enough to verify the claim for $\phi(\eta_{I})$. We first show that $\phi(\eta_{I}) \in \frac{1}{f}\Gamma(D(x_{I}), \Omega_{X}^{j})$, which is essentially a statement about the appropriate Laurent series converging absolutely. By Remark \ref{rmk - general analytic laurent nonsense}, and the absolute convergence of Laurent expansions, the $-\iota_{E}(\omega)$-homogeneous piece $\eta_{I, -\iota_{E}(\omega)}$ is itself in $\frac{1}{f}\Gamma(D(x_{I}), \Omega_{X}^{j})$. And $\phi(\eta_{I}) = \phi(\eta_{I} - \eta_{I,-\iota_{E}(\omega)})$. To test absolute convergence, we apply the modulus to an evaluation of the formal Laurent series $\phi(\eta_{I} - \eta_{I,-\iota_{E}(\omega)})$ at some point. Under $\phi$, every (nonzero) homogeneous $\eta_{I, p}$ of $\eta_{I} - \eta_{I, -\iota_{E}(\omega)}$ gets sent to $(1/p+\iota_{E}(\omega)) \eta_{I,p}$. After applying the modulus, and using the fact that $|1/(p+\iota_{E}(\omega)| \to 0$ as $|p| \to \infty$, we see that each $\mid (1/p+\iota_{E}(\omega)) \eta_{I,p}\mid$ is smaller than $\mid \eta_{I,p}\mid$, provided $p$ is sufficiently far from $0$. Since finite sums of convergent series converge, $\phi(\eta_{I})$ does live in $\frac{1}{f}\Gamma(D(x_{I}), \Omega_{X}^{j})$. 

Now we confirm that $\phi(\eta_{I}) \in \Gamma(D(x_{I}), \Omega_{X}^{j}(\log f))$ by showing $df/f \wedge \phi(\eta_{I}) \in \frac{1}{f} \Gamma(D(x_{I}), \Omega_{X}^{j+1}).$ Using Remark \ref{rmk - general analytic laurent nonsense} again, $\eta_{I, -\iota_{E}(\omega)}$ is logarithmic, and so $\eta_{I} - \eta_{I, -\iota_{E}(\omega)}$ is also logarithmic. Consequently, $df/f \wedge (\eta_{I} - \eta_{I, -\iota_{E}(\omega)}) \in \frac{1}{f} \Gamma(D(x_{I}, \Omega_{X}^{j+1}).$ But because $df/f$ is homogeneous of weight zero, as formal Laurent series we have the equalities
\[
df/f \wedge \phi(\eta_{I}) = df/f \wedge \phi(\eta_{I} - \eta_{I, -\iota_{E}(\omega)}) = \phi(df/f \wedge (\eta_{I} - \eta_{I, -\iota_{E}(\omega)})).
\]
And so the same argument about absolute convergence from the first paragraph, now applied to $\phi(df/f \wedge (\eta_{I} - \eta_{I, -\iota_{E}(\omega)}))$, shows $df/f \wedge \phi(\eta_{I}) \in \frac{1}{f} \Gamma(D(x_{I}), \Omega_{X}^{j+1})$ as required.
\end{proof}

\begin{proposition} \label{prop - assignment preserves Cech cycles and boundaries}
The assignment $\phi$ preserves cycles and boundaries of the \v{C}ech complex $C^{\bullet}(\{D(x_{i})\}, \Omega_{X}^{j}(\log \mathscr{A})).$
\end{proposition}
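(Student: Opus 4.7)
The plan is to show that $\phi$ commutes with the Cech differential $\epsilon$; once this is established, preservation of cycles and boundaries is formal. Concretely, if $\phi \circ \epsilon = \epsilon \circ \phi$, then $\epsilon(\eta)=0$ implies $\epsilon(\phi(\eta)) = \phi(\epsilon(\eta)) = 0$, and $\eta = \epsilon(\xi)$ implies $\phi(\eta) = \phi(\epsilon(\xi)) = \epsilon(\phi(\xi))$.

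To verify $\phi \circ \epsilon = \epsilon \circ \phi$, the key observation is that both the Cech differential and $\phi$ respect the Laurent homogeneous decomposition described in \eqref{eqn - log j-form decomposition into homogeneous parts}. The restriction maps $\Gamma(D(x_{I}), \Omega_{X}^{j}(\log f)) \to \Gamma(D(x_{I \cup \{\ell\}}), \Omega_{X}^{j}(\log f))$ in the Cech differential correspond, on the level of Laurent expansions at $0$, to the natural inclusion of the ring of Laurent series allowing negative powers only in variables indexed by $I$ into the analogous ring for $I \cup \{\ell\}$; this inclusion clearly sends a homogeneous series of weight $p$ to a homogeneous series of weight $p$. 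Consequently, if $\eta_{I} = \sum_{p} \eta_{I,p}$ is the homogeneous decomposition of $\eta_{I}$, then the homogeneous decomposition of $\epsilon(\eta)_{I^{\prime}}$ (for $|I^{\prime}| = t+1$) is obtained term-by-term: $\epsilon(\eta)_{I^{\prime}} = \sum_{p} \bigl(\sum_{\ell : I^{\prime} = I \cup \{\ell\}} (-1)^{\sigma_{I}(\ell)} \eta_{I,p}\bigr)$, with each inner sum being the weight-$p$ homogeneous piece.

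Now $\phi$ was defined to be the operator that, on each $D(x_{I})$-component, scales the weight-$p$ homogeneous piece by $1/(p + \iota_{E}(\omega))$ and annihilates the weight $-\iota_{E}(\omega)$ piece. Since $\epsilon$ preserves weights and commutes with the splitting into homogeneous parts as just noted, the scalar $1/(p + \iota_{E}(\omega))$ (which depends only on $p$) can be freely pushed past $\epsilon$; likewise, the weight $-\iota_{E}(\omega)$ piece of $\epsilon(\eta)$ comes entirely from the weight $-\iota_{E}(\omega)$ pieces of the $\eta_{I}$, so $\phi$ and $\epsilon$ both annihilate it in the same way. This yields the identity $\phi \circ \epsilon = \epsilon \circ \phi$ on every Cech degree, and the proposition follows.

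The only non-formal step will be the bookkeeping in the previous paragraph — verifying that the absolutely convergent Laurent expansion of each $\eta_{I,p}$ behaves well under the restriction maps, which was already set up in Remark \ref{rmk - general analytic laurent nonsense} and Proposition \ref{prop - assignment is well-defined}. Given these, the argument reduces to the elementary observation that any weight-preserving $\mathbb{C}$-linear map commutes with scaling each weight-$p$ piece by a constant depending only on $p$.
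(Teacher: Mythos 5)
Your proposal is correct and is essentially the paper's argument: the paper verifies the cycle and boundary statements separately, but in each case the computation is exactly your commutation identity $\phi\circ\epsilon=\epsilon\circ\phi$, resting on the same two facts — that restriction (hence the Cech differential) preserves the Laurent-homogeneous decomposition by uniqueness and absolute convergence of Laurent expansions, and that $\phi$ scales each weight-$p$ piece by a constant depending only on $p$. Packaging it as a single commutation relation is a slightly cleaner organization of the same proof.
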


\begin{proof}
Suppose $\eta = \bigoplus_{I} \eta_{I} \in \bigoplus_{I} \Gamma(D(x_{I}), \Omega_{X}^{j}(\log f))$ is a $t$-cycle (so $|I| = t)$. This means that for each $M \subseteq [n]$ of cardinality $t+1$ we have, in $\Gamma(D(x_{M}), \Omega_{X}^{j}(\log f))$, the equalities
\begin{align*}
\sum_{m \in M} (-1)^{\sigma_{M \setminus \{m\}}(m)} \eta_{M \setminus \{m\}}     &= \sum_{m \in M} (-1)^{\sigma_{M \setminus \{m\}}(m)} \sum_{p} \eta_{M \setminus \{m\}, p} \\
    & = \sum_{p} \sum_{m \in M} (-1)^{\sigma_{M \setminus \{m\}}(m)} \eta_{M \setminus \{m\}, p} \\
    & = 0.
\end{align*}
(Here $\eta_{M \setminus \{m\}}$ is naturally regarded in $\Gamma(D(x_{M}),\Omega_{X}^{j}(\log f))$ by restriction.) We see that, for each individual $p$,
\begin{equation} \label{eqn - each indiviual p vanishes, verifying cycle}
\sum_{m \in M} (-1)^{\sigma_{M \setminus \{m\}}(m)} \eta_{M \setminus \{m\}, p} = 0.
\end{equation}
Moreover, the $M$-component of $\epsilon(\phi(\eta))$ is
\begin{align*}
\sum_{m \in M} (-1)^{\sigma_{M \setminus \{m\}}(m)} \phi(\eta_{M \setminus \{m\}}) 
    &= \sum_{m \in M} (-1)^{\sigma_{M \setminus \{m\}}(m)} \sum_{ p \neq - \iota_{E}(\omega)}\frac{\eta_{M \setminus \{m\}}}{p + \iota_{E}(\omega)} \\
    &= \sum_{p \neq - \iota_{E}(\omega)} \sum_{m \in M} (-1)^{\sigma_{M \setminus \{m\}}(m)} \frac{\eta_{M \setminus \{m\}, p}}{p + \iota_{E}(\omega)}.
\end{align*}
By \eqref{eqn - each indiviual p vanishes, verifying cycle}, we conclude the above equation equals zero and $\epsilon(\phi(\eta)) = 0$ as well.

The case of boundaries is similar. If $\eta$ is a $t$-boundary, that means there is a $\zeta$ such that $\epsilon(\zeta) = \eta$. Arguing as above, the $I$-component of $\epsilon(\zeta)$ and $\eta$ agree at each homogeneous piece: $\epsilon(\zeta)_{I, p} = \eta_{I,p}$ for all $I$ of cardinality $t$ and all $p \in \mathbb{Z}$. As $\phi(\zeta)$ and $\phi(\eta)$ both scale each homogeneous term in a fixed way, $\epsilon(\phi(\zeta)) = \phi(\eta)$.
\end{proof}

\begin{lemma} \label{lemma - cohomological surjections of analytic homogeneous twisted Cech complex to total twisted Cech complex}
When $\mathscr{A}$ is central, the map of complexes \eqref{eqn - analytic Cech complex cohomology twisted inclusion} with $m = -\iota_{E}(\omega)$, that is,
\[
(H_{\Cech}^{t}(\Omega_{X}^{\bullet}(\log \mathscr{A})_{-\iota_{E}(\omega)}), \nabla_{\omega}) \to (H_{\Cech}^{t}(\Omega_{X}^{\bullet}(\log \mathscr{A})), \nabla_{\omega}),
\]
induces surjections on the level of cohomology for all $0 \leq j \leq n$:
\[
H^{j}(H_{\Cech}^{t}(\Omega_{X}^{\bullet}(\log \mathscr{A})_{-\iota_{E}(\omega)}), \nabla_{\omega}) \twoheadrightarrow H^{j}(H_{\Cech}^{t}(\Omega_{X}^{\bullet}(\log \mathscr{A})), \nabla_{\omega}).
\]
\end{lemma}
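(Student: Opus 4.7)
The plan is to leverage the scaling map $\phi$ together with the contraction $\iota_{E}$ to construct an explicit chain homotopy witnessing surjectivity, mirroring the homotopy $\iota_{E}$ used in the algebraic Proposition \ref{prop - homogeneous subcomplex quasi iso}. The crucial ingredient is the twisted Lie derivative formula $L_{\omega} = \nabla_{\omega} \iota_{E} + \iota_{E} \nabla_{\omega}$ of Lemma \ref{lemma - twisted lie derivative computation on homogeneous}, which, extended to the Laurent setting of Remark \ref{rmk - general analytic laurent nonsense}, acts as multiplication by $p + \iota_{E}(\omega)$ on the weight-$p$ homogeneous piece. This means $\phi$ is exactly a term-by-term inverse to $L_{\omega}$ off the special weight, so $\phi \circ L_{\omega}$ is the projection onto the subspace of weights $\neq -\iota_{E}(\omega)$.

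First I would observe that $\phi$ commutes with $\nabla_{\omega}$: since $\nabla_{\omega}$ preserves Laurent weight (as $\omega$ is weight zero), the weight-$p$ piece of $\nabla_{\omega} \eta_{I}$ is $\nabla_{\omega} \eta_{I,p}$, and scaling each weight-$p$ piece by $1/(p + \iota_{E}(\omega))$ commutes with this. Combined with Proposition \ref{prop - assignment preserves Cech cycles and boundaries}, which showed $\phi$ commutes with the Cech differential $\epsilon$, we conclude $\phi$ is an endomorphism of the double complex $C^{\bullet}(\{D(x_{i})\}, \Omega_{X}^{\bullet}(\log \mathscr{A}))$ that commutes with both differentials. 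Similarly, the Cech-level analog of Proposition \ref{prop - comm square of N diagram with contraction} gives $\epsilon \iota_{E} = \iota_{E} \epsilon$.

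Now fix a cohomology class in $H^{j}(H_{\Cech}^{t}(\Omega_{X}^{\bullet}(\log \mathscr{A})), \nabla_{\omega})$ represented by $\eta \in C^{t}(\{D(x_{i})\}, \Omega_{X}^{j}(\log \mathscr{A}))$ satisfying $\epsilon(\eta) = 0$ and $\nabla_{\omega}(\eta) = \epsilon(\zeta)$ for some $\zeta \in C^{t-1}(\{D(x_{i})\}, \Omega_{X}^{j+1}(\log \mathscr{A}))$. Because $\epsilon$ and $\nabla_{\omega}$ preserve weights, the homogeneous component $\eta_{-\iota_{E}(\omega)}$ is itself a Cech cocycle with $\nabla_{\omega}(\eta_{-\iota_{E}(\omega)}) = \epsilon(\zeta_{-\iota_{E}(\omega)})$ and therefore represents a class in the cohomology of the homogeneous subcomplex. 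I claim its image under \eqref{eqn - analytic Cech complex cohomology twisted inclusion} is $[\eta]$. Using the Lie derivative identity and the commutation $\iota_{E} \epsilon = \epsilon \iota_{E}$,
\[
L_{\omega}(\eta) = \nabla_{\omega}(\iota_{E} \eta) + \iota_{E}(\epsilon \zeta) = \nabla_{\omega}(\iota_{E} \eta) + \epsilon(\iota_{E} \zeta).
\]
Applying $\phi$ and invoking its commutation with $\nabla_{\omega}$ and $\epsilon$,
\[
\eta - \eta_{-\iota_{E}(\omega)} = \phi(L_{\omega} \eta) = \nabla_{\omega}(\phi(\iota_{E} \eta)) + \epsilon(\phi(\iota_{E} \zeta)).
\]
Since $\iota_{E} \eta$ is a Cech cocycle (as $\epsilon \iota_{E} \eta = \iota_{E} \epsilon \eta = 0$) and $\phi$ preserves cocycles, $\phi(\iota_{E} \eta)$ represents a class in $H_{\Cech}^{t}(\Omega_{X}^{j-1}(\log \mathscr{A}))$; meanwhile $\epsilon(\phi(\iota_{E} \zeta))$ is a Cech coboundary. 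Hence in $H_{\Cech}^{t}(\Omega_{X}^{j}(\log \mathscr{A}))$ we have $[\eta] - [\eta_{-\iota_{E}(\omega)}] = \nabla_{\omega}[\phi(\iota_{E} \eta)]$, so $[\eta]$ and $[\eta_{-\iota_{E}(\omega)}]$ determine the same class in $H^{j}(H_{\Cech}^{t}(\Omega_{X}^{\bullet}(\log \mathscr{A})), \nabla_{\omega})$, proving surjectivity.

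The main obstacle has already been surmounted in the preceding propositions: verifying that $\phi$ is well-defined on analytic logarithmic forms (Proposition \ref{prop - assignment is well-defined}) and preserves the Cech complex structure (Proposition \ref{prop - assignment preserves Cech cycles and boundaries}) required the delicate absolute-convergence arguments for Laurent expansions. Once these are in place, the homotopy-style manipulation above is formal and parallels the algebraic argument of Proposition \ref{prop - homogeneous subcomplex quasi iso}, with $\phi \circ \iota_{E}$ playing the role of the contracting homotopy that kills everything outside the special weight.
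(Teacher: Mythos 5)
Your proof is correct and follows essentially the same route as the paper: both use the twisted Lie derivative identity $L_{\omega} = \nabla_{\omega}\iota_{E} + \iota_{E}\nabla_{\omega}$ together with the scaling map $\phi$ (which inverts $L_{\omega}$ off the weight $-\iota_{E}(\omega)$ and commutes with $\epsilon$ and $\nabla_{\omega}$) to show that $[\eta]$ and $[\eta_{-\iota_{E}(\omega)}]$ agree in $H^{j}(H_{\Cech}^{t}(\Omega_{X}^{\bullet}(\log \mathscr{A})), \nabla_{\omega})$. The only cosmetic difference is that you work with explicit representatives and a chosen $\zeta$ with $\nabla_{\omega}(\eta) = \epsilon(\zeta)$, whereas the paper manipulates the Cech cohomology classes directly; the content is identical.
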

\begin{proof}
Consider
\[
[\bigoplus_{I} \eta_{I}] \in \ker [H^{t}(C(\{D(x_{i})\}, \Omega_{X}^{j}(\log f))) \xrightarrow[]{\nabla_{\omega}} H^{t}(C(\{D(x_{i})\}, \Omega_{X}^{j}(\log f)))].
\]
We apply $L_{\omega} = \nabla_{\omega}(\iota_{E}) + \iota_{E}(\nabla_{\omega})$, the twisted Lie derivative, to $[\bigoplus_{I} \eta_{I}] = [\bigoplus_{I} \sum_{p} \eta_{I,p}]$. Since this is done by applying $L_{\omega}$ to a representative and since we may do so term-by-term thanks to absolute convergence, we may use Lemma \ref{lemma - twisted lie derivative computation on homogeneous} to compute
\[
[\bigoplus_{I} \sum_{p \in \mathbb{Z}} (p + \iota_{E}(\omega))\eta_{I,p}] = L_{\omega}([\bigoplus_{I} \eta_{I}]) = \nabla_{\omega}(\iota_{E}([\bigoplus_{I} \eta_{I}])) = \nabla_{\omega}([\bigoplus_{I} \iota_{E}(\eta_{I})]).
\]
That is, $\bigoplus_{I} \sum_{p \in \mathbb{Z}} (p + \iota_{E}(\omega))\eta_{I,p} - \nabla_{\omega}(\bigoplus_{I} \iota_{E}(\eta_{I}))$ is a \v{C}ech $t$-boundary. By Proposition \ref{prop - assignment preserves Cech cycles and boundaries}, if we apply $\phi$ to this \v{C}ech $t$-boundary we get another \v{C}ech $t$-boundary. Since $\nabla_{\omega}$ preserves weights, $\phi$ and $\nabla_{\omega}$ commute. So applying $\phi$ to our \v{C}ech $t$-boundary gives the \v{C}ech $t$-boundary
\[
\bigoplus_{I} \sum_{p \neq - \iota_{E}(\omega)} \eta_{I,p} - \nabla_{\omega}(\bigoplus_{I} \phi(\iota_{E}(\eta_{I}))).
\]
With some mild rearranging, we have an equality of \v{C}ech cohomology classes
\[
[\bigoplus_{I} \sum_{p \in \mathbb{Z}} \eta_{I,p} - \bigoplus_{I} \eta_{I, -\iota_{E}(\omega)}] = [\nabla_{\omega}(\bigoplus_{I} \phi(\iota_{E}(\eta_{I})))].
\]
(Here we use the fact that $\bigoplus_{I} \sum_{p \in \mathbb{Z}} \eta_{I,p}$ was assumed to be a \v{C}ech $t$-cycle and so, because the \v{C}ech complex preserves weight, each of its homogeneous parts is also a \v{C}ech $t$-cycle.)

This means that we have the following equality of cohomology classes in $H^{j}(H_{\Cech}^{t}(\Omega_{X}^{\bullet}(\log \mathscr{A})), \nabla_{\omega})$:
\begin{equation} \label{equality of twisted cohomology of Cech cohomology classes, homogenous part}
[[\bigoplus_{I} \sum_{p \in \mathbb{Z}} \eta_{I,p}]] = [[\bigoplus_{I} \eta_{I, -\iota_{E}(\omega)}]].
\end{equation}
Since the map \eqref{eqn - analytic Cech complex cohomology twisted inclusion} explicitly obeys
\[
H_{\Cech}^{t}(\Omega_{X}^{j}(\log \mathscr{A})_{-\iota_{E}(\omega)}) \ni [\bigoplus_{I} \eta_{I, -\iota_{E}(\omega)}] \mapsto [\bigoplus_{I} \eta_{I, -\iota_{E}(\omega)}] \in H_{\Cech}^{t}(\Omega_{X}^{j}(\log \mathscr{A})),
\]
we have, thanks to \eqref{equality of twisted cohomology of Cech cohomology classes, homogenous part}, the induced identification
\begin{align*}
H^{j}(H_{\Cech}^{t}(\Omega_{X}^{\bullet}(\log \mathscr{A})_{-\iota_{E}(\omega)}), \nabla_{\omega}) 
    & \ni [[\bigoplus_{I} \eta_{I,-\iota_{E}(\omega)}]] \\
    &\mapsto [[\bigoplus_{I} \eta_{I}]] \in H^{j}(H_{\Cech}^{t}(\Omega_{X}^{\bullet}(\log \mathscr{A})), \nabla_{\omega}).
\end{align*}
The proves the desired cohomological surjections.
\end{proof}

As a final comment, we note that a similar strategy applies to the cohomology analytic twisted logarithmic de Rham complex directly. That is, the stalk at the origin of the twisted analytic logarithmic (resp. analytic) de Rham complex is quasi-isomorphic to a particular homogeneous subcomplex. Again, we mean homogeneous in the sense of the Laurent series expansion (at $0$). As this does not involve \v{C}ech complexes, this is essentially the same procedure as \cite{CohomologyComplementFree}.

\begin{lemma} \label{lemma - twisted analytic log qi to homogeneous subcomplex}
When $\mathscr{A}$ is central, the following inclusions are quasi-isomorphisms:
\begin{align*}
(\Omega_{X,0}^{\bullet}(\log \mathscr{A})_{-\iota_{E}(\omega)}, \nabla_{\omega}) \xhookrightarrow{\qi} (\Omega_{X,0}^{\bullet}(\log \mathscr{A}), \nabla_{\omega}); \\
(\Omega_{X,0}^{\bullet}(\star \mathscr{A})_{-\iota_{E}(\omega)}, \nabla_{\omega}) \xhookrightarrow{\qi} (\Omega_{X,0}^{\bullet}(\star \mathscr{A}), \nabla_{\omega})
\end{align*}
\end{lemma}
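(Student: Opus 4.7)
The plan is to repeat the strategy of Lemma \ref{lemma - cohomological surjections of analytic homogeneous twisted Cech complex to total twisted Cech complex} in the simpler setting of stalks at the origin, obtaining the analytic analogue of Proposition \ref{prop - homogeneous subcomplex quasi iso}. Since $\mathscr{A}$ is central, $f$ is homogeneous, so every germ $\eta \in \Omega_{X,0}^{j}(\log \mathscr{A})$ admits a weight decomposition $\eta = \sum_{p \in \mathbb{Z}} \eta_{p}$ via the Taylor expansion at the origin of the holomorphic form $f\eta$ (only finitely many negative $p$ occur, since $f \eta$ is holomorphic). The same decomposition works for $\eta \in \Omega_{X,0}^{j}(\star \mathscr{A})$ by writing $\eta = g/f^{k}$ and expanding the numerator. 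A verification entirely parallel to Remark \ref{rmk - general analytic laurent nonsense}---comparing homogeneous weights in the pole-order condition $df \wedge f\eta \in f \cdot \Omega_{X,0}^{j+1}$---shows that each $\eta_{p}$ lies in the same stalk as $\eta$.

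The key tool is the stalk-level analogue of the map from Proposition \ref{prop - assignment is well-defined}:
\[
\phi(\eta) \;=\; \sum_{p \neq - \iota_{E}(\omega)} \frac{\eta_{p}}{p + \iota_{E}(\omega)}.
\]
Well-definedness in $\Omega_{X,0}^{j}(\log \mathscr{A})$ and $\Omega_{X,0}^{j}(\star \mathscr{A})$ follows exactly as in Proposition \ref{prop - assignment is well-defined}: the scalars $1/(p + \iota_{E}(\omega))$ are bounded, so absolute convergence of $\sum_{p} \eta_{p}$ in a neighborhood of $0$ implies that of $\phi(\eta)$, and the logarithmic pole-order check reduces to its verification on $\eta - \eta_{-\iota_{E}(\omega)}$. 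Since $\nabla_{\omega}$ preserves weights, $\phi$ commutes with $\nabla_{\omega}$.

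For surjectivity on cohomology, take a $\nabla_{\omega}$-cycle $\eta$. By Lemma \ref{lemma - twisted lie derivative computation on homogeneous} applied term-by-term (legal because each $\eta_{p}$ is itself a stalk element and the series converges absolutely), the twisted Lie derivative formula gives
\[
\nabla_{\omega}(\iota_{E}(\eta)) \;=\; L_{\omega}(\eta) \;=\; \sum_{p}(p + \iota_{E}(\omega))\,\eta_{p}.
\]
Applying $\phi$ and using $\phi \circ \nabla_{\omega} = \nabla_{\omega} \circ \phi$ yields $\eta - \eta_{-\iota_{E}(\omega)} = \nabla_{\omega}(\phi(\iota_{E}(\eta)))$, so $[\eta] = [\eta_{-\iota_{E}(\omega)}]$ in cohomology. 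For injectivity, suppose $\eta$ is $-\iota_{E}(\omega)$-homogeneous and $\eta = \nabla_{\omega}(\zeta)$ in the full complex; decomposing $\zeta = \sum_{p} \zeta_{p}$ (with each $\zeta_{p}$ again in the stalk) and using that $\nabla_{\omega}$ is weight-preserving, the weight-$p$ component $\nabla_{\omega}(\zeta_{p})$ must vanish for $p \neq -\iota_{E}(\omega)$ and equal $\eta$ for $p = -\iota_{E}(\omega)$, exhibiting $\eta$ as a boundary already inside the homogeneous subcomplex. This handles both inclusions simultaneously---the meromorphic case differs only in that the homogeneous pieces are manifestly meromorphic without any pole-order verification.

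The only technical obstacle is ensuring that the homogeneous decomposition respects both stalk convergence and the logarithmic pole condition; both reduce to absolute convergence of the Taylor expansion of $f \eta$ at $0$, together with the argument of Remark \ref{rmk - general analytic laurent nonsense} transported from Reinhardt domains $D(x_{I})$ to a neighborhood of the origin. Once these are in hand, the rest is a direct application of the twisted Lie derivative formula, mirroring the algebraic case of Proposition \ref{prop - homogeneous subcomplex quasi iso}.
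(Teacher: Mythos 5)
Your proof is correct and follows essentially the same route as the paper's: define the stalk-level scaling map $\phi$, apply the twisted Lie derivative formula of Lemma \ref{lemma - twisted lie derivative computation on homogeneous} term-by-term to a cycle, and conclude $[\eta]=[\eta_{-\iota_{E}(\omega)}]$. Your explicit injectivity check (comparing weight components of a bounding $\zeta$) and the convergence discussion are details the paper leaves implicit, but they are consistent with its argument.
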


\begin{proof}
We just do part involving the twisted logarithmic de Rham complexes, as the meromorphic case is similar. Here, we consider $\phi$ as a map from $\Omega_{X,0}^{j}(\log \mathscr{A}) \xrightarrow[]{} \Omega_{X,0}^{j}(\log \mathscr{A})$ with an entirely similar definition as before. Let $\eta$ be a logarithmic $j$-form that is a $\nabla_{\omega}$-cycle. One one hand, $L_{\omega}(\eta) = \nabla_{\omega}(\sum_{p} \eta_{p}) = \sum_{p} (p + \iota_{E}(\omega)) \eta_{p}$; on the other hand, $L_{\omega}(\eta) = \nabla_{\omega}(\iota_{E}(\eta))$. Then $\nabla_{\omega}(\phi(\iota_{E}(\eta))) = \eta - \eta_{-\iota_{E}(\omega)}$. So the class $[\eta]$ equals the class $[\eta_{-\iota_{E}(\omega)}]$.
\end{proof}

\subsection{Castelnuovo--Mumford Regularity of Logarithmic Forms} \text{ }

We return to the algebraic setting. In light of Proposition \ref{prop - homogeneous subcomplex quasi iso}, it is useful to understand which graded components of $H_{\mathfrak{m}}^{t}(\Omega_{R}^{j}(\log \mathscr{A}))$ are zero and which are not. For if particular graded components vanish, Proposition \ref{prop - homogeneous subcomplex quasi iso} lets us conclude the twisted de Rham complex on local cohomology modules is acyclic. Instead of trying to answer this question for each pair $(t, j) \in [n]^{2}$, it is much easier to fix $j$ and let $t$ vary. Even still, it is easier to package the data for $\{(0,j), (1,j), \dots, (n, j)\}$ together as in the following classical definition:
\begin{define} \label{def - CM regularity}
Let $R = \mathbb{C}[x_{1}, \dots, x_{n}]$ be graded canonically (so $\deg(x_{i}) = 1)$, $\mathfrak{m}$ the irrelevant ideal, and $M$ a graded module. We allow $M$ to be $\mathbb{Z}$-graded. The \emph{Castelnuovo--Mumford regularity} $\reg(M)$ is
\[
\reg(M) = \max\limits_{t} \{ \max \deg (H_{\mathfrak{m}}^{t}(M)) + t \}, 
\]
where $\max \deg H_{\mathfrak{m}}^{t}(M)$ is the largest $u \in \mathbb{Z}$ such that $H_{\mathfrak{m}}^{t}(M)_{u} \neq 0$, is $\infty$ if no such maximum exists, and is $-\infty$ if $H_{\mathfrak{m}}^{t}(M) = 0$. In particular, the Castelnuovo--Mumford regularity of the zero module is $-\infty$.
\end{define}

The purpose of this subsection is to find a good upper bound on the Castelnuovo--Mumford regularity of $\Omega_{R}^{j}(\log \mathscr{A})$, with the eventual aim of showing $(H_{\mathfrak{m}}^{t}( \Omega_{R}^{\bullet}(\log \mathscr{A})), \nabla_{\omega})$ (and later its analytic counterpart $(H_{\Cech}^{t}(\Omega_{X}^{\bullet}(\log \mathscr{A})), \nabla_{\omega})$) is acyclic via Proposition \ref{prop - homogeneous subcomplex quasi iso} (subject to relatively benign conditions on $\omega$).

\begin{convention}
We use the suffix $-(m)$, with $m \in \mathbb{Z}$, to denote a twist in the grading. Explicitly, the $u$-homogeneous part of $R(m)$ is: $R(m)_{u} = R_{u + m}$. If an isomorphism is written as $M \simeq R(m)$ then the isomorphism honors the grading. Another convention: we use just $\reg(M)$ without specifying the ring in question, hoping context rectifies any confusion.
\end{convention}

\begin{remark} \label{rmk - general/useful facts about regularity} \text{ }
\begin{enumerate}[label=(\alph*)]
    \item Since $R$ is free, $H_{\mathfrak{m}}^{t}(R) = 0$ for all $t \neq n$. Using the \v{C}ech complex to compute local cohomology, 
    \[
    H_{\mathfrak{m}}^{n}(R) \simeq \frac{R[x_{1}^{-1}, \dots, x_{n}^{-1}]}{\sum_{1 \leq i \leq n} R[x_{1}^{-1}, \dots, x_{i-1}^{-1}, x_{i+1}^{-1}, \dots, x_{n}^{-1}]}
    \]
    and so $H_{\mathfrak{m}}^{n}(R)_{-n} = \mathbb{C} \cdot x_{1}^{-1} \cdots x_{n}^{-1}$ and higher homogeneous subspaces vanish. So $\reg(R) = 0$.
    \item If $M$ and $N$ are graded $R$-modules such that $M(\ell) = N$, then $H_{\mathfrak{m}}^{t}(M)(\ell) = H_{\mathfrak{m}}^{t}(N)$ and $\reg(N) + \ell = \reg(M).$
    \item Because the \v{C}ech complex definition of local cohomology implies direct sums commutes with local cohomology, we have:
    \begin{align*}
    \reg(M \oplus N) &= \max\limits_{t} \{ \max\{ \max \deg (H_{\mathfrak{m}}^{t}(M))), \max \deg (H_{\mathfrak{m}}^{t}(N))\}+ t \} \\
        &= \max\{ \reg(M), \reg(N)\}.
    \end{align*}
    \item Suppose $A$ and $B$ are polynomial $k$-algebras that are $\mathbb{N}$-graded. Let $N$ (resp. $M$) be a graded $A$ (resp. $B$) module. If $A_{0}$ and $B_{0}$ are finitely generated over $k$ and the irrelevant ideal of $A$ (resp. $B$) is $A_{> 0}$ (resp. $B_{>0}$), one can define graded local cohomology and regularity similarly to Definition \ref{def - CM regularity}. Then the Kunneth formula for local cohomology modules leads to a Kunneth formula for regularity:
    \[
    \reg(N \otimes_{k} M) = \reg(N) + \reg(M)
    \]
    where $\reg(N \otimes_{k} M)$ considers $N \otimes_{k} M$ as a $A \otimes_{k} B$-module, $\reg(N)$ considers $N$ as a $A$-module, and $\reg(M)$ considers $M$ as a $B$-module. See Lemma 1.5(5) of \cite{SymondsCMRegularityCohomologyRingofGroup} for details; Proposition 2.4 of \cite{BensonRegularityConjectureCohomologyFiniteGroup} for a different proof.
\end{enumerate}
\end{remark}

In \cite{DerksenSidman-CMRegularyByApproximation}, Derksen and Sidman gave a method to approximate Castelnuovo--Mumford regularity which is particularly potent for objects attached to hyperplane arrangements thanks to their amenability to addition/subtraction techniques. It is:

\begin{corollary} \label{cor - DerksenSidman CM approximation result} \normalfont{(Corollary 3.7 of \cite{DerksenSidman-CMRegularyByApproximation})} Let $F = \oplus R(0)$ be a finite, free $R$-module generated in degree zero. Suppose that $M \subseteq F$ is a graded $R$-module and that we have modules $M_{1}, \dots, M_{\ell} \subseteq F$ and ideals $I_{1}, \dots, I_{\ell} \subseteq R$ such that 
\[
I_{i} \cdot M_{i} \subseteq M \subseteq M_{i} \qquad \forall i.
\]
If $I_{1} + \cdots + I_{\ell} = \mathfrak{m}$ and $\reg(M_{i}) \leq r - 1$ for some $r \geq 2$ and all $i$, then $\reg(M) \leq r$.
\end{corollary}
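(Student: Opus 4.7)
The plan is to prove $\reg(M)\leq r$ by extracting information from the long exact sequences in local cohomology attached to the filtrations $I_i M_i\subseteq M\subseteq M_i$, and then combining those bounds across all $i$ using the covering hypothesis $I_1+\cdots+I_\ell=\mathfrak m$. Set $N_i := M_i/M$. The left containment $I_iM_i\subseteq M$ is precisely the statement that $I_i\cdot N_i=0$, so $\Supp(N_i)\subseteq V(I_i)$; the covering condition then gives $\bigcap_i\Supp(N_i)\subseteq V(\mathfrak m)=\{0\}$, which is the geometric input that the various $N_i$ are "spread out" enough to be controlled collectively.

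First, for each $i$ the short exact sequence $0\to M\to M_i\to N_i\to 0$ yields the piece
\[
H^{t-1}_{\mathfrak m}(N_i)\longrightarrow H^t_{\mathfrak m}(M)\longrightarrow H^t_{\mathfrak m}(M_i),
\]
so $\max\deg H^t_{\mathfrak m}(M)\leq \max\{\max\deg H^{t-1}_{\mathfrak m}(N_i),\,\max\deg H^t_{\mathfrak m}(M_i)\}$. The hypothesis $\reg(M_i)\leq r-1$ handles the right-hand term, giving $\max\deg H^t_{\mathfrak m}(M_i)+t\leq r-1$. Hence it is enough to show $\reg(N_i)\leq r$ for every $i$; equivalently, $\max\deg H^{t-1}_{\mathfrak m}(N_i)+(t-1)\leq r$, i.e., $\max\deg H^{t-1}_{\mathfrak m}(N_i)\leq r-t+1$, which combined with the above yields $\max\deg H^t_{\mathfrak m}(M)+t\leq r$ as required.

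Next, to bound $\reg(N_i)$, factor the inclusion $I_iM_i\subseteq M$ through $M_i$ itself: $N_i$ is a quotient of $M_i/I_iM_i$, both annihilated by $I_i$. Apply the short exact sequence $0\to I_iM_i\to M_i\to M_i/I_iM_i\to 0$ together with $\reg(M_i)\leq r-1$ and the fact that $I_iM_i$, being the image of $I_i\otimes_R M_i\to M_i$, has regularity controlled by $\reg(M_i)$ plus the maximal degree of generators of $I_i$, to obtain $\reg(M_i/I_iM_i)\leq r-1$ (or perhaps $r$, depending on shifts). One then transfers this bound to the quotient $N_i$; here the Mayer--Vietoris sequence
\[
\cdots\to H^t_{\mathfrak m}(N_i)\to H^t_{I_i}(N_i)\oplus H^t_{J}(N_i)\to H^t_{I_i\cap J}(N_i)\to\cdots,
\]
together with $\Supp(N_i)\subseteq V(I_i)$ and the covering relation, forces the relevant $H^t_{\mathfrak m}(N_i)$ to vanish outside the expected range.

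The main obstacle is the second step: converting the localization-style information ($N_i$ is $I_i$-torsion, so its $\mathfrak m$-local cohomology really sees only the $I_i$-primary part) into a numerical regularity bound of the desired shape. This is where the covering hypothesis $\sum I_j=\mathfrak m$ must be used nontrivially, and also where the assumption $r\geq 2$ buys the necessary slack: the degree shifts accumulated in passing from $M_i$ to $M_i/I_iM_i$ to the quotient $N_i$ must be kept to at most $+1$, so the inductive bound on $\reg(N_i)$ is at most $r$ and not $r+1$. Once this is arranged, the long exact sequence of the first step produces $\reg(M)\leq r$ directly, completing the argument.
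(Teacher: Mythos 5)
This statement is not proved in the paper at all: it is quoted verbatim as Corollary 3.7 of Derksen--Sidman, whose actual argument runs by induction on $\dim R$ via generic hyperplane sections, not by the local-cohomology bookkeeping you attempt. Judged on its own terms, your proposal has a genuine gap, and in fact two concrete defects. First, the reduction in your opening step is off by one and, more seriously, aims at something false. From the exact sequence $H^{t-1}_{\mathfrak m}(N_i)\to H^t_{\mathfrak m}(M)\to H^t_{\mathfrak m}(M_i)$ you need $\max\deg H^{t-1}_{\mathfrak m}(N_i)\le r-t$, i.e.\ $\reg(N_i)\le r-1$, to conclude $\reg(M)\le r$; the bound $\reg(N_i)\le r$ that you claim suffices only yields $\reg(M)\le r+1$. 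Worse, no bound on $\reg(N_i)$ for a \emph{single} $i$ can be extracted from the hypotheses: take $R=\mathbb{C}[x,y]$, $F=M_1=R$, $I_1=(x)$, $M=(x,y^{100})$. Then $I_1M_1\subseteq M\subseteq M_1$ and $\reg(M_1)=0$, yet $\reg(N_1)=\reg(R/(x,y^{100}))=99$. So the entire content of the corollary lives in the covering hypothesis $I_1+\cdots+I_\ell=\mathfrak m$, which your argument only touches in the final Mayer--Vietoris sentence involving an undefined ideal $J$ and an unproved vanishing claim --- the step you yourself label ``the main obstacle'' and never carry out.

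Second, the intermediate bounds you invoke are not valid operations on regularity. Regularity does not pass to quotients: from a surjection $P\twoheadrightarrow N$ one only gets $\reg(N)\le\max\{\reg(P),\reg(\ker)-1\}$, so knowing $\reg(M_i/I_iM_i)$ says nothing about its quotient $N_i$ without controlling the kernel (which is $M/I_iM_i$, i.e.\ exactly the unknown module). Likewise $\reg(I_iM_i)$ is not bounded by $\reg(M_i)$ plus the generating degrees of $I_i$ in general; products of ideals and modules can have strictly larger regularity. If you want to prove the approximation theorem rather than cite it, the workable route is Derksen and Sidman's: reduce modulo a general linear form $h$, observe that the hypotheses are inherited by $M/hM\subseteq F/hF$ (here the covering condition guarantees a linear form avoiding all associated primes outside $V(\mathfrak m)$ simultaneously for $M$ and the $M_i$), induct on dimension, and handle $H^0_{\mathfrak m}$ and the generating degrees separately --- this is where the hypotheses $r\ge2$ and ``$F$ generated in degree $0$'' are consumed.
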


\noindent We have emphasized $F$ is generated in degree zero to be in the situation of loc.\ cit.\

Derksen and Sidman use this result to get an upper bound on the regularity of $\Der_{R}(-\log \mathscr{A})$. In Proposition 1.3 of \cite{SaitoDegenerationOfPoleOrderSpectralSequence4Variables}, M. Saito showed their argument essentially proved (i.e. with only a minor adjustment) a stronger bound: for a central, essential arrangement $A$,
\[
\reg(\Der_{R}(-\log \mathscr{A})) \leq \deg(\mathscr{A}) - \rank(\mathscr{A}).
\]
(We use the convention $\deg(\partial_{i}) = -1$.) Taking M. Saito's improvement as a model, we estimate the regularity of the logarithmic $j$-forms $\Omega_{R}^{j}(\log \mathscr{A})$ along $\mathscr{A}$. Our situation is more involved because: (1) one cannot apply Corollary 3.7 of \cite{DerksenSidman-CMRegularyByApproximation} to $\Omega_{R}^{j}(\log \mathscr{A})$ itself; (2) the inductive scheme mandates relating $(j-1)$-logarithmic forms to $j$-logarithmic forms; (3) the case of $\Omega_{R}^{\rank(\mathscr{A}) - 1}(\log \mathscr{A})$ is bespoke.

\begin{example} \label{ex - CM regularity for SNC}
Consider $\mathscr{A}$ cut out by $f = x_{1} \cdots x_{n} \in R$, i.e. a simple normal crossing divisor. It is easy to check that $\Omega_{R}^{0}(\log \mathscr{A}) = R$ and $\Omega_{R}^{1}(\log \mathscr{A}) = R \cdot \frac{dx_{1}}{x_{1}} \oplus \cdots \oplus R \cdot \frac{dx_{n}}{x_{n}} \simeq R \oplus \cdots \oplus R$ (for example, use K. Saito's freeness criterion for logarithmic $1$-forms), where the last isomorphism is graded. Since $f$ is free, $\Omega_{R}^{j}(\log \mathscr{A}) = \wedge^{j} \Omega_{R}^{1}(\log \mathscr{A}) \simeq \oplus^{\binom{n}{j}} R$, with the last isomorphism graded. So for simple normal crossing divisors in $\mathbb{C}[x_{1}, \dots, x_{n}]$, 
\[
\reg(\Omega_{R}^{j}(\log \mathscr{A})) = 0 \quad \text{for all} \quad 0 \leq j \leq n.
\]
\end{example}

With this example in hand, here is our theorem bounding Castelnuovo--Mumford regularity of logarithmic $j$-forms:

\begin{theorem} \label{thm - computing CM regularity of log forms}
For $\mathscr{A}$ a central, essential, reduced hyperplane arrangement, the Castelnuovo--Mumford regularity of $\Omega_{R}^{j}(\log \mathscr{A})$ is bounded by:
\[
\reg(\Omega_{R}^{j}(\log \mathscr{A})) \leq 0 \quad \text{for} \quad 1 \leq j \leq \rank(\mathscr{A}) - 1.
\]
When $j = 0$ or $j=\rank(\mathscr{A})$, the module of logarithmic zero or $\rank(\mathscr{A})$-forms are free $R$-modules and so only $H_{\mathfrak{m}}^{\rank{\mathscr{A}}}(-) \neq 0$. In these cases, $\reg(\Omega_{R}^{0}(\log \mathscr{A})) = 0$ and $\reg(\Omega_{R}^{\rank(\mathscr{A})}(\log \mathscr{A})) = \rank(\mathscr{A}) - \deg(\mathscr{A})$. 
\end{theorem}

\begin{proof}
Throughout, let $f = f_{1} \cdots f_{d}$ be a defining equation for $\mathscr{A}$ and, if not explicitly changed, $n = \rank(\mathscr{A})$, $d = \deg(\mathscr{A})$.

Step 0: We handle the straightforwards cases $j=0$ or $j=\rank(\mathscr{A})$. All we must show is freeness and then compute regularity. It is well known (and easy to verify using \eqref{eqn - log j-forms identified with L object} below) that $\Omega_{R}^{0}(\log \mathscr{A}) = R$. Consequently, $\reg(\Omega_{R}^{0}(\log \mathscr{A})) = 0.$ Now consider $j=\rank(\mathscr{A})$. If $\eta \in \frac{1}{f} \Omega_{R}^{\rank(\mathscr{A})}$, then $d(\eta) = 0 \in \frac{1}{f}\Omega_{R}^{\rank(\mathscr{A})+1} = 0$. So: $\Omega_{R}^{\rank(\mathscr{A})}(\log \mathscr{A}) = \frac{1}{f} \Omega_{R}^{\rank(\mathscr{A})} = R(\deg(\mathscr{A}) - \rank(\mathscr{A}))$; $\reg(\Omega_{R}^{\rank(\mathscr{A})}(\log \mathscr{A})) = \rank(\mathscr{A}) - \deg(\mathscr{A})$.

Step 1: To deal with $1 \leq j \leq \rank(\mathscr{A}) - 1$, we change our target in order to apply Corollary 3.7 of \cite{DerksenSidman-CMRegularyByApproximation} (transcribed as Corollary \ref{cor - DerksenSidman CM approximation result}). Observe: the logarithmic $j$-forms can be characterized by: $\sum_{\mid I \mid = j} \frac{a_{I}}{f} dx_{I} \in \Omega_{R}^{j}(\log \mathscr{A})$ if and only if $d(\sum_{\mid I \mid = j} \frac{a_{I}}{f} dx_{I}) \in \frac{1}{f} \Omega_{R}^{j+1}$ if and only if $\sum_{\mid I \mid = j} \frac{a_{I}}{f^{2}} d(f) \wedge dx_{I} \in \frac{1}{f} \cdot \Omega_{R}^{j+1}$ if and only if $\sum_{\mid I \mid = j} \frac{a_{I}}{f} d(f) \wedge dx_{I} \in \Omega_{R}^{j+1}$ if and only if $\sum_{\mid I \mid = j} a_{I} d(f) \wedge dx_{I} \in f \cdot \Omega_{R}^{j+1}$. Since $R$ is a UFD and $f = f_{1} \cdots f_{d}$ is a factorization into irreducibles, we conclude:
\begin{equation} \label{eqn - log j-forms identified with L object}
\sum_{\mid I \mid = j} \frac{a_{I}}{f} dx_{I} \in \Omega_{R}^{j}(\log \mathscr{A}) \iff \sum_{\mid I \mid = j} a_{I} d(f_{k}) \wedge dx_{I}  \in f_{k} \cdot \Omega_{R}^{j+1} \text{ for all } f_{k}.
\end{equation}

Define 
\[
L^{j}(\mathscr{A}) = \{ \eta \in \Omega_{R}^{j} \mid d(f_{k}) \wedge \eta \in f_{k} \cdot \Omega_{R}^{j+1} \text{ for all } f_{k}\}
\]
and note that 
\begin{equation} \label{eqn - L^j equals intersection of L^j(each hyperplane)}
L^{j}(\mathscr{A}) = \bigcap_{k} L^{j}(\Var(f_{k})).
\end{equation}
The equivalence \eqref{eqn - log j-forms identified with L object} tells us the map $\sum_{\mid I \mid = j} \frac{a_{I}}{f} dx_{I} \mapsto \sum_{\mid I \mid = j} a_{I} dx_{I}$ gives an isomorphism $\Omega_{R}^{j}(\log \mathscr{A}) \xrightarrow[]{\simeq} L^{j}(\mathscr{A})$ that sends homogeneous elements of degree $u$ to homogeneous elements of degree $u + \deg(\mathscr{A})$. Therefore
\[
\Omega_{R}^{j}(\log \mathscr{A})(-\deg(\mathscr{A})) \simeq L^{j}(\mathscr{A})
\]
Consequently, $H_{\mathfrak{m}}^{t}(\Omega_{R}^{j}(\log \mathscr{A}))(-\deg(\mathscr{A})) \simeq H_{\mathfrak{m}}^{t}(L^{j}(\mathscr{A}))$ and 
\begin{equation} \label{eqn - reg for log j-forms = reg L-j stuff - d}
\reg(\Omega_{R}^{j}(\log \mathscr{A})) = \reg(L^{j}(\mathscr{A})) - d.
\end{equation}
Therefore our theorem is equivalent to $\reg(L^{j}(\mathscr{A})) \leq \deg(\mathscr{A})$, making the following our new objective:
\[
\textbf{The Claim}: \reg(L^{j}(\mathscr{A})) \leq \deg(\mathscr{A}) \text{ for } 1 \leq j \leq \rank(\mathscr{A}) - 1.
\]

Step 2: We first prove \textbf{The Claim} for $L^{1}(\mathscr{A})$ by induction on the rank and the degree of our arrangement. If $\rank(\mathscr{A}) = 2$ with $d = \deg(\mathscr{A})$, then the arrangement is automatically free and by standard considerations, $\Omega_{R}^{1}(\log \mathscr{A}) \simeq R \delta_{1} \oplus R \delta_{2}$ where $\delta_{1}$ has degree $2-d$ and $\delta_{2}$ has degree $0$. (Note that the determinant of the Saito matrix corresponding to $\delta_{1}, \delta_{2}$ equals, up to unit, $\frac{1}{f}dx_{1}dx_{2}$, and one logarithmic form, $\delta_{1}$, corresponds to the Euler operator, whereas the other, $\delta_{2}$, corresponds to $(\partial_{1} \bullet f)\partial_{2} - (\partial_{2} \bullet f)\partial_{1}$.)  So $\Omega_{R}^{1}(\log \mathscr{A}) \simeq R(d-2) \oplus R$ and $\reg( \Omega_{R}^{1}(\log \mathscr{A})) = \max \{ 2-d, 0 \} = 0$, since, by essentiality, $d \geq 2$. So $\reg(L^{1}(\mathscr{A})) = d$.

So assume the rank of $\mathscr{A}$ is at least $3$, the degree of $\mathscr{A}$ is $d+1$ and the claim holds for all essential, central arrangements of smaller degree or rank. (The base case of a simple normal crossing divisor with rank($\mathscr{A}$) hyperplanes is handled in Example \ref{ex - CM regularity for SNC}.) By essentiality, we may find $n$ hyperplanes from $\mathscr{A}$ cutting out the origin. 

\emph{Case 2.1}: There exists $n$ hyperplanes $f_{1}, \dots, f_{n}$ (relabelling as necessary) from $\mathscr{A}$ such that: (1) $\mathfrak{m} = (f_{1}, \dots, f_{n})$; (2) deleting any $\Var(f_{i})$ from $\mathscr{A}$ produces an essential arrangement. Let $\mathscr{A}_{i} = \mathscr{A} \setminus \Var(f_{i})$ be the result post deletion. By \eqref{eqn - L^j equals intersection of L^j(each hyperplane)}, $L^{1}(\mathscr{A}) \subseteq L^{1}(\mathscr{A}_{i}).$ And using \eqref{eqn - log j-forms identified with L object} and \eqref{eqn - L^j equals intersection of L^j(each hyperplane)}, we confirm that $f_{i} \cdot L^{1}(\mathscr{A}_{i}) \subseteq L^{1}(\mathscr{A})$. So we use the inductive hypothesis to get $\reg(L^{1}(\mathscr{A}_{i})) \leq d$ and $\reg(L^{1}(\mathscr{A}_{i})(1)) \leq d -1.$ As $L^{1}(\mathscr{A}) \subseteq R dx_{1} \oplus \cdots \oplus R dx_{n} = R(-1) \oplus \cdots \oplus R(-1)$, if we shift degrees and consider $L^{1}(\mathscr{A})(1)$, $L^{1}(\mathscr{A}_{i})(1) \subseteq R(0) \oplus \cdots \oplus R(0)$ we may invoke Corollary 3.7 of \cite{DerksenSidman-CMRegularyByApproximation} (see Corollary \ref{cor - DerksenSidman CM approximation result}) to get $\reg(L^{1}(\mathscr{A})(1)) \leq d$ or equivalently $\reg(L^{1}(\mathscr{A})) \leq d + 1$, finishing the case.

\emph{Case 2.2}: There exists a hyperplane $f_{i}$ such that the deleted arrangement $\mathscr{A} \setminus \Var(f_{i})$ is not-essential. Then, after possibly changing coordinates, $\mathscr{A}$ has a defining equation $x_{n} g$ where $\Var(g) = \mathscr{A}_{i}$, $g \in S = \mathbb{C}[x_{1}, \dots, x_{n-1}]$, and $x_{n}$ corresponds to the deleted hyperplane $f_{i}$. By the Kunneth formula for products, see Remark \ref{rmk - Kunneth formula for log diff forms}, 
\[
\Omega_{R}^{1}(\log \mathscr{A}) = (\Omega_{S}^{1}(\log \Var(g)) \otimes_{\mathbb{C}} \mathbb{C}[x_{n}]) \oplus (S \otimes_{\mathbb{C}} \Omega_{\mathbb{C}[x_{n}]}^{1}(\log \Var(x_{n})))
\]
and by the Kunneth formula for local cohomology and regularity, see Remark \ref{rmk - general/useful facts about regularity}, 
\[
\reg(\Omega_{R}^{1}(\log \mathscr{A})) = \max\{ \reg(\Omega_{S}^{1}(\Var(g))) + \reg(\mathbb{C}[x_{n}]), \reg(S) + \reg(\Omega_{\mathbb{C}[x_{n}]}^{1}(\Var(x_{n}))\}.
\]
Because $\Omega_{\mathbb{C}[x_{n}]}^{1}(\Var(x_{n})) = \mathbb{C}[x_{n}]\frac{dx_{n}}{x_{n}} \simeq \mathbb{C}[x_{n}]$ (a graded isomorphism), the only regularity term of the four summands that is potentially nonzero is $\reg(\Omega_{S}^{1}(\Var(g)))$. So $\reg(\Omega_{R}^{1}(\log \mathscr{A})) \leq \max\{\reg(\Omega_{S}^{1}(\Var(g))), 0\}$. Since $\mathscr{A}$ is essential of rank at least three, $\Var(g)$ must be essential of rank at least two (and of one less degree), and so we can use the induction hypothesis: $\reg(L^{1}(\Var(g)) \leq d$ and, by \eqref{eqn - reg for log j-forms = reg L-j stuff - d}, $\reg(\Omega_{S}^{1}(\Var(g))) \leq 0$. Thus $\reg(\Omega_{R}^{1}(\Var(f)) \leq 0$ and $\reg(L^{1}(\mathscr{A})) = d+1$ completing this case and terminating the inductive step. 

Step 3: Now we prove \textbf{The Claim} holds for $L^{j}(\mathscr{A})$, for any $1 \leq j \leq \rank(\mathscr{A}) - 1$, by induction on $\deg(\mathscr{A})$ and $\rank(\mathscr{A})$, utilizing Step 2. In particular, we may assume $\rank (\mathscr{A}) \geq 3$ and $j \geq 2$. The minimal value $\deg \mathscr{A}$ can obtain while remaining essential is $\rank(\mathscr{A})$ in which case Example \ref{ex - CM regularity for SNC} applies. So assume that the claim holds for any central, essential $\mathscr{B}$ whose degree is less than $\deg(\mathscr{A})$ or whose rank is smaller than $\rank(\mathscr{A})$. Now we do three cases, the first two similar to those from Step 2.

\emph{Case 3.1}: There exist $n$ hyperplanes $f_{1}, \dots, f_{n}$ (re-labelling as necessary) from $\mathscr{A}$ such that: (1) these $n$ hyperplanes generate $\mathfrak{m}$; (2) the arrangement $\mathscr{A}_{i} = \mathscr{A} \setminus \Var(f_{i})$ is essential. Since each $\mathscr{A}_{i}$ is promised to be essential, $\rank(\mathscr{A}_{i}) = \rank(\mathscr{A})$ and so the $\mathscr{A}_{i}$ fall into our inductive scheme: $\deg(\mathscr{A}_{i}) = \deg(\mathscr{A}) - 1$ and $j \leq \rank(\mathscr{A}) - 1 = \rank(\mathscr{A}_{i}) - 1$. So we know that $\reg(L^{j}(\mathscr{A}_{i})) \leq \deg(\mathscr{A}_{i}) = \deg(\mathscr{A}) - 1$. Or, as we will prefer, $\reg(L^{j}(\mathscr{A}_{i})(j)) \leq \deg(\mathscr{A}) - 1 - j.$ Using \eqref{eqn - log j-forms identified with L object} and \eqref{eqn - L^j equals intersection of L^j(each hyperplane)}, we again have
\[
f_{i} \cdot L^{j}(\mathscr{A}_{i}) \subseteq L^{j}(\mathscr{A}) \subseteq L^{j}(\mathscr{A}_{i}). 
\]
Now $L^{j}(\mathscr{A}) \subseteq \oplus_{\mid I \mid = j} R dx_{I} = \oplus_{\mid I \mid = j} R(-j).$ Shift degrees as in Case 2.1, so that $L^{j}(\mathscr{A})(j)$, $L^{j}(\mathscr{A}_{i})(j) \subseteq \oplus_{\mid I \mid = j} R$. Now we can appeal to Corollary 3.7 of \cite{DerksenSidman-CMRegularyByApproximation} (see Corollary \ref{cor - DerksenSidman CM approximation result}) to find that $\reg(L^{j}(\mathscr{A})(j)) \leq \deg(\mathscr{A}) - j$, that is $\reg(L^{j}(\mathscr{A})) \leq \deg(\mathscr{A})$, as required.

\emph{Case 3.2}: Both: (1) there is a hyperplane $f_{i}$ where $\rank(\mathscr{A} \setminus \Var(f_{i})) < \rank(\mathscr{A})$; (2) $j < \rank(\mathscr{A}) - 1 = n-1$. Here we may, changing coordinates as needed, assume $f = x_{n}g$ where $g \in S = \mathbb{C}[x_{1}, \dots, x_{n-1}]$. Since $\mathscr{A}$ is essential, $\Var(g)$ is itself an essential arrangement whose rank is $\rank(\mathscr{A}) - 1$. And $2 \leq j \leq \rank(\mathscr{A}) - 2 = \rank(\Var(g)) - 1$. So $\Var(g)$ falls into the inductive set-up and $\reg(L^{q}(\Var(g))) \leq \deg(\Var(g)) = \deg(\mathscr{A}) - 1$ for all $1 \leq q \leq j$. We may use the Kunneth formula for products (Remark \ref{rmk - Kunneth formula for log diff forms}) to find
\[
\Omega_{R}^{j}(\log \mathscr{A}) \simeq \Omega_{S}^{j}(\Var(g)) \otimes_{\mathbb{C}} \mathbb{C}[x_{n}] \oplus \Omega_{S}^{j-1}(\Var(g)) \otimes_{\mathbb{C}} \Omega_{\mathbb{C}[x_{n}]}^{1}(\Var(x_{n}))
\]
and, using the fact $\Omega_{\mathbb{C}[x_{n}]}^{1}(\Var(x_{n})) \simeq \mathbb{C}[x_{n}]$ (graded), the induced Kunneth formula on regularity (Remark \ref{rmk - general/useful facts about regularity}) gives
\[
\reg(\Omega_{R}^{j}(\log \mathscr{A})) = \max\{ \reg(\Omega_{S}^{j}(\Var(g))) + \reg(\mathbb{C}[x_{n}]), \reg(\Omega_{S}^{j-1}(\Var(g))) + \reg(\mathbb{C}[x_{n}])\}.
\]
By the inductive set-up, $\reg(\Omega_{S}^{j}(\Var(g))) \leq 0$ and $\reg(\Omega_{S}^{j-1}(\Var(g))) \leq 0$. So $\reg(\Omega_{R}^{j}(\log \mathscr{A})) \leq 0$ and $\reg(L^{j}(\mathscr{A})) \leq \deg(\mathscr{A})$. 

\emph{Case 3.3}: j = $\rank(\mathscr{A}) -1 = n-1.$ Write $d = \deg(\mathscr{A})$. Here, with $\widehat{dx_{i}} = dx_{1} \wedge \cdots \wedge dx_{i-1} \wedge dx_{i+1} \wedge dx_{n}$ and $dx = dx_{1} \wedge \cdots \wedge dx_{n}$, the equivalence \eqref{eqn - log j-forms identified with L object} is enriched:
\begin{align*}
\sum_{i} \frac{a_{i}}{f} \widehat{dx_{i}} \in \Omega_{R}^{n-1}(\log \mathscr{A}) &\iff \sum_{i} a_{i} d(f) \widehat{dx_{i}} = (\sum_{i} (-1)^{i-1} a_{i} \partial_{i} \bullet f)) dx \in R \cdot f dx \\
    & \iff (\sum_{i} (-1)^{i-1} a_{i} \partial_{i}) \bullet f \in R \cdot f \\
    & \iff (\sum_{i} (-1)^{i-1} a_{i} \partial_{i}) \in \Der_{R}(-\log \mathscr{A}).
\end{align*}
In other words, $\Omega_{R}^{n-1}(\log \mathscr{A}) \ni \sum_{i} a_{i} \widehat{dx_{i}} \mapsto (\sum_{i} (-1)^{i-1} a_{i} \partial_{i}) \in \Der_{R}(-\log \mathscr{A})$ is an isomorphism sending homogeneous elements of degree $u$ to homogeneous elements of degree $u + d - n$. (We use the convention that $\partial_{i}$ has degree $-1$ as in \cite{SaitoDegenerationOfPoleOrderSpectralSequence4Variables}.) Therefore
\[
\Omega_{R}^{n-1}(\log \mathscr{A})(n - d) \simeq \Der_{R}(\log \mathscr{A})
\]
and $\reg(\Omega_{R}^{n-1}(\log \mathscr{A})) = \reg(\Der_{R}(\log \mathscr{A})) + n - d$. By Proposition 1.3 of \cite{SaitoDegenerationOfPoleOrderSpectralSequence4Variables}, $\reg(\Der_{R}(\log \mathscr{A})) \leq d - n$ and so $\reg(\Omega_{R}^{n-1}(\log \mathscr{A})) \leq 0$. Therefore $\reg(L^{n-1}(\mathscr{A})) \leq \deg(\mathscr{A})$ completing this case, the induction, and the proof.
\end{proof}

\subsection{Acyclic Complexes of Local Cohomology Modules} \text{ }

We are now in position to utilize our work on Castelnuovo--Mumford regularity and prove that, when $\omega = \sum \lambda_{k} df_{k} / f_{k}$ and $\{\lambda_{k}\}$ satisfy some combinatorial numerical restrictions, the complexes of local cohomology modules of logarithmic de Rham modules, with differential induced by $\nabla_{\omega}$, is acyclic. 

\begin{corollary} \label{cor - specific weights at origin, acyclic complex of local cohomology modules}
Let $f = f_{1} \cdots f_{d} \in R = \mathbb{C}[x_{1}, \dots, x_{n}]$ be a central, essential, and reduced hyperplane arrangement, $\lambda_{1}, \dots, \lambda_{d} \in \mathbb{C}$ weights, and $\omega = \sum \lambda_{k} df_{k}/f_{k} = \sum \lambda_{k} d \log f_{k}$ the associated logarithmic one form. If
\begin{equation} \label{eqn - acyclic local cohomology weight condition at the origin}
\sum \lambda_{k} \notin \mathbb{Z}_{\geq \min\{2, \rank(\mathscr{A})\}}
\end{equation}
then for each $0 \leq t \leq n$ the following complex is acyclic:
\[
(H_{\mathfrak{m}}^{t}(\Omega_{R}^{\bullet}(\log \mathscr{A})), \nabla_{\omega}).
\]
\end{corollary}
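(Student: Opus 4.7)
The plan is to invoke Proposition \ref{prop - homogeneous subcomplex quasi iso} to reduce the acyclicity of $(H_{\mathfrak{m}}^{t}(\Omega_{R}^{\bullet}(\log\mathscr{A})), \nabla_{\omega})$ to the acyclicity of its $-\iota_{E}(\omega)$-homogeneous subcomplex, and then aim for the stronger conclusion that every term of that subcomplex vanishes, making it the zero complex and trivially acyclic. Set $m = \iota_{E}(\omega) = \sum_{k} \lambda_{k}$, $n = \rank(\mathscr{A})$, and $d = \deg(\mathscr{A})$.

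If $m \notin \mathbb{Z}$, then $H_{\mathfrak{m}}^{t}(\Omega_{R}^{j}(\log\mathscr{A}))_{-m} = 0$ automatically, because local cohomology of a $\mathbb{Z}$-graded $R$-module is itself $\mathbb{Z}$-graded. So assume $m \in \mathbb{Z}$; the hypothesis then forces $m \leq \min\{2, n\} - 1$, which I check term by term. The boundary values $j = 0$ and $j = n$ are free modules: $\Omega_{R}^{0}(\log\mathscr{A}) = R$ has $H_{\mathfrak{m}}^{n}(R)$ concentrated in degrees $\leq -n$, and $m < n$ gives vanishing at degree $-m$; $\Omega_{R}^{n}(\log\mathscr{A}) \simeq R(d-n)$ leads to the analogous computation where the grading shift reduces vanishing to $m < d$, which follows from $d \geq n$ together with the small-rank cases.

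For $1 \leq j \leq n-1$ (so $n \geq 2$ and $m \leq 1$), Theorem \ref{thm - computing CM regularity of log forms} supplies $\reg(\Omega_{R}^{j}(\log\mathscr{A})) \leq 0$; hence $H_{\mathfrak{m}}^{t}(\Omega_{R}^{j}(\log\mathscr{A}))_{u} = 0$ whenever $u > -t$, and at $u = -m$ this covers all $t \geq 2$. The case $t = 0$ is handled at once because $\Omega_{R}^{j}(\log\mathscr{A}) \hookrightarrow \Omega_{R}^{j}(\star\mathscr{A})$ sits inside a torsion-free $R$-module.

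The only remaining configuration is $m = 1$, $t = 1$, at which the regularity bound is exactly saturated; this edge case is the principal obstacle. I would attack it by upgrading the naive vanishing to a depth statement: one expects $\Omega_{R}^{j}(\log\mathscr{A})$ to be a reflexive $R$-module, for example via a Saito-type pairing with $\Omega_{R}^{n-j}(\log\mathscr{A})$ through the top logarithmic form, or by verifying Serre's $S_{2}$ condition at height one primes using the fact that $\mathscr{A}$ is generically a normal crossing. Reflexivity then yields $\depth(\Omega_{R}^{j}(\log\mathscr{A})) \geq 2$ and hence $H_{\mathfrak{m}}^{1}(\Omega_{R}^{j}(\log\mathscr{A})) = 0$ outright. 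With this last piece in place, every term of the $-m$-homogeneous subcomplex vanishes, and the desired acyclicity follows.
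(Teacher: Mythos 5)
Your argument is correct and is essentially the paper's own proof: reduce to the $-\iota_{E}(\omega)$-homogeneous subcomplex via Proposition \ref{prop - homogeneous subcomplex quasi iso}, kill the graded pieces for $t \geq 2$ with the regularity bound of Theorem \ref{thm - computing CM regularity of log forms}, and dispose of $t = 0, 1$ using reflexivity (hence depth at least two) of $\Omega_{R}^{j}(\log \mathscr{A})$ --- which is a known fact the paper simply cites (subsection 2.2 of \cite{DenhamSchulzeComplexesDualityChern}) rather than an open ``edge case'' needing a new pairing argument. Your separate freeness computations for $j = 0, \rank(\mathscr{A})$ and for the rank-one situation correspond to Step 0 of the regularity theorem and Case 2 of the paper's proof, so the organizational differences are cosmetic.
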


\begin{proof}
\emph{Case 1:} $\rank(\mathscr{A}) \geq 2$. 

By Proposition \ref{prop - homogeneous subcomplex quasi iso}, we know that the inclusion of $-\iota_{E}(\omega)$-homogeneous subcomplex of $(H_{\mathfrak{m}}^{t}(\Omega_{R}^{\bullet}(\log \mathscr{A})), \nabla_{\omega})$ is a quasi-isomorphism:
\begin{equation} \label{eqn - specific weights, homogeneous subcomplex quasi iso}
(H_{\mathfrak{m}}^{t}(\Omega_{R}^{\bullet}(\log \mathscr{A}))_{-\iota_{E}(\omega)}, \nabla_{\omega}) \xhookrightarrow{\qi} (H_{\mathfrak{m}}^{t}(\Omega_{R}^{\bullet}(\log \mathscr{A})), \nabla_{\omega}).
\end{equation}
Since $\iota_{E}(\omega) = \sum \lambda_{k}$, it is enough to show that for each $t$ and each $j$ the subspace of $H_{\mathfrak{m}}^{t}(\Omega_{R}^{j}(\log \mathscr{A}))$ degree $-\sum \lambda_{k}$ elements is zero. For then the left hand side of \eqref{eqn - specific weights, homogeneous subcomplex quasi iso} is a complex of zero modules. In an equation, it suffices to show:
\begin{equation} \label{eqn - specific weights, subcomplex vanishes, degree condition using reg}
H_{\mathfrak{m}}^{t}(\Omega_{R}^{j}(\log f))_{- \sum \lambda_{k}} = 0.
\end{equation}

First of all, if $\sum \lambda_{k} \notin \mathbb{Z}$, the criterion \eqref{eqn - specific weights, subcomplex vanishes, degree condition using reg} trivially holds. So we may assume $\sum \lambda_{k} \in \mathbb{Z}$. And, because the $j$-logarithmic forms are reflexive (cf. subsection 2.2 of \cite{DenhamSchulzeComplexesDualityChern}), they have depth at least two. By standard facts about local cohomology, $H_{\mathfrak{m}}^{0}(\Omega_{R}^{j}(\log \mathscr{A}))$ and $H_{\mathfrak{m}}^{1}(\Omega_{R}^{j}(\log \mathscr{A}))$ then vanish. Certainly \eqref{eqn - specific weights, subcomplex vanishes, degree condition using reg} then holds for $t=0,1$.

We must deal with $t \geq 2$, where we use the regularity estimate of Theorem \ref{thm - computing CM regularity of log forms}: $\reg(\Omega_{R}^{j}(\log \mathscr{A})) \leq 0.$ Combining this with the definition of regularity in terms of local cohomology modules yields
\[
\max \deg (H_{\mathfrak{m}}^{t}(\Omega_{R}^{j}(\log \mathscr{A}))) + t \leq 0.
\]
Therefore, for all $t \geq 2$ and for all $j$, 
\[
H_{\mathfrak{m}}^{t}(\Omega_{R}^{j}(\log \mathscr{A}))_{\ell} = 0 \text{ whenever } \ell \notin \{-2, - 3, -4, \dots \}.
\]
Then our hypothesis \eqref{eqn - acyclic local cohomology weight condition at the origin} ensures \eqref{eqn - specific weights, subcomplex vanishes, degree condition using reg} holds for $t \geq 2$, completing Case 1.

\emph{Case 2:} $\rank(\mathscr{A}) = 1$.

We argue in the same way, except noting that because $\Omega_{R}^{j}(\log \mathscr{A})$ is automatically free and $R$ has depth one, that $H_{\mathfrak{m}}^{0}(\Omega_{R}^{j}(\mathscr{A}))$ vanishes. Thus we need to show that 
\[
H^{1}_{\mathfrak{m}}(\Omega_{R}^{j})_{\ell} = 0 \text{ whenever } \ell \notin \{-1, -2, -3, \dots \}.
\]
for $j = 0$ and $j = 1$. Now use Example \ref{ex - CM regularity for SNC}.
\end{proof}

\subsection{Acyclic Complexes of Analytic \v{C}ech Cohomology} \enspace

Now we can prove an analytic version of Corollary \ref{cor - specific weights at origin, acyclic complex of local cohomology modules}, using the set-up and notation of subsection 2.3. This will be the crucial result powering the proof of Theorem \ref{thm - analytic twisted lct} (analytic Twisted Logarithmic Comparison Theorem). 

Instead of twisted complexes of local cohomology modules, we have twisted complexes of \v{C}ech cohomology, cf. Definition \ref{def - analytic twisted complexes of Cech cohomology}, and the role of the homogeneous subcomplex in the proof of Corollary \ref{cor - specific weights at origin, acyclic complex of local cohomology modules} is played by the subcomplex of homogeneous forms in the Laurent series sense of subsection 2.3, cf. Definition \ref{def - homogeneous subcomplex of analytic twisted complexes of Cech cohomology}. The argument uses GAGA machinery to enrich the arguments of the previous subsection; consequently it still utilizes the bound on Castelnuovo--Mumford regularity provided in Theorem \ref{thm - computing CM regularity of log forms} to employ Lemma \ref{lemma - cohomological surjections of analytic homogeneous twisted Cech complex to total twisted Cech complex}. Within the proof, we use $(-)^{\anal}$ (resp. $(-)_{\alg}$) to denote the analytic (resp. algebraic) interpreation of $(-)$. 

\begin{corollary} \label{cor - specific weights at origin, acyclic complex of analytic Cech cohomology}
Let $f = f_{1} \cdots f_{d} \in R = \mathbb{C}[x_{1}, \dots, x_{n}]$ be a central, essential, and reduced hyperplane arrangement, $\lambda_{1}, \dots, \lambda_{d} \in \mathbb{C}$ weights, and $\omega = \sum \lambda_{k} df_{k}/f_{k} = \sum \lambda_{k} d \log f_{k}$ the associated logarithmic one form. If
\begin{equation} \label{eqn - acyclic analytic Cech cohomology complex, weight condition at the origin}
\sum \lambda_{k} \notin \mathbb{Z}_{\geq \min\{2, \rank(\mathscr{A})\}}
\end{equation}
and if $n \geq 2$, then for each $1 \leq t \leq n$ the following complex is acyclic:
\[
(H_{\Cech}^{t}(\Omega_{X}^{\bullet}(\log \mathscr{A})), \nabla_{\omega}).
\]
\end{corollary}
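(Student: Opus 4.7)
The plan is to parallel the proof of Corollary \ref{cor - specific weights at origin, acyclic complex of local cohomology modules} in the analytic setting. By Lemma \ref{lemma - cohomological surjections of analytic homogeneous twisted Cech complex to total twisted Cech complex}, the cohomology of $(H_{\Cech}^{t}(\Omega_{X}^{\bullet}(\log \mathscr{A})), \nabla_{\omega})$ is the surjective image of the cohomology of the homogeneous Laurent-weight $-\iota_{E}(\omega)$ subcomplex $(H_{\Cech}^{t}(\Omega_{X}^{\bullet}(\log \mathscr{A})_{-\iota_{E}(\omega)}), \nabla_{\omega})$. It therefore suffices to prove the stronger statement that every term $H_{\Cech}^{t}(\Omega_{X}^{j}(\log \mathscr{A}))_{-\iota_{E}(\omega)}$ of this subcomplex is zero, as that immediately forces the cohomology of the subcomplex, and hence of the total complex, to vanish.

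The next step is the GAGA bridge from the weight-graded pieces of analytic Cech cohomology to the graded pieces of algebraic local cohomology. Each intersection $D(x_{I})$ is Stein, so by Cartan's Theorem B the sheaf-theoretic Cech cohomology of the cover $\{D(x_{i})\}$ equals $H^{t}(X \setminus 0, \Omega_{X}^{j}(\log \mathscr{A}))^{\anal}$. Because $X = \mathbb{C}^{n}$ is Stein and $\Omega_{X}^{j}(\log \mathscr{A})$ is coherent, the long exact sequence of local cohomology with support at the origin gives, for $t \geq 1$, an isomorphism $H_{\Cech}^{t}(\Omega_{X}^{j}(\log \mathscr{A})) \cong H_{\{0\}}^{t+1}(X, \Omega_{X}^{j}(\log \mathscr{A}))^{\anal}$. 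The analytic local cohomology at the origin depends only on the stalk, whose $\mathfrak{m}$-adic completion agrees with that of $\Omega_{R}^{j}(\log \mathscr{A})$; since local cohomology supported at $\mathfrak{m}$ is invariant under completion, and the $\mathbb{C}^{\star}$-action on $X$ endows both sides with matching weight eigenspaces compatible with the $\mathbb{Z}$-grading on $\Omega_{R}^{j}(\log \mathscr{A})$, I obtain a graded-piece-by-graded-piece identification $H_{\Cech}^{t}(\Omega_{X}^{j}(\log \mathscr{A}))_{m} \cong H_{\mathfrak{m}}^{t+1}(\Omega_{R}^{j}(\log \mathscr{A}))_{m}$ for $t \geq 1$. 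The remaining input is the vanishing built into the proof of Corollary \ref{cor - specific weights at origin, acyclic complex of local cohomology modules}: reflexivity kills $H_{\mathfrak{m}}^{s}$ for $s \leq 1$, while the regularity bound $\reg(\Omega_{R}^{j}(\log \mathscr{A})) \leq 0$ from Theorem \ref{thm - computing CM regularity of log forms} confines nonzero graded pieces of $H_{\mathfrak{m}}^{s}$ ($s \geq 2$) to degrees $\leq -s$. Combined with the hypothesis $\sum \lambda_{k} \notin \mathbb{Z}_{\geq \min\{2, \rank(\mathscr{A})\}}$, this kills $H_{\mathfrak{m}}^{s}(\Omega_{R}^{j}(\log \mathscr{A}))_{-\iota_{E}(\omega)}$ for all $s \geq 2$ and all $j$, and taking $s = t+1$ with $1 \leq t \leq n$ delivers the required vanishing of each $H_{\Cech}^{t}(\Omega_{X}^{j}(\log \mathscr{A}))_{-\iota_{E}(\omega)}$.

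The principal obstacle is making the grading-compatible GAGA identification rigorous: one must verify that the Laurent homogeneous decomposition of Definition \ref{def - homogeneous subcomplex of analytic twisted complexes of Cech cohomology} on each Reinhardt domain $D(x_{I})$ glues across the Cech complex into an honest $\mathbb{C}^{\star}$-eigenspace decomposition of sheaf cohomology on $X \setminus 0$, and that this analytic weight decomposition matches the algebraic grading on local cohomology at the origin term-for-term. Once this bookkeeping is confirmed, the argument becomes a formal transport of the regularity-based vanishing already established in Subsection 2.4 and Subsection 2.5 of the paper.
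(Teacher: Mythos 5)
Your overall skeleton matches the paper's: reduce via Lemma \ref{lemma - cohomological surjections of analytic homogeneous twisted Cech complex to total twisted Cech complex} to killing the weight-$(-\iota_{E}(\omega))$ piece, identify that piece with a graded piece of algebraic local cohomology, and finish with the regularity bound of Theorem \ref{thm - computing CM regularity of log forms}. The problem is the bridge you propose for the middle step. The chain ``analytic local cohomology at $0$ depends only on the stalk, the completions agree, and local cohomology is invariant under completion'' would, if it worked, give an \emph{ungraded} isomorphism $H^{t+1}_{\{0\}}(X,\Omega_{X}^{j}(\log\mathscr{A}))\cong H_{\mathfrak m}^{t+1}(\Omega_{R}^{j}(\log\mathscr{A}))$, and that statement is false. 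Already for $\mathscr{O}$ on $\mathbb{C}^{2}$: the class of $e^{1/(xy)}-1=\sum_{k\geq 1}x^{-k}y^{-k}/k!$ in $H^{1}(\mathbb{C}^{2}\setminus 0,\mathscr{O})\cong H^{2}_{\{0\}}(\mathbb{C}^{2},\mathscr{O})$ is nonzero and is not annihilated by any power of $\mathfrak m$, so the analytic local cohomology is not $\mathfrak m$-torsion and strictly contains $H_{\mathfrak m}^{2}(R)$. Completion invariance cannot rescue this, since $H_{\mathfrak m}^{t+1}(M)$ is $\mathfrak m$-torsion and unchanged by $-\otimes_{R}\widehat{R}$; the discrepancy is exactly the infinite convergent Laurent tails that the analytic topology allows. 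So the mechanism you name does not produce the identification you need, even though the graded-piece conclusion $H_{\Cech}^{t}(\Omega_{X}^{j}(\log\mathscr{A}))_{m}\cong H_{\mathfrak m}^{t+1}(\Omega_{R}^{j}(\log\mathscr{A}))_{m}$ is true (each fixed weight involves only finitely many monomials).

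What the paper does instead, and what you need to substitute for your ``completion'' step, is a projective GAGA argument: sheafify the twisted graded module $\Omega_{R}^{j}(\log\mathscr{A})(-\iota_{E}(\omega))$ on $\mathbb{P}^{n-1}_{\alg}$, use the standard identification $H_{\mathfrak m}^{t+1}(\Omega_{R}^{j}(\log\mathscr{A}))_{-\iota_{E}(\omega)}=H^{t}(\widetilde{\Omega_{R}^{j}(\log\mathscr{A})(-\iota_{E}(\omega))})$ for $t\geq 1$, apply Serre's GAGA to pass to the analytification $K^{j,\anal}$ on $\mathbb{P}^{n-1,\anal}$, compute its cohomology by the Stein Cech cover $\{D^{+}(x_{i})\}$, and then match $\Gamma(D^{+}(x_{I}),K^{j,\anal})$ with the Laurent-homogeneous sections $\Gamma(D(x_{I}),\Omega_{X}^{j}(\log\mathscr{A}))_{-\iota_{E}(\omega)}$. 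This last matching (using exactness of analytification and the algebraic description of membership in $\Omega^{j}(\log\mathscr{A})$) is the rigorous version of the ``bookkeeping'' you defer, and it is where the finiteness that your completion argument cannot supply actually comes from. One further small point: Lemma \ref{lemma - cohomological surjections of analytic homogeneous twisted Cech complex to total twisted Cech complex} reduces you to the cohomology of the homogeneous \emph{subcomplex} $H_{\Cech}^{t}(\Omega_{X}^{\bullet}(\log\mathscr{A})_{-\iota_{E}(\omega)})$, which you should not silently identify with the homogeneous part of $H_{\Cech}^{t}$; they agree here because cycles and boundaries decompose weight-by-weight (Proposition \ref{prop - assignment preserves Cech cycles and boundaries}), but that deserves a sentence.
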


\begin{proof}
By Lemma \ref{lemma - cohomological surjections of analytic homogeneous twisted Cech complex to total twisted Cech complex}, it suffices to show that for $0 \leq j \leq n$, the \v{C}ech cohomology $H_{\Cech}^{t}(\Omega_{X^{\anal}}^{j}(\log \mathscr{A})_{-\iota_{E}(\omega)})$ vanishes whenever $ 1 \leq t \leq n$. If $-\iota_{E}(\omega) \notin \mathbb{Z}$ the claim is immediate: there are no homogeneous (in the Laurent series sense) elements of non-integral weight. So we may assume $-\iota_{E}(\omega) \in \mathbb{Z}$ and will proceed by a GAGA argument.

Consider the graded $R$-module $\Omega_{R}^{j}(\log \mathscr{A})(-\iota_{E}(\omega))$. The sheafification $\widetilde{\Omega_{R}^{j}(\log \mathscr{A})(-\iota_{E}(\omega)})$ of this graded module gives a coherent algebraic sheaf of $\mathscr{O}_{\mathbb{P}_{\alg}^{n-1}}$-modules on $\mathbb{P}_{\alg}^{n-1}$. It is well known that for all $t \geq 1$ we have the following identification of sheaf cohomology with local cohomology:
\begin{equation} \label{eqn - local cohomology certain degree equals projective sheaf cohomology}
H_{\mathfrak{m}}^{t+1}(\Omega_{R}^{j}(\log \mathscr{A}))_{-\iota_{E}(\omega)} = H^{t}(\widetilde{\Omega_{R}^{j}(\log \mathscr{A})(-\iota_{E}(\omega)}))
\end{equation}
On one hand, the condition \eqref{eqn - acyclic analytic Cech cohomology complex, weight condition at the origin} on the residue of $\omega$ implies that the LHS of \eqref{eqn - local cohomology certain degree equals projective sheaf cohomology} vanishes. Indeed this follows from Castelnuovo--Mumford regularity estimate of Theorem \ref{thm - computing CM regularity of log forms} as in Corollary \ref{cor - specific weights at origin, acyclic complex of local cohomology modules}. On the other hand, let $K^{j, \anal}$ be the analytification of $\widetilde{\Omega_{R}^{j}(\log \mathscr{A})(-\iota_{E}(\omega)})$. By GAGA, $K^{j, \anal}$ is a coherent analytic sheaf of $\mathscr{O}_{\mathbb{P}^{n-1, \anal}}$-modules on $\mathbb{P}^{n-1, \anal}$. Again by GAGA, the sheaf cohomology of $K^{j,\anal}$ equals the right hand side of \eqref{eqn - local cohomology certain degree equals projective sheaf cohomology}. Altogether this gives $0 = H^{t}(K^{j, \anal})$ for all $t \geq 1$.

Now consider the \v{C}ech complex $C^{\bullet}(\{D^{+}(x_{i})\}, K^{j, \anal})$ attached to the open cover $\{D^{+}(x_{i})\}_{1 \leq i \leq n}$ of $\mathbb{P}^{n-1, \anal}$. Since $D^{+}(x_{i})$ and any intersection of the $\{D^{+}(x_{i})\}$ is Stein, and since higher sheaf cohomology (of coherent sheaves) vanishes on Steins, this \v{C}ech complex computes the sheaf cohomology $K^{j, \anal}$. So for all $t \geq 1$ we have
\begin{equation} \label{eqn - vanishing of analytification projective sheaf, also equal to Cech complex of analytification projective sheaf}
0 = H^{t}(K^{j, \anal}) = H^{t}(C^{\bullet}(\{D^{+}(x_{i})\}, K^{j, \anal})).
\end{equation}

Recall $K^{j, \anal}$ is the analytification of $\widetilde{\Omega_{R}^{j}(\log \mathscr{A})(-\iota_{E}(\omega)})$. Because membership in $\Omega_{R}^{j}(\log \mathscr{A})[x_{I}^{-1}]$ is determined by the kernel of a map defined wholly in terms of $\partial_{1} \bullet f, \dots, \partial_{n} \bullet f, f$, all of which are algebraic polynomials, and because analytification is exact, the analytification of $\Gamma(D(x_{I}), \widetilde{\Omega_{R}^{j}(\log \mathscr{A})})$ is $\Gamma(D(x_{I}), \Omega_{X^{\anal}}(\log \mathscr{A}))$. (This is an affine statement: $D(x_{I}) \subseteq \mathbb{C}^{n}$.) Because $\Omega_{R}^{j}(\log \mathscr{A})$ is generated by homogeneous terms (as $\mathscr{A}$ is central), we deduce
\[
\Gamma(D^{+}(x_{I}), K^{j, \anal}) = \Gamma(D(x_{I}), \Omega_{X^{\anal}}^{j}(\log \mathscr{A}))_{-\iota_{E}(\omega)}
\]
where by $(-)_{-\iota_{E}(\omega)}$ we mean the homogeneous elements in the sense of Laurent series. This means
\begin{align*}
H^{t}(C^{\bullet}(\{D^{+}(x_{i})\}, K^{j, \anal})) 
    &= H^{t}(C^{\bullet}(\{D(x_{i})\}, \Omega_{X^{\anal}}^{j}(\log \mathscr{A}))_{-\iota_{E}(\omega)}) \\
    &= H_{\Cech}^{t}(\Omega_{X^{\anal}}^{j}(\log \mathscr{A}))_{-\iota_{E}(\omega)}.
\end{align*}
By \eqref{eqn - vanishing of analytification projective sheaf, also equal to Cech complex of analytification projective sheaf}, we conclude $0 = H_{\Cech}^{t}(\Omega_{X^{\anal}}^{j}(\log \mathscr{A}))_{-\iota_{E}(\omega)}$, as required.
\end{proof}

\section{The (un)Twisted Logarithmic Comparison Theorem}

This section is devoted to proving the Twisted Logarithmic Comparison Theorem for reduced hyperplane arrangements, subject to a relatively mild restriction on the weights in both the analytic (Theorem \ref{thm - analytic twisted lct}) and algebraic (Theorem \ref{thm - global algebraic twisted lct with homogeneous subcomplex q.i}) cases. The latter requires centrality, the former does not. Consequences are (all not necessarily central): the global analytic Twisted Logarithmic Comparison Theorem (Corollary \ref{cor - global twisted analytic lct}); the analytic (untwisted) Logarithmic Comparison Theorem (Corollary \ref{cor - analytic untwisted lct}); the algebraic (untwisted) Logarithmic Comparison Theorem (Corollary \ref{cor - analytic untwisted lct}). The last result positively answers Terao's conjecture (Conjecture \ref{conjecture -  algebraic lct for arrangements}). In sum, these result show that the cohomology of the complement in an arbitrary rank one local system can be completely understood by an appropriately twisted logarithmic de Rham complex.

The analytic Twisted Logarithmic Comparison Theorem is the main result insofar as the others follow with significantly less difficulty. The argument synthesizes analytic and algebraic techniques.  We utilize the analytic paradigm of Theorem 1.1 of \cite{CohomologyComplementFree}: we repeat their inductive set up (either using the rank of $\mathscr{A}$ as we do here or using Saito-holonomic induction as in loc. cit.) and exploit the relationships between the four consequent spectral sequences, some of which incorporate the inductive hypothesis. The argument hinges on showing that on the second page of one of these spectral sequences all but one column vanishes. In loc.\ cit.\ (or similarly in \cite{YuzvinskyWiensLTCTameArrangements}) homological criterion on the logarithmic differential forms greatly simplifies the first page. With no homological assumptions, we have no control over the first page's complexity. It is here that we utilize algebraic techniques and our bound on the Castelnuovo--Mumford regularity of logarithmic forms: using Corollary \ref{cor - specific weights at origin, acyclic complex of analytic Cech cohomology} and subsection 2.3 we can translate this data into our analytic situation, furnishing adequate control over vanishing on the second page. 

Before embarking note that in Proposition \ref{prop - all local systems can be computed}, we show that all rank one local systems on $U = X \setminus \mathscr{A}$ can be computed via these Twisted Logarithmic Comparison Theorems. This is not the case for the twisted Orlik--Solomon algebra, cf. \cite{CohenSuciuTangent}, \cite{SuciuTranslatedTori}, \cite{CohenTriplesofArrangements}. When $\mathscr{A}$ is central, Theorem \ref{thm - global algebraic twisted lct with homogeneous subcomplex q.i} and Remark \ref{rmk - finite dimensional lin alg for analytic stalk} imply these computations are explicit, finite dimensional linear algebra. Example \ref{ex - deleted B3 arrangement} exemplifies this: we compute the Betti numbers of $H^{\bullet}(U, \localSystem_{\boldsymbol{\beta}})$ for a $\localSystem_{\boldsymbol{\beta}}$ belonging to a translated component of the characteristic variety--precisely the components undetectable by Orlik--Solomon methods. 

\subsection{The Analytic Case} \enspace

Before we can prove Theorem \ref{thm - analytic twisted lct}, we require a significant amount of lemmas. First, the analytic Twisted Logarithmic Comparison can be checked stalk-wise. Using this, the following lemma says it is enough to check the analytic Twisted Logarithmic Comparison Theorem on sufficiently small Stein opens $V \subseteq X$. The proof is exactly the same as in Lemma 2.5 of \cite{CohomologyComplementFree} and is tantamount to the following: because higher sheaf cohomology of coherent modules over Stein opens vanishes, taking hypercohomology amounts to taking global sections.

\begin{lemma} \label{lemma - twisted lct determined by behavior at small Steins} (Lemma 2.5 \cite{CohomologyComplementFree})
Let $f = f_{1} \cdots f_{d}$ cut out a hyperplane arrangment $\mathscr{A}$ and consider the weights $\lambda_{1}, \dots, \lambda_{d}$ with their associated one-form $\omega = \sum_{k} \lambda_{k} df_{k} / f_{k}.$
Then the following are equivalent: 
\begin{enumerate}[label=(\alph*)]
    \item $(\Omega_{X}^{\bullet}(\log \mathscr{A}), \nabla_{\omega}) \xhookrightarrow{\qi} (\Omega_{X}^{\bullet}(\star \mathscr{A}), \nabla_{\omega})$;
    \item For each (sufficiently small) Stein open $V \subseteq X$ and each $p \geq 0$, the map induced by inclusion
    \[
    H^{p}(\Gamma(V, \Omega_{X}^{\bullet}(\log \mathscr{A})), \nabla_{\omega}) \xrightarrow[]{} H^{p}(\Gamma(V \setminus \mathscr{A}, \Omega_{X}^{\bullet}(\star \mathscr{A})), \nabla_{\omega})
    \]
    is an isomorphism.
\end{enumerate}
\end{lemma}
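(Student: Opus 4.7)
The plan is to reduce the sheaf-level quasi-isomorphism in (a) to the family of global-section statements in (b) via the standard device of detecting quasi-isomorphisms on stalks and then computing stalks on arbitrarily small Stein opens. The inclusion appearing in (a) is a quasi-isomorphism of complexes of sheaves on $X$ iff it induces an isomorphism on the $p$-th cohomology sheaves for every $p$, iff it does so on every stalk. Since cohomology commutes with filtered direct limits, the stalk of $\mathscr{H}^{p}(-)$ at $x$ equals $\varinjlim_{V \ni x} H^{p}(\Gamma(V, -), \nabla_{\omega})$, with $V$ ranging over a cofinal system of Stein neighborhoods of $x$ (polydiscs, say). Hence (a) is equivalent to the map
\[
H^{p}(\Gamma(V, \Omega_{X}^{\bullet}(\log \mathscr{A})), \nabla_{\omega}) \longrightarrow H^{p}(\Gamma(V, \Omega_{X}^{\bullet}(\star \mathscr{A})), \nabla_{\omega})
\]
being an isomorphism for all sufficiently small Stein $V$; and because $\Omega_{X}^{p}(\star \mathscr{A}) = j_{\star} \Omega_{U}^{p}$, the target rewrites as $H^{p}(\Gamma(V \setminus \mathscr{A}, \Omega_{X}^{\bullet}(\star \mathscr{A})), \nabla_{\omega})$, recovering the formulation in (b).

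To identify these cohomologies of global sections with the corresponding hypercohomologies on $V$ (so that the sheaf-level quasi-iso is literally the global-sections comparison above), I would invoke Cartan's Theorem B. For $(\Omega_{X}^{\bullet}(\log \mathscr{A}), \nabla_{\omega})$ this is immediate: each $\Omega_{X}^{p}(\log \mathscr{A})$ is a coherent $\mathscr{O}_{X}$-module, so $H^{q}(V, \Omega_{X}^{p}(\log \mathscr{A})) = 0$ for $q > 0$, the hypercohomology spectral sequence $E_{1}^{p,q} = H^{q}(V, \Omega_{X}^{p}(\log \mathscr{A}))$ degenerates, and the identification follows.

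The main obstacle is the meromorphic complex, whose terms are not coherent. The workaround is to write $\Omega_{X}^{p}(\star \mathscr{A}) = \varinjlim_{n} \Omega_{X}^{p}(n \mathscr{A})$ as a filtered colimit of the coherent subsheaves of forms with pole order at most $n$ along $\mathscr{A}$. Because $X$ is paracompact, sheaf cohomology commutes with filtered direct limits, and Cartan B applied to each coherent $\Omega_{X}^{p}(n \mathscr{A})$ then forces $H^{q}(V, \Omega_{X}^{p}(\star \mathscr{A})) = 0$ for $q > 0$. Hence hypercohomology matches cohomology of global sections for the meromorphic complex as well, and (a) and (b) become literally the same statement. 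Beyond this coherent-approximation step, the argument is entirely formal.
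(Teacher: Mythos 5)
Your argument is correct and is essentially the paper's (which simply defers to Lemma 2.5 of the cited reference, with the gloss that higher sheaf cohomology of coherent sheaves on Stein opens vanishes, so hypercohomology reduces to cohomology of global sections). Your handling of the non-coherent meromorphic terms via the filtered colimit $\Omega_{X}^{p}(\star\mathscr{A}) = \varinjlim_{n}\Omega_{X}^{p}(n\mathscr{A})$ is a harmless variant of the standard route, which instead observes that $V\setminus\mathscr{A}$ is itself Stein and $\Omega^{p}_{U}$ is coherent there.
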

\begin{proof}
Exactly as in Lemma 2.5 of \cite{CohomologyComplementFree}.
\end{proof}

The majority of preliminary work is to recreate the inductive set-up from \cite{CohomologyComplementFree} in the twisted case. This will let information about smaller rank arrangements percolate to edges of a larger rank arrangement. Since we have $\omega \neq 0$, unlike loc.\ cit., there are several technical details that must be checked. 

First, we want to be able to study the Twisted Logarithmic Comparison at a point $\mathfrak{x}$ that only lies in a subset of the hyperplanes of $\mathscr{A}$. The next lemma says that such a point, we can throw out the summands $\lambda_{k} d \log f_{k}$ of $\omega$ corresponding to hyperplanes $\Var(f_{k})$ that do not pass through $\mathfrak{x}$. The argument uses an idea from Lemma 2.1 of \cite{KawaharaTwistedDeRham}.

\begin{lemma} \label{lemma - inductive step, removing fk terms from omega}
Suppose that $f = f_{1} \cdots f_{d}$ cuts out a reduced hyperplane arrangement $\mathscr{A}$. For weights $\lambda_{1}, \dots, \lambda_{r} \in \mathbb{C}$, we have the associated one form
\[
\omega = \sum_{k} \lambda_{k} \frac{df_{k}}{f_{k}}.
\]
For $\mathfrak{x} \in X$, let $f^{\prime} = \prod_{\{1 \leq k \leq d \mid \mathfrak{x} \in \Var(f_{k})\}} f_{k}$ and let
\[
\omega^{\prime} = \sum_{\{1 \leq k \leq d \mid \mathfrak{x} \in \Var(f_{k})\}} \lambda_{k} \frac{df_{k}}{f_{k}}.
\]
Then for small enough Stein opens $V \ni \mathfrak{x}$ we have isomorphisms
\begin{enumerate}[label=(\alph*)]
    \item $(\Omega_{V}^{\bullet}(\log f), \nabla_{\omega}) \xrightarrow[]{\simeq} (\Omega_{V}^{\bullet}(\log f^{\prime}), \nabla_{\omega^{\prime}})$;
    \item $(\Omega_{V}^{\bullet}(\star f), \nabla_{\omega}) \xrightarrow[]{\simeq} (\Omega_{V}^{\bullet}(\star f^{\prime}), \nabla_{\omega^{\prime}})$.
\end{enumerate}
These isomorphisms are compatible with $(\Omega_{V}^{\bullet}(\log f), \nabla_{\omega}) \xhookrightarrow{} (\Omega_{V}^{\bullet}(\star f), \nabla_{\omega}).$
\end{lemma}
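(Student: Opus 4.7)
The plan is to reduce everything to a single unit-multiplication trick, exploiting that on a small enough Stein $V \ni \mathfrak{x}$ the hyperplanes not passing through $\mathfrak{x}$ become units. First, I would shrink $V$ so that $V$ is a simply connected polydisk centered at $\mathfrak{x}$ disjoint from $\Var(f_k)$ for every $k$ with $\mathfrak{x}\notin\Var(f_k)$. Denote such $f_k$ by $u_k$; each is a unit in $\mathscr{O}_V$, and since $V$ is simply connected, $\log u_k$ is well-defined, so $u_k^{\lambda_k} := e^{\lambda_k \log u_k}$ is a unit in $\mathscr{O}_V$. Set
\[
g = \prod_{\{1\le k\le d\,\mid\,\mathfrak{x}\notin \Var(f_k)\}} u_k^{\lambda_k} \in \mathscr{O}_V^\times.
\]
A direct computation gives $dg/g = \sum_{\{k\,\mid\,\mathfrak{x}\notin\Var(f_k)\}} \lambda_k\,df_k/f_k = \omega - \omega'$.

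Next I would observe that on $V$ we have an equality of $\mathscr{O}_V$-modules $\Omega_V^j(\log f) = \Omega_V^j(\log f')$ (and likewise $\Omega_V^j(\star f)=\Omega_V^j(\star f')$): the factorization $f = (\prod u_k)f'$ with each $u_k\in\mathscr{O}_V^\times$ shows that poles along $f$ and poles along $f'$ coincide, and similarly for the condition on $d\eta$ defining the logarithmic complex. So the statement really concerns only the differentials, not the underlying modules.

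The key computation is then that multiplication by $g$ intertwines the two twisted differentials. For $\eta\in\Omega_V^j(\log f)=\Omega_V^j(\log f')$,
\[
\nabla_{\omega'}(g\eta) = dg\wedge\eta + g\,d\eta + g\omega'\wedge\eta = g\bigl(d\eta + (\omega'+dg/g)\wedge\eta\bigr) = g\,\nabla_{\omega}(\eta),
\]
using $dg/g = \omega-\omega'$. Hence multiplication by the unit $g$ defines an $\mathscr{O}_V$-linear chain isomorphism $(\Omega_V^\bullet(\log f),\nabla_\omega)\xrightarrow{\simeq}(\Omega_V^\bullet(\log f'),\nabla_{\omega'})$, with inverse multiplication by $g^{-1}$. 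The identical formula with $\Omega_V^\bullet(\star f)=\Omega_V^\bullet(\star f')$ in place of the logarithmic complexes proves part (b), and since the same $g$ is used in both cases the square
\[
\begin{tikzcd}
(\Omega_V^\bullet(\log f),\nabla_\omega) \rar[hook] \dar[swap]{g\cdot}
    & (\Omega_V^\bullet(\star f),\nabla_\omega) \dar{g\cdot} \\
(\Omega_V^\bullet(\log f'),\nabla_{\omega'}) \rar[hook]
    & (\Omega_V^\bullet(\star f'),\nabla_{\omega'})
\end{tikzcd}
\]
commutes, yielding the claimed compatibility with the inclusion.

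There is no serious obstacle here, but the only subtle point is choosing $V$ small and simply connected enough that each $u_k^{\lambda_k}$ is an actual (single-valued) holomorphic unit on $V$; a small polydisk around $\mathfrak{x}$ suffices, and this is exactly the flexibility granted by Lemma \ref{lemma - twisted lct determined by behavior at small Steins}. Everything else is a routine computation with $d\log$ and Leibniz, following the unit-twist idea from Lemma 2.1 of \cite{KawaharaTwistedDeRham}.
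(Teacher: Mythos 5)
Your proof is correct and is essentially the paper's argument: the paper removes the summands $\lambda_k\,d\log u_k$ one at a time by multiplying by $u_k^{-\lambda_k}$ and iterating, whereas you multiply once by the full product $g=\prod u_k^{\lambda_k}$; the intertwining computation $\nabla_{\omega'}(g\eta)=g\,\nabla_\omega(\eta)$ is the same Leibniz identity. Your explicit insistence on a simply connected neighborhood so that each $u_k^{\lambda_k}$ is single-valued is a welcome (if minor) point of extra care.
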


\begin{proof}
It is enough to prove the assertion for $V$ the complement of all the hyperplanes $\Var(f_{j})$ that do not contain the point $\mathfrak{x}$. Observe:
\[
\omega^{\prime} = \omega - \sum_{\{1 \leq k \leq d \mid \mathfrak{x} \notin \Var(f_{k})\}} \lambda_{j} \frac{df_{k}}{f_{k}} = \omega - \sum_{\{1 \leq k \leq d \mid \mathfrak{x} \notin \Var(f_{k})\}} \lambda_{k} (d \log f_{k}).
\]
Rename every such $f_{j}$ not vanishing at $\mathfrak{x}$ as $u_{j}$. We construct isomorphisms
\begin{equation} \label{eqn - sequence of quasi-isomorphisms, removing lambda summands}
    (\Omega_{V}^{\bullet}(\log f), \nabla_{\omega}) \xrightarrow[]{\simeq} (\Omega_{V}^{\bullet}(\log f), \nabla_{\omega - \lambda_{1}(d \log u_{1})}) \xrightarrow[]{\simeq} \cdots \xrightarrow[]{\simeq} (\Omega_{V}^{\bullet}(\log f), \nabla_{\omega^{\prime}}).
\end{equation}
We construct the first map of \eqref{eqn - sequence of quasi-isomorphisms, removing lambda summands}. We may assume $\lambda_{1} \neq 0$. Consider the diagram
\[
\begin{tikzcd}
\Omega_{V}^{i}(\log f) \rar{u_{1}^{-\lambda_{1}} \cdot } \dar{\nabla_{\omega - \lambda_{1}(d \log u_{1})}} 
    & \Omega_{V}^{i}(\log f)  \dar{\nabla_{\omega}} \\
\Omega_{V}^{i+1}(\log f) \rar{u_{1}^{-\lambda_{1}} \cdot } 
    & \Omega_{V}^{i+1}(\log f).
\end{tikzcd}
\]
Since $u_{1}$ is a unit, $u_{1}^{-\lambda_{1}}$ is a unit as well, making the horizontal maps isomorphisms. A quick computation verifies the diagram commutes. Therefore multiplication by $u_{1}^{-\lambda_{1}}$ is an invertible chain map and we have constructed the first isomorphism of \eqref{eqn - sequence of quasi-isomorphisms, removing lambda summands}. Iterating the procedure demonstrates (a).

The same construction works for the twisted meromorphic de Rham complex, giving (b). And multiplication by $u_{1}^{\lambda}$ is compatible with including the twisted logarithmic de Rham complex into the meromorphic one.
\end{proof}

Second, when working at a point $\mathfrak{x}$ whose corresponding edge and subarrangement is of non-maximal rank, we want to be able to reduce the ambient dimension. This corresponds to, and is proved similarly, to Lemma 2.2 of \cite{CohomologyComplementFree}.

\begin{lemma} \label{lemma - divisor product, inductive step quasi-iso}
Suppose $f = f_{1} \cdots f_{d}$ cuts out a hyperplane arrangement $\mathscr{A}$. For weights $\lambda_{1}, \dots, \lambda_{d} \in \mathbb{C}$, we have the associated one form
\[
\omega = \sum_{k} \lambda_{k} \frac{df_{k}}{f_{k}}.
\]
Assume that on some Stein manifold $V \subseteq X$, the divisor $\Div(f)$ is locally a product $(V, \Div(f)) = (\mathbb{C} \times V^{\prime}, \mathbb{C} \times \Div(f^{\prime}))$ for $V^{\prime}$ a Stein manifold of smaller dimension. Let $\pi: V \to V^{\prime}$ be the projection, let $f^{\prime}$ and $f_{k}^{\prime}$ $\in \mathscr{O}_{V^{\prime}}$ satisfy $\Div(f) = \mathbb{C} \times \Div(f^{\prime})$ and $\mathbb{C} \times \Div(f_{k}^{\prime}) = \Div(f_{k}))$, and let
\[
\omega^{\prime} = \sum_{k} \lambda_{k} \frac{df^{\prime}_{k}}{f^{\prime}_{k}}.
\]
Then
\begin{enumerate}[label=(\alph*)]
    \item $\pi^{-1}(\Omega_{V^{\prime}}^{\bullet}(\log f^{\prime}), \nabla_{\omega^{\prime}}) \to (\Omega_{V}^{\bullet}(\log f), \nabla_{\omega})$ is a quasi-isomorphism;
    \item $\pi^{-1}(\Omega_{V^{\prime}}^{\bullet}(\star f^{\prime}), \nabla_{\omega^{\prime}}) \to (\Omega_{V}^{\bullet}(\star f), \nabla_{\omega})$ is a quasi-isomorphism.
\end{enumerate}
\end{lemma}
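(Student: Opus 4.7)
The plan is to adapt the untwisted argument of Lemma 2.2 of \cite{CohomologyComplementFree}, noting that the twist by $\omega$ is harmless here. Because $\Div(f_k) = \mathbb{C} \times \Div(f_k')$ by hypothesis, each $f_k$ pulls back from $V'$, so $\omega = \pi^{-1}\omega'$ as a $1$-form on $V$; in particular, $\omega$ has no $dt$-component and does not depend on $t$. This is the key input distinguishing the twisted case from the untwisted one.

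The Künneth decomposition for logarithmic forms (Remark \ref{rmk - Kunneth formula for log diff forms}), applied at the sheaf level on $V = \mathbb{C} \times V'$, yields
\[
\Omega_V^k(\log f) \simeq \bigl(\pi^{-1}\Omega_{V'}^k(\log f') \otimes_{\pi^{-1}\mathscr{O}_{V'}} \mathscr{O}_V\bigr) \oplus \bigl(\pi^{-1}\Omega_{V'}^{k-1}(\log f') \otimes_{\pi^{-1}\mathscr{O}_{V'}} \mathscr{O}_V\bigr) \wedge dt.
\]
Since $\omega$ is purely horizontal, $\nabla_\omega$ decouples into the sum of $\nabla_{\omega'}$ acting on the $V'$-factor and the vertical differential $dt \wedge \partial_t$. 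Thus $(\Omega_V^\bullet(\log f), \nabla_\omega)$ is the total complex of a double complex whose rows are $\pi^{-1}(\Omega_{V'}^\bullet(\log f'), \nabla_{\omega'})$ with coefficients extended to $\mathscr{O}_V$, and whose two-term columns are $\mathscr{O}_V \xrightarrow{\partial_t} \mathscr{O}_V$.

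By the holomorphic Poincaré lemma in the variable $t$, each such column is a resolution of $\pi^{-1}\mathscr{O}_{V'}$ (at stalks, or on small polydiscs). Filtering by columns and running the associated spectral sequence, the $E_1$-page collapses to $\pi^{-1}(\Omega_{V'}^\bullet(\log f'), \nabla_{\omega'})$, and the resulting edge morphism is the natural pullback inclusion. This establishes (a); one could also exhibit an explicit contracting homotopy via $t$-integration $\eta \mapsto \int_0^t \eta$ on a small polydisc, giving the same conclusion. Part (b) is proven identically: the Künneth decomposition, the horizontality $\omega = \pi^{-1}\omega'$, and the $t$-direction Poincaré lemma all carry over to $\Omega^\bullet(\star f)$ verbatim, since localizing at $f$ and at $\pi^{-1}f'$ coincide. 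The only nontrivial point -- that $\omega$ has no $dt$-piece -- is immediate from the product structure of the divisor, so no real obstacle arises.
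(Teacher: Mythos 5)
Your argument has one genuine gap, right at the step you call ``the only nontrivial point.'' The hypothesis $\Div(f_k)=\mathbb{C}\times\Div(f_k')$ does \emph{not} say that $f_k$ pulls back from $V'$; it only says that $f_k$ and $\pi^{-1}f_k'$ have the same divisor on $V$, i.e.\ $f_k = u_k\,\pi^{-1}f_k'$ for some unit $u_k\in\mathscr{O}_V^\times$, and $u_k$ may depend on $t$ (the coordinates $(t,x')$ are only holomorphic product coordinates for the pair $(V,\Div(f))$, so the $f_k$ need not be $t$-independent in them). Hence
\[
\omega \;=\; \pi^{-1}\omega' \;+\; \sum_k \lambda_k\, d\log u_k,
\]
and the correction term generically has a $dt$-component and $t$-dependence, so $\nabla_\omega$ does not decouple as you claim. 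The fix is exactly the first step of the paper's proof: multiplication by $\prod_k u_k^{-\lambda_k}$ is an invertible chain map (the computation of Lemma \ref{lemma - inductive step, removing fk terms from omega}) identifying $(\Omega_V^\bullet(\log f),\nabla_\omega)$ with $(\Omega_V^\bullet(\log f),\nabla_{\pi^{-1}\omega'})$, after which your horizontality assertion becomes true. Without this isomorphism the twist is not ``harmless,'' and this is precisely where the twisted case differs from the untwisted Lemma 2.2 of \cite{CohomologyComplementFree}.

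Once that reduction is inserted, the remainder of your argument is sound and is a legitimate variant of the paper's: you contract the $t$-direction via the Künneth splitting $\Omega_V^k(\log f)\simeq \pi^*\Omega_{V'}^k(\log f')\oplus \pi^*\Omega_{V'}^{k-1}(\log f')\wedge dt$ and the one-variable holomorphic Poincaré lemma (equivalently the homotopy $\int_0^t$), whereas the paper contracts it with the twisted Lie derivative along $\chi=t\partial_t$, using $\iota_\chi(\omega')=0$ to identify the weight-zero subcomplex with $\pi^{-1}(\Omega_{V'}^\bullet(\log f'),\nabla_{\omega'})$. The two homotopies isolate the same $t$-independent, $dt$-free subcomplex; yours is marginally more elementary, the paper's reuses machinery (Lemma \ref{lemma - twisted analytic log qi to homogeneous subcomplex}) already set up for the homogeneous-subcomplex arguments elsewhere.
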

\begin{proof}
Pick local coordinates $(t, x^{\prime})$ for $V = \mathbb{C} \times V^{\prime}$. And note that $f_{k} = u_{k} f_{k}^{\prime}$ where $u_{k}$ is a unit. Arguing as in Lemma \ref{lemma - inductive step, removing fk terms from omega}, $(\Omega_{V}^{\bullet}(\log f^{\prime}), \nabla_{\omega^{\prime}}) \xrightarrow[]{\simeq} (\Omega_{V}^{\bullet}(\log f), \nabla_{\omega})$, where here $f^{\prime}$ and $\omega^{\prime}$ are viewed as functions and forms over $V$. (Note that we are removing the summands $\lambda_{k} d \log u_{k}$ from $\omega$). Now weight the local coordinates $(t, x^{\prime})$ by giving $t$ weight one, each $x_{i}^{\prime}$ weight zero, $dx_{i}^{\prime}$ weight zero, and $dt$ weight one. Let $\chi = t \partial_{t}$ be a logarithmic derivation on $f^{\prime}$. Then we can argue as in Lemma \ref{lemma - twisted analytic log qi to homogeneous subcomplex}: the twisted Lie derivative $\nabla_{\omega^{\prime}}(\iota_{\chi}) + \iota_{\chi}(\nabla_{\omega^{\prime}})$ induces a quasi-isomorphism $(\Omega_{V}^{\bullet}(\log f^{\prime})_{0}, \nabla_{\omega^{\prime}}) \xhookrightarrow{\qi} (\Omega_{V}^{\bullet}(\log f^{\prime}), \nabla_{\omega^{\prime}})$, where the weight zero subcomplex is defined by the aforementioned weighting. (Note that $\iota_{\chi}(\omega) = 0$ and the previous claim can be checked stalk-wise.) This weight zero subcomplex is $\pi^{-1}(\Omega_{V^{\prime}}^{\bullet}(\log f^{\prime}), \nabla_{\omega^{\prime}})$, giving (a). The proof of (b) is similar. 
\end{proof}

With the preparations complete, we can prove the main theorem. The infrastructure is that of Theorem 1.1 of \cite{CohomologyComplementFree} but: we have to be careful manipulating the form $\omega$; the \v{C}ech cohomology/local cohomology argument (Step 3) is much more subtle. To sketch the recipe for the latter, we: use subsection 2.3 to reduce to a ``homogeneous'' subcomplex of a twisted complex of analytic \v{C}ech cohomology modules; use the GAGA considerations of Corollary \ref{cor - specific weights at origin, acyclic complex of analytic Cech cohomology} to show the vanishing of this subcomplex on the nose is dictated by algebraic Castelnuovo--Mumford regularity bounds; use Theorem \ref{thm - computing CM regularity of log forms} to bound the regularity.

\begin{theorem} \label{thm - analytic twisted lct}
(Analytic Twisted Logarithmic Comparison Theorem) Let $f = f_{1} \cdots f_{d} \in R$ cut out a reduced hyperplane arrangement $\mathscr{A}$. Suppose that $\lambda_{1}, \dots, \lambda_{d} \in \mathbb{C}$ are weights such that, for each edge $E \in \mathscr{L}(\mathscr{A})$,
\begin{equation} \label{eqn - analytic twisted lct weight condition}
\sum_{\{1 \leq k \leq d \mid E \subseteq \Var(f_{k})\}} \lambda_{k} \notin \mathbb{Z}_{\geq \min\{2, \rank(E)\}}.
\end{equation}
Let $\omega = \sum_{k} \lambda_{k} \frac{df_{k}}{f_{k}}$ be
the logarithmic one form determined by the $\{\lambda_{k}\}$. Then the analytic Twisted Logarithmic Comparison Theorem with respect to $\{\lambda_{k}\}$ holds:
\begin{equation} \label{eqn - twisted lct statement}
(\Omega_{X}^{\bullet}(\log \mathscr{A}), \nabla_{\omega}) \xhookrightarrow{\qi} (\Omega_{X}^{\bullet}(\star \mathscr{A}), \nabla_{\omega}) \quad (= \derivedR j_{\star} \localSystem_{\Exp(\boldsymbol{\lambda})}).
\end{equation}
\end{theorem}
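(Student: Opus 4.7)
The plan is to verify the quasi-isomorphism stalk-by-stalk, inducting on the rank of the smallest edge $E \in \mathscr{L}(\mathscr{A})$ containing the point $\mathfrak{x}$. By Lemma \ref{lemma - inductive step, removing fk terms from omega} I may replace $\omega$ with its truncation $\omega' = \sum_{\{k \mid \mathfrak{x} \in \Var(f_k)\}} \lambda_k d\log f_k$ and replace $\mathscr{A}$ with the subarrangement of hyperplanes through $\mathfrak{x}$; by Lemma \ref{lemma - divisor product, inductive step quasi-iso} I may then strip off any trivial transverse factor, reducing to the case that $\mathscr{A}$ is central and essential at $\mathfrak{x} = 0$ (the hypothesis \eqref{eqn - analytic twisted lct weight condition} is preserved since it is indexed by edges). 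The base cases are $\rank \mathscr{A} = 0$ (trivial) and $\rank \mathscr{A} = 1$ (a smooth hypersurface, handled directly using Lemma \ref{lemma - twisted analytic log qi to homogeneous subcomplex} and the weight condition $\lambda_1 \notin \mathbb{Z}_{\geq 1}$).

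For the inductive step, fix an essential central $\mathscr{A}$ of rank $n \geq 2$ at $0$ and a small Stein neighborhood $V \ni 0$. By Lemma \ref{lemma - twisted lct determined by behavior at small Steins}, it suffices to show that the inclusion of global sections
\[
\bigl(\Gamma(V, \Omega_X^\bullet(\log \mathscr{A})), \nabla_\omega\bigr) \xhookrightarrow{\qi} \bigl(\Gamma(V, \Omega_X^\bullet(\star \mathscr{A})), \nabla_\omega\bigr)
\]
is a quasi-isomorphism. Following the architecture of \cite{CohomologyComplementFree}, I compare four spectral sequences of hypercohomology on $V$ and on $V \setminus 0$, abutting respectively to $\mathbb{H}^\bullet$ of the twisted logarithmic and meromorphic complexes. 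On $V \setminus 0$ every stalk lies on an edge of strictly smaller rank, so the induction hypothesis identifies the logarithmic abutment with the meromorphic one, which in turn computes $\derivedR j_\star \localSystem_{\Exp(\boldsymbol{\lambda})}$ via Proposition \ref{prop - twisted meromorphic de rham quasi-iso to local system}. Thus the whole problem collapses to understanding the spectral sequence with first page
\[
{}''E_1^{p,q} = H^q(V \setminus 0, \Omega_X^p(\log \mathscr{A}))
\]
and vertical differential induced by $\nabla_\omega$: I need to show that for all $q \geq 1$ the induced $\nabla_\omega$-complex on the $q$-th row is acyclic, while the $q = 0$ row recovers exactly the desired logarithmic hypercohomology on $V$ (the latter because higher coherent cohomology vanishes on the Stein $V$).

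The vanishing on each row $q \geq 1$ is the crux. Identifying $H^q(V\setminus 0, \Omega_X^p(\log \mathscr{A}))$ with the \v{C}ech cohomology $H^q_{\Cech}(\Omega_X^p(\log \mathscr{A}))$ relative to the cover $\{D(x_i)\}$ (valid after possibly shrinking $V$), the problem becomes the acyclicity of
\[
(H^q_{\Cech}(\Omega_X^\bullet(\log \mathscr{A})), \nabla_\omega) \quad \text{for } q \geq 1,
\]
which is exactly Corollary \ref{cor - specific weights at origin, acyclic complex of analytic Cech cohomology} applied to the maximal edge $E = \{0\}$, whose weight sum is $\iota_E(\omega)$. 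Once this vanishing is in place, a bootstrap comparison of the four spectral sequences forces the inclusion to be a quasi-isomorphism on $V$, completing the induction and giving the statement together with its identification with $\derivedR j_\star \localSystem_{\Exp(\boldsymbol{\lambda})}$.

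The main obstacle is precisely this acyclicity. Without homological restrictions (freeness as in \cite{CohomologyComplementFree}, tameness as in \cite{YuzvinskyWiensLTCTameArrangements}), the columns of ${}''E_1^{p,q}$ can be arbitrarily complicated, so the approaches of those papers do not apply directly. The resolution, already laid out in Sections 2.3--2.6, is to pass through Laurent-series ``homogeneity'': Lemma \ref{lemma - cohomological surjections of analytic homogeneous twisted Cech complex to total twisted Cech complex} shows that only the homogeneous subcomplex of weight $-\iota_E(\omega)$ contributes to cohomology; a GAGA argument then identifies its graded pieces with $H^t_\mathfrak{m}(\Omega_R^j(\log \mathscr{A}))_{-\iota_E(\omega)}$; and finally the Castelnuovo--Mumford regularity bound $\reg(\Omega_R^j(\log \mathscr{A})) \leq 0$ from Theorem \ref{thm - computing CM regularity of log forms}, combined with the numerical hypothesis \eqref{eqn - analytic twisted lct weight condition}, forces these graded pieces to vanish. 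The regularity bound itself is the deepest ingredient, obtained by transporting to the auxiliary module $L^j(\mathscr{A})$ and iteratively applying Derksen--Sidman linear approximation together with the Künneth formula to reduce across deletions.
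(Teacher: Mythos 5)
Your proposal is correct and follows essentially the same route as the paper: stalk-wise induction with the reduction Lemmas \ref{lemma - inductive step, removing fk terms from omega} and \ref{lemma - divisor product, inductive step quasi-iso}, the four-spectral-sequence comparison of \cite{CohomologyComplementFree} on a small Stein $V$, and the key acyclicity of $(H^{q}_{\Cech}(\Omega_X^\bullet(\log\mathscr{A})),\nabla_\omega)$ for $q\geq 1$ obtained from the Laurent-homogeneous subcomplex, GAGA, and the regularity bound of Theorem \ref{thm - computing CM regularity of log forms}. The only cosmetic discrepancy is that the \v{C}ech identification goes through $H^q(V\setminus 0,-)\simeq H^{q+1}_0(X,-)\simeq H^q(X\setminus 0,-)$ by excision, so the cover $\{D(x_i)\}$ lives on $X\setminus 0$ rather than on a shrunken $V\setminus 0$; this does not affect the argument.
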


\begin{proof}
While the result is well known, we defer giving details on the derived category identification ``$(= \derivedR j_{\star} \localSystem_{\Exp(\boldsymbol{\lambda})})$'' until Proposition \ref{prop - twisted meromorphic de rham quasi-iso to local system}.

\emph{Step 0}: Reductions and the Inductive Scheme.

We proceed stalk-wise and induce on $\dim X$. When $\dim X = 1$ the result is well known and easy to check. So assume $\dim X = n$ and the claim holds for all reduced arrangements in $\mathbb{C}^{p}$ for $p < n$. Let $\mathfrak{x} \in X$ and $E$ the smallest edge containing $\mathfrak{x}$. If the subarrangement $f^{\prime}$ consisting of all hyperplanes containing $\mathfrak{x}$ is of rank less than $\dim X$, then the induction hypothesis applies. For if $V$ is a small Stein containing $\mathfrak{x}$ and
\[
\omega^{\prime} = \sum_{\{1 \leq k \leq d \mid E \subseteq \Var(f_{k})\}} \lambda_{k} \frac{df_{k}}{f_{k}},
\]
then Lemma \ref{lemma - inductive step, removing fk terms from omega} says \eqref{eqn - twisted lct statement} holds at $\mathfrak{x}$ is if and only if $(\Omega_{V}^{\bullet}(\log f^{\prime}), \nabla_{\omega^{\prime}}) \xhookrightarrow{\qi} (\Omega_{V}^{\bullet}(\star f^{\prime}), \nabla_{\omega^{\prime}})$ holds at $\mathfrak{x}$. Since $f^{\prime}$ is of rank less than $\dim X$ at $\mathfrak{x}$ it is a product, and we use Lemma \ref{lemma - divisor product, inductive step quasi-iso} to invoke the inductive hypothesis. Note that $\omega^{\prime}$ satisfies its version of \eqref{eqn - analytic twisted lct weight condition} since $\omega$ satisfies \eqref{eqn - analytic twisted lct weight condition} itself.

If the subarrangement $f^{\prime}$ at $\mathfrak{x}$ has rank $n$, we may change coordinates and assume that $\mathfrak{x} = 0$ and $f^{\prime}$ is central and essential. By Lemma \ref{lemma - inductive step, removing fk terms from omega}, the quasi-isomorphism \eqref{eqn - twisted lct statement} holds at $\mathfrak{x}$ exactly when the corresponding Twisted Logarithmic Comparison Theorem holds for $f^{\prime}$ and $\omega^{\prime}$ at $0$ (in these new coordinates).

Therefore it suffices to prove the following: if $\mathscr{A}$ is a central, essential, reduced hyperplane arrangement such that the \eqref{eqn - twisted lct statement} holds at all points outside of $0$, then \eqref{eqn - twisted lct statement} holds at $0$. By Lemma \ref{lemma - twisted lct determined by behavior at small Steins} we may verify this (equivalently) by checking \eqref{eqn - twisted lct statement} holds at a small open Stein $V \ni 0$.

\emph{Step 1}: Setting up the Spectral Sequences.

For the rest of the proof, we inhabit the spectral sequence infrastructure from the proof of Theorem 1.1 of \cite{CohomologyComplementFree} but refine it using the results of our previous sections. Let $V$ be small Stein open about $0$. Define the Stein opens $V_{i} = V \setminus \{x_{i} = 0\}$ and $V_{i}^{\prime} = V_{i} \setminus (V \cap \mathscr{A})$. We have Stein open covers $\{V_{i}\}$ and $\{V_{i}^{\prime}\}$ of $V \setminus 0$ and $V \setminus (V \cap \mathscr{A})$ respectively. Consider the following two double complexes with objects:
\begin{align*}
K^{p, q} &= \bigoplus_{1 \leq i_{1} \leq \cdots \leq i_{q} \leq n} \Gamma(\bigcap_{i=0}^{q} V_{i_{j}}, \Omega_{X}^{p}(\log \mathscr{A})); \\
\widetilde{K}^{p, q} &= \bigoplus_{1 \leq i_{1} \leq \cdots \leq i_{q} \leq n} \Gamma(\bigcap_{i=0}^{q} V_{i_{j}}^{\prime}, \Omega_{X}^{p}(\star \mathscr{A})).
\end{align*}
Each complex is positioned in the lattice $\mathbb{Z}^{2}$ so that, for example, $K^{p,q}$, occurs in row $p$ and column $q$. $K^{\bullet, \bullet}$ has horizontal differential the \v{C}ech differential and vertical differential $\nabla_{\omega}$; $\widetilde{K}^{\bullet, \bullet}$ has the same. And restriction commutes with the differentials, giving a map $\rho_{0}^{\bullet, \bullet}: K^{\bullet, \bullet} \to \widetilde{K}^{\bullet, \bullet}$. 

We will consider the resulting four spectral sequences arising from these double complexes using similar notation as in \cite{CohomologyComplementFree}. As shorthand, $^{\prime}(-)$ denotes the spectral sequence first taking cohomology vertically and $^{\prime \prime}(-)$ first taking cohomology horizontally. To be precise: 
\begin{align*}
    ^{\prime}E_{1}^{p,q} &= \bigoplus_{1 \leq i_{0} \leq \cdots \leq i_{q} \leq n} H^{p} (\Gamma(\bigcap_{j=0}^{q} V_{i_{j}},  (\Omega_{X}^{\bullet}(\log \mathscr{A}), \nabla_{\omega}))); \\
    ^{\prime \prime}E_{1}^{p,q} &= H^{q}(V \setminus 0, \Omega_{X}^{p}(\log \mathscr{A})); \\
    ^{\prime}\widetilde{E}_{1}^{p,q} &= \bigoplus_{1 \leq i_{0} \leq \cdots \leq i_{q} \leq n} H^{p} (\Gamma(\bigcap_{j=0}^{q} V_{i_{j}}^{\prime} , (\Omega_{X}^{\bullet}(\star \mathscr{A}), \nabla_{\omega}))); \\
    ^{\prime \prime}\widetilde{E}_{1}^{p,q} &= H^{q}(V \setminus (V \cap \mathscr{A}), \Omega_{X}^{p}(\star \mathscr{A})). 
\end{align*}

\emph{Step 2}: The $^{\prime}(-)$ Spectral Sequences.

By the inductive set up, $\eqref{eqn - twisted lct statement}$ holds at all points outside the origin. This means that the aformentioned natural map: 
\[
\rho_{1}^{\bullet, \bullet}: \enspace ^{\prime}E_{1}^{\bullet, \bullet} \xrightarrow[]{} \enspace ^{\prime} \widetilde{E}_{1}^{\bullet, \bullet}
\]
is an isomorphism. As $\rho_{0}^{\bullet, \bullet}$ commutes with the differentials, this isomorphism continues on further pages. Since the spectral sequences converge we have
\begin{equation} \label{eqn- first spectral sequences isos continuing on infty page}
\rho_{\infty}^{\bullet, \bullet}:  \enspace ^{\prime}E_{\infty}^{\bullet, \bullet} \xrightarrow[]{\simeq} \enspace ^{\prime}\widetilde{E}_{\infty}^{\bullet, \bullet}.
\end{equation}

\emph{Step 3}: The $^{\prime \prime}(-)$ Spectral Sequences. 

We first consider $^{\prime \prime}\widetilde{E}^{\bullet, \bullet}$. Because $V \setminus (V \cap \mathscr{A})$ is Stein and $\Omega_{X}^{p}(\star f) |_{U}$ is coherent, the higher sheaf cohomology $^{\prime \prime}\widetilde{E}_{1}^{p,q}$ vanishes for $q > 0$. So this page's nonzero terms lie on the $y$-axis, the spectral sequence converges on the second page where the nonzero terms still lie on the $y$-axis, and
\begin{equation} \label{eqn - second page, meromorphic spectral sequence only one column}
    ^{\prime \prime}\widetilde{E}_{2}^{p,0} = H^{p}(\Gamma(V \setminus (V \cap \mathscr{A}), (\Omega_{X}^{\bullet}(\star \mathscr{A}), \nabla_{\omega}))).
\end{equation}

We claim $^{\prime \prime}E^{\bullet, \bullet}$ has the same vanishing behavior. Combine the long exact sequence of cohomology supported on $\{0\}$
\[
\to H^{q}_{0}(V, \Omega_{X}^{p}(\log \mathscr{A})) \to 
    H^{q}(V, \Omega_{X}^{p}(\log \mathscr{A})) \to 
        H^{q}(V \setminus 0, \Omega_{X}^{p}(\log \mathscr{A})) \to
\]
with the higher cohomological vanishing of $H^{q}(V, \Omega_{X}^{p}(\log \mathscr{A}))$ for $q \geq 1$, to get a natural isomorphism
\[
^{\prime \prime}E_{1}^{p,q} \simeq H_{0}^{q+1}(V,\Omega_{X}^{p}(\log \mathscr{A}))
\]
for all $q \geq 1$. By excision, 
\[
H_{0}^{q+1}(V,\Omega_{X}^{p}(\log \mathscr{A})) \simeq H_{0}^{q+1}(X, \Omega_{X}^{p}(\log \mathscr{A})).
\]
Again use the long exact sequence of cohomology supported on $\{0\}$ to get
\[
H^{q}(X \setminus 0, \Omega_{X}^{p}(\log \mathscr{A})) \simeq H_{0}^{q+1}(X, \Omega_{X}^{p}(\log \mathscr{A}))
\]
for all $q \geq 1$. Since $\{D(x_{i})\}_{1 \leq i \leq n}$ is an open cover of $X \setminus 0$, and since every $D(x_{i})$ along with every intersection of elements of the $\{D(x_{i})\}$ is a Stein manifold, the \v{C}ech complex $C^{\bullet}(\{D(x_{i})\}, \Omega_{X}^{p}(\log \mathscr{A}))$ computes the sheaf cohomology $H^{\bullet}(X \setminus 0, \Omega_{X}^{p}(\log \mathscr{A}))$. 

Combining these equivalences and using the notation of subsection 2.3 yields
\[
^{\prime \prime}E_{1}^{p,q} \simeq H^{q}(X \setminus 0, \Omega_{X}^{p}(\log \mathscr{A})) \simeq H_{\Cech}^{q}(\Omega_{X}^{p}(\log \mathscr{A}))
\]
for all $q \geq 1$. The second page $^{\prime \prime}E_{2}^{\bullet, \bullet}$ arises from the first by taking cohomology vertically with respect to the differential $\nabla_{\omega}$. So
\begin{equation} \label{eqn - hard second page, converted into twisted cohomology of Cech cohomology}
^{\prime \prime}E_{2}^{p,q} \simeq H^{p}(H_{\Cech}^{q}(\Omega_{X}^{\bullet}(\log \mathscr{A})), \nabla_{\omega})
\end{equation}
for all $q \geq 1.$ Now our condition on the weights \eqref{eqn - analytic twisted lct weight condition} considered at the origin become $\sum \lambda_{k} \notin \mathbb{Z}_{\geq \min\{2, \rank(\mathscr{A})\}}$. So we may apply Corollary \ref{cor - specific weights at origin, acyclic complex of analytic Cech cohomology} to \eqref{eqn - hard second page, converted into twisted cohomology of Cech cohomology}, deducing $^{\prime \prime}E_{2}^{p,q}$ vanishes for $q \geq 1$. Therefore the nonzero terms of the second page $^{\prime \prime}E_{2}^{\bullet, \bullet}$ all lie on the $y$-axis, the spectral sequence converges on this second page, and 
\[
^{\prime \prime}E_{2}^{p, 0} = H^{p}(\Gamma(V \setminus 0, (\Omega_{X}^{\bullet}(\log \mathscr{A}), \nabla_{\omega}))) .
\]
Since $X = \mathbb{C}^{n}$ with $n \geq 2$ by assumption, the reflexivity of $\Omega_{X}^{j}(\log \mathscr{A})$ (see, for example, Proposition 1.5 \cite{MondLogarithmicDifferentialFormsAnd}) implies that the global sections of $V \setminus 0$ are the global sections of $V$, meaning the only nonzero terms of the second page $^{\prime \prime}E_{2}^{\bullet, \bullet}$ are
\begin{equation} \label{eqn - second page, log spectral sequence only one column}
^{\prime \prime}E_{2}^{p, 0} = H^{p}(\Gamma(V, (\Omega_{X}^{\bullet}(\log \mathscr{A}), \nabla_{\omega}))) .
\end{equation}

\emph{Step 4}: The End.

Because $^{\prime \prime}E_{2}^{\bullet,\bullet}$ consists of only one column on the $y$-axis, convergence to $H^{\bullet}\text{Tot}(K^{\bullet,\bullet})$ happens on the second page. And because there is only one column, the induced filtration on $H^{\bullet}\text{Tot}(K^{\bullet,\bullet})$ is trivial. Using \eqref{eqn - second page, log spectral sequence only one column} we have
\[
H^{p}(\Gamma(V, (\Omega_{X}^{\bullet}(\log \mathscr{A}), \nabla_{\omega}))) = \enspace ^{\prime \prime}E_{2}^{p, 0} \enspace = H^{p}\text{Tot}(K^{\bullet,\bullet}).
\]
Similarly, $^{\prime \prime}\widetilde{E}_{2}^{\bullet,\bullet}$ has only one column on the $y$-axis, so convergence to $H^{\bullet} \text{Tot}(\widetilde{K}^{\bullet, \bullet})$ happens on the second page and the isomorphism has no graded data. Using \eqref{eqn - second page, meromorphic spectral sequence only one column} we have
\[
H^{p}(\Gamma(V \setminus (V \cap \mathscr{A}), (\Omega_{X}^{\bullet}(\star \mathscr{A}), \nabla_{\omega}))) = \enspace ^{\prime \prime}\widetilde{E}_{2}^{p, 0} \enspace = H^{p}\text{Tot}(\widetilde{K}^{\bullet,\bullet}).
\]

On the other hand, the isomorphism $\enspace ^{\prime}E_{\infty}^{\bullet, \bullet} \xrightarrow[]{\simeq} \enspace ^{\prime}\widetilde{E}_{\infty}^{\bullet, \bullet}$ from \eqref{eqn- first spectral sequences isos continuing on infty page} gives (ignoring the extra graded data) the natural isomorphism 
\[
H^{p}\text{Tot}(K^{\bullet,\bullet}) \simeq H^{p}\text{Tot}(\widetilde{K}^{\bullet,\bullet}).
\]

Putting the three displayed equations of this step together produces
\[
H^{p}(\Gamma(V, (\Omega_{X}^{\bullet}(\log \mathscr{A}), \nabla_{\omega}))) \simeq H^{p}(\Gamma(V \setminus (V \cap \mathscr{A}), (\Omega_{X}^{\bullet}(\star \mathscr{A}), \nabla_{\omega}))),
\]
which is what we required.
\end{proof}

\begin{remark}
The base case of the induction, $\mathscr{A} = \Var(x)$, requires $\mathbb{Z}_{\geq 1}$ in \eqref{eqn - analytic twisted lct weight condition}; the base case fails if $\mathbb{Z}_{\geq 2}$ is used in \eqref{eqn - analytic twisted lct weight condition} instead.
\end{remark}

Let us record two immediate corollaries. As it is of independent interest, we state the untwisted version:

\begin{corollary} \label{cor - analytic untwisted lct} (Analytic Logarithmic Comparison Theorem)
For a reduced hyperplane arrangement $\mathscr{A}$, the analytic Logarithmic Comparison Theorem holds:
\[
(\Omega_{X}^{\bullet}(\log \mathscr{A}), d) \xhookrightarrow{\qi} (\Omega_{X}^{\bullet}(\star \mathscr{A}), d) \quad (= \derivedR j_{\star} \mathbb{C}_{U})
\]
\end{corollary}

\begin{proof}
Use Theorem \ref{thm - analytic twisted lct} and set all the weights $\{\lambda_{k}\}$ to zero.
\end{proof}

We also obtain a global version as in Corollary 1.4 of \cite{CohomologyComplementFree}. We continue to not require centrality.

\begin{corollary} \label{cor - global twisted analytic lct}
Let $f = f_{1} \cdots f_{d} \in R$ be a reduced hyperplane arrangement $\mathscr{A}$ equipped with a factorization into linear forms. Suppose that $\lambda_{1}, \dots, \lambda_{d} \in \mathbb{C}$ are weights such that, for each edge $E \in \mathscr{L}(\mathscr{A})$,
\begin{equation*} 
\sum_{\{1 \leq k \leq d \mid E \subseteq \Var(f_{k})\}} \lambda_{k} \notin \mathbb{Z}_{\geq \min\{2, \rank(E)\}}.
\end{equation*}
Let $\omega = \sum_{k} \lambda_{k} \frac{df_{k}}{f_{k}}$ be
the logarithmic one form determined by the $\{\lambda_{k}\}$. Then the global analytic Twisted Logarithmic Comparison Theorem holds:
\[
(\Gamma(X, \Omega_{X}^{\bullet}(\log \mathscr{A})), \nabla_{\omega}) \xhookrightarrow{\qi} (\Gamma(X, \Omega_{X}^{\bullet}(\star \mathscr{A})), \nabla_{\omega}) \quad (\simeq H^{\bullet}(U, \localSystem_{\Exp(\boldsymbol{\lambda})}),
\]
where ``$(\simeq  H^{\bullet}(U, \localSystem_{\exp(\boldsymbol{\lambda})}))$'' means there are cohomological isomorphisms.
\end{corollary}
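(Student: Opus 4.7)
The plan is to deduce the global statement from the sheaf-level analytic Twisted Logarithmic Comparison Theorem (Theorem \ref{thm - analytic twisted lct}) by a Stein/Cartan B argument. First, under the hypotheses on the weights, Theorem \ref{thm - analytic twisted lct} gives a quasi-isomorphism of complexes of sheaves on $X$
\[
(\Omega_{X}^{\bullet}(\log \mathscr{A}), \nabla_{\omega}) \xhookrightarrow{\qi} (\Omega_{X}^{\bullet}(\star \mathscr{A}), \nabla_{\omega}) \enspace (= \derivedR j_{\star} \localSystem_{\Exp(\boldsymbol{\lambda})}),
\]
so the induced map on hypercohomology $\mathbb{H}^{\bullet}(X, -)$ is an isomorphism. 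Thus it suffices to show the two hypercohomology spectral sequences degenerate at $E_{1}$ and compute the cohomology of the global sections complex.

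For the logarithmic side, each $\Omega_{X}^{p}(\log \mathscr{A})$ is a coherent $\mathscr{O}_{X}$-module on the Stein manifold $X = \mathbb{C}^{n}$, so by Cartan's Theorem B, $H^{q}(X, \Omega_{X}^{p}(\log \mathscr{A})) = 0$ for $q > 0$. For the meromorphic side, I would write $\Omega_{X}^{p}(\star \mathscr{A}) = \varinjlim_{k} \frac{1}{f^{k}} \Omega_{X}^{p}$ as a directed colimit of coherent sheaves; since each term has vanishing higher cohomology by Cartan B and sheaf cohomology commutes with countable directed colimits on $\mathbb{C}^{n}$, we again get vanishing of $H^{q}(X, \Omega_{X}^{p}(\star \mathscr{A}))$ for $q > 0$. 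Thus the hypercohomology spectral sequences $E_{1}^{p,q} = H^{q}(X, \Omega_{X}^{p}(-))$ degenerate on their second pages with only one nonzero column, and
\[
H^{p}(\Gamma(X, \Omega_{X}^{\bullet}(\log \mathscr{A})), \nabla_{\omega}) \simeq \mathbb{H}^{p}(X, (\Omega_{X}^{\bullet}(\log \mathscr{A}), \nabla_{\omega}))
\]
and similarly for the meromorphic complex. Combined with the sheaf-level quasi-isomorphism, this yields the first assertion.

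For the local system identification, I use the derived category equality furnished by Theorem \ref{thm - analytic twisted lct}: $(\Omega_{X}^{\bullet}(\star \mathscr{A}), \nabla_{\omega}) = \derivedR j_{\star} \localSystem_{\Exp(\boldsymbol{\lambda})}$ in the bounded derived category on $X$. Applying $\mathbb{R} \Gamma(X, -)$ and using the adjunction $\mathbb{R}\Gamma(X, \derivedR j_{\star}(-)) = \mathbb{R}\Gamma(U, -)$ gives
\[
\mathbb{H}^{\bullet}(X, (\Omega_{X}^{\bullet}(\star \mathscr{A}), \nabla_{\omega})) \simeq H^{\bullet}(U, \localSystem_{\Exp(\boldsymbol{\lambda})}),
\]
which chains with the previous isomorphisms to produce the promised cohomological isomorphism. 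The factorization-into-linears hypothesis enters only to ensure the one-form $\omega = \sum_{k} \lambda_{k} df_{k}/f_{k}$ and the weight condition at each edge make sense as stated.

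The only real technical point, and the main obstacle to writing the argument out cleanly, is the cohomological vanishing for $\Omega_{X}^{p}(\star \mathscr{A})$, which is not coherent. This is handled by the standard colimit trick above; once that is in hand the rest is formal degeneration and adjunction. Every other ingredient is imported directly from Theorem \ref{thm - analytic twisted lct}.
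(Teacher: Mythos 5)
Your proposal is correct and follows essentially the same route as the paper: pass to hypercohomology of the sheaf-level quasi-isomorphism from Theorem \ref{thm - analytic twisted lct}, kill the higher sheaf cohomology on the Stein manifold $X$ so that hypercohomology reduces to cohomology of global sections, and identify $\mathbb{H}^{\bullet}(\derivedR j_{\star}\localSystem_{\Exp(\boldsymbol{\lambda})})$ with $H^{\bullet}(U,\localSystem_{\Exp(\boldsymbol{\lambda})})$. The only (cosmetic) difference is in how the acyclicity of the non-coherent sheaves $\Omega_{X}^{p}(\star\mathscr{A})$ is justified: you use the colimit $\varinjlim_{k}\frac{1}{f^{k}}\Omega_{X}^{p}$ of coherent sheaves, while the paper appeals to Stein-ness of $U$ and coherence of $\Omega_{U}^{p}$; both are standard and valid.
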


\begin{proof}
The quasi-isomorphism induced by inclusion in Theorem \ref{thm - analytic twisted lct} gives isomorphisms in hypercohomology:
\begin{equation} \label{eqn - global analytic twisted lct, taking hypercohomology}
\textbf{H}^{t}(\Omega_{X}^{\bullet}(\log \mathscr{A}), \nabla_{\omega}) \xrightarrow[]{\simeq} \textbf{H}^{t}(\Omega_{X}^{\bullet}(\star \mathscr{A}), \nabla_{\omega}).
\end{equation}
Since $X$ and $U = X \setminus \mathscr{A}$ are Stein and $\Omega_{X}^{j}(\log \mathscr{A})$ and $\Omega_{U}^{j}$ are coherent, their higher sheaf cohomology vanishes. So the hypercohomology in \eqref{eqn - global analytic twisted lct, taking hypercohomology} amounts to taking global sections and then taking cohomology. This gives the promised quasi-isomorphism. As for the claim involving ``$(\simeq  H^{\bullet}(U, \localSystem_{\exp(\boldsymbol{\lambda})}))$'', it is well known that $\textbf{H}^{t} (\derivedR j_{\star} \localSystem_{\Exp(\boldsymbol{\lambda})}) = H^{t}(U, \localSystem_{\Exp(\boldsymbol{\lambda})})$.
\end{proof}

\subsection{The Algebraic Case} \enspace

Now we turn to the problem of the algebraic Twisted Logarithmic Comparison Theorem. We cannot always repeat the proof of Theorem \ref{thm - analytic twisted lct} because of Lemma \ref{lemma - inductive step, removing fk terms from omega}: this construction requires multiplying by $f_{k}^{\lambda_{k}}$ at places where $f_{k}$ is a unit. Because $\lambda_{k}$ may not be integral, in the algebraic category this may be nonsensical.

Nevertheless, when $\mathscr{A}$ is central we can deduce the algebraic Twisted Logarithmic Comparison Theorem from the analytic one. We highlight that the cohomology of the complement $U$ in the attached local system is computed by a particular homogeneous subcomplex of the twisted logarithmic de Rham complex. So computing the complement's cohomology is a finite dimensional linear algebra problem.

\begin{theorem} \label{thm - global algebraic twisted lct with homogeneous subcomplex q.i} (Algebraic Twisted Logarithmic Comparison Theorem)
Let $f = f_{1} \cdots f_{d}$ cut out a central, reduced hyperplane arrangement $\mathscr{A}$ and let $\lambda_{1}, \dots, \lambda_{d} \in \mathbb{C}$ be weights such that, for each edge $E$,
\[
\sum_{\{1 \leq k \leq d \mid E \subseteq \Var(f_{k})\}} \lambda_{k} \notin \mathbb{Z}_{\geq \min\{2, \rank(E)\}}.
\]
Furthermore, let $\omega = \sum_{k} \lambda_{k} df_{k} / f_{k}$. Then the algebraic Twisted Logarithmic Comparison Theorem holds along with an additional quasi-isomorphism:
\[
(\Omega_{R}^{\bullet}(\log \mathscr{A})_{-\iota_{E}(\omega)}, \nabla_{\omega}) \xhookrightarrow{\qi} (\Omega_{R}^{\bullet}(\log \mathscr{A}), \nabla_{\omega}) \xhookrightarrow{\qi} (\Omega_{R}^{\bullet}(\star \mathscr{A}), \nabla_{\omega}) \quad (\simeq  H^{\bullet}(U, \localSystem_{\Exp(\boldsymbol{\lambda})})).
\]
Here $(\simeq  H^{\bullet}(U, \localSystem_{\Exp(\boldsymbol{\lambda})}))$ means there are isomorphisms on the level of cohomology and $(\Omega_{R}^{\bullet}(\log \mathscr{A})_{-\iota_{E}(\omega)}, \nabla_{\omega})$ is the homogeneous subcomplex of degree $- \iota_{E}(\omega) = -( \lambda_{1} + \cdots + \lambda_{d})$, which is a complex of finite dimensional $\mathbb{C}$-vector spaces.
\end{theorem}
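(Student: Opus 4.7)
The first quasi-isomorphism $(\Omega_{R}^{\bullet}(\log \mathscr{A})_{-\iota_{E}(\omega)}, \nabla_{\omega}) \xhookrightarrow{\qi} (\Omega_{R}^{\bullet}(\log \mathscr{A}), \nabla_{\omega})$ is exactly Proposition \ref{prop - homogeneous subcomplex quasi iso}, and finite-dimensionality of $\Omega_R^j(\log \mathscr{A})_{-\iota_E(\omega)}$ is automatic since $R$ is $\mathbb{N}$-graded and $\Omega_R^j(\log \mathscr{A})$ is finitely generated. The identification of the right-hand cohomology with $H^\bullet(U, \localSystem_{\Exp(\boldsymbol{\lambda})})$ is the Deligne--Grothendieck algebraic de Rham theorem recalled in Section 2.1. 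So the theorem reduces to the middle quasi-isomorphism, i.e.\ the algebraic Twisted Logarithmic Comparison Theorem.

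I plan to deduce this algebraic statement from its analytic counterpart (Theorem \ref{thm - analytic twisted lct}) by passing through the $-\iota_E(\omega)$-homogeneous subcomplex, which for central $\mathscr{A}$ is manifestly the same algebraically and analytically. Simply re-running the proof of Theorem \ref{thm - analytic twisted lct} is not an option, because the inductive step (Lemma \ref{lemma - inductive step, removing fk terms from omega}) multiplies by the analytic-but-not-algebraic unit $f_k^{-\lambda_k}$.

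The plan has three substeps. (i) The twisted Lie derivative homotopy of Proposition \ref{prop - homogeneous subcomplex quasi iso} goes through unchanged for meromorphic forms, since $\iota_E$ preserves $\Omega_R^\bullet(\star\mathscr{A})$ and its grading and $\iota_E(\omega)\in\mathbb{C}$; this gives
\[
(\Omega_{R}^{\bullet}(\star \mathscr{A})_{-\iota_{E}(\omega)}, \nabla_{\omega}) \xhookrightarrow{\qi} (\Omega_{R}^{\bullet}(\star \mathscr{A}), \nabla_{\omega}).
\]
(ii) Because $f$ is homogeneous, expanding any germ in $\Omega_{X,0}^j(\log \mathscr{A})$ or $\Omega_{X,0}^j(\star \mathscr{A})$ as a power series at $0$ (as in subsection 2.3) shows that its $q$-homogeneous part has polynomial coefficients of a single fixed degree; consequently $\Omega_{X,0}^j(\log \mathscr{A})_q = \Omega_R^j(\log \mathscr{A})_q$ and $\Omega_{X,0}^j(\star \mathscr{A})_q = \Omega_R^j(\star \mathscr{A})_q$ as $\mathbb{C}$-vector spaces, with identical differentials $d$ and $\omega\wedge(-)$. (iii) Assembling (ii) with Lemma \ref{lemma - twisted analytic log qi to homogeneous subcomplex} and Theorem \ref{thm - analytic twisted lct} applied at the stalk at $0$ produces the chain
\begin{align*}
(\Omega_{R}^{\bullet}(\log \mathscr{A})_{-\iota_{E}(\omega)}, \nabla_{\omega})
  &= (\Omega_{X,0}^{\bullet}(\log \mathscr{A})_{-\iota_{E}(\omega)}, \nabla_{\omega})
  \xhookrightarrow{\qi} (\Omega_{X,0}^{\bullet}(\log \mathscr{A}), \nabla_{\omega}) \\
  &\xhookrightarrow{\qi} (\Omega_{X,0}^{\bullet}(\star \mathscr{A}), \nabla_{\omega})
  \xhookleftarrow{\qi} (\Omega_{X,0}^{\bullet}(\star \mathscr{A})_{-\iota_{E}(\omega)}, \nabla_{\omega}) \\
  &= (\Omega_{R}^{\bullet}(\star \mathscr{A})_{-\iota_{E}(\omega)}, \nabla_{\omega}).
\end{align*}
Combined with (i) and the first quasi-isomorphism, this yields an isomorphism $H^p(\Omega_R^\bullet(\log \mathscr{A}), \nabla_\omega) \xrightarrow{\simeq} H^p(\Omega_R^\bullet(\star \mathscr{A}), \nabla_\omega)$. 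Since every arrow above is either the inclusion of a graded subcomplex or the natural inclusion of logarithmic into meromorphic forms, an obvious commutative-square chase shows this isomorphism is induced by the natural inclusion, finishing the proof.

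The main technical obstacle I anticipate is step (ii), namely pinning down that the analytic homogeneous summands really have polynomial coefficients. For logarithmic forms this is essentially immediate, because the pole order is fixed at one and the linear conditions cutting $\Omega^j(\log\mathscr{A})$ out of $\tfrac{1}{f}\Omega^j$ are graded. For meromorphic forms one must additionally observe that the projection onto the $q$-homogeneous summand can be taken with polynomial numerator over a fixed power of $f$, which is a bookkeeping exercise using the Laurent-expansion machinery already developed in subsection 2.3. Once (ii) is secured, the rest of the argument is formal diagram-chasing.
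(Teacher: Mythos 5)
Your proposal is correct and follows essentially the same route as the paper: the paper's proof also passes through the commutative ladder identifying the $-\iota_{E}(\omega)$-homogeneous subcomplexes of the algebraic complexes with those of the analytic stalks at $0$, uses Lemma \ref{lemma - twisted analytic log qi to homogeneous subcomplex} and Proposition \ref{prop - homogeneous subcomplex quasi iso} (the latter implicitly in its meromorphic form, which you spell out slightly more carefully) for the vertical quasi-isomorphisms, and then invokes Theorem \ref{thm - analytic twisted lct} at the origin to transfer the quasi-isomorphism from the top of the diagram to the bottom.
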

\begin{proof}
The claim involving $(\simeq  H^{\bullet}(U, \localSystem_{\exp(\boldsymbol{\lambda})}))$ follows from the algebraic de Rham Theorems of Deligne and Grothendieck, cf. \cite{OrlikHypergeometricIntegralsAndArrangements}. That, when $\mathscr{A}$ is central, the map $(\Omega_{R}^{\bullet}(\log \mathscr{A})_{-\iota_{E}(\omega)}, \nabla_{\omega}) \xhookrightarrow{} (\Omega_{R}^{\bullet}(\log \mathscr{A}), \nabla_{\omega})$ is a quasi-isomorphism is the first part of Proposition \ref{prop - homogeneous subcomplex quasi iso}. As for the finite dimensional claim: since $R$ and $\Omega_{R}^{j}(\log \mathscr{A})$ do not have elements of arbitrary large negative degree, the $-\iota_{E}(\omega)$-homogeneous subspace $\Omega_{R}^{j}(\log \mathscr{A})_{-\iota_{E}(\omega)}$ is finite dimensional. 

It remains to validate the purported quasi-isomorphism $(\Omega_{R}^{\bullet}(\log \mathscr{A}), \nabla_{\omega}) \xhookrightarrow{\qi} (\Omega_{R}^{\bullet}(\star \mathscr{A}), \nabla_{\omega})$. Consider the following commutative diagram:
\begin{equation} \label{eqn - comm diagram analytic to algebraic, homogeneous subcomplexes}
\begin{tikzcd}
(\Omega_{X, 0}^{j}(\log \mathscr{A}), \nabla_{\omega}) \rar[hookrightarrow]
    & (\Omega_{X, 0}^{j}(\star \mathscr{A}), \nabla_{\omega}) \\
(\Omega_{X, 0}^{j}(\log \mathscr{A})_{-\iota_{E}(\omega)}, \nabla_{\omega}) \rar[hookrightarrow] \uar[hookrightarrow]{\qi}
    & (\Omega_{X, 0}^{j}(\star \mathscr{A})_{-\iota_{E}(\omega)}, \nabla_{\omega}) \uar[hookrightarrow]{\qi} \\
(\Omega_{R}^{\bullet}(\log \mathscr{A})_{-\iota_{E}(\omega)}, \nabla_{\omega}) \rar[hookrightarrow] \uar[equal] \dar[hookrightarrow]{\qi}
    & (\Omega_{R}^{\bullet}(\star \mathscr{A})_{-\iota_{E}(\omega)}, \nabla_{\omega}) \uar[equal] \dar[hookrightarrow]{\qi} \\
(\Omega_{R}^{\bullet}(\log \mathscr{A}), \nabla_{\omega}) \rar[hookrightarrow]
    & (\Omega_{R}^{\bullet}(\star \mathscr{A}), \nabla_{\omega}).
\end{tikzcd}
\end{equation}
The displayed quasi-isomorphisms are justified by Lemma \ref{lemma - twisted analytic log qi to homogeneous subcomplex} and Proposition \ref{prop - homogeneous subcomplex quasi iso}. As for displayed equalities, let $\eta = \sum_{p} \eta_{p} \in \Omega_{X,0}^{j}(\log \mathscr{A})$ be the homogeneous decomposition of $\eta$ as in subsection 2.3, though here this arises from, up to a pole along $f$, convergent formal power series in $\mathbb{C}[[x_{1}, \dots, x_{n}]]$ as opposed to a Laurent series. If $\eta$ is homogeneous of weight $-\iota_{E}(\omega)$, then $\eta = \eta_{-\iota_{E}(\omega)}$, the latter of which is, up to its pole along $\mathscr{A}$, an algebraic $j$-form. So $\eta_{-\iota_{E}(\omega)} \in \Omega_{R}^{j}(\log \mathscr{A})_{-\iota_{E}(\omega)}$. The same story applies for the non-logarithmic displayed equality. So this diagram concludes the proof: Theorem \ref{thm - analytic twisted lct} implies the topmost horizontal map is a quasi-isomorphism, whence the bottom-most horizontal map is a quasi-isomorphism.
\end{proof}

\begin{remark} \label{rmk-whyCentralityForReferee}
    The proof of the Theorem \ref{thm - global algebraic twisted lct with homogeneous subcomplex q.i} nontrivially exploits the centrality assumption. Without centrality $\Omega_R^j(\log \mathscr{A})$ may not have a decomposition into a direct sum of homogeneous components, as the homogeneous terms of a logarithmic $j$-form may not be logarithmic. Consequently, $(\Omega_{R}^{\bullet}(\log \mathscr{A})_{-\iota_{E}(\omega)}, \nabla_{\omega})$ may be relatively vacuous, preventing any form of \eqref{eqn - comm diagram analytic to algebraic, homogeneous subcomplexes} holding and in turn obstructing our method of deducing Theorem \ref{thm - global algebraic twisted lct with homogeneous subcomplex q.i} from Theorem \ref{thm - analytic twisted lct}. For similar reasons, one should not expect the quasi-isomorphism $(\Omega_{R}^{\bullet}(\log \mathscr{A})_{-\iota_{E}(\omega)}, \nabla_{\omega}) \xhookrightarrow{\qi} (\Omega_{R}^{\bullet}(\log \mathscr{A}), \nabla_{\omega})$ of Theorem \ref{thm - global algebraic twisted lct with homogeneous subcomplex q.i} to hold (or be sensible) without a centrality assumption. On the other hand, the quasi-isomorphism $(\Omega_{R}^{\bullet}(\log \mathscr{A}), \nabla_{\omega}) \xhookrightarrow{\qi} (\Omega_{R}^{\bullet}(\star \mathscr{A}), \nabla_{\omega})$ of Theorem \ref{thm - global algebraic twisted lct with homogeneous subcomplex q.i} \emph{may} hold without centrality. One could try to check this at each maximal ideal, where $\mathscr{A}$ can be assumed to be central, but outside the analytic category technical issues related to $\nabla_\omega$ and Lemma \ref{lemma - inductive step, removing fk terms from omega} appear.
\end{remark}

This paper's original objective was to solve Terao's Conjecture \ref{conjecture -  algebraic lct for arrangements}, that is, to prove the algebraic (untwisted) Logarithmic Comparison Theorem. (This evolved out of Conjecture 3.1 of \cite{TeraoLCTforArrangementsConjecture}.) We can now do this quite easily, and, for aesthetic reasons, we include Brieskorn's Theorem in its statement. Note that we do not require centrality, which forces us to do more work.

\begin{corollary} \label{cor - algebraic untwisted lct}
(Algebraic Logarithmic Comparison Theorem) Let $\mathscr{A}$ be a reduced hyperplane arrangement. Then the algebraic Logarithmic Comparison Theorem holds giving a sequence of quasi-isomorphisms
\begin{equation} \label{eqn - algebraic untwisted lct statement in cor}
A^{\bullet}(\mathscr{A}) \xhookrightarrow{\qi} (\Omega_{R}^{\bullet}(\log \mathscr{A}), d) \xhookrightarrow{\qi} (\Omega_{R}^{\bullet}( \star \mathscr{A}), d) \quad (\simeq H^{\bullet}(U, \mathbb{C}_{U})).
\end{equation}
Moreover, $A^{j}(\mathscr{A}) = H^{j}(\Omega_{R}^{\bullet}(\log \mathscr{A}), d)$.
\end{corollary}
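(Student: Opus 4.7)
The identification $(\simeq H^{\bullet}(U, \mathbb{C}_U))$ is Grothendieck's Comparison Theorem, and the paper has already recalled, via Brieskorn, that the composite $A^{\bullet}(\mathscr{A}) \hookrightarrow \Omega_R^{\bullet}(\log \mathscr{A}) \hookrightarrow \Omega_R^{\bullet}(\star \mathscr{A})$ is a quasi-isomorphism. The generators $df_k/f_k$ of $A^{\bullet}(\mathscr{A})$ are closed, so $A^{\bullet}(\mathscr{A})$ carries the zero differential, and consequently the \emph{moreover} statement together with the first displayed quasi-isomorphism in the corollary follow formally (by two-out-of-three) from the single quasi-isomorphism
\[
(\Omega_R^{\bullet}(\log \mathscr{A}), d) \xhookrightarrow{\qi} (\Omega_R^{\bullet}(\star \mathscr{A}), d).
\]
This is the only content to be verified.

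When $\mathscr{A}$ is central, apply Theorem \ref{thm - global algebraic twisted lct with homogeneous subcomplex q.i} with all weights $\lambda_k = 0$: the form $\omega$ vanishes, $\nabla_\omega = d$, and for every edge $E$ the hypothesis reads $0 \notin \mathbb{Z}_{\geq \min\{2, \rank(E)\}}$, which always holds. For an arbitrary $\mathscr{A}$, I reduce to the central case via a cone construction. Let $F \in R' := \mathbb{C}[x_0, x_1, \ldots, x_n]$ denote the homogenization of $f$ and put $cf = x_0 F$, so that $c\mathscr{A} := \Var(cf)$ is a central, reduced arrangement in $\mathbb{C}^{n+1}$. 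The central case gives $(\Omega_{R'}^{\bullet}(\log c\mathscr{A}), d) \xhookrightarrow{\qi} (\Omega_{R'}^{\bullet}(\star c\mathscr{A}), d)$, and localization at $x_0$ (which is exact) preserves this. Because $x_0 \mid cf$, the right-hand side is unchanged, while the left-hand side becomes $\Omega_{R'[x_0^{-1}]}^{\bullet}(\log (c\mathscr{A} \setminus \Var(x_0)))$. Making the algebraic change of coordinates $y_i = x_i/x_0$, the ring $R'[x_0^{-1}]$ identifies with $\mathbb{C}[x_0^{\pm 1}] \otimes_{\mathbb{C}} \mathbb{C}[y_1, \ldots, y_n]$, and the localized arrangement becomes the decomposable divisor $\mathbb{C}^{\star} \times \mathscr{A}$. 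The Kunneth formula of Remark \ref{rmk - Kunneth formula for log diff forms} (applied after localizing the polynomial case to Laurent polynomials) then yields
\[
\Omega_{R'}^{\bullet}(\log c\mathscr{A})[x_0^{-1}] \cong K^{\bullet} \otimes_{\mathbb{C}} \Omega_R^{\bullet}(\log \mathscr{A}), \qquad \Omega_{R'}^{\bullet}(\star c\mathscr{A}) \cong K^{\bullet} \otimes_{\mathbb{C}} \Omega_R^{\bullet}(\star \mathscr{A}),
\]
where $K^{\bullet}$ is the two-term complex $\mathbb{C}[x_0^{\pm 1}] \xrightarrow{d} \mathbb{C}[x_0^{\pm 1}] dx_0$ in degrees $0$ and $1$, and the natural log-to-meromorphic inclusion corresponds to $\mathrm{id}_{K^{\bullet}} \otimes (\Omega_R^{\bullet}(\log \mathscr{A}) \hookrightarrow \Omega_R^{\bullet}(\star \mathscr{A}))$.

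A direct computation gives $H^0(K^{\bullet}) = \mathbb{C}$ and $H^1(K^{\bullet}) = \mathbb{C} \cdot [dx_0/x_0]$. Because we work over the field $\mathbb{C}$, the algebraic Kunneth theorem is free of Tor obstructions, so the localized cone quasi-isomorphism decomposes on cohomology as a direct sum of maps $\mathrm{id}_{H^i(K^{\bullet})} \otimes \phi_j$, where $\phi_j \colon H^j(\Omega_R^{\bullet}(\log \mathscr{A})) \to H^j(\Omega_R^{\bullet}(\star \mathscr{A}))$ is the map induced by inclusion. Since $H^i(K^{\bullet})$ is nonzero for both $i=0$ and $i=1$ and the total direct-sum map is an isomorphism, each $\phi_j$ must itself be an isomorphism, yielding the required quasi-isomorphism. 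The main technical obstacle is ensuring that the Kunneth decomposition is stable under the localization/change-of-coordinates and that the log-to-meromorphic inclusion genuinely intertwines the two decompositions; once this bookkeeping is in place, the conclusion is formal.
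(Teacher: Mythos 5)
Your proof is correct, and for the non-central case it takes a genuinely different route from the paper. The paper handles non-central $\mathscr{A}$ by defining Zariski-sheaf versions of the (un)twisted logarithmic and rational de Rham complexes and re-running the entire spectral-sequence proof of Theorem \ref{thm - analytic twisted lct} with ``Stein'' replaced by ``affine,'' noting that Lemma \ref{lemma - inductive step, removing fk terms from omega} is never needed when $\omega = 0$ and that the relevant first-page entries become algebraic local cohomology $H_{\mathfrak{m}}^{q+1}(\Omega_{R}^{p}(\log \mathscr{A}))$, to which Corollary \ref{cor - specific weights at origin, acyclic complex of local cohomology modules} applies directly. You instead cone the arrangement, invoke the central case (Theorem \ref{thm - global algebraic twisted lct with homogeneous subcomplex q.i} with $\boldsymbol{\lambda} = \mathbf{0}$, whose hypothesis $0 \notin \mathbb{Z}_{\geq 1}$ is vacuous), localize at $x_{0}$, and decone via the Kunneth decomposition with the de Rham complex $K^{\bullet}$ of $\mathbb{C}^{\star}$; since the Kunneth isomorphism over $\mathbb{C}$ is natural and block-diagonal and $H^{0}(K^{\bullet}) = \mathbb{C} \neq 0$, each $\phi_{j}$ is forced to be an isomorphism. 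Your route is more self-contained for this particular corollary — it avoids redoing the spectral-sequence machinery in the Zariski topology and only uses the already-established global central statement — at the cost of needing the Kunneth formula of Remark \ref{rmk - Kunneth formula for log diff forms} in the slightly extended setting of a Laurent-polynomial factor with empty arrangement (an elementary but genuinely required verification, which you correctly flag), plus the routine checks that $c\mathscr{A}$ is reduced and that the log-to-meromorphic inclusion intertwines the two Kunneth decompositions. The paper's heavier route has the side benefit of establishing the sheaf-theoretic algebraic LCT on $X_{\alg}$, not just the global statement, but for the corollary as stated your argument suffices.
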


\begin{proof}
Brieskorn's Theorem says the composition of maps in \eqref{eqn - algebraic untwisted lct statement in cor} is a quasi-isomorphism, so if we show the algebraic Logarithmic Comparison Theorem is true we can conclude the first map in \eqref{eqn - algebraic untwisted lct statement in cor} is a quasi-isomorphism. From this, the claim $A^{j}(\mathscr{A}) = H^{j}(\Omega_{R}^{\bullet}(\log \mathscr{A}), d)$ is easy: the implicit differential of exterior differentiation on $A^{\bullet}(\mathscr{A})$ is trivial.

It remains to prove the algebraic Logarithmic Comparison Theorem. When $\mathscr{A}$ is central this follows from Theorem \ref{thm - global algebraic twisted lct with homogeneous subcomplex q.i} by setting the weights to zero.

For non-central $\mathscr{A}$, first note that we can define a sheaf-theoretic version of the algebraic twisted logarithmic and rational de Rham complexes: $(\Omega_{X_{\alg}}^{\bullet}(\log \mathscr{A}), \nabla_{\omega})$ and $(\Omega_{X_{\alg}}^{\bullet}(\star \mathscr{A}), \nabla_{\omega})$ respectively. These are complexes of $\mathscr{O}_{X_{\alg}}$-modules. If we prove
\begin{equation} \label{eqn - sheaf theoretic algebraic lct in proof}
(\Omega_{X_{\alg}}^{\bullet}(\log \mathscr{A}), \nabla_{\omega}) \xhookrightarrow{\qi} (\Omega_{X_{\alg}}^{\bullet}(\star \mathscr{A}), \nabla_{\omega}),
\end{equation}
then we can deduce that taking global sections preserves the quasi-isomorphism by arguing as in Corollary \ref{cor - global twisted analytic lct}. The proof of Theorem \ref{thm - analytic twisted lct} gives a proof of \eqref{eqn - sheaf theoretic algebraic lct in proof} once you replace every occurrence of ``Stein'' with ``affine.'' Indeed, note that: Lemma \ref{lemma - twisted lct determined by behavior at small Steins} holds after replacing ``Stein'' with ``affine;'' Lemma \ref{lemma - inductive step, removing fk terms from omega} is never invoked since $\omega = 0$ in our case; we may still use Lemma \ref{lemma - divisor product, inductive step quasi-iso} since the change of coordinates is linear, and hence algebraic; in Step 3 of the proof, $E_{1}^{p,q} \simeq H_{0}^{q+1}(V, \Omega_{X}^{p}(\log \mathscr{A}))$ which is isomorphic to $H_{\mathfrak{m}}^{q+1}(\Omega_{R}^{p}(\log \mathscr{A}))$ by Exercise III.2.3, III.3.3 \cite{HartshorneAlgGeo} and so Corollary \ref{cor - specific weights at origin, acyclic complex of local cohomology modules} can be applied to $E_{2}^{p,q}$ directly (i.e. without subsection 2.3 and subsection 2.6).
\end{proof}

\begin{remark} \label{rmk - finite dimensional lin alg for analytic stalk} \enspace
If $\mathscr{A}$ is central and the combinatorial conditions are satisfied, \eqref{eqn - comm diagram analytic to algebraic, homogeneous subcomplexes} also shows that
    \[
    H^{\bullet}((\Omega_{R}^{\bullet}(\log \mathscr{A})_{-\iota_{E}(\omega)}, \nabla_{\omega})) \simeq H^{\bullet}(\Omega_{X,0}(\star \mathscr{A}), \nabla_{\omega}) \simeq H^{\bullet}((\derivedR j_{\star} \localSystem_{{\Exp(\boldsymbol{\lambda})}})_{0}).
    \]
\end{remark}

\subsection{Feasibility} \enspace

In this minisicule section we ask which rank one local systems can be studied by our assorted (un)Twisted Logarithmic Comparison Theorems. Recall that when $\mathscr{A}$ is central, this means computing $(\derivedR j_{\star} \localSystem_{\Exp(\boldsymbol{\beta})})_{0}$ or $H^{\bullet}(U, \localSystem_{\boldsymbol{\beta}})$ is a finite dimensional linear algebra problem (Theorem \ref{thm - global algebraic twisted lct with homogeneous subcomplex q.i}; Remark \ref{rmk - finite dimensional lin alg for analytic stalk}). The answer is: all such local systems are computable this way.

\begin{proposition} \label{prop - all local systems can be computed}
For an arbitrary rank one local system $\localSystem_{\boldsymbol{\beta}}$ on $U$, there exists a $\boldsymbol{\lambda} \in \Exp^{-1}(\boldsymbol{\beta})$ such that all the aforementioned Twisted Logarithmic Comparison Theorems hold with respect to the weights $\boldsymbol{\lambda} = (\lambda_{1}, \dots, \lambda_{d}).$
\end{proposition}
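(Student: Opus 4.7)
The plan is to exploit the $\mathbb{Z}^{d}$-ambiguity in $\Exp^{-1}(\boldsymbol{\beta})$. Given any rank one local system $\localSystem_{\boldsymbol{\beta}}$, I would fix an arbitrary lift $\boldsymbol{\lambda}^{(0)} = (\lambda_{1}^{(0)}, \dots, \lambda_{d}^{(0)}) \in \Exp^{-1}(\boldsymbol{\beta})$ and then translate by a large negative integer vector. Specifically, consider
\[
\boldsymbol{\lambda}(N) := \boldsymbol{\lambda}^{(0)} - N \cdot (1, \dots, 1), \qquad N \in \mathbb{Z}_{> 0}.
\]
Since $\Exp$ is $\mathbb{Z}^{d}$-periodic, $\Exp(\boldsymbol{\lambda}(N)) = \boldsymbol{\beta}$ for every $N$, so each $\boldsymbol{\lambda}(N)$ is a valid choice of weights representing $\localSystem_{\boldsymbol{\beta}}$.

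The key computation is this: for each edge $E \in \mathscr{L}(\mathscr{A})$, setting $d_{E} = |\{k : E \subseteq \Var(f_{k})\}|$, the combinatorial residue transforms as
\[
\sum_{\{1 \leq k \leq d \mid E \subseteq \Var(f_{k})\}} \lambda_{k}(N) \;=\; \sum_{\{1 \leq k \leq d \mid E \subseteq \Var(f_{k})\}} \lambda_{k}^{(0)} \;-\; N \cdot d_{E}.
\]
I would then analyze this edge by edge. If the $\boldsymbol{\lambda}^{(0)}$-residue at $E$ is non-integral, then the $\boldsymbol{\lambda}(N)$-residue is also non-integral for every $N \in \mathbb{Z}$, and the condition \eqref{eqn - intro, analytic twisted lct weight condition} is automatic. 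If the $\boldsymbol{\lambda}^{(0)}$-residue at $E$ is an integer, then because $d_{E} \geq 1$ for every $E \in \mathscr{L}(\mathscr{A})$, the $\boldsymbol{\lambda}(N)$-residue becomes an arbitrarily large negative integer as $N \to \infty$, hence lies outside $\mathbb{Z}_{\geq \min\{2, \rank(E)\}}$. Since $\mathscr{L}(\mathscr{A})$ is finite, a single sufficiently large $N$ handles all edges uniformly.

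With such a choice of $\boldsymbol{\lambda} = \boldsymbol{\lambda}(N)$, the hypotheses of Theorem \ref{thm - intro, analytic twisted lct} and (in the central case) Theorem \ref{thm - intro, global algebraic twisted lct with homogeneous subcomplex q.i} are met, producing the advertised quasi-isomorphisms and, via $\localSystem_{\Exp(\boldsymbol{\lambda})} = \localSystem_{\boldsymbol{\beta}}$, the computation of $\derivedR j_{\star} \localSystem_{\boldsymbol{\beta}}$ and $H^{\bullet}(U, \localSystem_{\boldsymbol{\beta}})$. There is no substantive obstacle here; rather, the proposition is really a sanity check on the shape of the combinatorial hypothesis. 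The crucial structural point is that the forbidden locus $\mathbb{Z}_{\geq \min\{2, \rank(E)\}}$ is one-sided — it only excludes sufficiently large positive integers — so uniformly deflating all weights guarantees that each integer-valued residue lands safely to the left of the threshold, while non-integer residues are insensitive to the shift.
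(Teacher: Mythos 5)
Your argument is exactly the paper's proof: pick an arbitrary lift of $\boldsymbol{\beta}$ and subtract a large constant integer vector $z\cdot(1,\dots,1)$, using that each forbidden set $\mathbb{Z}_{\geq \min\{2,\rank(E)\}}$ is bounded below and that $\mathscr{L}(\mathscr{A})$ is finite. Your write-up just spells out the edge-by-edge verification that the paper leaves implicit; there is nothing to correct.
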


\begin{proof}
We must produce $\boldsymbol{\lambda} \in \Exp^{-1}(\boldsymbol{\beta})$ such that the combinatorial arithmetical restrictions 
\[
\sum_{\{1 \leq k \leq d \mid E \subseteq \Var(f_{k})\}} \lambda_{k} \notin \mathbb{Z}_{\geq \min\{2, \rank(E)\}}
\]
are satisfied at each edge $E$ of the intersection lattice. To do this, pick an arbitrary $\boldsymbol{\lambda} \in \Exp^{-1}(\boldsymbol{\beta})$ and replace $\boldsymbol{\lambda}$ with $\boldsymbol{\lambda} - \textbf{z}$, where $\textbf{z} = (z, \dots, z) \in \mathbb{Z}^{d}$ and $z \gg 0$.
\end{proof}

\begin{remark} \label{rmk - comparing to other local system reuslts} \text{ }
\begin{enumerate}[label=(\alph*)]
    \item If we require $\mathscr{A}$ to be central and let $P(\mathscr{A})$ be the associated projective arrangement, then the Leray spectral sequence of the fibration $\mathbb{C}^{\star} \to \mathbb{C}^{n} \setminus \mathscr{A} \to \mathbb{P}^{n-1} \setminus P(\mathscr{A})$ gives a dictionary between cohomology of the affine complement with a rank one local system and the cohomology of the projective complement with the corresponding local system, cf. Theorem 5.2 \cite{DimcaHyperplaneArrangements}. In the projective case, if we require the weights $\{\lambda_{k}\}$ to satisfy $\lambda_{1} + \cdots + \lambda_{d} = 0$, then it is proved in \cite{CohomologyofLocalSystemsOnTheComplementOfHyperplanes} that there are similar combinatorial arithmetic conditions on the dense edges of the intersection lattice of $\mathscr{A}$ that, when satisfied, ensure
    \[
    H^{\bullet}((A^{\bullet}(\mathscr{A}), \nabla_{\omega})) \simeq H^{\bullet}(U, \localSystem_{\Exp(\boldsymbol{\lambda})}).
    \]
    Here $(A^{\bullet}(\mathscr{A}), \nabla_{\omega})$ is the Orlik--Solomon algebra equipped with the differential $\omega \wedge$, a twisted Orlik--Solomon algebra. Alternatively, $A^{\bullet}(\mathscr{A})$ is canonically identified with the Brieskorn algebra: the algebra generated by wedge products of the one-forms $\{ df_{k} / f_{k} \}$.
    \item It is well known that not all rank one local systems can be computed by the machinery of the previous item since, in particular, $\lambda_{1} + \cdots + \lambda_{d} = 0$ may not be satisfied, cf. Example \ref{ex - deleted B3 arrangement}. The undetectable systems correspond to torsion translated subtori of the characteristic variety, cf. \cite{CohenSuciuTangent}. This is not a defect of \cite{CohomologyofLocalSystemsOnTheComplementOfHyperplanes}: the invisibility of the torsion translated subtori is intrinsic to Orlik--Solomon methods.
    \item In \cite{SaitoArrangements} (subsection 2.2), M. Saito removes the requirement $\lambda_{1} + \cdots + \lambda_{d} = 0$ at the cost of taking a resolution of singularities $(\widetilde{X}, \widetilde{\mathscr{A}}) \to (X, \mathscr{A})$ and doing the computations upstairs in $\widetilde{X}$. This is difficult to use since, even for arrangements, resolutions of singularities are unwieldy.
\end{enumerate}
\end{remark}

\begin{example} \label{ex - deleted B3 arrangement} (Deleted $B_{3}$ arrangement)
Let $f = yz(x+y)(x-y)(x+z)(x-z)(y+z)(y-z)$, let $\boldsymbol{\lambda} = (1/2,1/2, -1/2, -1/2, 1/4, 1/4, 1/4, 1/4)$ be a collection of weights, ordered by the factors listed, and let $\omega$ be the associated one-form. This is a free arrangement with exponents $\{1, 3, 4\}$. The local system $\localSystem_{\Exp(\boldsymbol{\lambda})}$ on $U = \mathbb{C}^{3} \setminus \Var(f)$ belongs to a translated component of the rank one local systems on $U$, cf. Example 4.1 of \cite{SuciuTranslatedTori} or Section 3 of \cite{CohenTriplesofArrangements}. So it is undetectable by any twisted Orlik--Solomon algebra: $H^{k}(A^{\bullet}(\mathscr{A}), \nabla_{\omega}) = 0$ for $0 \leq k \leq 3$. In loc.\ cit.\ Suciu computes $\dim_{\mathbb{C}}(H^{1}(U, \localSystem_{\Exp(\boldsymbol{\lambda})})) = 1$. The weights $\boldsymbol{\lambda}$ satisfy the conditions of our Twisted Logarithmic Comparison Theorems. Macaulay2 claims:
\begin{align*}
\dim_{\mathbb{C}} (\text{im} [ \Omega_{R}^{1}(\log \mathscr{A})_{-1} \xrightarrow[]{\nabla_{\omega}} \Omega_{R}^{2}(\log \mathscr{A})_{-1}]) = 8;\\
\dim_{\mathbb{C}} (\text{im} [ \Omega_{R}^{2}(\log \mathscr{A})_{-1} \xrightarrow[]{\nabla_{\omega}} \Omega_{R}^{3}(\log \mathscr{A})_{-1}]) = 8.
\end{align*}
Using Theorem \ref{thm - global algebraic twisted lct with homogeneous subcomplex q.i}, we compute the dimensions of the nonzero $H^{k}(U,\localSystem_{\Exp(\boldsymbol{\lambda})})$:
\begin{align*}
    \dim_{\mathbb{C}}(H^{1}(U, \localSystem_{\Exp(\boldsymbol{\lambda})})) = \dim_{\mathbb{C}} (H^{1}(\Omega_{R}^{\bullet}(\log f)_{-1}, \nabla_{\omega})) = 1; \\
     \dim_{\mathbb{C}}(H^{2}(U, \localSystem_{\Exp(\boldsymbol{\lambda})})) = \dim_{\mathbb{C}} (H^{2}(\Omega_{R}^{\bullet}(\log f)_{-1}, \nabla_{\omega})) = 8; \\
      \dim_{\mathbb{C}}(H^{3}(U, \localSystem_{\Exp(\boldsymbol{\lambda})})) = \dim_{\mathbb{C}} (H^{3}(\Omega_{R}^{\bullet}(\log f)_{-1}, \nabla_{\omega})) = 7.
\end{align*}
\end{example}

\section{Applications to $\mathscr{D}_{X}$-modules and Bernstein--Sato Ideals}

Hereafter, we will require $\mathscr{A}$ to be central. Let $\mathscr{D}_{X}$ be the analytic sheaf of $\mathbb{C}$-linear differential operators. We will mostly focus on the analytic setting, but will track statements that also work for the algebraic one. 

Because the Logarithmic Comparison Theorem (twisted or otherwise) offers a quasi-isomorphism between the logarithmic de Rham complex and the meromorphic de Rham complex and because the meromorphic de Rham complex is obtained by applying the de Rham functor to a certain $\mathscr{D}_{X}$-module, it is natural to hope for a intrinsic complex of $\mathscr{D}_{X}$-modules whose image under the de Rham functor is the logarithmic de Rham complex. There is a good candidate for free divisors and the result is known for certain such free divisors, see \cite{MorenoMacarroLogarithmic} as well as the survey \cite{NarvaezMacarroLinearityConditions}. Consequently, there is a package of $\mathscr{D}_{X}$-theoretic results divisors satisfying Logarithmic Comparison Theorems ``ought'' to possess. For certain free divisors, the most thorough treatment is \cite{MAcarroDuality}, especially Theorem 4.7 and Corollary 4.2/4.3 from which most of the ``expected'' properties discussed in \cite{TorelliLogarithmicComparisionTheoremAnd} can be deduced in this case; see also loc.\ cit.\ and \cite{NarvaezMacarroLinearityConditions} for more discussion. Despite having (un)Twisted Logarithmic Comparison Theorems for reduced hyperplane arrangements we do not have a good candidate for the ``pre''-de Rham complex of $\mathscr{D}_{X}$-modules corresponding to the logarithmic de Rham complex.

Nevertheless we are able to obtain many of the desired $\mathscr{D}_{X}$-related results for reduced hyperplane arrangements $f$. In the first subsection we study $\mathscr{O}_{X}(\star f^{\boldsymbol{\lambda}})$ for a set of weights $\boldsymbol{\lambda} \in \mathbb{C}^{d}$, showing that (Theorem \ref{thm - twisted LCT implies generating level of meromorphic specialization}), under familiar hypotheses on the weights, $f^{-\textbf{1} + \boldsymbol{\lambda}}$ generates this $\mathscr{D}_{X}$-module. This should be thought of as a multivariate generalization of fact $f^{-1}$ generates $\mathscr{O}_{X}(\star f)$; the theorem recovers this univariate statement. We also justify our previous claim $(\Omega_{X}^{\bullet}(\star \mathscr{A}), \nabla_{\omega}) = \derivedR j_{\star} \localSystem_{\Exp(\boldsymbol{\lambda})}$ in the derived category (Proposition \ref{prop - twisted meromorphic de rham quasi-iso to local system}). In the second subsection we give a sharp bound on the candidates of the codimension one components of the zero locus of the Bernstein--Sato ideal of $f$ (for any factorization). This is Theorem \ref{thm - bounding codim one components of BS zero loci} and should be thought of as a sort of generalization of M. Saito's result \cite{SaitoArrangements} that the roots of the Bernstein--Sato polynomial of $f$ live in $(-2 + 1/\deg(f), 0)$--in fact, the univariate version of Theorem \ref{thm - bounding codim one components of BS zero loci} gives a new proof of this fact. 

\subsection{Generators of Twisted Meromorphic Powers} \text{ }

First, recall that $\mathscr{O}_{X}(\star \mathscr{A})$ is naturally a $\mathscr{D}_{X}$-module: the action of a derivation on $f^{-p} $, for $p \in \mathbb{N}$ and with $f$ defining $\mathscr{A}$, is given by the chain rule. Nothing depends on the choice of defining equation for $\mathscr{A}$ and we note that the cyclic $\mathscr{D}_{X}$-module generated by $f^{-p}$ sits inside $\mathscr{O}_{X}(\star \mathscr{A})$. In particular, as $\mathscr{D}_{X}$-modules,
\[
\bigcup_{p \in \mathbb{N}} \mathscr{D}_{X} f^{-p} = \mathscr{O}_{X}(\star \mathscr{A}) = \mathscr{O}_{X}(\star f).
\]
Now let $f = f_{1} \cdots f_{d}$ be a factorization into irreducibles and take $\textbf{p} \in \mathbb{Z}^{d}$. Denote $f_{1}^{p_{1}} \cdots f_{d}^{p_{d}}$ by $f^{\textbf{p}}$. Then, as $\mathscr{D}_{X}$-modules,
\[
\bigcup_{\textbf{p} \in \mathbb{N}^{d}} \mathscr{D}_{X} f^{ -\textbf{p}} = \mathscr{O}_{X}(\star \mathscr{A}) = \mathscr{O}_{X}(\star f).
\]

We can tell the same story for nonstandard powers of the $f_{k}$. To this end, let $\boldsymbol{\lambda} = (\lambda_{1}, \dots, \lambda_{d}) \in \mathbb{C}^{d}$ be weights as before, define the $\mathscr{D}_{X}$-action on $f^{ \boldsymbol{\lambda}}$ via the chain and product rule, and make the $\mathscr{D}_{X}$-module identification/definition
\[
\bigcup_{p \in \mathbb{N}^{d}} \mathscr{D}_{X} f^{ \boldsymbol{\lambda} - \textbf{p}} = \mathscr{O}_{X}(\star f^{ \boldsymbol{\lambda}}).
\]
Repeating the construction algebraically produces a $D_{X_{\alg}}$-module $\mathscr{O}_{X_{\alg}}(\star f^{\boldsymbol{\lambda}}).$

For technical reasons, we will work with the following equivalent object:

\begin{proposition} \label{prop - technical details of funky connection}
Consider $\mathscr{O}_{X}(\star f)$ as an $\mathscr{D}_{X}$-module. Let $\textbf{p} \in \mathbb{N}^{d}$. We let $\delta \in \Der_{X}$ act on $g f^{-\textbf{p}}$ by
\[
\delta(g f^{-\textbf{p}}) = \delta(g) f^{-\textbf{p}} + \sum_{1 \leq k \leq d} g (-p_{k} + \lambda_{k}) \delta(f_{k}) f_{k}^{-1} f^{-\textbf{p}}.
\]
This defines a connection $\nabla^{\boldsymbol{\lambda}}$ on $\mathscr{O}_{X}(\star f)$ satisfying the commutative diagram of $\mathscr{D}_{X}$-maps
\[
\begin{tikzcd}
\mathscr{D}_{X}f^{-\textbf{p} + \boldsymbol{\lambda}} \rar[hookrightarrow] 
    & \mathscr{O}_{X}(\star f^{\boldsymbol{\lambda}})  \\
(\mathscr{D}_{X}f^{-\textbf{p}}, \nabla^{\boldsymbol{\lambda}}) \uar{\simeq} \rar[hookrightarrow]
    & (\mathscr{O}_{X}(\star f), \nabla^{\boldsymbol{\lambda}}) \uar{\simeq} ,
\end{tikzcd}
\]
where $(\mathscr{D}_{X} f^{-\textbf{p}}, \nabla^{\boldsymbol{\lambda}})$ denotes the module generated by $f^{-\textbf{p}}$ under this new connection $\nabla^{\boldsymbol{\lambda}}$ and the vertical isomorphisms are determined by $g f^{-\textbf{p}} \mapsto g f^{-\textbf{p} + \boldsymbol{\lambda}}$.
\end{proposition}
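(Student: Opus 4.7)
The plan is to interpret the formula for $\nabla^{\boldsymbol{\lambda}}$ as a transport-of-structure along the $\mathscr{O}_{X}$-linear isomorphism
\[
\mu: \mathscr{O}_{X}(\star f) \xrightarrow{\simeq} \mathscr{O}_{X}(\star f^{\boldsymbol{\lambda}}), \qquad g f^{-\textbf{p}} \mapsto g f^{-\textbf{p} + \boldsymbol{\lambda}},
\]
rather than trying to verify directly (and redundantly across all representations $gf^{-\textbf{p}}$ of a given element) that the formula is well-posed and defines a flat connection. First I would check that $\mu$ is a well-defined $\mathscr{O}_{X}$-module isomorphism. This is where the only real bookkeeping occurs: after writing $\mathscr{O}_{X}(\star f) = \varinjlim_{\textbf{p}} \mathscr{O}_{X} f^{-\textbf{p}}$ and $\mathscr{O}_{X}(\star f^{\boldsymbol{\lambda}}) = \varinjlim_{\textbf{p}} \mathscr{O}_{X} f^{-\textbf{p} + \boldsymbol{\lambda}}$ (with the transition maps being multiplication by $f_{k}$), one sees that $\mu$ is induced level-wise by $f^{-\textbf{p}} \mapsto f^{-\textbf{p}+\boldsymbol{\lambda}}$ and is compatible with the transition maps. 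Its inverse is ``multiplication by $f^{-\boldsymbol{\lambda}}$.''

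Second, I would define $\nabla^{\boldsymbol{\lambda}}$ by transport: for $\delta \in \Der_{X}$ and $\eta \in \mathscr{O}_{X}(\star f)$, set
\[
\nabla^{\boldsymbol{\lambda}}_{\delta}(\eta) := \mu^{-1}\bigl(\delta \cdot \mu(\eta)\bigr),
\]
where the action $\delta \cdot (-)$ on the right is the canonical $\mathscr{D}_{X}$-structure on $\mathscr{O}_{X}(\star f^{\boldsymbol{\lambda}})$ given by chain- and product-rule on the formal symbols $f^{-\textbf{p}+\boldsymbol{\lambda}}$. Because $\mu$ is an $\mathscr{O}_{X}$-linear bijection, this automatically endows $\mathscr{O}_{X}(\star f)$ with a flat $\mathscr{O}_{X}$-connection for which $\mu$ is a $\mathscr{D}_{X}$-module isomorphism; no separate Leibniz/integrability check is needed.

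Third, I would compute $\nabla^{\boldsymbol{\lambda}}_{\delta}(gf^{-\textbf{p}})$ explicitly to match the stated formula. Since
\[
\delta(g f^{-\textbf{p} + \boldsymbol{\lambda}}) = \delta(g)\, f^{-\textbf{p}+\boldsymbol{\lambda}} + \sum_{k} g(-p_{k}+\lambda_{k})\, \delta(f_{k})\, f_{k}^{-1}\, f^{-\textbf{p}+\boldsymbol{\lambda}},
\]
applying $\mu^{-1}$ (i.e.\ stripping off the factor $f^{\boldsymbol{\lambda}}$) recovers exactly the formula in the proposition.

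Finally, the commutative square of $\mathscr{D}_{X}$-maps drops out: under the $\mathscr{D}_{X}$-isomorphism $\mu$, the cyclic submodule $(\mathscr{D}_{X}f^{-\textbf{p}}, \nabla^{\boldsymbol{\lambda}}) \subseteq (\mathscr{O}_{X}(\star f), \nabla^{\boldsymbol{\lambda}})$ is carried to the cyclic submodule generated by $\mu(f^{-\textbf{p}}) = f^{-\textbf{p}+\boldsymbol{\lambda}}$ in $\mathscr{O}_{X}(\star f^{\boldsymbol{\lambda}})$, namely $\mathscr{D}_{X} f^{-\textbf{p}+\boldsymbol{\lambda}}$. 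The only genuine obstacle in the plan is the well-definedness of $\mu$ on the full module $\mathscr{O}_{X}(\star f)$; everything else is formal once transport of structure is set up.
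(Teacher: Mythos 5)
Your proposal is correct and takes essentially the same route as the paper: the paper's proof also rests on observing that the map $g f^{-\textbf{p}} \mapsto g f^{-\textbf{p}+\boldsymbol{\lambda}}$ is an $\mathscr{O}_{X}$-isomorphism intertwining $\nabla^{\boldsymbol{\lambda}}_{\delta}$ with the canonical action of $\delta$ on $\mathscr{O}_{X}(\star f^{\boldsymbol{\lambda}})$, and uses that to dispatch both the well-definedness of the connection and the commutativity of the square. Your transport-of-structure phrasing merely makes explicit the logical order the paper leaves implicit.
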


\begin{proof}
The vertical maps in the diagram are certainly $\mathscr{O}_{X}$-isomorphisms and it is easy to check that action of a derivation $\delta$ commutes with the vertical maps: $\psi(\nabla_{\delta}^{\boldsymbol{\lambda}} (-)) = \delta(\psi(-)).$ This fact simplifies the straightforward calculations mandatory in checking $\nabla^{\boldsymbol{\lambda}}$ is a well-defined connection (and then certifies the vertical maps are $\mathscr{D}_{X}$-isomorphisms).
\end{proof}

\begin{remark} \text{ }
\begin{enumerate}[label=(\alph*)]
    \item Note that $\nabla_{\delta}^{\boldsymbol{\lambda}}(1) = \nabla_{\delta}^{\boldsymbol{\lambda}}(f^{\boldsymbol{0}}) = \sum_{k} \delta(f_{k}) f^{-\boldsymbol{e_{k}}}$, where $\boldsymbol{e_{k}}$ are the standard unit vectors. 
    \item Proposition \ref{prop - technical details of funky connection} also holds in the algebraic setting.
\end{enumerate}
\end{remark}

Before proceeding, let us complete our proofs of the analytic (un)Twisted Logarithmic Comparison Theorems by justifying the relationship between the analytic twisted meromorphic de Rham complex and the derived direct image of a rank one local system on the the complement $U = X \setminus \mathscr{A}$. This is known to experts and follows from the explicit Riemann--Hilbert correspondence.

\begin{proposition} \label{prop - twisted meromorphic de rham quasi-iso to local system}
Let $f = f_{1} \cdots f_{d} \in R$ be a reduced hyperplane arrangement $\mathscr{A}$ equipped with a factorization into linear forms. Suppose that $\boldsymbol{\lambda} = (\lambda_{1}, \dots, \lambda_{d}) \in \mathbb{C}^{d}$ and let $\omega = \sum_{k} \lambda_{k} df_{k} / f_{k}$ the one-form determined by $\boldsymbol{\lambda}$. And let $\localSystem_{\Exp(\lambda}$ be the rank one local system on $X \setminus \mathscr{A}$ corresponding to the torus point $\Exp(\boldsymbol{\lambda})$; let $j: X \setminus \mathscr{A} \xhookrightarrow{} X$ be the inclusion. Then we have the following equality in the category of perverse sheaves:
\[
(\Omega_{X}^\bullet(\star \mathscr{A}), \nabla_{\omega}) = \derivedR j_{\star} \localSystem_{\Exp(\boldsymbol{\lambda})}.
\]
\end{proposition}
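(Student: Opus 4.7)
The strategy is to realize the twisted meromorphic de Rham complex as the analytic de Rham complex of a regular meromorphic connection on $X$ along $\mathscr{A}$, and then to invoke the Riemann--Hilbert correspondence.

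First, I would use Proposition \ref{prop - technical details of funky connection} to identify $(\Omega_{X}^{\bullet}(\star \mathscr{A}), \nabla_{\omega})$ with the analytic de Rham complex $\deRham(\mathscr{O}_{X}(\star \mathscr{A}), \nabla^{\boldsymbol{\lambda}})$. Formally, under the substitution $g \leftrightarrow g \cdot f^{\boldsymbol{\lambda}}$, exterior differentiation sends $g \cdot f^{\boldsymbol{\lambda}}$ to $(dg + g\omega) \cdot f^{\boldsymbol{\lambda}}$, so $\nabla_{\omega}$ corresponds to $\nabla^{\boldsymbol{\lambda}}$, and $\Omega_{X}^{\bullet} \otimes_{\mathscr{O}_{X}} \mathscr{O}_{X}(\star \mathscr{A})$ equipped with the differential induced by $\nabla^{\boldsymbol{\lambda}}$ is, term by term, $(\Omega_{X}^{\bullet}(\star \mathscr{A}), \nabla_{\omega})$. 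This is purely a bookkeeping step.

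Second, I would verify that $(\mathscr{O}_{X}(\star \mathscr{A}), \nabla^{\boldsymbol{\lambda}})$ is a regular holonomic $\mathscr{D}_{X}$-module. Holonomicity is standard for $\mathscr{O}_{X}(\star \mathscr{A})$, since the characteristic variety is contained in the union of the conormals to strata of $\mathscr{A}$. Regularity follows from the fact that $\omega$ is a \emph{logarithmic} one form: the connection preserves the lattice $\mathscr{O}_{X} \subseteq \mathscr{O}_{X}(\star \mathscr{A})$ up to terms with a first-order pole along $\mathscr{A}$, so the pair $(\mathscr{O}_{X}(\star \mathscr{A}), \nabla^{\boldsymbol{\lambda}})$ is Deligne's canonical regular meromorphic extension of its restriction to $U$. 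At a technical level this can be verified after pulling back by an embedded resolution, where $\omega$ becomes logarithmic along a simple normal crossing divisor and the residues $\lambda_{k}$ (or sums of such, for exceptional components) are constant scalars, which is the classical setting of Deligne's theorem.

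Third, the restriction $(\mathscr{O}_{U}, d + \omega)$ is a rank one flat connection on $U$. Its sheaf of horizontal sections is generated locally by the multivalued function $f^{-\boldsymbol{\lambda}} = f_{1}^{-\lambda_{1}} \cdots f_{d}^{-\lambda_{d}}$; analytic continuation around $\Var(f_{k})$ multiplies this section by $e^{-2\pi i \lambda_{k}}$, and up to the chosen duality convention (which I would pin down so that the monodromy of the local system of flat sections matches the torus point $\Exp(\boldsymbol{\lambda})$), this local system is exactly $\localSystem_{\Exp(\boldsymbol{\lambda})}$.

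Finally, invoking the Riemann--Hilbert correspondence for regular holonomic $\mathscr{D}_{X}$-modules (Kashiwara--Mebkhout), or equivalently Deligne's theorem on regular meromorphic extensions, the analytic de Rham complex of the regular meromorphic extension computes $\derivedR j_{\star}$ of its local system of flat sections on $U$. Combining this with the previous three steps gives the claimed equality in the derived category of perverse sheaves. The step I expect to require the most care is verifying regularity of $(\mathscr{O}_{X}(\star \mathscr{A}), \nabla^{\boldsymbol{\lambda}})$ at the non-normal-crossing singularities of $\mathscr{A}$; the resolution-of-singularities reduction is the cleanest way to dispatch this, but one must check that the resulting exceptional residues still yield regular singularities, which follows from the logarithmic nature of $\omega$.
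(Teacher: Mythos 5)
Your proposal is correct and follows essentially the same route as the paper: identify $(\Omega_{X}^{\bullet}(\star \mathscr{A}), \nabla_{\omega})$ with the de Rham complex of the twisted meromorphic connection $(\mathscr{O}_{X}(\star f), \nabla^{\boldsymbol{\lambda}}) \simeq \mathscr{O}_{X}(\star f^{\boldsymbol{\lambda}})$ via Proposition \ref{prop - technical details of funky connection}, and then apply the Riemann--Hilbert correspondence. The only difference is one of sourcing: where you sketch regular holonomicity and the identification of the flat sections by hand (Deligne's theorem after a log resolution, explicit monodromy of $f^{-\boldsymbol{\lambda}}$), the paper simply cites Theorem 5.2 of \cite{BudurLocalSystems} for both facts, and the monodromy-sign convention you flag is indeed the one point that must be pinned down in either version.
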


\begin{proof}
We justify the following:
\begin{align*}
\deRham((\Omega_{X}^\bullet(\star \mathscr{A}), \nabla_{\omega})) \stackrel{(1)}{=} \deRham((\mathscr{O}_{X}(\star f), \nabla_{\boldsymbol{\lambda}})) \stackrel{(2)}{=} \deRham(\mathscr{O}_{X}(\star f^{\lambda})) \stackrel{(3)}{=} \derivedR j_{\star} \localSystem_{\Exp(\boldsymbol{\lambda})}.
\end{align*}
Note that the equalities occur in the derived category. Indeed: $\stackrel{(1)}{=}$ is nothing more than a straightforward computation; since $\mathscr{O}_{X}(\star f^{\boldsymbol{\lambda}})$ is regular holonomic (cf. Theorem 5.2 \cite{BudurLocalSystems}) and since $\deRham(-)$ gives an equivalence of categories between regular holonomic $\mathscr{D}_{X}$-modules and perverse sheaves, by Proposition \ref{prop - technical details of funky connection} we have $\stackrel{(2)}{=}$ in the category of perverse sheaves; the explicit Riemann--Hilbert correspondence gives $\stackrel{(3)}{=}$ (cf. Theorem 5.2 \cite{BudurLocalSystems}).
\end{proof}

Using the identification within Proposition \ref{prop - technical details of funky connection}, we can use the Twisted Logarithmic Comparison Theorem to locate choices of $\boldsymbol{\lambda}$ such that $\mathscr{D}_{X} f^{-\textbf{1} + \boldsymbol{\lambda}} = \mathscr{O}_{X}(\star f^{\boldsymbol{\lambda}})$. The following is based off an observation of Torelli in the univariate, untwisted case (cf. Proposition 3.1 \cite{TorelliLogarithmicComparisionTheoremAnd}).

\begin{theorem} \label{thm - twisted LCT implies generating level of meromorphic specialization}
Let $f = f_{1} \cdots f_{d}$ cut out a central, reduced hyperplane arrangement and let $\boldsymbol{\lambda} = (\lambda_{1}, \dots, \lambda_{d}) \in \mathbb{C}^{d}$ be weights such that for each edge $E$
\[
\sum_{\{1 \leq k \leq d \mid E \subseteq \Var(f_{k})\}} \lambda_{k} \notin \mathbb{Z}_{\geq \min\{2, \rank(E)\}}.
\]
Then 
\[
\mathscr{D}_{X} f^{-\textbf{1} + \boldsymbol{\lambda}} = \mathscr{O}_{X}(\star f^{\boldsymbol{\lambda}}).
\]
\end{theorem}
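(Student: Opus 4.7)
My plan follows Torelli's approach to the univariate untwisted case in \cite{TorelliLogarithmicComparisionTheoremAnd}, lifted to the multivariate twisted setting by combining the analytic Twisted Logarithmic Comparison Theorem with the Riemann--Hilbert correspondence. Using Proposition \ref{prop - technical details of funky connection}, the inclusion $\mathscr{D}_{X} f^{-\mathbf{1}+\boldsymbol{\lambda}} \hookrightarrow \mathscr{O}_{X}(\star f^{\boldsymbol{\lambda}})$ is an inclusion of regular holonomic $\mathscr{D}_{X}$-modules (the latter regular holonomic as recorded in the proof of Proposition \ref{prop - twisted meromorphic de rham quasi-iso to local system}, the former inheriting this as a $\mathscr{D}_{X}$-submodule of a regular holonomic module). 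Since the de Rham functor is conservative on regular holonomic modules, it suffices to show that the natural inclusion of complexes $\phi: \deRham(\mathscr{D}_{X} f^{-\mathbf{1}+\boldsymbol{\lambda}}) \hookrightarrow (\Omega_{X}^{\bullet}(\star \mathscr{A}), \nabla_{\omega})$ is a quasi-isomorphism.

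The pivotal construction is a chain map $\iota: (\Omega_{X}^{\bullet}(\log \mathscr{A}), \nabla_{\omega}) \to \deRham(\mathscr{D}_{X} f^{-\mathbf{1}+\boldsymbol{\lambda}})$ given by $\eta \mapsto (f\eta) \otimes f^{-\mathbf{1}+\boldsymbol{\lambda}}$. This is well defined because a logarithmic $j$-form has at most simple poles along $\mathscr{A}$, so $f\eta \in \Omega_{X}^{j}$; chain-map compatibility is a direct computation with the twisted connection $\nabla^{\boldsymbol{\lambda}}$ of Proposition \ref{prop - technical details of funky connection}. Under the natural identification of $\deRham(\mathscr{O}_{X}(\star f^{\boldsymbol{\lambda}}))$ with $(\Omega_{X}^{\bullet}(\star \mathscr{A}), \nabla_{\omega})$, the composition $\phi \circ \iota$ is exactly the inclusion $(\Omega_{X}^{\bullet}(\log \mathscr{A}), \nabla_{\omega}) \hookrightarrow (\Omega_{X}^{\bullet}(\star \mathscr{A}), \nabla_{\omega})$, which is a quasi-isomorphism by Theorem \ref{thm - analytic twisted lct} because our combinatorial arithmetic hypothesis on $\boldsymbol{\lambda}$ is precisely the one required there.

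From $\phi \circ \iota$ being a quasi-isomorphism, $\phi$ is immediately surjective on cohomology, so the remaining task is to establish injectivity of $H^{\bullet}(\phi)$, equivalently the vanishing of $H^{\bullet}(\deRham(\mathscr{Q}))$ for $\mathscr{Q}$ the cokernel of $\mathscr{D}_{X} f^{-\mathbf{1}+\boldsymbol{\lambda}} \hookrightarrow \mathscr{O}_{X}(\star f^{\boldsymbol{\lambda}})$. The plan for this step is to exploit the long exact cohomology sequence attached to the short exact sequence $0 \to \mathscr{D}_{X} f^{-\mathbf{1}+\boldsymbol{\lambda}} \to \mathscr{O}_{X}(\star f^{\boldsymbol{\lambda}}) \to \mathscr{Q} \to 0$: the factorization through $\iota$ shows that the map $H^{i}(\deRham(\mathscr{O}_{X}(\star f^{\boldsymbol{\lambda}}))) \to H^{i}(\deRham(\mathscr{Q}))$ is zero, which collapses the long exact sequence and embeds $H^{i-1}(\deRham(\mathscr{Q}))$ into $\ker(H^{i}(\phi))$. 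To kill this kernel I plan to run a stalk-wise, rank-of-edge induction modeled on the proof of Theorem \ref{thm - analytic twisted lct}, using Lemmas \ref{lemma - inductive step, removing fk terms from omega} and \ref{lemma - divisor product, inductive step quasi-iso} to reduce to a central, essential arrangement at the origin and there to combine the local Twisted LCT with the combinatorial arithmetic hypothesis on $\boldsymbol{\lambda}$ to force the connecting morphism itself to vanish. This inductive injectivity step is the technically hardest part: Torelli's univariate untwisted argument uses specific algebraic structure of free divisors to bypass it, but such structure is unavailable in general for reduced hyperplane arrangements, so the heavy lifting has been done by the Castelnuovo--Mumford regularity bound of Theorem \ref{thm - computing CM regularity of log forms} upstream in the proof of the Twisted Logarithmic Comparison Theorem itself.
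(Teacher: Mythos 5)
Your skeleton is the same as the paper's: pass to the twisted connection $\nabla^{\boldsymbol{\lambda}}$ on $\mathscr{O}_{X}(\star f)$ via Proposition \ref{prop - technical details of funky connection}, form the cokernel $Q$ of the inclusion, factor $(\Omega_{X}^{\bullet}(\log \mathscr{A}), \nabla_{\omega}) \hookrightarrow \deRham(\mathscr{D}_{X}f^{-\mathbf{1}+\boldsymbol{\lambda}}) \hookrightarrow \deRham(\mathscr{O}_{X}(\star f^{\boldsymbol{\lambda}}))$ so that Theorem \ref{thm - analytic twisted lct} gives surjectivity of $H^{\bullet}(\phi)$, and finish with regular holonomicity plus the Riemann--Hilbert correspondence to deduce $Q=0$ from acyclicity of $\deRham(Q)$. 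All of that is correct and matches the paper.

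The gap is in the injectivity step. Surjectivity of $H^{\bullet}(\phi)$ in every degree tells you only that $H^{i}(\deRham(Q))$ injects, via the connecting map, into $\ker(H^{i+1}(\phi))$; it does not make that kernel vanish, and your plan to ``force the connecting morphism itself to vanish'' by invoking the local Twisted LCT and the arithmetic hypothesis again is circular: vanishing of the connecting map is equivalent to $H^{i}(\deRham(Q))=0$, which is exactly what you are trying to prove, and the Twisted LCT has already been fully spent on surjectivity. The missing structural input is the following. The stalk-wise induction shows $Q$ is supported at the origin; Kashiwara's equivalence then identifies $Q$ with a direct sum of copies of $\mathscr{D}_{X,0}/\mathscr{D}_{X,0}\cdot\mathfrak{m}_{0}\simeq\mathbb{C}[\partial_{1},\dots,\partial_{n}]$, whose de Rham complex is the Koszul complex of the regular sequence $\partial_{1},\dots,\partial_{n}$ and hence has cohomology only in degree $n$. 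With $\deRham(Q)$ concentrated in degree $n$, the long exact sequence degenerates into isomorphisms plus a single short exact sequence ending in $H^{n}(\deRham(Q))$, and the surjectivity you already have in degree $n$ kills that last term. Without this Kashiwara/Koszul step your argument does not close; the Castelnuovo--Mumford regularity bound, which you cite as the heavy lifting, only enters through the surjectivity half.
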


\begin{proof}
Doing this stalk-by-stalk, with the standard reduction to a smaller rank arrangement at points not the origin, and using the Proposition \ref{prop - technical details of funky connection}, it suffices to prove that the inclusion $(\mathscr{D}_{X,0}f^{-\textbf{1}}, \nabla^{\boldsymbol{\lambda}}) \subseteq (\mathscr{O}_{X,0}(\star f), \nabla^{\boldsymbol{\lambda}})$ is an equality.

Define the $\mathscr{D}_{X,0}$-module $Q$ to be the cokernel of this inclusion:
\begin{equation} \label{eqn - ses cokernel Q}
0 \to (\mathscr{D}_{X,0}f^{-\textbf{1}}, \nabla^{\boldsymbol{\lambda}}) \to (\mathscr{O}_{X,0}(\star f), \nabla^{\boldsymbol{\lambda}}) \to Q \to 0.
\end{equation}
Apply the de Rham functor $\deRham(-)$ to this short exact sequence. By the inductive hypothesis, $Q$ is supported at the origin; by Kashiwara's equivalence it is isomorphic  to a direct sum of copies of $\mathscr{D}_{X,0}/\mathscr{D}_{X,0} \cdot \mathfrak{m}_{0} \simeq \mathbb{C}[\partial_{1}, \dots, \partial_{n}]$. The de Rham complex attached to this $\mathscr{D}_{X,0}$-module is essentially the Koszul complex of the regular sequence $\partial_{1}, \dots, \partial_{n}$ on $\mathbb{C}[\partial_{1}, \dots, \partial_{n}]$. Thus $H^{j}(\deRham(Q)) = 0 $ for all $j \neq n$. This means the long exact sequence arising from applying the de Rham functor to \eqref{eqn - ses cokernel Q} is a collection of isomorphisms and one short exact sequence:
\begin{align} \label{eqn - de Rham isomorphisms and ses in last slot}
&0 \to H^{n}(\deRham((\mathscr{D}_{X,0}f^{-\textbf{1}}, \nabla^{\boldsymbol{\lambda}})) \to H^{n}(\deRham((\mathscr{O}_{X,0}(\star f), \nabla^{\boldsymbol{\lambda}}))) \to H^{n}(\deRham(Q)) \to 0.
\end{align}
Note that the first nontrivial map is induced by $\deRham(-)$ applied to a submodule inclusion. 

On the other hand, by the definition of the de Rham functor one can easily check that we have an inclusion of complexes
\begin{equation} \label{eqn - inclusion of three de rham complexes}
(\Omega_{X, 0}^{\bullet}(\log f), \nabla_{\omega}) \xhookrightarrow{} \deRham((\mathscr{D}_{X,0}f^{-\textbf{1}}, \nabla^{\boldsymbol{\lambda}})) \xhookrightarrow{} \deRham((\mathscr{O}_{X,0}(\star f), \nabla^{\boldsymbol{\lambda}})), 
\end{equation}
where the last complex equals $(\Omega_{X,0}^{\bullet}(\star f), \nabla_{\omega})$. By Theorem \ref{thm - analytic twisted lct}, the composition of maps in \eqref{eqn - inclusion of three de rham complexes} is a quasi-isomorphism. So the second map in \eqref{eqn - inclusion of three de rham complexes} is a surjection on the level of cohomology. Specifically, for $H^{n}(-)$, this surjection is the same map as the first nontrivial map in \eqref{eqn - de Rham isomorphisms and ses in last slot}, since both are induced by submodule inclusion. We deduce $H^{n}(\deRham(Q)) = 0$ and $\deRham(Q)$ is acyclic.

Since $(\mathscr{O}_{X,0}(\star f), \nabla^{\boldsymbol{\lambda}}) \simeq \mathscr{O}_{X,0}(\star f^{\boldsymbol{\lambda}})$ is regular holonomic (Theorem 5.2 \cite{BudurLocalSystems}) and these properties are preserved under subquotients, $Q$ is regular holonomic. By the Riemann--Hilbert correspondence, there is an equivalence of categories between regular holonomic $\mathscr{D}_{X,0}$-modules and perverse sheaves. As have shown $\deRham(Q)$ corresponds to the zero perverse sheaf, we deduce $Q = 0$, completing the proof.
\end{proof}

\subsection{Codimension One Components of Bernstein--Sato Ideals} \text{ }

We conclude with a sharp estimate of the codimension one components of the Bernstein--Sato ideal attached to an arbitrary factorization of a central, reduced hyperplane arrangement. We first give a quick tour of some of the necessary concepts, with some other tools deferred to the proof.

Let $f$ cut out a central, reduced hyperplane arrangement and let $f = f_{1} \cdots f_{r}$ be an arbitrary factorization of $f$, not necessarily into linear forms. We encode this factorization by $F = (f_{1}, \dots, f_{r}).$ Formally invert $f$, add $r$ dummy variables $s_{1}, \dots, s_{r}$, and consider the $\mathscr{O}_{X,0}[\frac{1}{f}, S] = \mathscr{O}_{X,0}[\frac{1}{f}, s_{1}, \dots, s_{r}]$-module generated by $f_{1}^{s_{1}} \cdots f_{r}^{s_{r}}$; call this $\mathscr{O}_{X,0}[\frac{1}{f}, S] F^{S}$. This is a $\mathscr{D}_{X,0}[S] = \mathscr{D}_{X,0}[s_{1}, \dots, s_{r}]$-module where the action of derivation on $F^{S} = f_{1}^{s_{r}} \cdots f_{r}^{s_{r}}$ is given by formally applying the chain and product rule. The cyclic $\mathscr{D}_{X,0}[S]$-submodule generated by $F^{S}$ (resp. $F^{S+1} = f_{1}^{s_{1}+1} \cdots f_{r}^{s_{r}+1}$) is denoted by $\mathscr{D}_{X,0}[S] F^{S}$ (resp. $\mathscr{D}_{X,0}[S] F^{S+1}$). 

\begin{define} \label{def - basic BS ideal definitions}
Consider the $\mathscr{D}_{X}[S]$-module
\[
M = \frac{\mathscr{D}_{X,0}[S] F^{S}}{\mathscr{D}_{X,0}[S] F^{S+1}}.
\]
The $\mathbb{C}[S] = \mathbb{C}[s_{1}, \dots, s_{r}]$-module annihilator of this module is the \emph{Bernstein--Sato ideal}. It is denoted by
\[
B_{F,0} = \ann_{\mathbb{C}[S]} M.
\]
We call the zero locus of the Bernstein--Sato ideal $Z(B_{F,0}) \subseteq \mathbb{C}^{r}$ and we single out its codimension one components with 
\[
Z_{r-1}(B_{F,0}) = \text{ codimension one components of } Z(B_{F,0}).
\]
When working with a factorization $F = (f_{1}, \dots, f_{r})$ into more than one term (i.e. $r \geq 2$) we call this the multivariate situation; when the factorization $F = (f)$ is trivial (i.e. $r = 1$) we call this the univariate situation. In the univariate situation the \emph{Bernstein--Sato polynomial} is the monic generator of the (necessarily principal) Bernstein--Sato ideal.
\end{define}

It is well known that the Bernstein--Sato ideal (both the univariate and multivariate versions) is nonzero. We recommend the modern treatments \cite{MaisonobeFiltrationRelative, ZeroLociI, ZeroLociII} which focus on the multivariate situation, especially since we utilize many of the ideas therein. In \cite{MaisonobeFiltrationRelative}, Maisonobe realized studying the $\Ext$-modules
\begin{equation} \label{eqn - Ext modules for BS things}
\Ext_{\mathscr{D}_{X,0}[S]}^{\bullet}(M, \mathscr{D}_{X,0}[S])
\end{equation}
is crucial to understanding Bernstein--Sato ideals. In particular, he proved the first nonvanishing $\Ext$-module sits in slot $n+1$ (recall $n = \dim X$). In \cite{ZeroLociI}, the authors: (1) developed the theory of localized $\mathscr{D}_{X,0}[S]$-modules, for example, replacing the $\mathscr{D}_{X,0}[S]$-module $M$ with the $\mathscr{D}_{X,0} \otimes_{\mathbb{C}} T^{-1}\mathbb{C}[S]$-module $M \otimes_{\mathbb{C}} T^{-1} \mathbb{C}[S]$ for a multiplicatively closed subset $T \subseteq \mathbb{C}[S]$; (2) showed that when $M$ is \emph{$(n+1)$-Cohen--Macaulay}, that is, when amongst all the $\Ext$-modules of \eqref{eqn - Ext modules for BS things} only the $\Ext$-module sitting in slot $n+1$ is nonzero, $M$ has very nice properties. 

We will make use of the approach of \cite{ZeroLociI} where they show $M$ is generically $(n+1)$-Cohen--Macaulay (generic with respect to points of $\mathbb{C}^{r}$). We will also make use of the result of the previous subsection. For $\textbf{a} = (a_{1}, \dots, a_{r}) \in \mathbb{C}^{r}$, let $\mathbb{C}_{\textbf{a}}$ be the associated residue field. There is a natural surjective specialization map of $\mathscr{D}_{X,0}$-modules
\[
\mathscr{D}_{X,0}[S] F^{S} \otimes_{\mathbb{C}[S]} \mathbb{C}_{\textbf{a}} \twoheadrightarrow \mathscr{D}_{X,0} f^{\textbf{a}} = \mathscr{D}_{X,0}f_{1}^{a_{1}} \cdots f_{r}^{a_{r}}
\]
obtained by replacing all $s_{k}$ with $a_{k}$. (See \cite{OakuAlgorithm} and section 5 of \cite{BudurLocalSystems} for details.) Theorem \ref{thm - twisted LCT implies generating level of meromorphic specialization} will, with suitable caution, let us use some of the results from \cite{OakuAlgorithm} regarding this specialization. 

Because we care about arbitrary factorizations $F$ of $f$, we require some technical notation exclusive to hyperplane arrangements. 
\begin{define} \label{def - notation for factorization of arrangements along edges}
Let $f = \ell_{1} \cdots \ell_{d}$ be a factorization of a central, reduced arrangement $f$ into linear terms. For an edge $E$ in the intersection lattice $\mathscr{L}(\mathscr{A})$, let $J(E) = \{ j \in [d] \mid E \subseteq \Var(\ell_{j})\}$ so that $f_{E} = \prod_{j \in J(X)} \ell_{j}$ is the arrangement consisting of all hyperplanes containing $E$. The factorization $F = (f_{1}, \cdots, f_{r})$ amounts to a disjoint partition of $[d]$ into $r$ subsets; let $S_{k} = \{ j \in [d] \mid \Var(\ell_{j}) \subseteq \Var(f_{k})\}$. Thus $f_{k} = \prod_{j \in S_{k}} \ell_{j}$. The factorization $F$ of $f$ induces a factorization $f_{E} = f_{E, 1} \cdots f_{E, r}$ where 
\[
f_{E,k} = \prod_{j \in S_{k} \cap J(E)} \ell_{j}.
\]
Finally, let $d = \deg(f)$, $d_{E} = \deg(f_{E}) = \mid J(E) \mid$, and $d_{E,k} = \deg(f_{E,k}) = \mid J(E) \cap S_{k} \mid.$ Note that $\deg(f_{E,k})$ can equal zero.
\end{define}

Now we can state and prove the subsection's main result. What is new below is the upper bound $Q_{E}$ on the size of $v$; that all the codimension one components of $Z(B_{F,0})$ are of this form (without any restriction other than the nonnegativity of $v$) was proved in Theorem 4.18 of \cite{Bath2}. We highlight that, in the univariate case, this independently recovers (by a very different argument) M. Saito's result \cite{SaitoArrangements} that the roots of the Bernstein--Sato polynomial of a smooth, central, reduced arrangement lie in $(-2 + \frac{1}{d}, 0)$.

\begin{theorem} \label{thm - bounding codim one components of BS zero loci}
For $F = (f_{1}, \dots, f_{r})$ an arbitrary factorization of a reduced, central arrangement $f$, the codimension one components of the Bernstein--Sato ideal attached to $F$ has the following restriction:
\[
Z_{r-1}(B_{F,0}) \subseteq \bigcup_{\substack{E \in \mathscr{L}(\mathscr{A}) \\ E \text{ \normalfont dense}}} \bigcup_{v=0}^{Q_{E}} \left\{ \sum\limits_{\{1 \leq k \leq r \mid E \subseteq \Var(f_{k})\}} d_{E,k} s_{k} + \rank(E) + v = 0 \right\}
\]
where $d_{E,k}$ is as in Definition \ref{def - notation for factorization of arrangements along edges} and 
\[
Q_{E} = 
\begin{cases}
2 d_{E} - \rank(E) - \min\{2, \rank(E)\} \text{ for } F \text{ a factorization into linears}; \\
2 d_{E} - \rank(E) - \min\{2, \rank(E)\} \text{ for $F$ any factorization } \& \text{ } E = \{0\}; \\
2 d_{E} - \rank(E) - 1 \text{ for } F \text{ not a factorization into linears }\& \text{ } E \neq \{0\}.
\end{cases}
\]
In particular, when $F = (f)$ is the trivial factorization and $f$ is not smooth, the roots of the Bernstein--Sato polynomial are contained in $(-2 + 1/d, 0).$
\end{theorem}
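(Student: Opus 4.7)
The plan is to bound the integer shift $v$ in the candidate hyperplanes produced by Theorem 4.18 of \cite{Bath2}. That earlier result already confines $Z_{r-1}(B_{F,0})$ to the union of the stated hyperplanes $H_{E,v} = \{\sum_{k : E \subseteq \Var(f_{k})} d_{E,k} s_{k} + \rank(E) + v = 0\}$, indexed by dense edges $E$ of $\mathscr{L}(\mathscr{A})$ and arbitrary non-negative integers $v$. What is new is the upper bound $v \leq Q_{E}$, and this is exactly where Theorem \ref{thm - twisted LCT implies generating level of meromorphic specialization} is used. The strategy is classical in spirit: one shows that any $\boldsymbol{\lambda}$ exceeding $Q_E$ has enough room for an integer translate to fall into the combinatorial arithmetic locus where our generation result applies, forcing the associated point off the Bernstein--Sato zero locus.

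The first step is to invoke the generically $(n{+}1)$-Cohen--Macaulay machinery of Maisonobe \cite{MaisonobeFiltrationRelative} and \cite{ZeroLociI}. Outside a proper subvariety $W \subseteq \mathbb{C}^r$ of codimension at least two, the module $M = \mathscr{D}_{X,0}[S]F^S / \mathscr{D}_{X,0}[S]F^{S+1}$ becomes $(n+1)$-Cohen--Macaulay after localization, and this structural property lets one test membership in $Z(B_{F,0})$ through a controlled specialization. At a generic point $\boldsymbol{\lambda}$ of any codimension-one component $H \subseteq Z(B_{F,0})$, one then has the identification $\boldsymbol{\lambda} \in Z(B_{F,0})$ with the failure of the natural specialization $\mathscr{D}_{X,0}[S]F^S \otimes_{\mathbb{C}[S]} \mathbb{C}_{\boldsymbol{\lambda}} \to \mathscr{D}_{X,0} f^{\boldsymbol{\lambda}}$ to coincide, after integer translation, with $\mathscr{O}_{X,0}(\star f^{\boldsymbol{\lambda}})$. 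Here genericity lets us assume $\boldsymbol{\lambda} \notin W$ and that $\boldsymbol{\lambda}$ lies off every other candidate hyperplane.

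The second step is the combinatorial heart of the argument. Fix a dense edge $E$ and $v > Q_E$, and assume for contradiction that $H_{E,v}$ is a component of $Z(B_{F,0})$. Pick $\boldsymbol{\lambda} \in H_{E,v}$ generic in the above sense. I seek $\textbf{m} \in \mathbb{Z}^r$ with $\textbf{m} \geq \textbf{1}$ coordinatewise so that $\boldsymbol{\mu} := \boldsymbol{\lambda} - \textbf{m} + \textbf{1}$ satisfies the hypothesis of Theorem \ref{thm - twisted LCT implies generating level of meromorphic specialization}:
\[
\sum_{\{k : E' \subseteq \Var(f_{k})\}} \mu_{k} \notin \mathbb{Z}_{\geq \min\{2, \rank(E')\}} \text{ for every edge } E'.
\]
At edges $E' \neq E$ the sum is generic in $\boldsymbol{\lambda}$ (by genericity on $H_{E,v}$), so integer shifts can arrange each such sum to avoid the forbidden set; only the edge $E$ itself is rigid, where $\sum_{k} d_{E,k} \mu_k$ is pinned down modulo the shift by $\textbf{m}$ and equals $-\rank(E) - v$ up to the integer $\sum_{k}d_{E,k}(-m_{k}+1)$. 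The combinatorial content of $Q_E$ is that $v > Q_E$ leaves just enough room for an admissible shift, the three cases reflecting whether the forbidden set is $\mathbb{Z}_{\geq 2}$ or $\mathbb{Z}_{\geq 1}$ (which in turn depends on whether $\rank(E) \geq 2$ and on whether the factorization refines the hyperplanes at $E$ into linears). Once such $\boldsymbol{\mu}$ is constructed, Theorem \ref{thm - twisted LCT implies generating level of meromorphic specialization} gives $\mathscr{D}_{X,0} f^{-\textbf{1}+\boldsymbol{\mu}} = \mathscr{O}_{X,0}(\star f^{\boldsymbol{\mu}})$, and a standard $b$-function pullback argument (using the functional equation chain from $\boldsymbol{\mu}$ back up to $\boldsymbol{\lambda}$ together with the $(n+1)$-Cohen--Macaulay identification) contradicts $\boldsymbol{\lambda} \in Z(B_{F,0})$.

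The main obstacle will be verifying the combinatorial bookkeeping in the three cases of $Q_E$ cleanly, especially for non-linear factorizations where the $d_{E,k}$ are no longer all $0$ or $1$ and the forbidden set at sub-edges of $E$ interacts nontrivially with the shift vector $\textbf{m}$. For the corollary about Bernstein--Sato polynomials of non-smooth central arrangements, specialize $r = 1$: the smallest root produced is $-(\rank(E)+v)/d_{E}$ over all dense $E$ and $0 \leq v \leq Q_E$, and the worst case $E = \{0\}$ of rank $n \geq 2$ gives $v \leq 2d - n - 2$, hence $-(\rank(\{0\})+v)/d \geq -(2d-2)/d = -2 + 2/d > -2 + 1/d$; any $E \neq \{0\}$ yields the even stronger bound $-2 + 2/d_{E} > -2 + 1/d$, recovering M. Saito's theorem independently.
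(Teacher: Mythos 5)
Your overall architecture (start from Theorem 4.18 of \cite{Bath2}, work on the generic $(n+1)$-Cohen--Macaulay locus, and feed in Theorem \ref{thm - twisted LCT implies generating level of meromorphic specialization}) matches the paper's, but the key step is pointed in the wrong direction, and as written the mechanism proves too much. You impose the hypotheses of Theorem \ref{thm - twisted LCT implies generating level of meromorphic specialization} at $\boldsymbol{\mu}=\boldsymbol{\lambda}-\textbf{m}+\textbf{1}\leq\boldsymbol{\lambda}$. But the contradiction must come from showing that the inclusion $\mathscr{D}_{X,0}f^{\boldsymbol{\lambda}+\textbf{1}}\subseteq\mathscr{D}_{X,0}f^{\boldsymbol{\lambda}}$ is an equality, and the only way Theorem \ref{thm - twisted LCT implies generating level of meromorphic specialization} delivers that is via $\mathscr{D}_{X,0}f^{\boldsymbol{\lambda}+\textbf{1}}\supseteq\mathscr{D}_{X,0}f^{-\textbf{1}+(\boldsymbol{\lambda}+\textbf{j})}=\mathscr{O}_{X,0}(\star f^{\boldsymbol{\lambda}})$ for some integer shift $\textbf{j}\geq\textbf{2}$, i.e.\ the combinatorial conditions must hold at a point at least two full integer steps \emph{above} $\boldsymbol{\lambda}$. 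Shifting \emph{down}, as you do, makes the rigid condition at $E$ automatic for every $v\geq 0$: the forbidden set $\mathbb{Z}_{\geq\min\{2,\rank(E)\}}$ is bounded below and subtracting positive integers only decreases $\sum_k d_{E,k}\lambda_k$. So your argument, if it closed, would exclude every candidate hyperplane including $v=0$, i.e.\ it would show $Z_{r-1}(B_{F,0})=\emptyset$, which is false. The bound $Q_E$ is precisely the threshold for the \emph{upward} shift to exist: on your $H_{E,v}$ the sum at $E$ after shifting by $\textbf{j}$ is $-\rank(E)-v+jd_E$, and asking that this avoid $\mathbb{Z}_{\geq\min\{2,\rank(E)\}}$ for some $j\geq 2$ is equivalent to $v>Q_E$ in the linear-factorization case (the other two cases of $Q_E$ come from adjusting which residues are attainable when the blocks $f_k$ are not linear).

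Two further gaps remain even after reversing the shift. First, the "pullback" is not one-shot: one takes the \emph{largest} $j\geq 2$ whose translate lies in $Z(B_{F,0})$ (finitely many by Sabbah), uses Oaku's specialization criterion to make the lower horizontal map an isomorphism, and applies the Cohen--Macaulay dichotomy at that particular translate. Since that $j$ is not known in advance, the chosen point must lie in the $(n+1)$-Cohen--Macaulay locus of \emph{every} translate and satisfy the arithmetic condition for \emph{every} pair $(E',p)$ with $p\in\mathbb{Z}_{\geq\min\{2,\rank(E')\}}$; this is a countable intersection of dense opens, which is why the paper resorts to the Baire Category Theorem rather than Zariski genericity on the component. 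Second, your closing arithmetic is off: for $E\neq\{0\}$ and $F$ not a factorization into linears, $Q_E=2d_E-\rank(E)-1$ yields roots bounded below by $-2+1/d_E$, not $-2+2/d_E$.
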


\begin{proof}
\emph{Case 1:} We first assume $r = d$ (i.e. $F$ is a factorization into linear terms) and establish notation and a plan that will be used by the other cases. 

\emph{General Notation:} Fix an arbitrary dense edge $E$ and then fix an integer $\ell \in [0, d_{E})$. Choose the integer $m \in \mathbb{Z}$ by the mandate
\begin{align} \label{eqn - choice of m; F a complete factorization}
m &= \min\{ p \in \{ \rank(E) + \ell + z \cdot d_{E} \mid z \in \mathbb{Z} \} \mid -p \notin \mathbb{Z}_{\geq \min\{2, \rank(E)\}} \} \\
    &\in [-\min\{2, \rank(E)\} + 1, - \min\{2, \rank(E)\} + d_{E}] \nonumber
\end{align}
and consider the prime ideal in $\mathbb{C}[S] = \mathbb{C}[s_{1}, \dots, s_{r}]$
\begin{equation} \label{eqn - def of prime ideal with min choice of m}
\mathfrak{q} = \mathbb{C}[S] \cdot \sum_{\{1 \leq k \leq r \mid E \subseteq \Var(f_{k})\}} d_{E,k} s_{k} + m.
\end{equation}
Furthermore, let $\tau:  \mathbb{C}[S] \to  \mathbb{C}[S]$ be the ring isomorphism defined by $s_{k} \mapsto s_{k}+1$ for all $k$. This induces a map $\tau^{\sharp}: \Spec \mathbb{C}[S] \mapsfrom \Spec \mathbb{C}[S]$ sending $ (a_{1}, \dots, a_{r}) \mapsfrom (a_{1} - 1, \dots, a_{r} - 1).$ In particular, $(\tau^{\sharp})^{j} : \Var(\tau^{j}(\mathfrak{q})) \to \Var(\mathfrak{q})$ is a homeomorphism in the Zariski topology. To make this plain, if $j \in \mathbb{Z}$, then 
\[
(\tau^{\sharp})^{-j}(\textbf{a}) = (a_{1} - j, \dots, a_{r} - j).
\]

\emph{General Plan:} We know from Theorem 4.18  of \cite{Bath2} that every codimension one component of $Z(B_{F,0})$ is of the form $\Var(\sum_{\{1 \leq k \leq r \mid E \subseteq \Var(f_{k})\}} d_{E,k} s_{k} + \rank(E) + u)$ for some dense edge $E \in \mathscr{L}(\mathscr{A})$ and some nonnegative integer $u$. Therefore showing the following claim proves the theorem:
\begin{equation} \label{eqn - BS ideal zero loci estimation; the claim}
\text{\textbf{The Claim}:} \Var(\tau^{j}(\mathfrak{q})) \nsubseteq Z_{r-1}(B_{F,0}) \text{ for each } j \in \{2, 3, 4, \dots \}.
\end{equation}
Indeed, in \eqref{eqn - choice of m; F a complete factorization} we selected $ - \min\{2, \rank(E)\} + 1 \leq m \leq - \min\{2, \rank(E)\} + d_{E}$. So if we prove \textbf{The Claim} \eqref{eqn - BS ideal zero loci estimation; the claim} we have proved every codimension one component of $Z(B_{F,0})$ associated to the dense edge $E$ is of of the form $\Var(\sum_{\{1 \leq k \leq r \mid E \subseteq \Var(f_{k}) \}} d_{k} s_{k} + v)$ where $- \min\{2, \rank(E)\} + 1 \leq v \leq - \min\{2, \rank(E)\} + 2 d_{E}$. Combining this with the first sentence of \emph{General Plan} means the codimension one components of $Z(B_{F,0})$ associated to the dense edge $E$ are of the form
\[
\Var(\sum_{\{1 \leq k \leq r \mid E \subseteq \Var(f_{k})\}} d_{E,k} s_{k} + \rank(E) + w)
\]
where 
\[
0 \leq w \leq 2 d_{E} -\rank(E) -\min\{2, \rank(E)\}.
\]

To prove \textbf{The Claim} we find a nonempty dense set $\Gamma \subseteq \Var(\mathfrak{q})$ such that for all $j \in \{2, 3, \dots, \}$ the translate $(\tau^{\sharp})^{-j}(\Gamma)$ and the zero locus $Z(B_{F,0})$ are disjoint. Since $(\tau^{\sharp})^{-j}(\Gamma) \subseteq (\tau^{\sharp})^{-j}(\mathfrak{q}) = \Var(\tau^{j}(\mathfrak{q})$, this shows $\Var(\tau^{j}(\mathfrak{q})$ is not a codimension one component of $Z(B_{F,0})$ certifying \textbf{The Claim}. The set $\Gamma$ is constructed by intersecting the elements of two countable collections of Zariski open sets: in \emph{Step 1} we identify a countable collection of Zariski opens so that for points in their intersection $M$ satisfies the $(n+1)$-Cohen--Macaulay property ala the approach of \cite{ZeroLociI}; in \emph{Step 2} we identify a different countable collection of opens so that points in all these opens satisfy the set-up of Theorem \ref{thm - twisted LCT implies generating level of meromorphic specialization}. In \emph{Step 3} we intersect all these opens, call it $\Gamma$, and use the generic properties of \emph{Step 1} and \emph{Step 2} to study points in $(\tau^{\sharp})^{-j}(\Gamma) \cap Z(B_{F,0})$; in \emph{Step 4} we show $\Gamma$ is non-empty and dense using the Baire Category Theorem.

\emph{Step 1:} Generic Cohen--Macaulay translates.

We begin by utilizing the main technique of \cite{ZeroLociI} (see Lemma 3.5.2). For $j$ some integer, consider the localization of $M$ at the multiplicatively closed set $T_{j} = \mathbb{C}[S]\setminus \cup_{\mathfrak{m} \in \Var(\tau^{j}(\mathfrak{q}))} \mathfrak{m}$. Then $M \otimes_{\mathbb{C}[S]} T_{j}^{-1}\mathbb{C}[S]$ is relative holonomic (Definition 3.2.3 of \cite{ZeroLociI}) and has a Bernstein--Sato ideal (since $M$ satisfies both before localization); moreover, the zero locus of this Bernstein--Sato ideal sits inside $\Var(\tau^{j}(\mathfrak{q}))$. Following Lemma 3.5.2 of \cite{ZeroLociI} (or just using the result), we can consider the zero locus $Z_{p,j} \subseteq \Var(\tau^{j}(\mathfrak{q}))$ of the Bernstein--Sato ideal of 
\[
\Ext_{\mathscr{D}_{X,0}[S] \otimes_{\mathbb{C}[S]} T_{j}^{-1}\mathbb{C}[S]}^{p}(M \otimes_{\mathbb{C}[S]} T_{j}^{-1}\mathbb{C}[S], \mathscr{D}_{X,0}[S] \otimes_{\mathbb{C}[S]} T_{j}^{-1}\mathbb{C}[S]).
\]
Provided $p \geq n+2$, each $Z_{p,j} \subseteq \Var(\tau^{j}(\mathfrak{q}))$ is Zariski closed and of codimension (with respect to this containment) of at least one. Consider the nonempty Zariski open set
\[
C_{j} = \Var(\tau^{j}(\mathfrak{q})) \setminus \left( \bigcup_{p = n+2}^{p = n+r} Z_{p,j} \right) \subseteq  \Var(\tau^{j}(\mathfrak{q}))
\]
and let $D_{j} \subseteq T_{j}^{-1}\mathbb{C}[S]$ the corresponding multiplicatively closed set. Then $M \otimes_{\mathbb{C}[S]} T_{j}^{-1} \mathbb{C}[S] \otimes_{\mathbb{C}[S]} D_{j}^{-1} \mathbb{C}[S]$ is $(n+1)$-Cohen--Macaulay (over points in $C_{j})$. (See Lemma 3.5.2 of \cite{ZeroLociI} for more details.)

Let $\mathbb{C}_{\textbf{a}}$ be the residue field corresponding to the point $\textbf{a} \in \mathbb{C}^{r}$. Suppose that $\textbf{a} \in C_{j}$. Since Bernstein--Sato ideals of relative holonomic modules localize (Lemma 3.4.1 and Remark 3.2.1 of \cite{ZeroLociI}), our point $\textbf{a}$ is in the zero locus of the Bernstein--Sato ideal of $M$ if and only if $\textbf{a}$ is in the zero locus of the Bernstein--Sato ideal of $M \otimes_{\mathbb{C}[S]} T_{j}^{-1} \mathbb{C}[S] \otimes_{\mathbb{C}[S]} D_{j}^{-1} \mathbb{C}[S]$, the latter of which is $(n+1)$-Cohen--Macaulay by construction. By Proposition 3.4.3 of of \cite{ZeroLociI} and the Cohen--Macaulay-ness, $\textbf{a} \in Z(B_{F,0})$ if and only if $M \otimes_{\mathbb{C}[S]} \mathbb{C}_{\textbf{a}} \neq 0.$ 

Recall that $(\tau^{\sharp})^{j}: \Var(\tau^{j}(\mathfrak{q})) \to \Var(\mathfrak{q})$ is a homeomorphism. Consider the collection of nonemtpy Zariski opens
\begin{equation} \label{eqn - collection of opens, generically CM translates}
\mathscr{C} = \{ C_{0}, (\tau^{\sharp})^{1}(C_{1}), (\tau^{\sharp})^{2}(C_{2}), \dots \},
\end{equation}
and note that all elements of $\mathscr{C}$ not only lie in $\Var(\mathfrak{q})$ but are dense therein. Moreover, if $\textbf{a} \in \bigcap_{U \in \mathscr{C}} U$, then for all $j \in \mathbb{Z}_{\geq 0}$ we have that $\textbf{a} \in (\tau^{\sharp})^{j}(C_{j})$. Alternatively, $(\tau^{\sharp})^{-j}(\textbf{a}) \in C_{j}$. This puts $(\tau^{\sharp})^{-j}(\textbf{a})$ within the $(n+1)$-Cohen--Macaulay locus of $\Var(\tau^{j}(\mathfrak{q}))$ and hence $(\tau^{\sharp})^{-j}(\textbf{a}) \in Z(B_{F,0})$ if and only if $M \otimes_{\mathbb{C}[S]} \mathbb{C}_{(\tau^{\sharp})^{-j}(\textbf{a})} \neq 0.$ This gives us a criterion for verifying that none of the translates $\{ (\tau^{\sharp})^{-j}(\textbf{a})\}$ of $\textbf{a} \in \bigcap_{U \in \mathscr{C}} U$ lie in the zero locus of the Bernstein--Sato ideal of $M$: no translate lies in this zero locus provided $M \otimes_{\mathbb{C}[S]} \mathbb{C}_{(\tau^{\sharp})^{-j}(\textbf{a})} = 0$ for all $j \in \mathbb{Z}_{\geq 0}$.

\emph{Step 2:} Generically generating $\mathscr{O}_{X}(\star f^{\lambda})$.

For each edge $E^{\prime} \in \mathscr{L}(\mathscr{A})$ and each $p \in \mathbb{Z}_{\geq \min\{2, \rank(E)\}}$, consider the nonempty distinguished Zariski open set
\[
G_{E^{\prime}, p} = \{ \boldsymbol{\lambda} \in \mathbb{C}^{d} \mid \sum_{\{1 \leq j \leq d \mid E^{\prime} \subseteq \Var(f_{j}) \}} \lambda_{j} \neq p) \} \subseteq \mathbb{C}^{d}.
\]
Consider the countable collection of these opens, parameterized both by $p$ and by $E^{\prime}$:
\[
\{G_{E^{\prime}, p} \mid  E^{\prime} \in \mathscr{L}(\mathscr{A}) \text{ and } p \in \mathbb{Z}_{\geq \min\{2, \rank(E^{\prime})\}} \}.
\]
By Theorem \ref{thm - twisted LCT implies generating level of meromorphic specialization}, if $\boldsymbol{\lambda}$ lies in the intersection of all these $G_{E^{\prime},p}$, then 
\[
\mathscr{D}_{X} f^{\textbf{-1} + \boldsymbol{\lambda}} = \mathscr{O}_{X}(\star f^{\boldsymbol{\lambda}}).
\]

Let $\gamma$ be the ring map of localization at $T_{j} = \mathbb{C}[S]\setminus \cup_{\mathfrak{m} \in \Var(\tau^{j}(\mathfrak{q}))} \mathfrak{m}$:
\[
\gamma = \mathbb{C}[s_{1}, \dots, s_{d}] \xrightarrow[]{} T_{j}^{-1} \mathbb{C}[S].
\]
And let $\gamma^{\sharp}: \Var(\mathfrak{q}) \to \Spec \mathbb{C}[s_{1}, \dots, s_{d}]$ be the induced map on spectra.
By definition $\textbf{a} \in \Var(\mathfrak{q})$ equates to
\begin{align} \label{eqn - bounding sum of coordinates from point of q}
\sum_{\{1 \leq j \leq d \mid E \subseteq \Var(\ell_{j})\}}  (\gamma^{\sharp}(\textbf{a}))_{j} = -m \leq -( - \min\{2, \rank(E)\} + 1)
\end{align}
where $E$ and $m$ are as determined in \emph{General Plan} (see \eqref{eqn - choice of m; F a complete factorization}). Therefore $(\gamma^{\sharp})^{-1}(G_{E, p})$ is nonempty for all $p \in \mathbb{Z}_{\geq \min\{2, \rank(E)\}}$. 

Additionally, for edges $E^{\prime} \neq E$ and for all $p \in \mathbb{Z}_{\geq \min\{2, \rank(E)\}}$ we claim our Zariski open $(\gamma^{\sharp})^{-1}(G_{E^{\prime}, p})$ is nonempty. If there exists a $k$ such that $\Var(f_{k}) \subseteq E$ and $\Var(f_{k}) \nsubseteq E^{\prime}$ then for a point $\boldsymbol{\beta} \in G_{E^{\prime}, p}$ there is no restriction on the entry $(\boldsymbol{\beta})_{k}$. So changing this value moves $\boldsymbol{\beta}$ within $G_{E^{\prime}, p}$ and thus we can change this value such that \eqref{eqn - bounding sum of coordinates from point of q} is satisfied. Conversely, if there exists a $k$ such that $\Var(f_{k}) \subseteq E^{\prime}$ but $\Var(f_{k}) \nsubseteq E$, then we can take a point $\textbf{a} \in \Var(\mathfrak{q})$, and change the entry $(\boldsymbol{\gamma}^{\sharp}(\textbf{a}))_{k}$ so that the point moves into $G_{E^{\prime}, p}$ but \eqref{eqn - bounding sum of coordinates from point of q} is not effected.

Therefore
\begin{equation} \label{eqn - countable collection of opens, generically generating twisted power meromorphic d-module}
\mathscr{G} = \{ (\gamma^{\sharp})^{-1}(G_{E^{\prime},p}) \mid E^{\prime} \in \mathscr{L}(\mathscr{A}) \text{ and } p \in \mathbb{Z}_{\geq \min\{2, \rank(E^{\prime})\}} \} 
\end{equation}
is a countable collection of nonempty sets lying in $\Var(\mathfrak{q})$, all of which are Zariski open in $\Var(\mathfrak{q})$. And if $\textbf{a} \in \bigcap_{U \in \mathscr{G}} U$, we have that
\begin{equation} \label{eqn - generically generating twisted power meromorphic d-module}
\mathscr{D}_{X} f^{\textbf{-1} + \gamma^{\sharp}(\textbf{a})} = \mathscr{O}_{X}(\star f^{\gamma^{\sharp}(\textbf{a})}).
\end{equation}

\emph{Step 3:} Combining Generic Properties.

Define $\Gamma \subseteq \Var(\mathfrak{q})$ by
\begin{equation} \label{eqn - def of gamma, very generic points}
\Gamma = (\bigcap_{C \in \mathscr{C}} C ) \cap (\bigcap_{G \in \mathscr{G}} G)
\end{equation}
and let $\textbf{a} \in \Gamma$. We will show that if $j \in \{2, 3, \dots, \}$ then $(\tau^{\sharp})^{-j}(\Gamma)$ is disjoint from the $Z(B_{F,0})$, zero locus of the Bernstein--Sato ideal of $M$. This verifies \textbf{The Claim}.

By \cite{SabbahI} (see also Proposition 3.2 of \cite{OakuAlgorithm}), only finitely many of $\{(\tau^{\sharp})^{-j})(\textbf{a}) \mid j \in \{2, 3, \dots \} \}$ intersect $Z(B_{F,0})$. Assume, towards contradiction, that the intersection is non-empty, and fix $j \in \{2, 3, \dots \}$ to be the largest choice such that $(\tau^{\sharp})^{-j}(\textbf{a}) \in Z(B_{F,0})$. For notational ease, let $\textbf{j} = (j, \dots, j) \in \mathbb{C}^{d}$ and recall that $\gamma^{\sharp}((\tau^{\sharp})^{-j}(\textbf{a})) =  - \textbf{j} + \gamma^{\sharp}(\textbf{a}).$ Now consider the following commutative diagram of $\mathscr{D}_{X, 0}$-maps:
\begin{equation} \label{eqn - comm diagram nabla specialization}
\begin{tikzcd}
\mathscr{D}_{X, 0}[S] F^{S} \otimes_{\mathbb{C}[S]} \mathbb{C}_{(\tau^{\sharp})^{-j + 1}(\textbf{a})} \rar[twoheadrightarrow] \dar{\nabla_{(\tau^{\sharp})^{-j + 1}(\textbf{a})}}
    & \mathscr{D}_{X, 0} f^{- \textbf{j} + \textbf{1} + \gamma^{\sharp}(\textbf{a})} \dar[hookrightarrow]{=}\\
\mathscr{D}_{X, 0}[S] F^{S} \otimes_{\mathbb{C}[S]} \mathbb{C}_{(\tau^{\sharp})^{-j}(\textbf{a})} \rar[twoheadrightarrow]{\simeq}
    & \mathscr{D}_{X, 0} f^{- \textbf{j} + \gamma^{\sharp}(\textbf{a})}.
\end{tikzcd}
\end{equation}
(Recall that $\mathbb{C}_{\boldsymbol{\beta}}$ is the residue field of the point $\boldsymbol{\beta} \in \Spec \mathbb{C}[S]$.) Here the horizontal specialization maps are always surjections, the rightmost vertical map is submodule inclusion, and the leftmost vertical map $\nabla_{(\tau^{\sharp})^{-j + 1}(\textbf{a})}$ is induced by sending $s_{k} \mapsto s_{k} + 1$ for all $k$, cf. Definition 3.7 of \cite{Bath3} and/or the commutative square of subsection 5.10 of \cite{BudurLocalSystems} where $\nabla_{(\tau^{\sharp})^{-j + 1}(\textbf{a})}$ goes by $\rho_{(\tau^{\sharp})^{-j + 1}(\textbf{a})}$. Because $j$ is the largest possible value such that $(\tau^{\sharp})^{-j}(\textbf{a}) \in Z(B_{F,0})$, Proposition 3.6 of \cite{OakuAlgorithm} says the bottom horizontal map is an isomorphism. And because $-j + 1 \leq -1$ and $\textbf{a} \in \Gamma$, the rightmost vertical map is actually an equality (see \eqref{eqn - countable collection of opens, generically generating twisted power meromorphic d-module}, \eqref{eqn - generically generating twisted power meromorphic d-module}).

Therefore $\nabla_{(\tau^{\sharp})^{-j + 1}(\textbf{a})}$ is surjective. This means its cokernel vanishes:
\[
M \otimes_{\mathbb{C}[S]} \mathbb{C}_{(\tau^{\sharp})^{-j}(\textbf{a})} = 0.
\]
Since $\textbf{a} \in \Gamma$, the definition \eqref{eqn - collection of opens, generically CM translates} of $\mathscr{C}$ and the subsequent remarks imply that the vanishing of this cokernel equates to 
\[
(\tau^{\sharp})^{-j}(\textbf{a}) \notin Z(B_{F,0}). 
\]
But this contradicts our choice that $j \in \{2, 3, \dots, \}$ was the largest possible value such that $(\tau^{\sharp})^{-j}(\textbf{a}) \in Z(B_{F,0}).$ We conclude that
\begin{equation} \label{eqn - translates of very generic points are disjoint from zero loci of BS ideals}
(\tau^{\sharp})^{-j}(\Gamma) \cap Z(B_{F,0}) = \emptyset \text{ for all } j \in \{2, 3, \dots, \}.
\end{equation}

\emph{Step 4:} Baire Category Theorem.

The points $\Gamma \subseteq \Var(\mathfrak{q})$ from Step 3 are obtained by taking a countable intersection of nonempty Zariski open sets. By the Baire Category Theorem $\Gamma$ is nonempty and dense in $\Var(\mathfrak{q})$. Moreover, $(\tau^{\sharp})^{-j}(\Gamma) \subseteq (\tau^{\sharp})^{-j}(\Var(\mathfrak{q})) =  \Var(\tau^{j}(\mathfrak{q}))$ is also dense in $\Var(\tau^{j}(\mathfrak{q}))$ since $\tau^{\sharp}$ is a homeomorphism. By Step 3 and \eqref{eqn - translates of very generic points are disjoint from zero loci of BS ideals}, for all $j \in \{2, 3, \dots, \}$, the codimension one component $\Var(\tau^{j}(\mathfrak{q})) \nsubseteq Z(B_{F,0})$ lest $\emptyset \neq (\tau^{\sharp})^{-j}(\Gamma) \subseteq Z(B_{F,0})$. This proves \textbf{The Claim} and by the discussion in \emph{General Plan} completes the proof for \emph{Case 1}.

\emph{Case 2:} We assume $d > r > 1$, i.e. $F$ is neither the trivial factorization $F = (f)$ nor a factorization into linear forms.

The strategy is entirely similar to Case 1 but with some mild alterations involving \emph{Step 2}. First, if $E \neq \{0\}$ change \eqref{eqn - choice of m; F a complete factorization}, and the consequent definition of $\mathfrak{q} \in \Spec \mathbb{C}[S]$, so that 
\begin{align} \label{eqn - choice of m; F a neither complete nor trivial factorization}
m &= \min\{ p \in \{ \rank(E) + \ell + z \cdot d_{E} \mid z \in \mathbb{Z} \} \mid -p \notin \mathbb{Z}_{\geq \min\{2, \rank(E)\}} \} \\
    &\in [0, d_{E} - 1]. \nonumber
\end{align}
If $E = \{0\}$ make no such changes.

The construction of $\mathscr{C}$ (see \eqref{eqn - collection of opens, generically CM translates}) from \emph{Step 1} does not change. As for the construction of $\mathscr{G}$ from \emph{Step 2}, let $\beta: \mathbb{C}[s_{1}, \dots, s_{d}] \to \mathbb{C}[s_{1}, \dots, s_{r}]$ be the ring homomorphism induced by the factorization $F$ of $f$ (see Definition \ref{def - notation for factorization of arrangements along edges}):
\[
\beta: \mathbb{C}[s_{1}, \dots, s_{d}] \to \frac{\mathbb{C}[s_{1}, \dots, s_{d}]}{(\{ s_{t} - s_{j} \mid t, j \in S_{k}, 1 \leq k \leq r\})} = \mathbb{C}[s_{1}, \dots, s_{r}].
\]
And change $\gamma$ from \emph{Step 2} so that $\gamma$ is the composition of $\beta$ with the localization at $T_{j} = \mathbb{C}[S]\setminus \cup_{\mathfrak{m} \in \Var(\tau^{j}(\mathfrak{q}))} \mathfrak{m}$:
\[
\gamma = \mathbb{C}[s_{1}, \dots, s_{d}] \xrightarrow[]{\beta} \mathbb{C}[S] \xrightarrow[]{} T_{j}^{-1} \mathbb{C}[S].
\]
And $\gamma^{\sharp}: \Var(\mathfrak{q}) \to \Spec \mathbb{C}[s_{1}, \dots, s_{d}]$ is the induced map on spectra. As before, define 
\[
\mathscr{G} = \{ (\gamma^{\sharp})^{-1}(G_{E^{\prime},p}) \mid E^{\prime} \in \mathscr{L}(\mathscr{A}) \text{ and } p \in \mathbb{Z}_{\geq \min\{2, \rank(E^{\prime})\}} \} 
\]
and, as before, we must show each element of $\mathscr{G}$ is nonempty. 

Under these modifications, $\textbf{a} \in \Var(\mathfrak{q})$ equates to
\[
\sum_{\{1 \leq j \leq d \mid E \subseteq \Var(\ell_{j})\}}  (\gamma^{\sharp}(\textbf{a}))_{j} = -m \leq 
    \begin{cases}
        0 \text{ if } E \neq \{0\}; \\
        -1 \text{ if } E = \{0\}.
    \end{cases}
\]
There are two cases. If $-m \leq 0$, then $\mathbb{Q}_{\leq 0}^{r}$ intersects $\Var(\mathfrak{q})$ nontrivially. Certainly $\mathbb{Q}_{\leq 0}^{d} \subseteq G_{E^{\prime}, p}$ for arbitrary $E^{\prime} \in \mathscr{L}(\mathscr{A})$ and $p \in \mathbb{Z}_{\geq \min\{2, \rank(E^{\prime})\}}.$ So for all such $E^{\prime}$ and $p$ we know $\emptyset \neq \gamma^{\sharp}(\mathbb{Q}_{\leq 0}^{r} \cap \Var(\mathfrak{q})) \subseteq \mathbb{Q}_{\leq 0}^{d} \subseteq G_{E^{\prime}, p}$ for arbitrary $E^{\prime} \in \mathscr{L}(\mathscr{A})$ and $p \in \mathbb{Z}_{\geq \min\{2, \rank(E^{\prime})\}}.$ Thus when $-m \leq 0$ every member of $\mathscr{G}$ is nonempty and Zariski open in $\Var(\mathfrak{q})$. If $-m > 0$, then by \eqref{eqn - choice of m; F a complete factorization} and \eqref{eqn - choice of m; F a neither complete nor trivial factorization} we conclude $-m = 1$ and $E = \{0\}$. Let $\textbf{1/d} = (1/d, \dots, 1/d) \in \mathbb{C}^{d}$ and note that $\textbf{1/d} \in G_{E^{\prime}, p}$ for all $E^{\prime} \neq \{0\}$ and $p \in \mathbb{Z}_{\geq \min\{2, \rank(E^{\prime})\}}$ (this amounts to $d_{E^{\prime}} < d$). Certainly $(\beta^{\sharp})^{-1}(\textbf{1/d}) \in \Var(\mathfrak{q})$ so in this case each relevant $(\beta^{\sharp})^{-1}(G_{E^{\prime}, p}) \neq \emptyset$.

With these alterations and arguments, the construction of $\mathscr{G}$ from \emph{Step 2} has the same properties as before. So the argument in \emph{Case 1} applies to \emph{Case 2}. Note that the change in $Q_{E}$ for edges not $\{0\}$ matches the change in the lower bound for $m$.

\emph{Case 3:} We assume $r=1$, i.e. $F$ is the trivial factorization $F = (f)$. 

We argue similarly to \emph{Case 2}. First, note that the changes to $\mathscr{G}$ therein apply here as well which, because $\Spec \mathbb{C}[S] = \mathbb{C}$, means $\mathscr{G}$ consists of copies of $\Var(\mathfrak{q})$. And because $r=1$, it is easy to check $M$ is automatically $(n+1)$-Cohen--Macaulay (e.g. Lemma 3.3.3 of \cite{ZeroLociI}). So $\mathscr{C}$ from \emph{Step 1} is simply copies of $\Var(\mathfrak{q})$. This means $\Gamma = \Var(\mathfrak{q})$ and the rest of the argument (i.e. \emph{Step 3}) holds. (There is no need to appeal to the Baire Category Theorem nor dense subset arguments since \emph{Step 3} applies to all of $\Gamma = \Var(\mathfrak{q})$.) Finally, the containment of the roots of the Bernstein--Sato polynomial in $(-2 + 1/d, 0)$ is immediate from the bound of codimension one components (i.e. roots) provided.
\end{proof}

\begin{remark} \label{rmk - on BS ideal codim one bound} \text{ }
\begin{enumerate}[label=(\alph*)]
    \item The bound in Theorem \ref{thm - bounding codim one components of BS zero loci} is sharp: the formula Theorem 3.23 of \cite{Bath3} for the Bernstein--Sato ideal of a generic arrangement exhibits this.
    \item Theorem \ref{thm - bounding codim one components of BS zero loci} is a significant improvement to the bound of Corollary 3.21 of \cite{Bath3} as there tameness was required in order to lift M. Saito's univariate's bound (cf. \cite{SaitoArrangements}) to the multivariate setting.
    \item Because the proof of Theorem \ref{thm - bounding codim one components of BS zero loci} depends on generic properties it yields no concrete evidence that $\Var(\tau^{j}(\mathfrak{q}))$ is disjoint from $Z(B_{F,0})$ for $j \in \{2, 3, \dots \}$: all we show is that the complement of their intersection is dense. In the tame case the intersection is disjoint because $Z(B_{F,0})$ is purely codimension one, see Corollary 3.5 \cite{Bath3}.
\end{enumerate}
\end{remark}

\bibliographystyle{abbrv}
\bibliography{refs}

\end{document}